\definecolor{myorange}{RGB}{225,127,0}
\definecolor{mygreen}{RGB}{0,225,0}
\definecolor{mypurple}{RGB}{128,0,128}
\definecolor{myred}{RGB}{225,0,0}
\definecolor{myblue}{RGB}{0,0,225}
\definecolor{myyellow}{RGB}{210,210,0}
\definecolor{mycream}{RGB}{245,243,198}
\definecolor{dummy}{RGB}{10,10,10}
\definecolor{mygray}{gray}{0.7}
\definecolor{orchid}{RGB}{143,40,194}
\definecolor{lava}{RGB}{207,16,32}
\definecolor{mydarkblue}{RGB}{10,10,170}
\newcommand{\foamt}{\mathbf{Foam}_{3}}
\newcommand{\llambda}{\overline{\lambda}}
\newcommand{\ii}{\underline{\textbf{\textit{i}}}}
\newcommand{\jj}{\underline{\textbf{\textit{j}}}}
\newcommand{\glcat}{\mathcal{U}(\mathfrak{gl}_n)}
\newcommand{\Ugl}{\dot{\mathbf U}_q(\mathfrak{gl}_n)}
\DeclareMathOperator{\End}{End}
\DeclareMathOperator{\HomGL}{Hom_{\glcat}}
\DeclareMathOperator{\HOMGL}{HOM_{\glcat}}
\newcommand{\bN}{\mathbb{N}}
\newcommand{\bZ}{\mathbb{Z}}
\newcommand{\bQ}{\mathbb{Q}}
\newcommand{\bC}{\mathbb{C}}
\newcommand{\sseq}{{\rm SiSeq}}
\newcommand{\onel}{{\mathbf 1}_{\lambda}}
\newcommand{\onelp}{{\mathbf 1}_{\lambda'}}
\newcommand{\refequal}[1]{\xy {\ar@{=}^{#1}
(-1,0)*{};(1,0)*{}};
\endxy}
\newcommand{\U}{\dot{{\bf U}}_q(\mathfrak{sl}_n)}
\newcommand{\UZ}{\dot{\bf U}_q}
\newcommand{\Ucat}{\mathcal{U}(\mathfrak{sl}_n)}
\newcommand{\Scat}{\mathcal{S}}
\newcommand{\UcatD}{\dot{\mathcal U}(\mathfrak{sl}_n)}
\newcommand{\ScatD}{\dot{\Scat}}
\newcommand{\qbin}[2]{
\left[
 \begin{array}{c}
 #1 \\
 #2 \\
 \end{array}
 \right]
}
\newtheorem{prop}{Proposition}[section]
\newtheorem{thm}[prop]{Theorem}
\newtheorem{lem}[prop]{Lemma}
\newtheorem{cor}[prop]{Corollary}
\theoremstyle{remark}
\newtheorem{rem}[prop]{Remark}
\theoremstyle{remark}
\newtheorem{ex}[prop]{\textbf{Example}}
\theoremstyle{remark}
\theoremstyle{definition}
\newtheorem{defn}[prop]{Definition}
\newtheorem{question}[prop]{Question}
\numberwithin{equation}{section}
\newcommand{\figins}[3] 
{\raisebox{#1pt}{\includegraphics[height=#2 in]{section22/#3}}}
\newcommand{\figwhins}[4] 
{\raisebox{#1pt}{\includegraphics[height=#2 in, width=#3 in]{section22/#4}}}
\newcommand{\Ver}{\mathrm{Vert}}
\newcommand{\F}{\mathcal{F}}
\title{The $\mathfrak{sl}_{3}$-web algebra}
\author{M.~Mackaay, W.~Pan and D.~Tubbenhauer}
\thanks{The first author was supported by the FCT - Funda\c c\~{a}o para a 
Ci\^{e}ncia e a Tecnologia, through project number PTDC/MAT/101503/2008, 
New Geometry and Topology.}
\thanks{The second and the third author were supported by the German Research 
Foundation (Deutsche Forschungsgemeinschaft (DFG)) 
through the Institutional Strategy of the University of G\"{o}ttingen.}
\begin{document}
\begin{abstract}
In this paper we use Kuperberg's $\mathfrak{sl}_3$-webs and Khovanov's 
$\mathfrak{sl}_3$-foams to define a new algebra $K^S$, which we call the 
$\mathfrak{sl}_3$-web algebra. It is the $\mathfrak{sl}_3$ analogue of 
Khovanov's arc algebra. 

We prove that $K^S$ is a graded symmetric Frobenius algebra. 
Furthermore, we categorify an instance of $q$-skew Howe duality, 
which allows us to prove that $K^S$ is Morita equivalent to 
a certain cyclotomic KLR-algebra of level 3. This allows us to determine 
the split Grothendieck group $K^{\oplus}_0(\mathcal{W}^S)_{\bQ(q)}$, to show that its center is 
isomorphic to the cohomology ring of a certain Spaltenstein variety, 
and to prove that $K^S$ is a graded cellular algebra. 
\end{abstract}

\maketitle
 
\tableofcontents


\section{Introduction}
\setcounter{subsection}{1}

In this paper, we define the $\mathfrak{sl}_3$ analogue of Khovanov's arc algebras $H^n$, introduced in~\cite{kh}. We call 
them \textit{web algebras} and denote them by $K^S$, where $S$ is a \textit{sign string} (string of $+$ and $-$ signs). 
Instead of arc diagrams, which give a diagrammatic presentation of 
the representation theory of 
$U_q(\mathfrak{sl}_2)$, we use $\mathfrak{sl}_3$-webs, introduced by Kuperberg~\cite{ku}. These webs give a diagrammatic presentation of the 
representation theory of $U_q(\mathfrak{sl}_3)$. 
Instead of $\mathfrak{sl}_2$-cobordisms, which Bar-Natan 
used~\cite{bn} to give his formulation of Khovanov's link homology, we use Khovanov's~\cite{kv} $\mathfrak{sl}_3$-foams. 
\vspace*{0.25cm}

We prove the following main results regarding $K^S$.
\begin{enumerate}
\item $K^S$ is a graded symmetric Frobenius algebra (Theorem~\ref{thm:frob}).
\item We give an explicit degree preserving algebra isomorphism 
between the cohomology ring of the Spaltenstein variety 
$X^{\lambda}_{\mu}$ and the center $Z(K^S)$ of $K^S$, where $\lambda$ and $\mu$ are 
two weights determined by $S$ (Theorem~\ref{thm:center}). 
\item Let $V^S=V^{s_1}\otimes\cdots\otimes V^{s_n}$, where $V^+$ is the 
basic $U_q(\mathfrak{sl}_3)$-representation and $V^-$ its dual. 
Kuperberg~\cite{ku} proved that $W^S$, the space of $\mathfrak{sl}_3$-webs 
whose boundary is determined by $S$, is isomorphic to 
$\mathrm{Inv}_{U_q(\mathfrak{sl}_3)}(V^S)$, the space of invariant tensors in 
$V^S$. 

Choose an arbitrary 
$k\in\mathbb{N}$ and let $n=3k$. Let us denote by $V_{(3^k)}$ the 
irreducible $U_q(\mathfrak{sl}_n)$-module 
with highest weight $3\omega_k$, where $\omega_k$ is the $k$-th fundamental 
$\mathfrak{sl}_n$-weight. As the reader will have noticed, we actually 
use the corresponding $\mathfrak{gl}_n$-weight $(3^k)$. This is natural from 
the point of view of skew Howe duality, as we will explain in the paper. 

The $\mathfrak{gl}_n$-weights of $V_{(3^k)}$ belong to 
$\Lambda(n,n)_3$, which is the set of $n$-part compositions of $n$ whose 
parts are integers between $0$ and $3$. These weights, denoted $\mu_S$, 
correspond bijectively to \textit{enhanced sign sequences} of length $n$, 
denoted by $S$ as before, and are in bijective correspondence to the 
semi-standard Young tableaux with $k$ rows and 3 columns. 

Define the \textit{web module}
\[
W_{(3^k)}=\bigoplus_{\mu_S\in\Lambda(n,n)_3} W^S,
\]
where $W^S$ is defined as before after deleting the entries of $\mu_S$ 
which are equal to $0$ or $3$. 

By $q$-skew Howe duality, which 
we will explain at the beginning of Section~\ref{sec:grothendieck}, 
there is an $\U$-action on $W_{(3^k)}$ such that 
\[
V_{(3^k)}\cong \bigoplus_{\mu_S\in\Lambda(n,n)_3} W^S
\]
as $\U$-modules. 
\vskip0.5cm
In Section~\ref{sec:grothendieck} we categorify 
this result. Let $R_{(3^k)}$ be the cyclotomic level-three 
Khovanov-Lauda Rouquier algebra (cyclotomic 
KLR algebra for short) with highest $\mathfrak{gl}_n$-weight $(3^k)$, and 
let  
\[
\mathcal{V}_{(3^k)}=R_{(3^k)}\text{-}\mathrm{\textbf{Mod}}_{\mathrm{gr}}\quad\text{and}\quad {}_p\mathcal{V}_{(3^k)}=
R_{(3^k)}\text{-}\mathrm{p\textbf{Mod}}_{\mathrm{gr}}
\]
be its categories of finite dimensional, graded modules and 
finite dimensional, graded, projective modules respectively. 
We define grading shifts by 
\[
M\{t\}_i=M_{i-t}
\]
for any $M\in \mathcal{V}_{(3^k)}$ and $t\in\mathbb{Z}$. 
We denote the split Grothendieck group of ${}_p\mathcal{V}_{(3^k)}$ by 
\[
K^{\oplus}_0({}_p\mathcal{V}_{(3^k)}),
\]
which becomes a $\mathbb{Z}[q,q^{-1}]$-module by defining  
\[
q^t[M]=[M\{t\}]
\] 
for any $M\in {}_p\mathcal{V}_{(3^k)}$ and $t\in\mathbb{Z}$. 
For the rest of this paper, we will always work with  
\[ 
K^{\oplus}_0({}_p\mathcal{V}_{(3^k)})_{\bQ(q)}=K^{\oplus}_0({}_p\mathcal{V}_{(3^k)})
\otimes_{\mathbb{Z}[q,q^{-1}]}{\bQ(q)}.
\]

Brundan and 
Kleshchev~\cite{bk} (see also~\cite{kakash},
~\cite{lv},~\cite{vv} and~\cite{we1}) proved that there is 
a strong $\mathfrak{sl}_n$-$2$-representation on 
$\mathcal{V}_{(3^k)}$, which can be restricted to ${}_p\mathcal{V}_{(3^k)}$  
such that  
\[
K^{\oplus}_0({}_p\mathcal{V}_{(3^k)})_{\bQ(q)}
\cong V_{(3^k)}
\] 
as $\U$-modules. 

We prove (Proposition~\ref{prop:cataction}) that there exists a 
strong $\mathfrak{sl}_n$-$2$-representation on 
\[
\mathcal{W}_{(3^k)}=\bigoplus_{\mu_S\in\Lambda(n,n)_3} \mathcal{W}^{S},
\]
where 
\[
\mathcal{W}^{S}=K^S\text{-}\mathrm{\textbf{Mod}}_{\mathrm{gr}}~\footnote{The idea for this 
$2$-representation was suggested by Mikhail Khovanov to M.~M. in 2008 and its basic 
ideas were worked out modulo 2 in the unpublished preprint~\cite{mack}.}.
\]
This $2$-representation can be restricted to 
\[
{}_p\mathcal{W}_{(3^k)}=\bigoplus_{\mu_S\in\Lambda(n,n)_3} {}_p\mathcal{W}^{S},
\]
where 
\[
{}_p\mathcal{W}^{S}=K^S\text{-}\mathrm{p\textbf{Mod}}_{\mathrm{gr}}.
\] 

By a general result due to Rouquier~\cite{rou}, which 
we recall in Proposition~\ref{prop:rouquier}, we get
\begin{equation}
\label{eq:rouquierequiv}
{}_p\mathcal{V}_{(3^k)}\cong {}_p\mathcal{W}_{(3^k)}.
\end{equation}
\item In particular, 
this proves that the split Grothendieck groups of both categories are 
isomorphic (Corollary~\ref{cor:equivalence}). It follows that we have 
\[
K^{\oplus}_0\left({}_p\mathcal{W}^S\right)_{\bQ(q)}\cong W^S,
\]
for any $S$ such that $\mu_S\in\Lambda(n,n)_3$. 
\item As proved in Corollary~\ref{cor:equivalence}, the equivalence 
in~\eqref{eq:rouquierequiv} implies that 
$R_{(3^k)}$ and 
\[
K_{(3^k)}=\bigoplus_{\mu_S\in\Lambda(n,n)_3} K^S
\] 
are Morita equivalent (Proposition~\ref{prop:morita}), i.e. we have 
\begin{equation}
\label{eq:rouquiermorita}
\mathcal{V}_{(3^k)}\cong \mathcal{W}_{(3^k)}
\end{equation}
as strong $\mathfrak{sl}_n$-$2$-representations. 
\item In Corollary \ref{cor:moritacellular}, we show 
that \eqref{eq:rouquiermorita} and the cellularity of $R_{(3^k)}$, due to Hu and 
Mathas \cite{hm}, imply that $K^S$ is a graded cellular algebra for any $S$. 
\item We also show that~\eqref{eq:rouquierequiv} and Brundan and 
Kleshchev's results in~\cite{bk} imply that the indecomposable 
objects in ${}_p\mathcal{W}^S$, with 
a suitable normalization of their gradings, correspond bijectively 
to the dual canonical basis elements in 
$\mathrm{Inv}(V^S)$ (Theorem~\ref{thm:dualcan}).  
\end{enumerate}
The first result is easy to prove and similar to the case for $H^n$. 
Some of the other results are much harder to prove for $K^S$ 
than their analogues are for $H^n$ 
(e.g. see Remark~\ref{rem:counter2}). In order to prove the second and 
the last result, we introduce a 
``new trick'', i.e. we use a deformation of $K^S$, called $G^S$. This deformation 
is induced by Gornik's~\cite{g} deformation of Khovanov's original 
$\mathfrak{sl}_3$-foam relations. One big difference 
between $G^S$ and $K^S$ is that the former algebra is \textit{filtered} whereas 
the latter is \textit{graded}. As a matter of fact, 
$K^S$ is the associated graded algebra of $G^S$. The usefulness of 
$G^S$ relies on the fact that $G^S$ is semisimple as an algebra, 
i.e. forgetting the filtration (see Proposition~\ref{prop:Gsemisimple}). 
\vskip0.5cm
Let us explain the connection to some of the existing work in the literature. 

We first comment on the relation of our results with known results in the $\mathfrak{sl}_2$ case. 
Khovanov~\cite{kh} introduced the arc algebras $H^n$ in his work on the generalization of 
his celebrated categorification of the Jones link polynomial to tangles. As he showed, 
\[
K_0^{\oplus}(H^n\text{-}\mathrm{p\textbf{Mod}}_{\mathrm{gr}})\cong \mathrm{Inv}_{\dot{\mathbf U}_q(\mathfrak{sl}_2)}(V^{\otimes 2n}),
\]
where $V$ is the fundamental $\dot{\mathbf U}_q(\mathfrak{sl}_2)$-module. 
The Grothendieck classes of the indecomposable graded $H^n$-modules, with a suitable 
normalization of their grading, correspond bijectively to the dual canonical basis elements 
of the invariant tensor space. The proof of these facts is completely elementary and does not 
require any categorified skew Howe duality.  

Huerfano and Khovanov categorified the irreducible level-two 
$\U$-representations with highest weight $2\omega_k$ in~\cite{hkh}, using the arc algebras and 
categorified skew Howe duality (without calling it that explicitly), but without explaining the relation with the level-two 
cyclotomic KLR algebras which had not yet been invented at that time. 

That relation only appeared in the work by Brundan and Stroppel~\cite{bs3}, 
who studied the representation theory of the arc algebras in 
great detail in~\cite{bs},~\cite{bs2},~\cite{bs3},~\cite{bs4} and~\cite{bs5}.  

Khovanov showed that the center of the arc algebra $H^n$ is isomorphic to the 
cohomology ring of the $(n,n)$-Springer variety $X^n$ and Stroppel and Webster 
showed that $H^n$ can be realized using the intersection cohomology of $X^n$.  

The results in this paper are the $\mathfrak{sl}_3$ analogues 
of some of the results in the papers cited above. 

There are several results for $\mathfrak{sl}_2$ for which we have not yet found the $\mathfrak{sl}_3$ 
analogues, e.g. we have not defined the quasi-hereditary cover of $K^S$ in this paper. The quasi-hereditary cover of 
$H^n$ is due to Chen and Khovanov~\cite{ck} and Stroppel~\cite{s} and was studied by Brundan 
and Stroppel in~\cite{bs},\cite{bs2} and~\cite{bs3}. Furthermore, in~\cite{bs4} Brundan and Stroppel found a remarkable 
representation theoretic relation between general linear super groups and certain generalized arc algebras. 
This is another result for which we do not have an $\mathfrak{sl}_3$-analogue.  
\vskip0.5cm
Let us now comment on the connection with other work on categorified $\mathfrak{sl}_n$-representations and 
link homologies, for $n\geq 3$. There are essentially three diagrammatic or 
combinatorial approaches which give 
$\mathfrak{sl}_n$-link homologies (there are other approaches using 
representation theory or algebraic geometry for example, but we will not 
consider those in this introduction).  
\begin{enumerate}
\item There is the approach using matrix factorizations due 
to Khovanov and Rozansky~\cite{kv}, which was proved to be equivalent 
to an approach using foams~\cite{kh},~\cite{kr},~\cite{msv} and ~\cite{mv2}. 
\item There is Webster's approach using cyclotomic tensor algebras, which 
generalize 
the cyclotomic KLR-algebras, see~\cite{we1} and~\cite{we2}. 
\item There is an approach using 
Chuang-Rouquier complexes over cyclotomic quotients~\cite{cr} (its details have 
only been worked out and written up completely for $n=2,3$ in~\cite{lqr}, 
but see our remarks below for the general case. Note that Chuang and Rouquier 
did not prove invariance under the third Reidemeister move nor did they 
discuss braid closures, i.e. knots and links in~\cite{cr})
\end{enumerate} 

Only for $n=2$ and $n=3$ it is known that all three approaches 
give isomorphic link homologies. For $n\geq 4$ they are conjectured 
to be isomorphic, but only the first and the third approaches are known 
to give isomorphic link homologies. Let us explain this in a bit 
more detail.   

The paper by Lauda, Queffelec and Rose~\cite{lqr} appeared online a little 
after our paper became available and is completely independent. 
They used a slightly different type of $\mathfrak{sl}_3$-foams in order to define their version of 
categorified level-three skew Howe duality and used it to relate the first 
and third aforementioned approach to $\mathfrak{sl}_3$-link homologies. 
\vspace*{0.25cm}

Although link homologies have attracted a lot of attention, 
it seems important to us to understand the 
bigger picture of the categorified representation theory behind 
the link homologies. 

In this paper we do not work out the application to link homology, but 
concentrate on the indecomposable projective modules 
and the center of $K^S$. In Proposition~\ref{prop:unitriang} we actually 
prove a conjecture due to Morrison and Nieh~\cite{mn} about the relation 
between the basis webs in $B^S$ and the indecomposables in 
${}_p\mathcal{W}^S$ (see Remark~\ref{rem:mn}). For related results 
in this direction, see the work by Robert in~\cite{rob} and~\cite{rob2}. 
\vspace*{0.25cm}

It is not so hard to generalize our results in this paper 
to the case for $\mathfrak{sl}_n$, with $n\geq 2$, using 
matrix factorizations instead of foams. As a matter of fact, 
this has been done in the meanwhile by Mackaay and 
Yonezawa in~\cite{mack1} and~\cite{my}. 
General $\mathfrak{sl}_n$-foams have not been defined yet (for a partial 
case, see~\cite{msv}), so they cannot be used. But the results in~\cite{my} 
will probably be helpful to define a finite set of relations on 
$\mathfrak{sl}_n$-foams and to prove that these relations are consistent and 
sufficient. 

Having such a definition of $\mathfrak{sl}_n$-foams is important, 
just as it is important to have generators and 
relations for any interesting algebra. In our opinion, the 
categorified skew Howe duality using foams would be a proper 
categorification of Cautis, Kamnitzer and Morrison's results on 
quantum skew Howe duality in~\cite{ckm}. Furthermore, $\mathfrak{sl}_n$-foams 
might be very helpful in computing the 
Khovanov-Rozansky link homologies effectively (i.e. using computers). 

Although Mackaay and Yonezawa did not work out the details, it is 
fairly straightforward to show that their definitions and results in~\cite{my} 
imply that the equivalence between 
level-$n$ cyclotomic KLR algebras and $\mathfrak{sl}_n$-web algebras 
maps the colored Chuang-Rouquier complex of a braid over a cyclotomic KLR 
algebra to the corresponding colored Khovanov-Rozansky complex 
(due to Wu~\cite{wu} and Yonezawa~\cite{yo} in the colored case). This 
would imply the above claim that the third aforementioned construction of  
$\mathfrak{sl}_n$-link homologies works for all $n\geq 2$ and 
that the first and the third construction give isomorphic 
$\mathfrak{sl}_n$-link homologies.  

In Proposition 4.4 in~\cite{we2}, Webster 
proved that his and Khovanov's $\mathfrak{sl}_3$-link homologies are 
isomorphic, but the proof is quite sophisticated and relies on 
Mazorchuk and Stroppel's approach to link homology using functors and 
natural transformations on certain 
blocks of category $\mathcal{O}$~\cite{ms}. 
Our results in this paper might help to give an elementary and direct 
isomorphism between Webster's and Khovanov's $\mathfrak{sl}_3$-link homologies. In 
order to do that, one would have to use yet to be defined bimodules 
over the cyclotomic tensor algebras 
which categorify the $\mathfrak{sl}_3$-webs. 

For $n\geq 4$ Webster conjectured his $\mathfrak{sl}_n$-link homology to 
be isomorphic to Khovanov and Rozansky's, but did not prove it. 
Following the same reasoning as for $n=3$, such a proof should now be within 
reach. 
\vskip0.5cm
Finally, let us mention one interesting open question w.r.t. 
$\mathfrak{sl}_3$-web algebras. 
In~\cite{fkk}, Fontaine, Kamnitzer and Kuperberg studied spiders 
using an algebro-geometric approach. For $\mathfrak{sl}_3$ these 
spiders are exactly the webs in our paper. Given a sign string $S$, 
the \emph{Satake fiber} $F(S)$, denoted 
$F(\overrightarrow{\lambda})$ in~\cite{fkk}, is isomorphic to 
the Spaltenstein variety $X^{\lambda}_{\mu}$ mentioned above. 
Let us point out the difference in these notations 
that otherwise might confuse the reader: the $\lambda$ in~\cite{fkk} is 
equal to $\mu$ in our paper, which is also equivalent to $S$. 
Given a web $w$ with boundary corresponding to $S$, Fontaine, Kamnitzer and 
Kuperberg also defined a 
variety $Q(D(w))$, called the \textit{web variety}. One interesting question 
is the following (asked to us by Kamnitzer).
\begin{question}
For any two basis webs $u,v\in B^S$, does there exist 
a degree preserving algebra isomorphism  
\[
\bigoplus_{u,v\in B^S} H^*(Q(D(u)))\otimes_{F(S)}H^*(Q(D(v)))\cong 
\bigoplus_{u,v\in B^S} {}_uK_v?
\]  
Here 
\[
K^S=\bigoplus_{u,v\in B^S}{}_uK_v
\]
is the decomposition of $K^S$ in Section~\ref{sec:webalgebra}. The product 
on 
\[
\bigoplus_{u,v\in B^S} H^*(Q(D(u)))\otimes_{F(S)}H^*(Q(D(v)))
\] 
is given by convolution.   
\end{question}

If the answer to this question is affirmative, then that would be 
the $\mathfrak{sl}_3$ analogue of the aforementioned result for 
$\mathfrak{sl}_2$ due to Stroppel and Webster~\cite{sw}. 
Our Theorem~\ref{thm:center} could be a first step towards answering 
Kamnitzer's question. 
\vskip0.5cm
This paper is organized as follows.
\begin{enumerate}
\item In Section~\ref{sec:basic}, we recall the definitions 
and some fundamental properties of webs, foams and categorified quantum 
algebras and their categorical representations. The reader who already 
knows all this material well enough can just leaf through it, 
in order to understand our notations and conventions. Other readers 
might perhaps find it helpful as a brief introduction to the rapidly growing 
literature on categorification, although it is far from self-contained.  
\item In Section~\ref{sec:webalgebra}, we define $K^S$ and prove the 
first of our aforementioned main results.
\item In Section~\ref{sec:center}, we first study the relation between 
column strict tableaux and webs with 
flows. Using this relation, we prove our second main result. 
\item In Section~\ref{sec:grothendieck}, we explain skew Howe duality in our 
context and categorify the case relevant to this paper. This leads to the 
other main results. 
\item Sections~\ref{sec:center} and~\ref{sec:grothendieck} 
are largely independent of each other. However, the proof of 
Theorem~\ref{thm:center} requires Proposition~\ref{prop:morita}. The proof of 
Proposition~\ref{prop:unitriang}, which is a key ingredient 
for the proof of Theorem~\ref{thm:dualcan}, requires Lemma~\ref{lem:dimZG}. 
\item In ``Appendix 1'', we collect some technical facts 
from the literature on filtered algebras, filtered modules and their 
associated graded counterparts. These are needed at various places in the 
paper. 
\end{enumerate}

\paragraph*{Acknowledgements}
We thank Jonathan Brundan, Joel Kamnitzer, Mikhail Khovanov and 
Ben Webster for helpful exchanges of emails, some of which will 
hopefully bear fruit in future publications on this topic. In particular, we thank Mikhail Khovanov for spotting a crucial mistake in 
a previous version of this paper and Ben Webster for 
suggesting to us to use $q$-skew Howe duality in order to relate 
$K^S$ to a cyclotomic KLR-algebra.   

M.M. thanks the Courant Research Center ``Higher Order Structures'' and 
the Graduiertenkolleg 1493 in G\"{o}ttingen for sponsoring 
two research visits during this project. 

W.P. and D.T. thank the University of the Algarve and the Instituto Superior 
T\'{e}cnico for sponsoring three research visits during this project. 


\section{Basic definitions and background}
\label{sec:basic}

\subsection{Webs}
\label{sec:webs}
In~\cite{ku}, Kuperberg describes the representation theory of 
$U_{q}(\mathfrak{sl}_3)$ using oriented trivalent graphs, possibly with 
boundary, called \textit{webs}. Boundaries of webs
consist of univalent vertices (the ends of oriented edges), which we will 
usually put on a horizontal line (or various horizontal lines), e.g. such a web is shown below.
\begin{align}
\xy
   (0,0)*{\includegraphics[width=140px]{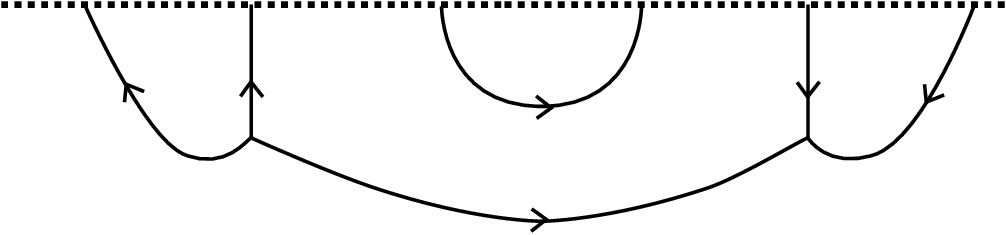}};
\endxy
\end{align}
We say that a web has $n$ free strands if the number of non-trivalent vertices 
is exactly $n$. In this way, the boundary of a web can be 
identified with a \textit{sign string} $S=(s_1,\ldots,s_n)$, with $s_i=\pm$, 
such that upward oriented boundary edges get a ``$+$'' and downward oriented 
boundary edges a ``$-$'' sign. Webs without boundary are called 
\textit{closed} webs. 

Any web can be obtained from the following 
elementary webs by glueing and disjoint union.
\begin{align}
\xy
   (0,0)*{\includegraphics[width=280px]{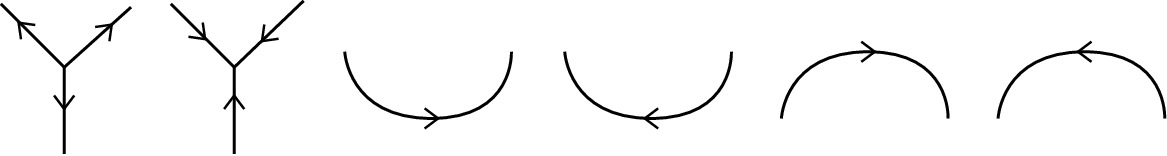}};
\endxy
\end{align}
Fixing a boundary $S$, we can form the 
$\mathbb{Q}(q)$-vector space $W^S$, spanned by all webs with boundary $S$, 
modulo the following set of local relations (due to Kuperberg~\cite{ku}). 
\begin{align}
\label{eq:circle}
\xy(0,0)*{\includegraphics[width=20px]{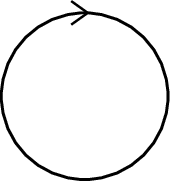}}\endxy\;\; &=\;\; [3]\\
\label{eq:digon}
\xy(0,0)*{\includegraphics[width=70px]{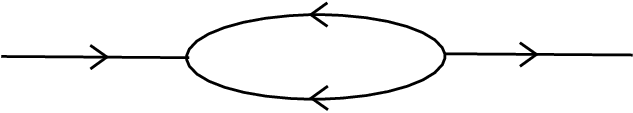}}\endxy\;\; &=\;\; [2]\;\; \xy(0,0)*{\includegraphics[width=50px]{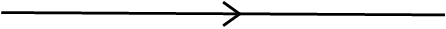}}\endxy\\
\label{eq:square}
\xy(0,0)*{\includegraphics[width=50px]{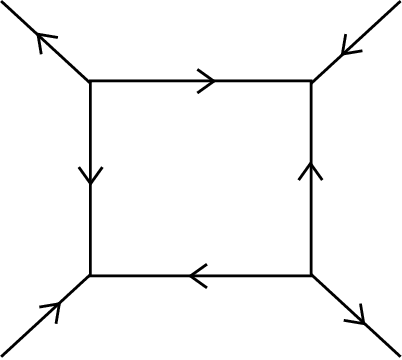}}\endxy\;\; &=\;\;\xy(0,0)*{\includegraphics[height=45px]{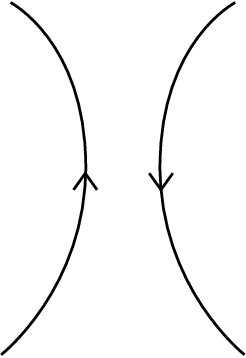}}\endxy + \;\;\xy(0,0)*{\includegraphics[width=45px]{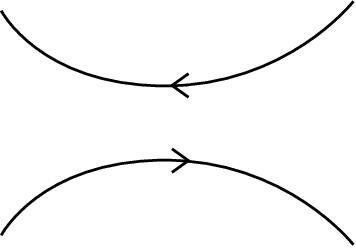}}\endxy
\end{align}
Recall that 
\[
[a]=\frac{q^a-q^{-a}}{q-q^{-1}}=q^{a-1}+q^{a-3}+\cdots+q^{-(a-1)}\in\mathbb{N}[q,q^{-1}]
\] 
denotes the \textit{quantum integer}.
\begin{rem}
Throughout this paper we will use $q$ as the quantum parameter, which we take to be equal to the 
parameter $v$ in~\cite{kk}. Note that in that paper Khovanov and Kuperberg use the 
different convention $v=-q^{-1}$. 
\end{rem}

By abuse of notation, we will call all elements of $W^S$ webs. 
From relations~\eqref{eq:circle},~\eqref{eq:digon} and~\eqref{eq:square} it 
follows that any element in $W^S$ is a linear 
combination of webs with the same boundary and without circles, digons or 
squares. These are called \textit{non-elliptic webs}. 
As a matter of fact, the non-elliptic webs form a basis of $W^S$, which 
we call $B^S$. 
Therefore, we will simply call them \textit{basis webs}.
\vskip0.5cm
Following Brundan and Stroppel's~\cite{bs} notation for arc diagrams, 
we will write $w^*$ to denote the web obtained by reflecting a given 
web $w$ horizontally and reversing all 
orientations.
\begin{align}
\xy
 (0,0)*{\includegraphics[width=140px]{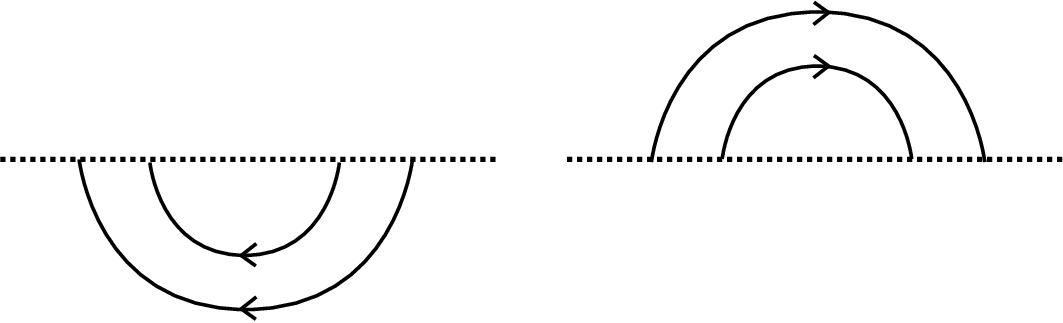}};
 (-13.5,2)*{w};
 (14,-2)*{w^*};
\endxy
\end{align}
By $uv^*$, we mean the planar diagram containing the disjoint union of $u$ and 
$v^*$, where $u$ lies vertically above $v^*$.
\begin{align}
\xy
 (0,0)*{\includegraphics[width=70px]{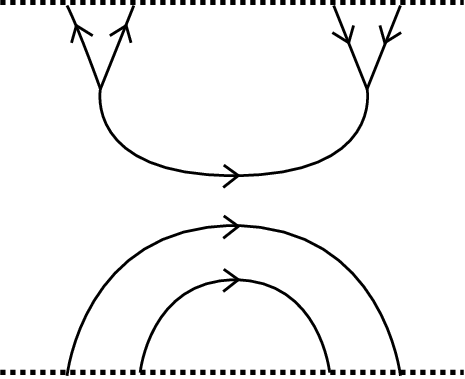}};
 (12,5)*{u};
 (12.5,-5)*{v^*};
\endxy
\end{align}
By $v^*u$, we shall mean the closed web obtained by glueing 
$v^*$ on top of $u$, when such a construction is possible (i.e. the number of free strands and orientations on the strands match).
\begin{align}\label{closed}
\xy
 (0,0)*{\includegraphics[width=70px]{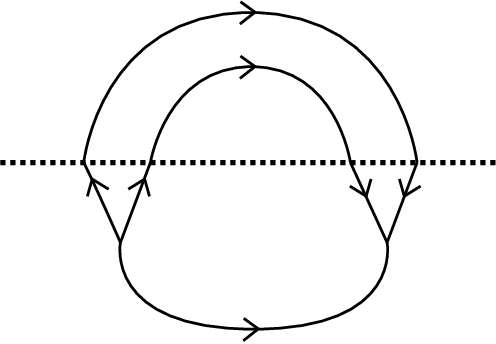}};
 (12,-4)*{u};
 (12.5,5)*{v^*};
\endxy
\end{align}
In the same vein, by $v_1^*u_1v_2^*u_2$ we denote the following web.
\begin{align}
\xy
 (0,0)*{\includegraphics[width=70px]{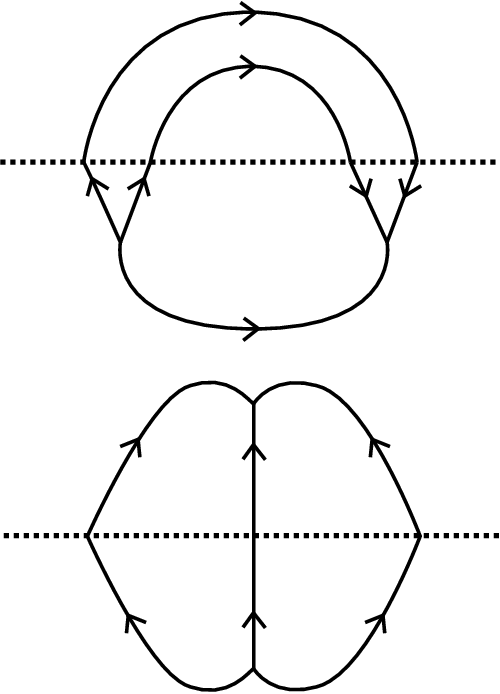}};
 (12.2,5)*{u_1};
 (12.4,-5.5)*{v_2^*};
 (12.2,-13)*{u_2};
 (12.4,13)*{v_1^*};
\endxy
\end{align}
\vskip0.5cm
To make the connection with the representation theory of 
$U_{q}(\mathfrak{sl}_3)$, we recall that 
a sign string $S=(s_1,\ldots,s_n)$ corresponds to 
\[
V^S=V^{s_1}\otimes \cdots\otimes V^{s_n},
\]
where $V^{+}$ is the fundamental representation and $V^{-}$ its dual. The 
latter is also isomorphic to $V^+\wedge V^+$, a fact which we will need 
later on.

Both $V^+$ and $V^-$ have dimension three. In this interpretation, 
webs correspond to intertwiners and we have
\[
W^S\cong \mathrm{Inv}_{U_q(\mathfrak{sl}_3)}(V^S)\cong\mathrm{Hom}_{U_q(\mathfrak{sl}_3)}(\bC_q,V^S).
\]
Here $\bC_q$ is the trivial representation.

Thus, the elements of 
$B^S$ give a basis of $\mathrm{Inv}_{U_q(\mathfrak{sl}_3)}(V^S)$. 
Using the embedding $\mathrm{Inv}_{U_q(\mathfrak{sl}_3)}(V^S)\subset V^S$, one can expand a non-elliptic 
web in terms of elementary tensors. In Theorem 2 of~\cite{kk}, Kuperberg and Khovanov prove an 
important result about this expansion, which we will reproduce in Theorem~\ref{thm:upptriang}.
\vskip0.5cm
Kuperberg showed in~\cite{ku} (see also~\cite{kk}) that basis webs are indexed 
by closed weight lattice paths in the dominant Weyl chamber of 
$\mathfrak{sl}_3$. It is well-known that any path in the 
$\mathfrak{sl}_3$-weight lattice can be presented by a pair consisting of a 
sign string $S=(s_1,\ldots,s_n)$ and a 
\textit{state string} $J=(j_1,\ldots,j_n)$, with $j_i\in \{-1,0,1\}$ for all 
$1\leq i\leq n$. Given a pair $(S,J)$ representing a closed dominant path, 
a unique basis web (up to isotopy) is determined by a set of 
inductive rules called 
the \textit{growth algorithm}. We briefly recall the algorithm as 
described in~\cite{kk}. In fact, the algorithm can be applied to any path, but 
we will only use it for closed dominant paths. 

\begin{defn} \label{growth}
\textbf{(The growth algorithm)} 
Given $(S,J)$, a web $w^S_J$ is recursively generated by the following rules.
\begin{enumerate}
\item Initially, the web consists of $n$ parallel strands whose orientations are given by the sign string. If $s_{i} = +$, then the $i$-th strand is oriented 
upwards; if $s_{i} = -$, it is oriented downwards.
\item The algorithm builds the web downwards. Suppose we have already applied 
the algorithm $k-1$ times. For the $k$-th step, do the following. 
If the bottom boundary string contains a neighboring pair of edges 
matching the top of one of 
the following webs (called H, arc and Y respectively), then glue 
the corresponding H, arc or Y to the relevant bottom boundary edges.
\begin{figure}[H]
  \centering
    \includegraphics[width=260px]{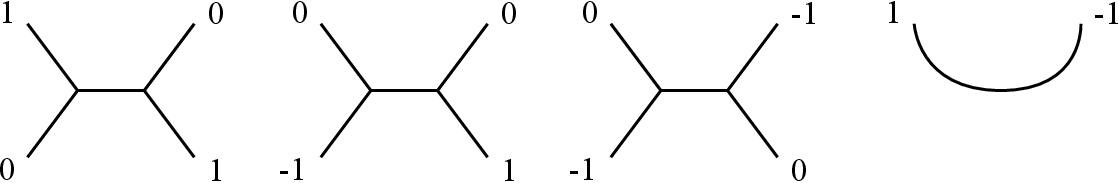}
    \caption{Top strands have different signs.}
\end{figure}
\begin{figure}[H]
  \centering
    \includegraphics[width=180px]{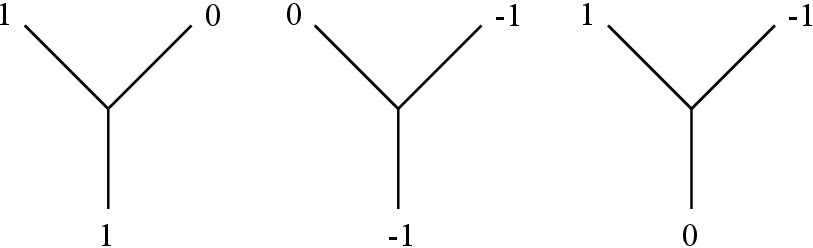}
    \caption{Top strands have same sign.}
\end{figure}
\end{enumerate}
These rules apply for any compatible orientation of the edges in the webs. 
Therefore, we have drawn them without any specific orientations. Below, 
whenever we write down an equation involving webs without orientations, 
we mean that the equation holds for all possible orientations. 
For future use, we will call the rules above the \textit{H, arc and Y-rule}. 
The growth algorithm stops if no further rules can be applied. 
\end{defn}
If $(S,J)$ represents a closed dominant path, then the growth algorithm 
produces a basis web. 

For example, the growth algorithm converts $S=(+-+-+++)$ and 
$J=(1,1,0,0,-1,0,-1)$ into the following basis web.
\begin{align}
\xy
 (0,0)*{\includegraphics[width=140px]{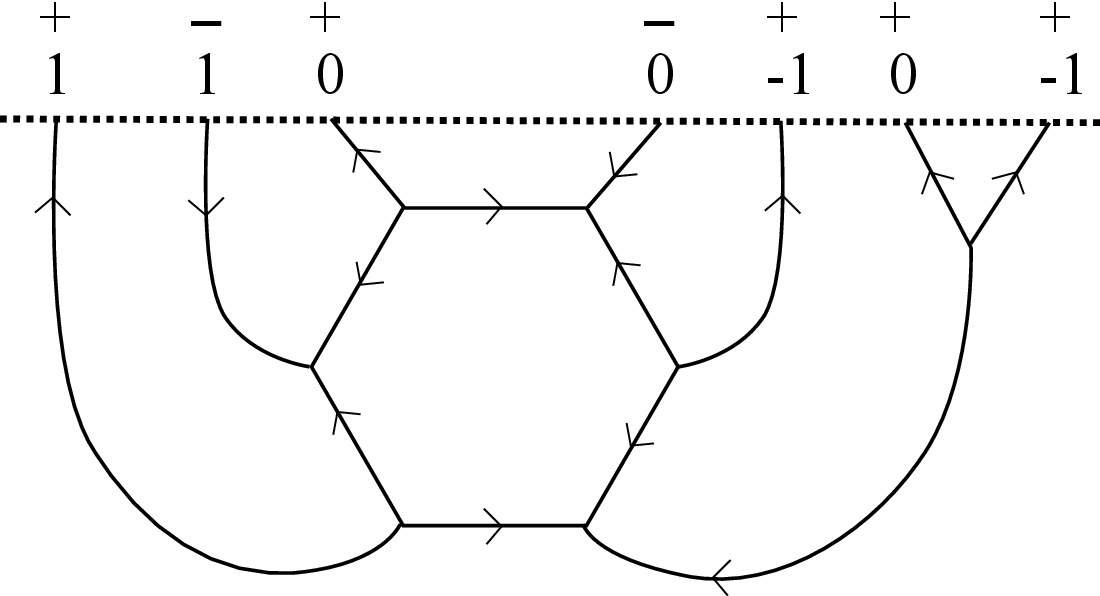}};
\endxy
\end{align}
In addition, the growth algorithm has an inverse, 
called the \textit{minimal cut path algorithm}~\cite{kk}, which we will 
not use in this paper. 
\vskip0.5cm
Following Khovanov and Kuperberg in~\cite{kk}, we define a \textit{flow} $f$ on 
a web $w$ to be an oriented subgraph that contains exactly two of the three edges 
incident to each trivalent vertex. The connected components of the 
flow are called the \textit{flow lines}. The orientation of the flow lines 
need not agree with the orientation of $w$. Note that if $w$ is closed, then 
each flow line is a closed cycle. At the boundary, the flow lines 
can be represented by a state string $J$. By convention, at the $i$-th 
boundary edge, we set $j_i=+1$ if the flow line is oriented upward, $j_i=-1$ if 
the flow line is oriented downward and $j_i=0$ there is no flow line. The same 
convention determines a state for each edge of $w$. 
\begin{rem}
\label{rem:3color}
Every flow determines a unique 3-coloring of $w$, with colours $-1,0,1$, 
satisfying the property that, for any trivalent vertex of $w$, 
the colors of the three incident edges are all distinct. 
These colorings are called \textit{admissible} in~\cite{g}. 

Conversely, any such 3-coloring determines a unique flow on $w$. This 
correspondence determines a bijection between flows and admissible 
3-colorings on $w$.   

This remark will be important in Section~\ref{sec-webhowea} and in Section~\ref{sec-webhoweb}.
\end{rem}
We will also say that any flow $f$ that is compatible with a given state string $J$ 
on the boundary of $w$ \textit{extends} $J$. 

Given a web with a flow, denoted $w_f$, Khovanov and Kuperberg~\cite{kk} 
attribute a \textit{weight} to each trivalent vertex and each arc in $w_f$, 
as in Figures~\ref{weights} and~\ref{weights2}. 
The total weight of $w_f$ is by definition the sum of the 
weights at all trivalent vertices and arcs.
\begin{align}\label{weights}
  & \xy
 (0,0)*{\includegraphics[width=310px]{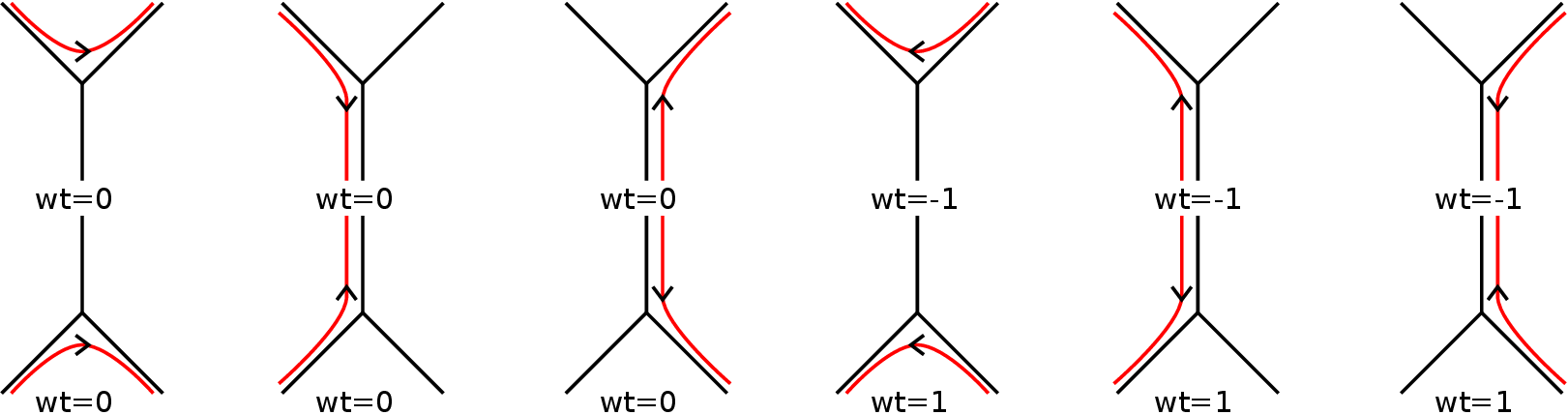}};
\endxy\\
   \label{weights2}
  & \xy
 (0,0)*{\includegraphics[width=310px]{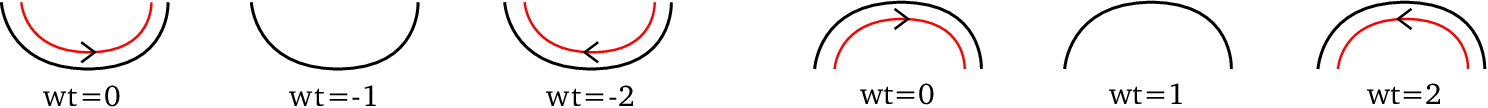}};
\endxy
\end{align}
For example, the following web has weight $-4$.
\begin{align}
\xy
 (0,0)*{\includegraphics[width=150px]{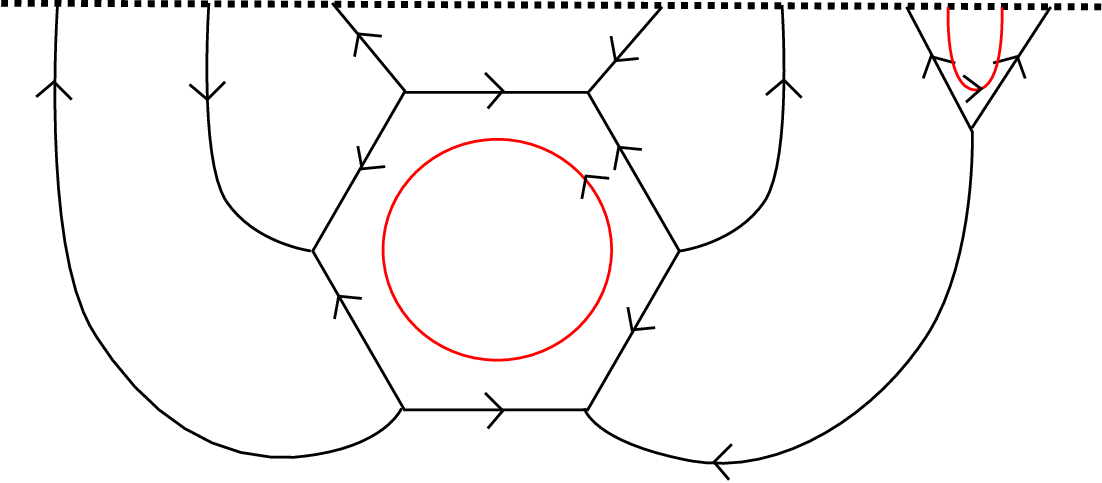}};
\endxy
\end{align}
We can extend the table in~\eqref{weights} and~\eqref{weights2} to calculate 
weights determined by flows on $H$'s, so that it becomes easier to 
compute the weight of $w_f$ when $w$ is expressed using the growth algorithm 
(Definition~\ref{growth}).

\begin{defn}\cite{kk}\label{defn-cano}
\textbf{(Canonical flows on basis webs)} Given a basis web $w$ expressed 
using the growth algorithm. 
We define the \textit{canonical flow} on $w$ by the following rules.
\begin{align} \label{canrule}
\xy
 (0,0)*{\includegraphics[width=250px]{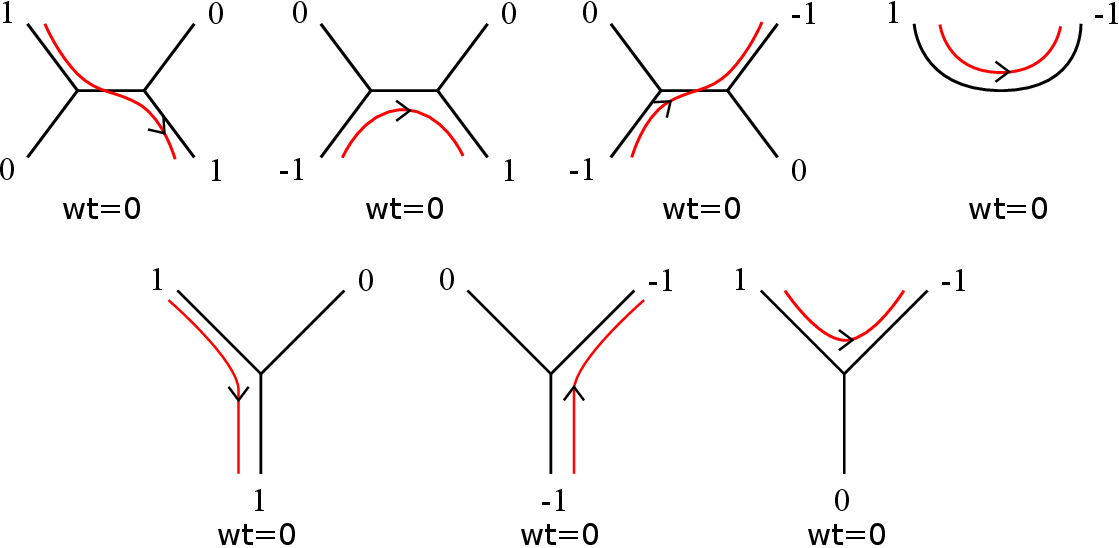}};
\endxy
\end{align}
The canonical flow does not depend on the 
particular instance of the growth algorithm that we have chosen to obtain 
$w$. 
\end{defn}
Observe that the definition of the canonical flows implies the following lemma.  

\begin{lem}
\label{lem:canflowzero}
A basis web with the canonical flow has weight zero.
\end{lem}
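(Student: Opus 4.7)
The natural strategy is to induct on the number of steps in the growth algorithm used to build the basis web $w$. At each step one glues an H-piece, arc-piece, or Y-piece to the current bottom boundary, and the canonical flow is specified locally on that piece by the rules in~(\ref{canrule}). Because the total weight is just the sum of weights at trivalent vertices and arcs, it suffices to show that, for each of the three local gluing moves, the weights contributed by the newly added vertices and arcs (with the canonical flow restricted to them) sum to zero. The inductive hypothesis then gives the conclusion, with the base case being a collection of parallel strands (no vertices, no arcs, weight zero).

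The plan is therefore to do a finite case check. For the arc-piece, one consults the arc weight table~(\ref{weights2}) together with the canonical-flow rule for arcs in~(\ref{canrule}), noting that the flow state prescribed on an arc forces a specific entry in the table that equals $0$. For the Y-piece, one uses the trivalent vertex weights in~(\ref{weights}) against the canonical Y-rule: the combinations of incoming/outgoing orientations and the prescribed flow state on the three edges again give a vertex weight of $0$. The H-piece is the most elaborate case, since it contributes two trivalent vertices and possibly two arcs; here one enumerates the finitely many orientation patterns (determined by the two ``different-sign'' top strands in the growth figure) and the canonical flow patterns, and verifies that the two vertex weights cancel with the arc contributions.

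The main obstacle is the bookkeeping in the H-case: for each allowed orientation of the top strands and each canonical flow state, one must identify the orientations and flow states on all four edges of the H, look up each of the two vertex weights from~(\ref{weights}) and each arc weight from~(\ref{weights2}), and check that the sum vanishes. The cleanest way to organize this is to first observe an invariance: reversing all orientations (or all flow arrows) of a local piece negates each local weight, which cuts the number of cases in half; then one reads off the remaining cases directly from Figures~\ref{weights},~\ref{weights2} and the rules in~(\ref{canrule}). Once these local verifications are done for arc, Y, and H, summing over all steps of the growth algorithm gives $w_f$ weight $0$, proving Lemma~\ref{lem:canflowzero}.
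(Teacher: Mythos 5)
Your proposal is correct and matches the paper's (implicit) reasoning: the paper dispenses with the proof by remarking that the lemma follows by inspecting the canonical flow rules~(\ref{canrule}), and your inductive argument over the growth algorithm, checking that each glued arc, Y, or H contributes zero local weight under the canonical flow, is exactly the verification that remark is pointing at. Your observation that reversing all orientations negates the local weight is a sensible way to halve the case count, but is not something the paper makes explicit.
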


Khovanov and Kuperberg~\cite{kk} use a particular basis for $V^+$, 
denoted $\{e^+_1,e^+_2,e^+_3\}$, and also one for $V^-$, 
denoted $\{e^-_1,e^-_2,e^-_3\}$. Given $(S,J)$, let 
\[
e^S_J=e^{s_1}_{j_1}\otimes\cdots\otimes e^{s_n}_{j_n}
\] 
be the {\em elementary tensor} corresponding to $(S,J)$. 
Interpreting the webs in $W^S$ as invariant tensors in $V^S$, we can write 
any web as a linear combination of elementary tensors. 
For each state string $J^{\prime}$, 
one can consider all flows on a given basis web $w=w^S_J$ which are compatible 
with $J^{\prime}$ on the boundary. The coefficient $c(S,J,J^{\prime})$ of 
$e^S_{J^{\prime}}$ in the linear combination corresponding to $w$ satisfies  
\[
c(S,J,J^{\prime})=\sum_{f}q^{\mathrm{wt}(w_f)},
\]
where the sum is taken over all flows on $w$ which are compatible with 
$J^{\prime}$ on the boundary. 

Khovanov and Kuperberg prove the following  
theorem (Theorem 2 in~\cite{kk}), which will be important for us 
in Section~\ref{sec:grothendieck}.
\begin{thm}\label{thm:upptriang}(\textbf{Khovanov-Kuperberg})
Given $(S,J)$, we have 
\[ 
w^S_J=e^S_J+\sum_{J^{\prime}<J}c(S,J,J^{\prime})e^S_{J^{\prime}}
\]
for some coefficients $c(S,J,J^{\prime})\in\mathbb{N}[q,q^{-1}]$, where the 
state strings $J$ and $J^{\prime}$ are ordered lexicographically.
\end{thm}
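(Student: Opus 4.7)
The plan is to identify $W^S$ with $\mathrm{Inv}_{U_q(\mathfrak{sl}_3)}(V^S)$ and compute the coefficient of $e^S_{J'}$ in the expansion of any web $w$ as the sum $\sum_{f}q^{\mathrm{wt}(w_f)}$, where $f$ runs over flows on $w$ whose restriction to the boundary equals $J'$. This is the content of the tensor calculus encoded in the weight tables preceding Definition~\ref{defn-cano}: one checks on the generating pieces (arcs, Y's and H's) that expanding each into elementary tensors matches the local flow/weight assignment, and then glueing gives the stated sum formula on arbitrary webs. Applied to $w=w^S_J$, each coefficient $c(S,J,J')$ is manifestly a sum of monomials $q^m$ with $m\in\mathbb{Z}$, so it already lies in $\mathbb{N}[q,q^{-1}]$; the substance of the theorem is then the upper-triangularity together with the identity $c(S,J,J)=1$.

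For the diagonal term, the canonical flow of Definition~\ref{defn-cano} is designed so that its restriction to the boundary of $w^S_J$ is precisely $J$, because the local canonical rules for arcs, Y-pieces and H-pieces line up with the boundary state propagated by the growth algorithm of Definition~\ref{growth}. By Lemma~\ref{lem:canflowzero} its total weight is $0$, contributing exactly $q^0=1$ to $c(S,J,J)$. Obtaining $c(S,J,J)=1$ on the nose requires uniqueness of this flow among flows extending $J$, which will fall out as a by-product of the upper-triangularity argument below.

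For the off-diagonal estimate I would induct on the number of steps in the growth algorithm used to assemble $w^S_J$. The key local lemma is this: for each elementary piece appearing in Definition~\ref{defn-cano}, once the state on the newly created (lower) side of the piece is fixed, the canonical flow is the unique local flow through the piece whose state on the opposite (upper) side is lex-maximal among all compatible local flows, and any non-canonical local choice strictly decreases that upper state. This is a finite inspection over the pictures in the arc, Y, and H rules. Given the local claim, consider a flow $f$ on the whole web that differs from the canonical flow; at the first piece (reading the growth algorithm in reverse) where they disagree, the upper-state on that piece strictly drops in lex order. An induction on the remaining pieces, invoking the same local maximality, shows this lex drop is preserved up to the top boundary, so the boundary state $J'$ of $f$ satisfies $J'<J$.

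The main obstacle I anticipate is the local maximality check for H-pieces, where two strands interact and several flow configurations can share a state on one side while differing on the other; one must confirm that the canonical prescription is indeed the lex-maximal one on the upper side and that propagating a local lex decrease across subsequent arcs, Y's, and H's never restores equality. Once this local maximality is verified by inspection, the global induction is essentially formal, and combining it with the tensor-calculus formula and Lemma~\ref{lem:canflowzero} delivers the stated expansion with coefficients in $\mathbb{N}[q,q^{-1}]$.
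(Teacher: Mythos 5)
You should first be aware that the paper itself contains no proof of Theorem~\ref{thm:upptriang}: it is quoted from Khovanov and Kuperberg (Theorem 2 of~\cite{kk}), so your argument cannot be checked against an internal proof and has to stand on its own. The parts of your sketch that reduce the coefficients to the flow state-sum, and the treatment of the diagonal term via the canonical flow together with Lemma~\ref{lem:canflowzero}, are fine, and the overall shape (show that every non-canonical flow has strictly lex-smaller boundary state, which also gives uniqueness of the flow extending $J$) is the right one.

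The genuine gap is in the propagation step, which you declare ``essentially formal''. Your local lemma only controls a piece whose lower states agree with the canonical ones, i.e.\ the first disagreement piece; for all pieces above it the two flows already differ, and there the induction needs a monotonicity statement, not maximality at fixed lower states. Worse, the invariant ``lex-smaller at every cross-section'' is not propagated by any purely local inspection: an $H$-piece admits exactly twelve flows (in the $3$-colouring picture of Remark~\ref{rem:3color}: three choices for the middle edge, then two orderings at each vertex) but only nine possible pairs of colours on its two upper edges, and precisely when the two upper edges carry the same colour there are two distinct flows with identical upper states and different lower states. Hence a strict lexicographic difference sitting below such an $H$ can be erased on top of it, pushing the first difference to positions to the right of the piece, where your hypothesis gives no control, and the inductive hypothesis can fail at the next level. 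Ruling this out requires knowing which local flows the canonical rules of Definition~\ref{defn-cano} can actually produce, and even when the upper states do differ, a piece straddling the first-difference position needs a comparison of upper states for \emph{different} lower states --- neither of which is the fixed-bottom maximality you formulate, nor a single-piece ``finite inspection''. (Note also that your local lemma is keyed the wrong way round: the canonical rules are determined by the states on top of each piece, coming from the path data, not by those on the bottom.) So the argument is incomplete exactly at the step you treat as formal; either a stronger invariant than lexicographic order of the full state strings, or substantial extra case analysis tied to the explicit canonical rules, is needed to close it.
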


\begin{rem} 
\label{rem:counter}
Khovanov and Kuperberg~\cite{kk} show that $B^S$ is not 
equal to the dual canonical basis of $W^S$. This follows from the 
fact that $c(S,J,J^{\prime})\not\in q^{-1}\mathbb{N}[q^{-1}]$, for general $J^{\prime}< J$. 
In their Section 8, they give explicit counter-examples of elements $w\in B^S$ which 
admit non-canonical weight zero flows. 
\end{rem}
 

\subsection{Foams}
\label{subsec:foams}
In this subsection we review the category $\foamt$ of $\mathfrak{sl}_3$-foams 
introduced by Khovanov in~\cite{kv}. As a matter of fact, we will also need 
a deformation of Khovanov's original category, due to Gornik~\cite{g} 
in the context of matrix factorizations, and studied in~\cite{mv} in the 
context of foams. Therefore, we introduce a parameter $c\in \mathbb{C}$ in 
$\foamt^c$, just 
as in~\cite{mv}, such that we get Khovanov's original category for $c=0$ and, 
for any $c\ne 0$, the category $\foamt^c$ is isomorphic to 
Gornik's deformation (his original deformation was for $c=1$). 
A big difference between these two 
specializations is that $\foamt^c$ is graded for $c=0$ and filtered for 
any $c\ne 0$. In fact, for any $c\ne 0$, the associated graded category of 
$\foamt^c$ is isomorphic to $\foamt^0$.  

We recall the following definitions as they appear in~\cite{mv}. We note that the diagrams accompanying these definitions are taken, also, from~\cite{mv}.

A \textit{pre-foam} is a cobordism with singular arcs between two webs. 
A singular arc in a pre-foam $U$ is the set of points of $U$ which have a neighborhood 
homeomorphic to the letter Y 
times an interval. Note that singular arcs are disjoint. 
Interpreted as morphisms, we read pre-foams from bottom to top by convention. Thus, pre-foam composition consists of placing one 
pre-foam on top of the other. The orientation of the singular arcs is, by convention, as in 
the diagrams below (called the \textit{zip} and the 
\textit{unzip} respectively).
\begin{equation*}
\figins{-20}{0.5}{ssaddle}
\mspace{35mu}\mspace{35mu}
\figins{-20}{0.5}{ssaddle_ud}\ 
\end{equation*}
We allow pre-foams to have dots that can move freely about the facet on which they 
belong, but we do not allow a dot to cross singular arcs. 

By a \textit{foam}, we mean a formal $\mathbb{C}$-linear combination of 
isotopy classes of pre-foams modulo 
the ideal generated by the set of relations $\ell=(3D,NC,S,\Theta)$ and 
the \textit{closure relation}, 
as described below. 
\begin{gather*}
\figins{-7}{0.25}{planddd}
= c\figins{-7}{0.25}{plan}
\tag{3D}
\\[1.5ex]\displaybreak[0]
 \figwhins{-22}{0.65}{0.26}{cyl}=
-\figwhins{-22}{0.65}{0.26}{cneck01}
-\figwhins{-22}{0.65}{0.26}{cneck02}
-\figwhins{-22}{0.65}{0.26}{cneck03}
\tag{NC}\label{eq:cn}
\\[1.5ex]\displaybreak[0]
\figins{-8}{0.3}{sundot}=
\figins{-8}{0.3}{sdot}=0,\quad
\figins{-8}{0.3}{sddot}=-1
\tag{S}
\\[1.5ex]\displaybreak[0]
\labellist
\small\hair 2pt
\pinlabel $\alpha$ at 3 33
\pinlabel $\beta$ at -3 17
\pinlabel $\delta$ at 3 5
\endlabellist
\centering
\figins{-10}{0.4}{thetafoam} 
=\begin{cases}
\ \ 1, & (\alpha,\beta,\delta)=(1,2,0)\text{ or a cyclic permutation}, \\ 
-1, & (\alpha,\beta,\delta)=(2,1,0)\text{ or a cyclic permutation}, \\ 
\ \ 0, & \text{else}.
\end{cases}
\tag{$\Theta$}\label{eq:theta}
\end{gather*}
\begin{quote}
The \textit{closure relation}, i.e. any $\mathbb{C}$-linear combination of foams with the same boundary,
is equal to zero if and only if any way of capping off these foams with a 
common foam yields a $\mathbb{C}$-linear combination of closed foams 
whose evaluation is zero. 
\end{quote}

The relations in $\ell$ imply the following identities (for detailed proofs see~\cite{kv}).
\begin{align}
\figwhins{-22}{0.65}{0.30}{sddisk}
& = -\ 
\figwhins{-22}{0.65}{0.30}{cneckb}
\tag{Bamboo}\label{eq:bamboo}
\\[1.2ex]\displaybreak[0]
\figwhins{-22}{0.65}{0.30}{sdisk}
& =\
\figwhins{-22}{0.65}{0.30}{cnecka1}-
\figwhins{-22}{0.65}{0.30}{cnecka2}
 \tag{RD}\label{eq:rd}
\\[1.2ex]\displaybreak[0]
\figins{-12}{0.4}{bbubble00} 
& =\ 0
\tag{Bubble}\label{eq:bubble}
\\[1.2ex]\displaybreak[0]
\figins{-20}{0.6}{digonfid-sl3}
& = 
\figins{-26}{0.75}{digon_rem1-sl3}-
\figins{-26}{0.75}{digon_rem2-sl3}
\tag{DR}\label{eq:dr}
\\[1.2ex]\displaybreak[0]
\figins{-28}{0.8}{square_id-sl3}
&=
-\ \figins{-28}{0.8}{square_rem1-sl3}
-\figins{-28}{0.8}{square_rem2-sl3}
\tag{SqR}\label{eq:sqr}
\end{align}

\begin{equation}\tag{Dot Migration}\label{eq:dotm}
\begin{split}
\figins{-22}{0.6}{pdots100}
\,+\,
\figins{-22}{0.6}{pdots010}
\,+\,
\figins{-22}{0.6}{pdots001}
\, &= 0
\\[1.2ex]
\figins{-22}{0.6}{pdots110}
\,+\,
\figins{-22}{0.6}{pdots101}
\,+\,
\figins{-22}{0.6}{pdots011}
\, &=\ 0
\\[1.2ex]
\figins{-22}{0.6}{pdots111}
\, &=c\;\figins{-22}{0.6}{pdots000}
\end{split}
\end{equation}
\begin{defn}
For any $c\in \mathbb{C}$, let $\foamt^c$ be the category whose 
objects are webs $\Gamma$ lying inside a horizontal strip in 
$\mathbb{R}^2$, which is bounded by the lines $y=0,1$ containing the boundary points of $\Gamma$. 
The morphisms of $\foamt^c$ are $\mathbb{C}$-linear combinations of foams lying inside the horizontal strip bounded by $y=0,1$ times the unit interval. We require that the vertical boundary of each foam is a set (possibly empty) of vertical lines.
\end{defn}

The \textit{$q$-grading} of a foam $U$ is defined as 
\[
q(U) = \chi(\partial U) - 2\chi(U) + 2d+b,
\] 
where $\chi$ denotes the Euler characteristic, $d$ is 
the number of dots on $U$ and $b$ is the number of vertical boundary 
components. This makes $\foamt^0$ into a graded category. For any 
$c\ne 0$, this makes $\foamt^c$ into a filtered category, whose 
associated graded category is isomorphic to $\foamt^0$. 

\begin{defn}\cite{kv} \label{foamhom} \textbf{(Foam Homology)} Given a web 
$w$ the \textit{foam homology} of $w$ is the complex vector space, 
$\F^c(w)$, spanned by all foams  
\[
U\colon\emptyset \to w
\]
in $\foamt^c$.
\end{defn}

The complex vector space $\F^c(w)$ is filtered/graded by 
the $q$-grading on foams and has rank 
$\langle w\rangle_{\mathrm{Kup}}$, where $\langle w\rangle_{\mathrm{Kup}}$ is the \textit{Kuperberg bracket} 
computed recursively by the rules below.
\begin{enumerate}
\item $\left\langle w \amalg \raisebox{-2mm}
{\includegraphics[width=15px]{section21/cirrem}}\right\rangle_{\mathrm{Kup}}= [3]\langle w\rangle_{\mathrm{Kup}}$.
\item $\langle  \raisebox{-0.5mm}{\includegraphics[width=45px]{section21/dgrem}}\rangle_{\mathrm{Kup}} 
= [2]\langle  \raisebox{0.5mm}{\includegraphics[width=30px]{section21/line}}\rangle_{\mathrm{Kup}}$.
\item $\left\langle \raisebox{-3mm}{\includegraphics[width=25px]{section21/sqrem}}
\right\rangle_{\mathrm{Kup}} = 
\left\langle  \raisebox{-3mm}{\includegraphics[width=15px]{section21/vert}}\right
\rangle_{\mathrm{Kup}} + \left\langle  \raisebox{-1.9mm}{\includegraphics[width=23px]{section21/horiz}}
\right\rangle_{\mathrm{Kup}}$.
\end{enumerate}
The relations above correspond to the decomposition of $\F^c(w)$ into direct 
summands. The idempotents corresponding to these direct summands are the 
terms on the r.h.s. of the relations (NC), (DR) and (SqR), respectively.
For any $c\ne 0$, the complex vector space $\F^c(w)$ is filtered and 
its associated graded vector space is $\F^0(w)$. See~\cite{kh} and~\cite{mv} for details. 

\begin{rem}
\label{rem:flowsfoams}
Given $u,v\in B^S$, the observations above 
and Theorem~\ref{thm:upptriang} 
show that there exists a homogeneous basis of 
$\F^0(u^*v)$ parametrized by the flows on 
$u^*v$. We have, in fact, constructed such a basis, but it is not unique. 
There is 
also no ``preferred choice'', unless one requires the basis to have other 
nice properties, e.g. in the $\mathfrak{sl}_2$ case, 
Brundan and Stroppel prove that there is a cellular basis of $H^n$. The 
construction of a ``good'' basis of the $\mathfrak{sl}_3$-web algebra 
$K^S$ (and similarly for Gornik's deformation $G^S$) 
is still work in progress and will, 
hopefully, be the contents of a subsequent paper. Although we do not need 
such a basis in this paper, it is important that the reader keep this remark 
in mind while reading Section~\ref{sec:grothendieck}. 
\end{rem}


\subsection{Quantum 2-algebras}
\subsubsection{The quantum general and special linear algebras}
First we recall the quantum general and special linear algebras. Most parts in this section are copied from section two and three in~\cite{msv2}. 

The $\mathfrak{gl}_n$-weight lattice is isomorphic to $\bZ^n$. Let 
$\epsilon_i=(0,\ldots,1,\ldots,0)\in \bZ^n$, with $1$ being on the $i$-th 
coordinate, and $\alpha_i=\epsilon_i-\epsilon_{i+1}
=(0,\ldots,1,-1,\ldots,0)\in\bZ^{n}$, for 
$i=1,\ldots,n-1$. Recall that the Euclidean inner product on $\bZ^n$ is defined by  
$(\epsilon_i,\epsilon_j)=\delta_{i,j}$. 
   
\begin{defn} For $n\in\bN_{>1}$ the \textit{quantum general linear algebra} 
${\mathbf U}_q(\mathfrak{gl}_n)$ is 
the associative unital $\bQ(q)$-algebra generated by $K_i$ and $K_i^{-1}$, for $1,\ldots, n$, 
and $E_{\pm i}$ (beware that some authors use $F_i$ instead of $E_{-i}$), for $i=1,\ldots, n-1$, subject to the relations
\begin{gather*}
K_iK_j=K_jK_i,\quad K_iK_i^{-1}=K_i^{-1}K_i=1,
\\
E_iE_{-j} - E_{-j}E_i = \delta_{i,j}\dfrac{K_iK_{i+1}^{-1}-K_i^{-1}K_{i+1}}{q-q^{-1}},
\\
K_iE_{\pm j}=q^{\pm (\epsilon_i,\alpha_j)}E_{\pm j}K_i,
\\
E_{\pm i}^2E_{\pm j}-(q+q^{-1})E_{\pm i}E_{\pm j}E_{\pm i}+E_{\pm j}E_{\pm i}^2=0,
\qquad\text{if}\quad |i-j|=1,
\\
E_{\pm i}E_{\pm j}-E_{\pm j}E_{\pm i}=0,\qquad\text{else}.
\end{gather*} 
\end{defn}

\begin{defn} 
\label{defn:qsln}
For $n\in\bN_{>1}$ the \textit{quantum special linear algebra} 
${\mathbf U}_q(\mathfrak{sl}_n)\subseteq {\mathbf U}_q(\mathfrak{gl}_n)$ is 
the unital $\bQ(q)$-subalgebra generated by $K_iK^{-1}_{i+1}$ and 
$E_{\pm i}$, for $i=1,\ldots, n-1$.
\end{defn}

Recall that the ${\mathbf U}_q(\mathfrak{sl}_n)$-weight lattice is 
isomorphic to $\bZ^{n-1}$. Suppose that $V$ is a 
${\mathbf U}_q(\mathfrak{gl}_n)$-weight representation with 
weights $\lambda=(\lambda_1,\ldots,\lambda_n)\in\bZ^n$, i.e. 
\[
V\cong \bigoplus_{\lambda}V_{\lambda},
\] 
and $K_i$ acts as multiplication by 
$q^{\lambda_i}$ on $V_{\lambda}$. Then $V$ is also a 
${\mathbf U}_q(\mathfrak{sl}_n)$-weight representation with weights 
$\overline{\lambda}=(\overline{\lambda}_1,\ldots,\overline{\lambda}_{n-1})\in
\bZ^{n-1}$ such that 
$\overline{\lambda}_j=\lambda_j-\lambda_{j+1}$ for $j=1,\ldots,n-1$.
 
Conversely, given a ${\mathbf U}_q(\mathfrak{sl}_n)$-weight 
representation with weights $\mu=(\mu_1,\ldots,\mu_{n-1})$, there is not a 
unique choice of ${\mathbf U}_q(\mathfrak{gl}_n)$-action on $V$. We can 
fix this by choosing the action of $K_1,\cdots, K_n$. In terms of weights, this 
corresponds to the observation that, for any $d\in\bZ$, the equations 
\begin{align}
\label{eq:sl-gl-wts1}
\lambda_i-\lambda_{i+1}&=\mu_i,\\
\label{eq:sl-gl-wts2}
\qquad \sum_{i=1}^{n}\lambda_i&=d,
\end{align}
determine $\lambda=(\lambda_1,\ldots,\lambda_n)$ uniquely, 
if there exists a solution to~\eqref{eq:sl-gl-wts1} and~\eqref{eq:sl-gl-wts2} 
at all. To fix notation, we 
define the map $\phi_{n,d}\colon \bZ^{n-1}\to \bZ^{n}\cup \{*\}$ by 
\[
\phi_{n,d}(\mu)=\lambda, 
\]
if~\eqref{eq:sl-gl-wts1} and \eqref{eq:sl-gl-wts2} have a solution, and we
put $\phi_{n,d}(\mu)=*$ otherwise.   

Note that ${\mathbf U}_q(\mathfrak{gl}_n)$ and 
${\mathbf U}_q(\mathfrak{sl}_n)$ are both Hopf algebras, which implies that 
the tensor product of two of their representations is a representation again. Moreover, the duals of representations are representations and there is a trivial representation. 

Both ${\mathbf U}_q(\mathfrak{gl}_n)$ and ${\mathbf U}_q(\mathfrak{sl}_n)$ 
have plenty of non-weight representations, but we won't discuss them in the paper. 
Therefore we can restrict our attention to the 
Beilinson-Lusztig-MacPherson~\cite{B-L-M} idempotent version of these 
quantum groups, denoted $\Ugl$ and $\U$ respectively. It is worth noting that such algebras can be seen as $1$-categories.

To understand their definition, recall that $K_i$ acts as $q^{\lambda_i}$ on the 
$\lambda$-weight space of any weight representation. 
For each $\lambda\in\bZ^n$ adjoin an idempotent $1_{\lambda}$ to 
${\mathbf U}_q(\mathfrak{gl}_n)$ and add 
the relations
\begin{align*}
1_{\lambda}1_{\mu} &= \delta_{\lambda,\nu}1_{\lambda},   
\\
E_{\pm i}1_{\lambda} &= 1_{\lambda\pm\alpha_i}E_{\pm i},
\\
K_i1_{\lambda} &= q^{\lambda_i}1_{\lambda}.
\end{align*}
\begin{defn} 
\label{defn:Uglndot}
The idempotented quantum general linear algebra is defined by 
\[
\Ugl=\bigoplus_{\lambda,\mu\in\bZ^n}1_{\lambda}{\mathbf U}_q(\mathfrak{gl}_n)1_{\mu}.
\]
\end{defn}
\noindent Let $I=\{1,2,\ldots,n-1\}$. In the sequel we use {\em signed sequences} 
$\ii=(\alpha_1i_1,\ldots,\alpha_mi_m)$, 
for any $m\in\bN$, $\alpha_j\in\{\pm 1\}$ and $i_j\in I$. 
The set of signed sequences 
we denote $\sseq$.

For such an $\ii=(\alpha_1 i_1,\ldots,\alpha_{n-1}i_{n-1})$ we define
\[
E_{\ii}=E_{\alpha_1 i_1}\cdots E_{\alpha_{n-1} i_{n-1}}
\] 
and we define $\ii_{\Lambda}\in\bZ^n$ to be the $n$-tuple such that 
\[
E_{\ii}1_{\mu}=1_{\mu + \ii_{\Lambda}}E_{\ii}.
\]

Similarly, for ${\mathbf U}_q(\mathfrak{sl}_n)$, adjoin an idempotent $1_{\mu}$ 
for each $\mu\in\bZ^{n-1}$ and add the relations
\begin{align*}
1_{\mu}1_{\nu} &= \delta_{\mu,\nu}1_{\lambda},   
\\
E_{\pm i}1_{\mu} &= 1_{\mu\pm\overline{\alpha}_i}E_{\pm i},\quad\text{with}\;
\overline{\alpha}_i=\alpha_i-\alpha_{i+1},
\\
K_iK^{-1}_{i+1}1_{\mu} &= q^{\mu_i}1_{\mu}.
\end{align*}
\begin{defn} The idempotented quantum special linear algebra is defined by 
\[
\U=\bigoplus_{\mu,\nu\in\bZ^{n-1}}1_{\mu}{\mathbf U}_q(\mathfrak{sl}_n)1_{\nu}.
\]
\end{defn}
\noindent Note that $\Ugl$ and $\U$ are both non-unital algebras, 
because their units 
would have to be equal to the infinite sum of all their idempotents.
 
Furthermore, the only ${\mathbf U}_q(\mathfrak{gl}_n)$ 
and ${\mathbf U}_q(\mathfrak{sl}_n)$-representations which factor through 
$\Ugl$ and $\U$ respectively are the weight representations. 
Finally, note that there is no embedding of $\U$ into $\Ugl$, because 
there is no embedding of the $\mathfrak{sl}_n$-weights into the 
$\mathfrak{gl}_n$-weights.  

\subsubsection{The $q$-Schur algebra}

Let $d\in\bN$ and let $V$ be the natural $n$-dimensional representation of 
${\mathbf U}_q(\mathfrak{gl}_n)$. Define  
\[
\Lambda(n,d)=\{\lambda\in \bN^n\mid\,\, 
\sum_{i=1}^{n}\lambda_i=d\}\quad\text{and}
\]  
\[
\Lambda^+(n,d)=\{\lambda\in\Lambda(n,d)\mid d\geq 
\lambda_1\geq\lambda_2\geq\cdots 
\geq\lambda_n\geq 0\}.
\] 
Recall that the weights in $V^{\otimes d}$ are precisely the elements of 
$\Lambda(n,d)$, and that the highest weights are the elements of 
$\Lambda^+(n,d)$.  
The highest weights correspond exactly to the irreducibles $V_{\lambda}$ 
that show up in the decomposition of $V^{\otimes d}$. 

We can define the $q$-Schur algebra as follows. 
\begin{defn}
The \emph{$q$-Schur algebra} $S_q(n,d)$ is the image 
of the representation $\psi_{n,d}$ defined by
\[
\psi_{n,d}\colon {\mathbf U}_q(\mathfrak{gl}_n)\to 
\End_{\mathbb{C}}(V^{\otimes d}).
\]
\end{defn}
For $\lambda\in\Lambda^+(n,d)$, the 
${\mathbf U}_q(\mathfrak{gl}_n)$-action on $V_{\lambda}$ factors through 
the projection
\[
\psi_{n,d}\colon {\mathbf U}_q(\mathfrak{gl}_n)\to S_q(n,d).
\] 
This way we obtain all irreducible representations of $S_q(n,d)$. Note that 
this also implies that all representations of $S_q(n,d)$ have a 
weight decomposition. As a matter of fact, it is well-known that 
\[
S_q(n,d)\cong \prod_{\lambda\in\Lambda^+(n,d)}\End_{\mathbb{C}}(V_{\lambda}).
\]
Therefore $S_q(n,d)$ is a finite dimensional, semisimple, 
unital algebra and its dimension is equal to 
\[
\sum_{\lambda\in\Lambda^+(n,d)}\dim(V_{\lambda})^2=\binom{n^2+d-1}{d}. 
\]
Since $V^{\otimes d}$ is a weight representation, 
$\psi_{n,d}$ gives rise to a homomorphism 
$\Ugl\to S_q(n,d)$, for 
which we use the same notation. This map is still surjective and 
Doty and Giaquinto, in Theorem 2.4 of~\cite{D-G}, showed that 
the kernel of $\psi_{n,d}$ is equal to the ideal generated by all 
idempotents $1_{\lambda}$ such that 
$\lambda\not\in\Lambda(n,d)$. Clearly the image of $\psi_{n,d}$ is isomorphic to
$S_q(n,d)$. By the above observations, 
we see that $S_q(n,d)$ has a Serre presentation. As a matter of fact, 
by Corollary 4.3.2 in~\cite{C-G}, this presentation is simpler than 
that of $\Ugl$, i.e. one does not need to impose the last two Serre relations, 
involving cubical terms, because they are implied by the other relations 
and the finite 
dimensionality.  
\begin{lem} 
$S_q(n,d)$ is isomorphic to the associative, unital 
$\bQ(q)$-algebra generated by $1_{\lambda}$, for $\lambda\in\Lambda(n,d)$, 
and $E_{\pm i}$, for $i=1,\ldots,n-1$, subject to the relations
\begin{align}
\label{eq:schur1} 1_{\lambda}1_{\mu} &= \delta_{\lambda,\mu}1_{\lambda},
\\[0.5ex]
\label{eq:schur2}
\sum_{\lambda\in\Lambda(n,d)}1_{\lambda} &= 1,
\\[0.5ex]
\label{eq:schur3}
E_{\pm i}1_{\lambda} &= 1_{\lambda\pm\alpha_i}E_{\pm i},\quad\text{with}\;\alpha_i=\epsilon_i-\epsilon_{i+1}=(0,\dots ,1,-1,\dots ,0),
\\[0.5ex]
\label{eq:schur4}
E_iE_{-j}-E_{-j}E_i &= \delta_{ij}\sum\limits_{\lambda\in\Lambda(n,d)}
[\overline{\lambda}_i]1_{\lambda}.
\end{align}
We use the convention that $1_{\mu}X1_{\lambda}=0$, if $\mu$ 
or $\lambda$ is not contained in $\Lambda(n,d)$. Again $[a]$ denotes the 
quantum integer from before.
\end{lem}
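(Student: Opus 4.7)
My plan is to realize $S_q(n,d)$ as an explicit quotient of $\Ugl$ and then to verify that this quotient admits the presentation claimed. Let $A$ denote the abstractly defined algebra on the right-hand side of the lemma, generated by the $1_\lambda$ (with $\lambda\in\Lambda(n,d)$) and $E_{\pm i}$ subject to relations~\eqref{eq:schur1}--\eqref{eq:schur4}. I will show $S_q(n,d)\cong A$ by constructing mutually inverse surjections.

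First, by the theorem of Doty and Giaquinto cited in the paragraph preceding the lemma, $\ker\psi_{n,d}$ is the two-sided ideal $J\subset\Ugl$ generated by $\{1_\mu\mid\mu\notin\Lambda(n,d)\}$, so $S_q(n,d)\cong \Ugl/J$. In this quotient, relations~\eqref{eq:schur1},~\eqref{eq:schur3} and~\eqref{eq:schur4} follow immediately from the corresponding defining relations of $\Ugl$ after one restricts attention to weights in $\Lambda(n,d)$; the surviving idempotents are mutually orthogonal and support the only nonzero weight spaces of $V^{\otimes d}$. Relation~\eqref{eq:schur2} follows because in $\Ugl$ one has the formal identity $\sum_{\lambda\in\bZ^n}1_\lambda=\mathrm{id}$ (on every weight module), and all summands outside $\Lambda(n,d)$ die in the quotient, leaving the finite sum $\sum_{\lambda\in\Lambda(n,d)}1_\lambda$ equal to $1$. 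This gives a surjection $S_q(n,d)\twoheadrightarrow A$ since $A$ is defined by a (possibly smaller) set of relations than those defining $\Ugl/J\cong S_q(n,d)$.

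For the reverse direction, I need a surjection $A\twoheadrightarrow S_q(n,d)$. The map on generators is forced: send $1_\lambda\mapsto \psi_{n,d}(1_\lambda)$ and $E_{\pm i}\mapsto \psi_{n,d}(E_{\pm i})$. To see this extends to a well-defined algebra homomorphism, I must check that all relations defining $A$ hold in $S_q(n,d)$, which is immediate from the $\Ugl$-relations together with the support condition on weights. Surjectivity is clear since the image contains all generators of $S_q(n,d)$. Composing the two surjections in either order yields the identity on generators, so both are isomorphisms.

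The main obstacle, and the only place where the lemma goes beyond routine bookkeeping, is the assertion implicit in the presentation of $A$: namely, that the quantum Serre relations of $\Ugl$ (the cubic relations $E_{\pm i}^2E_{\pm j}-(q+q^{-1})E_{\pm i}E_{\pm j}E_{\pm i}+E_{\pm j}E_{\pm i}^2=0$ for $|i-j|=1$) are not needed among the defining relations of $A$. Here I invoke Corollary 4.3.2 in~\cite{C-G}: given relations~\eqref{eq:schur1}--\eqref{eq:schur4}, the finiteness of $\Lambda(n,d)$ forces $A$ to be finite dimensional, and in this setting the cubic Serre relations are a formal consequence of the commutator relation~\eqref{eq:schur4} together with the nilpotency of $E_{\pm i}$ on each weight space forced by~\eqref{eq:schur3} (the operator $E_{\pm i}$ shifts the weight by $\pm\alpha_i$, and finitely many such shifts exit $\Lambda(n,d)$). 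I would either quote this corollary directly or, if self-containment is desired, reproduce its short derivation by iterating the relation~\eqref{eq:schur4} on the vectors $E_{\pm i}^2 E_{\pm j}1_\lambda$ and using that $1_\mu=0$ for $\mu\notin\Lambda(n,d)$.
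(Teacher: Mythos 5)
Your proposal follows the same route as the paper: identify $S_q(n,d)\cong\Ugl/J$ via the Doty--Giaquinto description of $\ker\psi_{n,d}$, observe that $S_q(n,d)$ inherits a Serre-type presentation, and invoke Corollary 4.3.2 of Chriss--Ginzburg to discard the cubic Serre relations on grounds of finite dimensionality. The key content is correctly identified. However, the logical bookkeeping of your ``two surjections'' is partly inverted. You claim that the surjection $S_q(n,d)\twoheadrightarrow A$ is the easy one ``since $A$ is defined by a (possibly smaller) set of relations than those defining $\Ugl/J\cong S_q(n,d)$.'' That reasoning runs the wrong way: imposing \emph{fewer} relations produces a \emph{larger} (freer) algebra, so this observation gives you only the surjection $A\twoheadrightarrow S_q(n,d)$ for free (every relation of $A$ holds in $S_q(n,d)$, since all of \eqref{eq:schur1}--\eqref{eq:schur4} descend from $\Ugl$-relations together with the vanishing of the bad idempotents). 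The nontrivial direction is constructing $S_q(n,d)\to A$: to send $\psi_{n,d}(x)\mapsto$ the corresponding class in $A$, you must show that \emph{all} relations of $\Ugl/J$ — in particular the cubic Serre relations — already hold in $A$. That is precisely where Chriss--Ginzburg enters, and it is exactly the content of your ``main obstacle'' paragraph. So the substance is there, but you've attached the [C-G] argument to the map $A\to S_q(n,d)$ (where it is unnecessary) rather than to $S_q(n,d)\to A$ (where it is essential). One further small caveat: your sketch asserts that \eqref{eq:schur1}--\eqref{eq:schur4} alone force $A$ to be finite dimensional and then uses this to deduce the Serre relations; this circularity is fine if you are genuinely quoting [C-G] Corollary 4.3.2, which handles it, but if you reproduce the argument yourself you would need to first establish a spanning set for $A$ (e.g.\ ordered monomials in the $E_{\pm i}$ cut off by the weight constraints) without presupposing the Serre relations.
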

\vspace*{0.15cm}

Although there is no embedding of $\U$ into $\Ugl$, the projection 
\[
\psi_{n,d}\colon{\mathbf U}_q(\mathfrak{gl}_n)\to S_q(n,d)
\] 
can be restricted to ${\mathbf U}_q(\mathfrak{sl}_n)$ and is still surjective. 
This gives rise to the surjection 
\[
\psi_{n,d}\colon \U\to S_q(n,d),
\]
defined by 
\begin{equation}
\label{eq:psi}
\psi_{n,d}(E_{\pm i}1_{\lambda})=E_{\pm i}1_{\phi_{n,d}(\lambda)},
\end{equation}
where $\phi_{n,d}$ was defined below equations~\eqref{eq:sl-gl-wts1} and 
\eqref{eq:sl-gl-wts2}. By convention we put $1_{*}=0$.   
\vskip0.5cm 

\subsubsection{The general and special quantum 2-algebras}
We note that a lot of this section is copied from~\cite{msv2}. The reader can find even more details there.
\vspace*{0.25cm}

Let $\Ucat$ be Khovanov and Lauda's~\cite{kl3} 
diagrammatic categorification of $\U$. 
In~\cite{msv2} it was shown that there is a quotient 2-category of 
$\Ucat$, denoted by $\Scat(n,n)$, which categorifies $S_q(n,n)$. 
\vspace*{0.15cm}

We recall the definition of these categorified quantum algebras and some notions from above. 
As before, let $I=\{1,2,\ldots,n-1\}$. Again, we use \textit{signed sequences} 
$\ii=(\alpha_1i_1,\ldots,\alpha_mi_m)$, 
for any $m\in\bN$, $\alpha_j\in\{\pm 1\}$ and $i_j\in I$, and 
the set of signed sequences is denoted $\sseq$.
\vspace*{0.15cm}
 
For $\ii=(\alpha_1i_1,\ldots,\alpha_mi_m)\in\sseq$ we 
define $\ii_{\Lambda}=\alpha_1 (i_1)_{\Lambda}+\cdots+\alpha_m (i_m)_{\Lambda}$, 
where 
\[
(i_j)_{\Lambda}=(0,0,\ldots,1,-1,0\ldots,0),
\]
such that the vector starts with $i_j-1$ and ends with $k-1-i_j$ zeros. 
We also define the symmetric $\bZ$-valued bilinear form on $\bC[I]$ 
by $i\cdot i=2$, $i\cdot (i+1)=-1$ and $i\cdot j=0$, for $\vert i-j\vert>1$. 
Recall that $\overline{\lambda}_i=\lambda_i-\lambda_{i+1}$.
\vspace*{0.25cm}

We first recall the definition, given in~\cite{msv2}, of the 2-category which 
conjecturally categorifies $\Ugl$. It is 
a straightforward adaptation of Khovanov and Lauda's $\Ucat$. 
\begin{defn} \label{def_glcat} $\glcat$ is an
additive $\bC$-linear 2-category. The 2-category $\glcat$ consists of
\begin{itemize}
  \item Objects are $\lambda\in\bZ^n$.
\end{itemize}
The hom-category $\glcat(\lambda,\lambda')$ between two objects 
$\lambda$, $\lambda'$ is an additive $\bC$-linear category 
consisting of the following.
\begin{itemize}
  \item Objects\footnote{We refer to objects of the category
$\glcat(\lambda,\lambda')$ as 1-morphisms of $\glcat$.  Likewise, the morphisms of
$\glcat(\lambda,\lambda')$ are called 2-morphisms in $\glcat$. } of
$\glcat(\lambda,\lambda')$, i.e. for 1-morphism in $\glcat$ from $\lambda$ to $\lambda'$
is a formal finite direct sum of 1-morphisms
  \[
 \mathcal{E}_{\ii} \onel\{t\} = \onelp \mathcal{E}_{\ii} \onel\{t\}
= \mathcal{E}_{\alpha_1 i_1}\dotsm\mathcal{E}_{\alpha_m i_m} \onel\{t\}
  \]
for any $t\in \bZ$ and signed sequence $\ii\in\sseq$ such that 
$\lambda'=\lambda+\ii_{\Lambda}$ and $\lambda$, $\lambda'\in\bZ^n$. 
  \item Morphisms of $\glcat(\lambda,\lambda')$, i.e. for 1-morphisms $\mathcal{E}_{\ii} \onel\{t\}$
and  $\mathcal{E}_{\jj} \onel\{t'\}$ in $\glcat$, the hom
sets $\glcat(\mathcal{E}_{\ii} \onel\{t\},\mathcal{E}_{\jj} \onel\{t'\})$ of
$\glcat(\lambda,\lambda')$ are graded $\bC$-vector spaces given by linear
combinations of degree $t-t'$ diagrams, modulo certain relations, built from
compo\-sites of the following.
\begin{enumerate}[(i)]
  \item  Degree zero identity 2-morphisms $1_x$ for each 1-morphism $x$ in
$\glcat$; the identity 2-morphisms $1_{\mathcal{E}_{+i} \onel}\{t\}$ and
$1_{\mathcal{E}_{-i} \onel}\{t\}$, for $i \in I$, are represented graphically by
\[
\begin{array}{ccc}
  1_{\mathcal{E}_{+i} \onel\{t\}} &\quad  & 1_{\mathcal{E}_{-i} \onel\{t\}} \\ \\
   \lambda + i_{\Lambda}\xy
 (0,0)*{\includegraphics[width=09px]{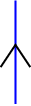}};
 (0,8)*{\scriptstyle i};
 (0,-8)*{\scriptstyle i};
 \endxy\lambda
 & &
 \;\;   
   \lambda - i_{\Lambda}\xy
 (0,0)*{\includegraphics[width=09px]{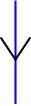}};
 (0,8)*{\scriptstyle i};
 (0,-8)*{\scriptstyle i};
 \endxy\lambda,
\\ \\
   \;\;\text{ {\rm deg} 0}\;\;
 & &\;\;\text{ {\rm deg} 0}\;\;
\end{array}
\]
for any $\lambda + i_{\Lambda} \in\bZ^n$ and any 
$\lambda - i_{\Lambda} \in \bZ^n$, respectively.

More generally, for a signed sequence $\ii=(\alpha_1i_1, \alpha_2i_2, \ldots
\alpha_mi_m)$, the identity $1_{\mathcal{E}_{\ii} \onel\{t\}}$ 2-morphism is
represented as
\begin{equation*}
\begin{array}{ccc}
  \lambda + \ii_{\Lambda}\xy
 (0,0)*{\includegraphics[width=0.7px]{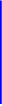}};
 (0,8)*{\scriptstyle i_1};
 (0,-8)*{\scriptstyle i_1};
 \endxy\,\xy
 (0,0)*{\includegraphics[width=0.7px]{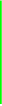}};
 (0,8)*{\scriptstyle i_2};
 (0,-8)*{\scriptstyle i_2};
 \endxy\cdots\xy
 (0,0)*{\includegraphics[width=0.7px]{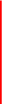}};
 (0,8)*{\scriptstyle i_m};
 (0,-8)*{\scriptstyle i_m};
 \endxy\lambda,
\end{array}
\end{equation*}
where the strand labeled $i_{k}$ is oriented up if $\alpha_{k}=+$
and oriented down if $\alpha_{k}=-$. We will often place labels with no
sign on the side of a strand and omit the labels at the top and bottom.  The
signs can be recovered from the orientations on the strands. 

\item Recall that $-\cdot -$ is the bilinear form from above. For each $\lambda \in \bZ^n$ the 2-morphisms 
\[
\begin{tabular}{|l|c|c|c|c|}
\hline
 {\bf Notation:} \xy (0,-5)*{};(0,7)*{}; \endxy& 
\xy
 (0,0)*{\includegraphics[width=7px]{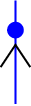}};
 (2,-5)*{\scriptstyle i,\lambda};
 \endxy &
\xy
 (0,0)*{\includegraphics[width=7px]{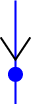}};
 (2,-5)*{\scriptstyle i,\lambda};
 \endxy  &  \xy
 (0,0)*{\includegraphics[width=20px]{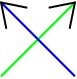}};
 (8,-4.5)*{\scriptstyle i,j,\lambda};
 \endxy  
 & 
\xy
 (0,0)*{\includegraphics[width=20px]{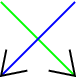}};
 (8,-4.5)*{\scriptstyle i,j,\lambda};
 \endxy\\
 \hline
 {\bf 2-morphism:} \xy (0,-5)*{};(0,9)*{}; \endxy&   \, \xy
 (0,0)*{\includegraphics[width=09px]{section23/upsimpledot}};
 (1.5,-5)*{\scriptstyle i};
 (3,0)*{\scriptstyle\lambda};
 (-5,0)*{\scriptstyle\lambda+i_{\Lambda}};
 \endxy\,
 &
    \, \xy
 (0,0)*{\includegraphics[width=09px]{section23/downsimpledot}};
 (1.5,-5)*{\scriptstyle i};
 (-3,0)*{\scriptstyle\lambda};
 (5,0)*{\scriptstyle \lambda+i_{\Lambda}};
 \endxy\,
 &
   \xy
 (0,1)*{\includegraphics[width=25px]{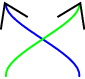}};
 (-5,-3)*{\scriptstyle i};
 (5,-3)*{\scriptstyle j};
 (5.5,0)*{\scriptstyle\lambda};
 \endxy\,
 &
   \xy
 (0,1)*{\includegraphics[width=25px]{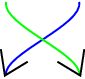}};
 (-5,-4)*{\scriptstyle i};
 (5,-4)*{\scriptstyle j};
 (5.5,0)*{\scriptstyle\lambda};
 \endxy\,
\\ & & & &\\
\hline
 {\bf Degree:} & \;\;\phantom{.a-}\text{$i\cdot i$}\;\;\phantom{.a-}
 &\;\;\phantom{.a-}\text{$i\cdot i$}\;\;\phantom{.a-}& \;\phantom{.a}\text{$-i\cdot j$}\;\;\phantom{..-}
 & \;\,\phantom{.a}\text{$-i\cdot j$}\;\,\phantom{..-} \\
 \hline
\end{tabular}
\]

\[
\begin{tabular}{|l|c|c|c|c|}
\hline
 {\bf Notation:} \xy (0,-5)*{};(0,7)*{}; \endxy& \xy
 (0,0)*{\includegraphics[width=20px]{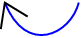}};
 (6,-2)*{\scriptstyle i,\lambda};
 \endxy &
 \xy
 (0,0)*{\includegraphics[width=20px]{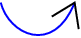}};
 (6,-2)*{\scriptstyle i,\lambda};
 \endxy  &  \xy
 (0,0)*{\includegraphics[width=20px]{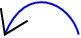}};
 (6,-2)*{\scriptstyle i,\lambda};
 \endxy  
 & \xy
 (0,0)*{\includegraphics[width=20px]{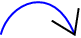}};
 (6,-2)*{\scriptstyle i,\lambda};
 \endxy  \\
 \hline
 {\bf 2-morphism:} \xy (0,-5)*{};(0,7)*{}; \endxy&   \xy
 (0,0)*{\includegraphics[width=25px]{section23/leftcup}};
 (0,-3)*{\scriptstyle i};
 (5,0)*{\scriptstyle\lambda};
 \endxy\,
 &
    \xy
 (0,0)*{\includegraphics[width=25px]{section23/rightcup}};
 (0,-3)*{\scriptstyle i};
 (5,0)*{\scriptstyle\lambda};
 \endxy\,
 &
   \xy
 (0,0)*{\includegraphics[width=25px]{section23/leftcap}};
 (0,3)*{\scriptstyle i};
 (5,0)*{\scriptstyle\lambda};
 \endxy\,
 &
   \xy
 (0,0)*{\includegraphics[width=25px]{section23/rightcap}};
 (0,3)*{\scriptstyle i};
 (5,0)*{\scriptstyle\lambda};
 \endxy\,
\\ & & & &\\
\hline
 {\bf Degree:} \xy (0,-1)*{};(0,5)*{}; \endxy& \phantom{.m}\text{$1-\overline{\lambda}_i$}\phantom{.m}
 &\phantom{.m}\text{$1+\overline{\lambda}_i$}\phantom{.m}&\phantom{.m}\text{$1+\overline{\lambda}_i$}\phantom{.m}
 & \phantom{.m}\text{$1-\overline{\lambda}_i$}\phantom{.m} \\
 \hline
\end{tabular}
\]
\end{enumerate}

\item Biadjointness and cyclicity.
\begin{enumerate}[(i)]
\item\label{it:sl2i}  $\mathbf{1}_{\lambda+i_{\Lambda}}\mathcal{E}_{+i}\onel$ and
$\onel\mathcal{E}_{-i}\mathbf{1}_{\lambda+i_{\Lambda}}$ are biadjoint, up to grading shifts, i.e.
\begin{equation} \label{eq_biadjoint1}
  \xy
 (0,0)*{\includegraphics[width=50px]{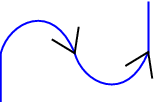}};
 (-4,5.5)*{\scriptstyle \lambda+i_{\Lambda}};
 (4,-5.5)*{\scriptstyle \lambda};
 \endxy
    \; =
    \;
\, \xy
 (0,0)*{\includegraphics[width=09px]{section23/upsimple}};
 (1.5,-5)*{\scriptstyle i};
 (3,0)*{\scriptstyle\lambda};
 (-5,0)*{\scriptstyle\lambda+i_{\Lambda}};
 \endxy\,
\qquad \quad  \xy
 (0,0)*{\includegraphics[width=50px]{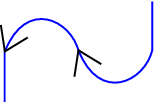}};
 (-4,5.5)*{\scriptstyle \lambda};
 (5,-5.5)*{\scriptstyle \lambda+i_{\Lambda}};
 \endxy
    \; =
    \;
\, \xy
 (0,0)*{\includegraphics[width=09px]{section23/downsimple}};
 (1.5,-5)*{\scriptstyle i};
 (-3,0)*{\scriptstyle\lambda};
 (5,0)*{\scriptstyle\lambda+i_{\Lambda}};
 \endxy\,
\end{equation}

\begin{equation} \label{eq_biadjoint2}
  \xy
 (0,0)*{\includegraphics[width=50px]{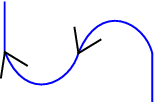}};
 (-4,-5.5)*{\scriptstyle \lambda+i_{\Lambda}};
 (4,5.5)*{\scriptstyle \lambda};
 \endxy
    \; =
    \;
\, \xy
 (0,0)*{\includegraphics[width=09px]{section23/upsimple}};
 (1.5,-5)*{\scriptstyle i};
 (3,0)*{\scriptstyle\lambda};
 (-5,0)*{\scriptstyle\lambda+i_{\Lambda}};
 \endxy\,
\qquad \quad  \xy
 (0,0)*{\includegraphics[width=50px]{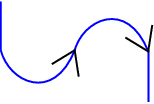}};
 (-4,-5.5)*{\scriptstyle \lambda};
 (5,5.5)*{\scriptstyle \lambda+i_{\Lambda}};
 \endxy
    \; =
    \;
\, \xy
 (0,0)*{\includegraphics[width=09px]{section23/downsimple}};
 (1.5,-5)*{\scriptstyle i};
 (-3,0)*{\scriptstyle\lambda};
 (5,0)*{\scriptstyle\lambda+i_{\Lambda}};
 \endxy\,
\end{equation}
\item
\begin{equation} \label{eq_cyclic_dot}
  \xy
 (0,0)*{\includegraphics[width=50px]{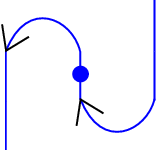}};
 (-4,8)*{\scriptstyle \lambda+i_{\Lambda}};
 (4,-8)*{\scriptstyle \lambda};
 \endxy
 \; =
    \;
\, \xy
 (0,0)*{\includegraphics[width=09px]{section23/downsimpledot}};
 (1.5,-5)*{\scriptstyle i};
 (-3,0)*{\scriptstyle\lambda};
 (5,0)*{\scriptstyle\lambda+i_{\Lambda}};
 \endxy\,
    \; =
    \;
  \xy
 (0,0)*{\includegraphics[width=50px]{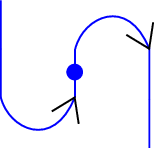}};
 (-4,-8)*{\scriptstyle \lambda};
 (4,8)*{\scriptstyle \lambda+i_{\Lambda}};
 \endxy    
\end{equation}
\item All 2-morphisms are cyclic with respect to the above biadjoint
   structure. This is ensured by the relations \eqref{eq_cyclic_dot}, and, for arbitrary $i,j$, the
   relations
\begin{equation} \label{eq_cyclic_cross-gen}
  \xy
 (0,0)*{\includegraphics[width=125px]{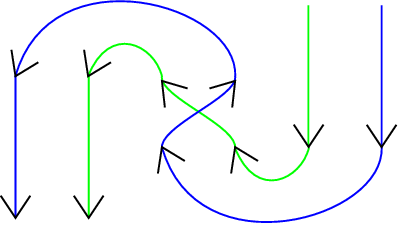}};
 (-12,-14)*{\scriptstyle j};
 (-20.3,-14)*{\scriptstyle i};
 (12,14)*{\scriptstyle j};
 (20.3,14)*{\scriptstyle i};
 (6,-0.5)*{\scriptstyle\lambda};
 \endxy\,
 \; =
    \;
\,\xy
 (0,1)*{\includegraphics[width=25px]{section23/downcrosscurved}};
 (-5,-4)*{\scriptstyle i};
 (5,-4)*{\scriptstyle j};
 (-5.5,0)*{\scriptstyle\lambda};
 \endxy
    \; =
    \;
  \xy
 (0,0)*{\includegraphics[width=125px]{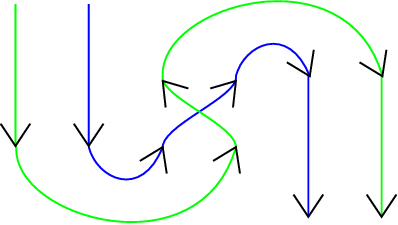}};
 (-12,14)*{\scriptstyle i};
 (-20.3,14)*{\scriptstyle j};
 (12,-14)*{\scriptstyle i};
 (20.3,-14)*{\scriptstyle j};
 (6,-1)*{\scriptstyle\lambda};
 \endxy\,.    
\end{equation}
Note that we can take either the first or the last diagram above as the 
definition of the up-side-down crossing. The cyclic condition on 
2-morphisms, expressed by \eqref{eq_cyclic_dot} and 
\eqref{eq_cyclic_cross-gen}, ensures that diagrams related by isotopy represent
the same 2-morphism in $\glcat$.

It will be convenient to introduce degree zero 2-morphisms.
\begin{equation} \label{eq_crossl-gen}
  \xy
 (0,0)*{\includegraphics[width=25px]{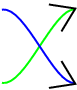}};
 (-5.5,4)*{\scriptstyle i};
 (-5.5,-4)*{\scriptstyle j};
 (5.5,0)*{\scriptstyle\lambda};
 \endxy\,
 \; =
    \;
 \xy
 (0,1)*{\includegraphics[width=75px]{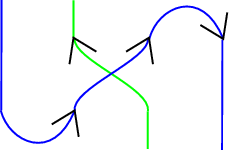}};
 (-4.3,11.5)*{\scriptstyle j};
 (-13.3,11.5)*{\scriptstyle i};
 (3.8,-9.5)*{\scriptstyle j};
 (12.5,-9.5)*{\scriptstyle i};
 (14,0)*{\scriptstyle\lambda};
 \endxy\,
    \; =
    \;
  \xy
 (0,0)*{\includegraphics[width=75px]{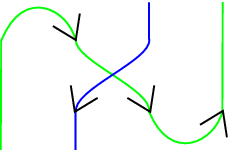}};
 (-4,-10.5)*{\scriptstyle i};
 (-13,-10.5)*{\scriptstyle j};
 (4,10.5)*{\scriptstyle i};
 (13,10.5)*{\scriptstyle j};
 (14,0)*{\scriptstyle\lambda};
 \endxy\,   
\end{equation}

\begin{equation} \label{eq_crossr-gen}
  \,\xy
 (0,0)*{\includegraphics[width=25px]{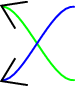}};
 (-5.5,0)*{\scriptstyle\lambda};
 (5.5,4)*{\scriptstyle i};
 (5.5,-4)*{\scriptstyle j};
 \endxy
 \; =
    \;
 \,\xy
 (0,1)*{\includegraphics[width=75px]{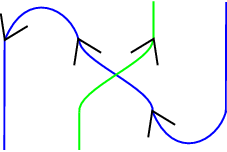}};
 (-4,-9.5)*{\scriptstyle j};
 (-13,-9.5)*{\scriptstyle i};
 (4.3,11.5)*{\scriptstyle j};
 (13.3,11.5)*{\scriptstyle i};
 (-14,0)*{\scriptstyle\lambda};
 \endxy\,
    \; =
    \;
  \,\xy
 (0,0)*{\includegraphics[width=75px]{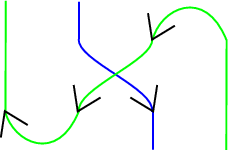}};
 (-4,10.5)*{\scriptstyle i};
 (-13,10.5)*{\scriptstyle j};
 (4,-10.5)*{\scriptstyle i};
 (13,-10.5)*{\scriptstyle j};
 (-14,0)*{\scriptstyle\lambda};
 \endxy\, ,    
\end{equation}
where the second equality in \eqref{eq_crossl-gen} and \eqref{eq_crossr-gen}
follow from \eqref{eq_cyclic_cross-gen}.  

\item All dotted bubbles of negative degree are zero. That is,
\begin{equation} \label{eq_positivity_bubbles}
 \xy
 (0,0)*{\includegraphics[width=30px]{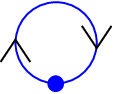}};
 (0,-5.5)*{\scriptstyle m};
 (-6,0)*{\scriptstyle i};
 (6,0)*{\scriptstyle\lambda};
 \endxy
  = 0,
 \qquad
  \text{if $m<\llambda_i-1$}, \qquad
 \xy
 (0,0)*{\includegraphics[width=30px]{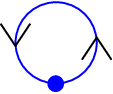}};
 (0,-5.5)*{\scriptstyle m};
 (-6,0)*{\scriptstyle i};
 (6,0)*{\scriptstyle\lambda};
 \endxy = 0,\quad
  \text{if $m< -\llambda_i-1$}
\end{equation}
for all $m \in \bZ_+$, where a dot carrying a label $m$ denotes the
$m$-fold iterated vertical composite of $\xy
 (0,0)*{\includegraphics[width=5px]{section23/upsimpledot}};
 (3,-3)*{\scriptstyle i,\lambda};
 \endxy$ or
$\xy
 (0,0)*{\includegraphics[width=5px]{section23/downsimpledot}};
 (3,-3)*{\scriptstyle i,\lambda};
 \endxy$ depending on the orientation.  A dotted bubble of degree
zero equals $\pm 1$, that is
\begin{equation}\label{eq:bubb_deg0}
 \xy
 (0,0)*{\includegraphics[width=30px]{section23/circleclock}};
 (0,-5.5)*{\scriptstyle m};
 (-6,0)*{\scriptstyle i};
 (6,0)*{\scriptstyle\lambda};
 \endxy
  = (-1)^{\lambda_{i+1}}, \quad \text{for $\llambda_i \geq 1$,}
  \qquad \quad
  \xy
 (0,0)*{\includegraphics[width=30px]{section23/circlecounter}};
 (0,-5.5)*{\scriptstyle m};
 (-6,0)*{\scriptstyle i};
 (6,0)*{\scriptstyle\lambda};
 \endxy
  = (-1)^{\lambda_{i+1}-1}, \quad \text{for $\llambda_i \leq -1$.}
\end{equation}
\item For the following relations we employ the convention that all summations
are increasing, so that a summation of the form $\sum_{f=0}^{m}$ is zero 
if $m < 0$.
\begin{eqnarray}
\label{eq:redtobubbles}
  \;\;\;\;\qquad\text{$\,\xy
 (0,0)*{\includegraphics[width=50px]{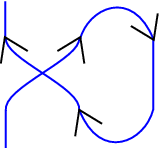}};
 (-8,0)*{\scriptstyle i};
 (9,0)*{\scriptstyle \lambda};
 \endxy$} \; = \; -\sum_{f=0}^{-\llambda_i}
   \xy
 (0,0)*{\includegraphics[width=9px]{section23/upsimpledot}};
 (1,-4.5)*{\scriptstyle i};
 (5.5,5)*{\scriptstyle -\overline{\lambda}_i-f};
 \endxy\,\xy
 (0,0)*{\includegraphics[width=30px]{section23/circleclock}};
 (0,-5.5)*{\scriptstyle \overline{\lambda}_i-1+f};
 (-6,0)*{\scriptstyle i};
 (6,0)*{\scriptstyle\lambda};
 \endxy
\quad\text{  and  } \quad
  \text{$\xy
 (0,0)*{\includegraphics[width=50px]{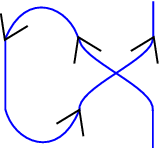}};
 (-9.5,0)*{\scriptstyle \lambda};
 (8,0)*{\scriptstyle i};
 \endxy$} \; = \;
 \sum_{g=0}^{\llambda_i}
   \xy
 (0,0)*{\includegraphics[width=30px]{section23/circlecounter}};
 (0,-5.5)*{\scriptstyle -\overline{\lambda}_i-1+g};
 (-6,0)*{\scriptstyle i};
 (6,0)*{\scriptstyle\lambda};
 \endxy\,\xy
 (0,0)*{\includegraphics[width=9px]{section23/upsimpledot}};
 (1,-4.5)*{\scriptstyle i};
 (5.5,5)*{\scriptstyle \overline{\lambda}_i-g};
 \endxy
\end{eqnarray}

\begin{equation}
\label{eq:EF}
 \xy
 (0,0)*{\includegraphics[width=9px]{section23/upsimple}};
 (-1,-3.5)*{\scriptstyle i};
 (-2.5,0)*{\scriptstyle\lambda};
 \endxy\,\xy
 (0,0)*{\includegraphics[width=9px]{section23/downsimple}};
 (1,-3.5)*{\scriptstyle i};
 (2.5,0)*{\scriptstyle\lambda};
 \endxy
  =
 \xy
 (0,0)*{\includegraphics[width=17px]{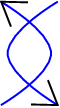}};
 (-3.9,-5.5)*{\scriptstyle i};
 (3.8,-5.5)*{\scriptstyle i};
 (3.2,0)*{\scriptstyle\lambda};
 \endxy
   - 
\sum_{f=0}^{\llambda_i-1} \sum_{g=0}^{f}
    \xy
 (0,0)*{\includegraphics[width=30px]{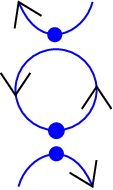}};
 (-6,0)*{\scriptstyle\lambda};
 (8,5)*{\scriptstyle \overline{\lambda}_i-1-f};
 (7,-4)*{\scriptstyle -\overline{\lambda}_i-1+g};
 (-5,-5)*{\scriptstyle f-g};
 (-4,4)*{\scriptstyle i};
 \endxy
 \quad\text{and}\quad
 \xy
 (0,0)*{\includegraphics[width=9px]{section23/downsimple}};
 (-1,-3.5)*{\scriptstyle i};
 (-2.5,0)*{\scriptstyle\lambda};
 \endxy\,\xy
 (0,0)*{\includegraphics[width=9px]{section23/upsimple}};
 (1,-3.5)*{\scriptstyle i};
 (2.5,0)*{\scriptstyle\lambda};
 \endxy
 = 
 \xy
 (0,0)*{\includegraphics[width=17px]{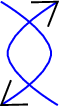}};
 (-3.9,-5.5)*{\scriptstyle i};
 (3.8,-5.5)*{\scriptstyle i};
 (3.2,0)*{\scriptstyle\lambda};
 \endxy
   - 
\sum_{f=0}^{-\llambda_i-1} \sum_{g=0}^{f}
    \xy
 (0,0)*{\includegraphics[width=30px]{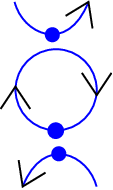}};
 (7,5)*{\scriptstyle -\overline{\lambda}_i-1-f};
 (-6,0)*{\scriptstyle\lambda};
 (8,-4)*{\scriptstyle \overline{\lambda}_i-1+g};
 (-5,-5)*{\scriptstyle f-g};
 (-4,4)*{\scriptstyle i};
 \endxy
\end{equation}
for all $\lambda\in \bZ^n$
(see~\eqref{eq_crossl-gen} and~\eqref{eq_crossr-gen} for the definition of sideways
crossings). 
Notice that for some values of 
$\lambda$ the dotted
bubbles appearing above have negative labels. A composite of $\xy
 (0,0)*{\includegraphics[width=5px]{section23/upsimpledot}};
 (3,-3)*{\scriptstyle i,\lambda};
 \endxy$
or $\xy
 (0,0)*{\includegraphics[width=5px]{section23/downsimpledot}};
 (3,-3)*{\scriptstyle i,\lambda};
 \endxy$ with itself a negative number of times does not make
sense. These dotted bubbles with negative labels, called {\em fake bubbles}, are
formal symbols inductively defined by the equation
\begin{equation}\label{eq_infinite_Grass}
\qquad\;\;\left(\xy
 (0,0)*{\includegraphics[width=30px]{section23/circlecounter}};
 (-4,4)*{\scriptstyle i};
 (4,4)*{\scriptstyle \lambda};
 (-1,-5)*{\scriptstyle -\overline{\lambda}_i-1};
 \endxy t^{0}+\cdots+\xy
 (0,0)*{\includegraphics[width=30px]{section23/circlecounter}};
 (-4,4)*{\scriptstyle i};
 (4,4)*{\scriptstyle \lambda};
 (0,-5)*{\scriptstyle -\overline{\lambda}_i-1+r};
 \endxy t^{r}+\cdots\right)\left(\xy
 (0,0)*{\includegraphics[width=30px]{section23/circleclock}};
 (-4,4)*{\scriptstyle i};
 (4,4)*{\scriptstyle \lambda};
 (-0.1,-5)*{\scriptstyle \overline{\lambda}_i-1};
 \endxy t^{0}+\cdots+\xy
 (0,0)*{\includegraphics[width=30px]{section23/circleclock}};
 (-4,4)*{\scriptstyle i};
 (4,4)*{\scriptstyle \lambda};
 (0,-5)*{\scriptstyle \overline{\lambda}_i-1+r};
 \endxy t^{r}+\cdots\right)=-1
\end{equation}
and the additional condition
\[
\xy
 (0,0)*{\includegraphics[width=30px]{section23/circleclock}};
 (-4,4)*{\scriptstyle i};
 (4,4)*{\scriptstyle \lambda};
 (0,-5)*{\scriptstyle -1};
 \endxy=(-1)^{\lambda_{i+1}}\quad\text{and}\quad\xy
 (0,0)*{\includegraphics[width=30px]{section23/circlecounter}};
 (-4,4)*{\scriptstyle i};
 (4,4)*{\scriptstyle \lambda};
 (0,-5)*{\scriptstyle -1};
 \endxy=(-1)^{\lambda_{i+1}-1},\quad\text{if}\;\overline{\lambda}_i=0.
\]
Although the labels are negative for fake bubbles, one can check that the overall
degree of each fake bubble is still positive, so that these fake bubbles do not
violate the positivity of dotted bubble axiom. The above equation, called the
infinite Grassmannian relation, remains valid even in high degree when most of
the bubbles involved are not fake bubbles.  

\item The NilHecke relations are
\begin{equation}\label{eq_nil_rels}
 \xy
 (0,0)*{\includegraphics[width=17px]{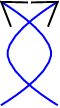}};
 (-3.9,-5.5)*{\scriptstyle i};
 (3.8,-5.5)*{\scriptstyle i};
 (3.2,0)*{\scriptstyle\lambda};
 \endxy=0,\quad\xy
 (0,0)*{\includegraphics[width=50px]{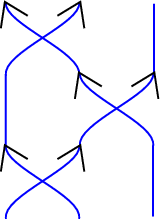}};
 (-9.5,-11.5)*{\scriptstyle i};
 (1,-11.5)*{\scriptstyle i};
 (7.3,-11.5)*{\scriptstyle i};
 (10,0)*{\scriptstyle \lambda};
 \endxy=\xy
 (0,0)*{\includegraphics[width=50px]{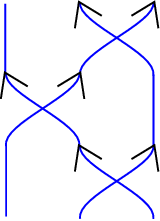}};
 (-7.3,-11.5)*{\scriptstyle i};
 (-1,-11.5)*{\scriptstyle i};
 (9.5,-11.5)*{\scriptstyle i};
 (10,0)*{\scriptstyle \lambda};
 \endxy
\end{equation}

\begin{equation}\label{eq_nil_dotslide}
 \xy
 (0,0)*{\includegraphics[width=9px]{section23/upsimple}};
 (-1,-3.5)*{\scriptstyle i};
 \endxy\,\xy
 (0,0)*{\includegraphics[width=9px]{section23/upsimple}};
 (1,-3.5)*{\scriptstyle i};
 (2.5,0)*{\scriptstyle\lambda};
 \endxy\;=\;\xy
 (0,1)*{\includegraphics[width=25px]{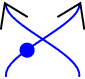}};
 (-5,-3)*{\scriptstyle i};
 (5,-3)*{\scriptstyle i};
 (5.5,0)*{\scriptstyle\lambda};
 \endxy\;-\xy
 (0,1)*{\includegraphics[width=25px]{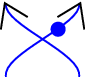}};
 (-5,-3)*{\scriptstyle i};
 (5,-3)*{\scriptstyle i};
 (5.5,0)*{\scriptstyle\lambda};
 \endxy\;=\xy
 (0,1)*{\includegraphics[width=25px]{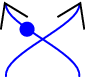}};
 (-5,-3)*{\scriptstyle i};
 (5,-3)*{\scriptstyle i};
 (5.5,0)*{\scriptstyle\lambda};
 \endxy\;-\xy
 (0,1)*{\includegraphics[width=25px]{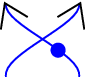}};
 (-5,-3)*{\scriptstyle i};
 (5,-3)*{\scriptstyle i};
 (5.5,0)*{\scriptstyle\lambda};
 \endxy.
\end{equation}
\end{enumerate}

\item We have for $i \neq j$
\begin{equation} \label{eq_downup_ij-gen}
 \xy
 (0,0)*{\includegraphics[width=17px]{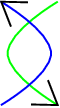}};
 (-3.9,-5.5)*{\scriptstyle i};
 (3.8,-5.5)*{\scriptstyle j};
 (3.2,0)*{\scriptstyle\lambda};
 \endxy=\xy
 (0,0)*{\includegraphics[width=9px]{section23/upsimple}};
 (-1,-3.5)*{\scriptstyle i};
 \endxy\,\xy
 (0,0)*{\includegraphics[width=9px]{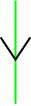}};
 (1,-3.5)*{\scriptstyle j};
 (2.5,0)*{\scriptstyle\lambda};
 \endxy\quad\text{and}\quad\xy
 (0,0)*{\includegraphics[width=17px]{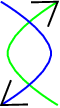}};
 (-3.9,-5.5)*{\scriptstyle i};
 (3.8,-5.5)*{\scriptstyle j};
 (3.2,0)*{\scriptstyle\lambda};
 \endxy=\xy
 (0,0)*{\includegraphics[width=9px]{section23/downsimple}};
 (-1,-3.5)*{\scriptstyle i};
 \endxy\,\xy
 (0,0)*{\includegraphics[width=9px]{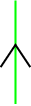}};
 (1,-3.5)*{\scriptstyle j};
 (2.5,0)*{\scriptstyle\lambda};
 \endxy
\end{equation}

\item \begin{enumerate}[(i)]
\item We have for $i \neq j$
\begin{equation}\label{eq_r2_ij-gen}
 \xy
 (0,0)*{\includegraphics[width=17px]{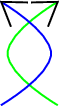}};
 (-3.9,-5.5)*{\scriptstyle i};
 (3.8,-5.5)*{\scriptstyle j};
 (3.2,0)*{\scriptstyle\lambda};
 \endxy =\begin{cases}\phantom{(i-j)(d}
 \xy
 (0,0)*{\includegraphics[width=9px]{section23/upsimple}};
 (-1,-3.5)*{\scriptstyle i};
 \endxy\,\xy
 (0,0)*{\includegraphics[width=9px]{section23/upsimplegreen}};
 (1,-3.5)*{\scriptstyle j};
 (2.5,0)*{\scriptstyle\lambda};
 \endxy, & \text{if}\,i\cdot j=0,\\[3ex]
 (i-j)\left(\xy
 (0,0)*{\includegraphics[width=9px]{section23/upsimpledot}};
 (-1,-3.5)*{\scriptstyle i};
 \endxy\,\xy
 (0,0)*{\includegraphics[width=9px]{section23/upsimplegreen}};
 (1,-3.5)*{\scriptstyle j};
 (2.5,0)*{\scriptstyle\lambda};
 \endxy-\xy
 (0,0)*{\includegraphics[width=9px]{section23/upsimple}};
 (-1,-3.5)*{\scriptstyle i};
 \endxy\,\xy
 (0,0)*{\includegraphics[width=9px]{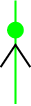}};
 (1,-3.5)*{\scriptstyle j};
 (2.5,0)*{\scriptstyle\lambda};
 \endxy\right), & \text{if}\,i\cdot j=-1.
 \end{cases}
\end{equation}
Notice that $(i-j)$ is just a sign, which takes into account the standard 
orientation of the Dynkin diagram.

\begin{equation}\label{eq_dot_slide_ij-gen}
\xy
 (0,1)*{\includegraphics[width=25px]{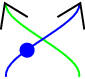}};
 (-5,-3)*{\scriptstyle i};
 (5,-3)*{\scriptstyle j};
 (5.5,0)*{\scriptstyle\lambda};
 \endxy\;=\xy
 (0,1)*{\includegraphics[width=25px]{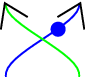}};
 (-5,-3)*{\scriptstyle i};
 (5,-3)*{\scriptstyle j};
 (5.5,0)*{\scriptstyle\lambda};
 \endxy\quad\text{and}\quad\xy
 (0,1)*{\includegraphics[width=25px]{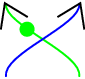}};
 (-5,-3)*{\scriptstyle i};
 (5,-3)*{\scriptstyle j};
 (5.5,0)*{\scriptstyle\lambda};
 \endxy\;=\xy
 (0,1)*{\includegraphics[width=25px]{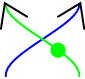}};
 (-5,-3)*{\scriptstyle i};
 (5,-3)*{\scriptstyle j};
 (5.5,0)*{\scriptstyle\lambda};
 \endxy.
\end{equation}

\item Unless $i = k$ and $i \cdot j=-1$, we have
\begin{equation}\label{eq_r3_easy-gen}
 \xy
 (0,0)*{\includegraphics[width=50px]{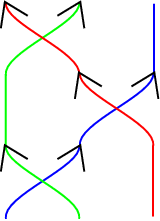}};
 (-9.1,-11.5)*{\scriptstyle i};
 (1,-11.5)*{\scriptstyle j};
 (7.1,-11.5)*{\scriptstyle k};
 (10,0)*{\scriptstyle \lambda};
 \endxy=\xy
 (0,0)*{\includegraphics[width=50px]{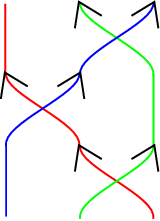}};
 (-9.3,-11.5)*{\scriptstyle i};
 (-1,-11.5)*{\scriptstyle j};
 (9.1,-11.5)*{\scriptstyle k};
 (10,0)*{\scriptstyle \lambda};
 \endxy
\end{equation}

\item We have for $i \cdot j =-1$
\begin{equation}\label{eq_r3_hard-gen}
\xy
 (0,0)*{\includegraphics[width=50px]{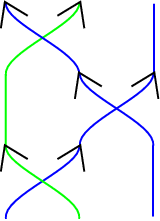}};
 (-9.1,-11.5)*{\scriptstyle i};
 (1,-11.5)*{\scriptstyle j};
 (7.1,-11.5)*{\scriptstyle i};
 (10,0)*{\scriptstyle \lambda};
 \endxy -\xy
 (0,0)*{\includegraphics[width=50px]{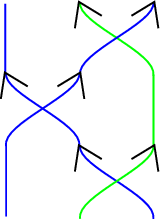}};
 (-9.3,-11.5)*{\scriptstyle i};
 (-1,-11.5)*{\scriptstyle j};
 (9.1,-11.5)*{\scriptstyle i};
 (10,0)*{\scriptstyle \lambda};
 \endxy =
 (i-j)\xy
 (0,0)*{\includegraphics[width=60px]{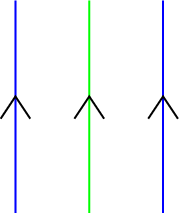}};
 (-9.4,-11.5)*{\scriptstyle i};
 (1,-11.5)*{\scriptstyle j};
 (7.7,-11.5)*{\scriptstyle i};
 (12,0)*{\scriptstyle \lambda};
 \endxy.
\end{equation}
\end{enumerate}
\item The additive, linear composition functor $\glcat(\lambda,\lambda')
\times \glcat(\lambda',\lambda'') \to \glcat(\lambda,\lambda'')$ is given on
1-morphisms of $\glcat$ by
\begin{equation}
  \mathcal{E}_{\jj}\mathbf{1}_{\lambda'}\{t'\} \times \mathcal{E}_{\ii}\onel\{t\} \mapsto
  \mathcal{E}_{\jj\ii}\onel\{t+t'\}
\end{equation}
for $\ii_{\Lambda}=\lambda-\lambda'$, and on 2-morphisms of $\glcat$ by juxtaposition of
diagrams, e.g.
\[
\left(\xy
 (0,0)*{\includegraphics[width=95px]{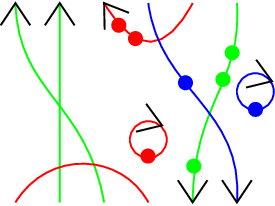}};
 (-16,0)*{\scriptstyle \lambda};
 (19,0)*{\scriptstyle\lambda'};
 \endxy\right)\times\left(\xy
 (0,0)*{\includegraphics[width=35px]{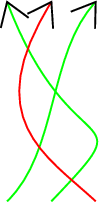}};
 (-8,0)*{\scriptstyle \lambda'};
 (8,0)*{\scriptstyle\lambda''};
 \endxy\right)\mapsto\xy
 (0,0)*{\includegraphics[width=125px]{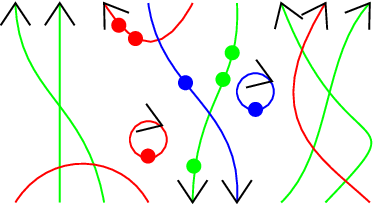}};
 (-20,0)*{\scriptstyle \lambda};
 (22,0)*{\scriptstyle\lambda''};
 \endxy.
\]
\end{itemize}
\end{defn}
This concludes the definition of $\glcat$. 
\vskip0.5cm
Note that for two $1$-morphisms $x$ and $y$ in 
$\glcat$ the $2$-hom-space $\HomGL(x,y)$ 
only contains $2$-morphisms of degree zero and is therefore finite dimensional. 
Following Khovanov and Lauda we introduce the graded $2$-hom-space 
\[
\HOMGL(x,y)=\bigoplus_{t\in\bZ}\HomGL(x\{t\},y),
\]
which is infinite dimensional. We also define the $2$-category 
$\glcat^*$ which has the same objects and $1$-morphisms as $\glcat$, 
but for two $1$-morphisms $x$ and $y$ the vector space of $2$-morphisms is 
defined by 
\begin{equation}
\label{eq:ast}
\glcat^*(x,y)=\HOMGL(x,y).
\end{equation}

It should be noted that $\Ucat$ is defined just as $\glcat$, but labeling 
all the regions of the diagrams with $\mathfrak{sl}_n$-weights, i.e.
elements of 
$\mathbb{Z}^{n-1}$. Note that one also has to re-normalize the signs of the 
left cups and caps, so that the bubble relations all become 
dependent on the $\mathfrak{sl}_n$-weights. For much more details, see~\cite{msv2}. 
\subsubsection{The $q$-Schur $2$-algebra}
The categorification of $S_q(n,n)$ is now obtained from 
$\glcat$ by taking a quotient. 
\begin{defn}
The $2$-category $\Scat(n,n)$ is the quotient of $\glcat$ by the ideal 
generated by all $2$-morphisms containing a region with a label not in 
$\Lambda(n,n)$. 
\end{defn}

We remark that we only put real bubbles, whose interior has a label outside 
$\Lambda(n,n)$, equal to zero. To see what happens to a fake bubble, one 
first has to write it in terms of real bubbles with the opposite orientation 
using the infinite Grassmannian relation~\eqref{eq_infinite_Grass}.

A main result of~\cite{msv2}, given in Theorem 7.11 in that paper, is the following.  
\begin{thm}
Let $\dot{\mathcal S}(n,n)$ denote the Karoubi envelope of 
${\mathcal S}(n,n)$. The 
$\bQ(q)$-linear map 
\[
\gamma_S\colon S_q(n,n)\to K^{\oplus}_0(\dot{\mathcal S}(n,n))_{\bQ(q)},
\]
determined by 
\[
\gamma_S(E_{\ii}1_{\lambda})=[\mathcal{E}_{\ii}1_{\lambda}]
\]
is an isomorphism of algebras. 
\end{thm}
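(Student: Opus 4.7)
The plan is to prove this by first checking that $\gamma_S$ is a well-defined algebra homomorphism, then establishing surjectivity from the generators, and finally tackling injectivity via a basis comparison; this mirrors the Khovanov--Lauda strategy for $\dot{\mathbf U}_q(\mathfrak{sl}_n)$ but exploits the special simplicity of $S_q(n,n)$.

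First, I would verify well-definedness by checking the four defining relations~\eqref{eq:schur1}--\eqref{eq:schur4} of the Serre presentation hold in $K^{\oplus}_0(\dot{\mathcal S}(n,n))_{\bQ(q)}$ after applying $\gamma_S$. Relation~\eqref{eq:schur1} is immediate because identity 1-morphisms at distinct objects compose to the zero 1-morphism. Relation~\eqref{eq:schur2} is precisely the content of the quotient defining $\Scat(n,n)$ from $\glcat$: all $1_\lambda$ with $\lambda\notin\Lambda(n,n)$ are killed, so the remaining $1_\lambda$ (finitely many) form a complete orthogonal decomposition of the identity in the Karoubi envelope. Relation~\eqref{eq:schur3} is built into the combinatorics of the source/target objects of $\mathcal{E}_{\pm i}\onel$. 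The crucial relation~\eqref{eq:schur4} follows from the defining 2-isomorphism~\eqref{eq:EF} together with the infinite Grassmannian/bubble calculus \eqref{eq_positivity_bubbles}--\eqref{eq_infinite_Grass}: on 1-morphisms, $\mathcal{E}_i\mathcal{E}_{-j}\onel$ and $\mathcal{E}_{-j}\mathcal{E}_i\onel$ differ by a direct sum of shifted copies of $\onel$ indexed by dotted bubbles, and these bubbles sum (after passing to classes in $K_0^{\oplus}$) to the quantum integer $[\overline{\lambda}_i]$.

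Next, surjectivity is essentially by construction: the Lemma exhibits $S_q(n,n)$ as generated as an algebra by the $1_\lambda$ and $E_{\pm i}$, and each lies tautologically in the image of $\gamma_S$. Multiplicativity of $\gamma_S$ on these generators is inherited from the horizontal composition in $\glcat$, which descends to $\Scat(n,n)$, so $\gamma_S$ is an algebra homomorphism and is surjective.

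The hard part is injectivity, and this is where I expect to spend the real effort. Since $S_q(n,n)$ is finite dimensional and semisimple with $\dim S_q(n,n)=\binom{n^2+n-1}{n}$, it suffices to exhibit enough linearly independent classes in $K^{\oplus}_0(\dot{\Scat}(n,n))_{\bQ(q)}$. I would do this by lifting a known basis of $S_q(n,n)$ — e.g.\ Lusztig's canonical basis, or the dual canonical basis of Du — to indecomposable 1-morphisms in the Karoubi envelope $\dot{\Scat}(n,n)$: concretely, one writes each basis element as a sum $\sum_{\ii} c_{\ii,\lambda} E_{\ii}1_\lambda$ in $S_q(n,n)$, categorifies this as an idempotent 2-endomorphism of the 1-morphism $\bigoplus_\ii \mathcal{E}_{\ii}\onel\{t_\ii\}$ using nilHecke/KLR 2-morphisms, and identifies its image in the Karoubi envelope as indecomposable. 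Linear independence of the resulting classes follows from an upper-triangularity statement: $[\mathcal{E}_{\ii}\onel] = E_{\ii}1_\lambda + (\text{lower terms})$ with respect to a dominance-style order coming from the Hecke decomposition, using the NilHecke relations~\eqref{eq_nil_rels}--\eqref{eq_nil_dotslide}. The main obstacle is showing that these lifts really are indecomposable and pairwise non-isomorphic; here the shortcut specific to $S_q(n,n)$ is that the Doty--Giaquinto presentation removes the cubical Serre relations, so the 2-category $\Scat(n,n)$ has much smaller hom-spaces than $\glcat$, and one can compare with Webster's cyclotomic quotient picture or directly compute $\mathrm{HOM}$-spaces between the relevant 1-morphisms using the bubble calculus to match the Shapovalov-type pairing on $S_q(n,n)$, thereby forcing the dimensions to coincide.
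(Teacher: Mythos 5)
The paper does not give its own proof of this statement; it cites it as Theorem~7.11 of~\cite{msv2}, so there is no in-paper argument against which to compare yours directly. That said, here is an assessment of the proposal on its own terms.

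Your treatment of well-definedness and surjectivity is on the right track. The relations~\eqref{eq:schur1}, \eqref{eq:schur3} are built into the combinatorics of 1-morphisms, \eqref{eq:schur2} just records that the unit of $K^{\oplus}_0(\dot{\mathcal S}(n,n))_{\bQ(q)}$ is $\sum_{\lambda\in\Lambda(n,n)}[\mathbf 1_\lambda]$, and \eqref{eq:schur4} is a decategorification of the direct sum decompositions implied by~\eqref{eq:EF} together with the bubble relations. Surjectivity is indeed automatic from the Serre-type generating set. One small inaccuracy: the reason $\Scat(n,n)$ has smaller 2-hom-spaces than $\glcat$ is the quotient by the ideal of diagrams passing through weights outside $\Lambda(n,n)$, not the omission of the cubical Serre relations; at the decategorified level those cubical relations are \emph{consequences} of finite dimensionality, they are not a separate simplification at the 2-categorical level.

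The genuine gap is in the injectivity step, and it is the same gap that confronted Khovanov and Lauda in the $\U$ case: you need a \emph{non-degeneracy} result for the 2-hom-spaces, not merely a spanning set for them. Your phrase ``directly compute HOM-spaces\ldots to match the Shapovalov-type pairing'' presupposes that you know the graded dimensions of 2-hom-spaces on the nose; but the defining relations only give you a spanning set (hence upper bounds on $\dim_q\HomGL$), whereas linear independence of $[\mathcal E_{\ii}\mathbf 1_\lambda]$-classes requires lower bounds. Those lower bounds cannot be extracted from diagrammatic manipulations alone. The mechanism used in~\cite{msv2} (and in~\cite{kl3} before it) is to construct an explicit graded 2-representation of $\Scat(n,n)$ on a category built from equivariant cohomology of partial flag varieties, check that the prescribed spanning set of 2-morphisms acts by linearly independent operators there, and then decategorify: the induced $S_q(n,n)$-module map factors through $\gamma_S$, and faithfulness of the target module forces injectivity. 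Your alternative route --- lifting canonical basis elements to indecomposable 1-morphisms and invoking an upper-triangularity argument --- has the same problem in disguise: proving that those lifts are indecomposable and pairwise non-isomorphic is equivalent to computing graded 2-hom dimensions, which again rests on non-degeneracy. In short, some explicit 2-representation (flag varieties, cyclotomic KLR quotients, or polynomial representations) must appear as an input; it cannot be avoided by bubble calculus alone.
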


Recall also (see Definition 4.1 in~\cite{msv2}) that there is an 
essentially surjective and full additive $2$-functor
\[
\Psi_{n,n}\colon \Ucat\to \Scat(n,n),
\]
whose precise definition is not relevant here. Up to signs related to 
cups and caps, it is obtained by mapping any string diagram to itself 
and applying $\phi_{n,n}$ to the labels of the regions. By convention, any diagram 
with a region labeled $*$ is taken to be zero. 
It is important to note that 
\[
K^{\oplus}_0(\Psi_{n,n})_{\bQ(q)}\colon K^{\oplus}_0(\UcatD)_{\bQ(q)}
\to K^{\oplus}_0(\ScatD(n,n))_{\bQ(q)}
\]
corresponds to the aforementioned surjective homomorphism
\[
\psi_{n,n}\colon \U\to S_q(n,n).
\] 
\subsubsection{The cyclotomic KLR-algebras}
In this subsection, we recall the definition of the cyclotomic KLR-algebras, 
due to Khovanov and Lauda~\cite{kl1} and, independently, 
to Rouquier~\cite{rou}. We also recall two important results about them. 

Fix $\nu\in\mathbb{Z}_{\leq 0}[I]$. Let $\mathrm{Seq}(\nu)$ be the set of all 
sequences $\ii=(-i_1,-i_2,\cdots,-i_m)$, such that $i_k\in I$ for 
each $k$ and $\nu_j=\#\{k\mid i_k=j\}$. 
\begin{defn} For any $\ii,\jj\in \mathrm{Seq}(\nu)$ and any 
$\mathfrak{gl}_n$-weight $\lambda\in\mathbb{Z}^{n}$, let 
\[
{}_{\ii}R^{\nu}_{\jj}\subset 
\mathrm{End}_{\glcat}({\mathcal E}_{\jj}1_{\lambda},{\mathcal E}_{\ii}1_{\lambda})
\]  
be the subalgebra containing only diagrams which are oriented downwards. So, 
only strands oriented downwards with dots and crossings are allowed. No 
strands oriented upwards, no cups and no caps. The relations 
in $\glcat$ involving only downward strands do not depend on $\lambda$. 
Therefore, the definition above makes sense. In~\cite{kl1}, 
the authors do not label the regions of the diagrams. 

Then $R^{\nu}$ is defined as 
\[
R^{\nu}=\bigoplus_{\ii,\jj\in \mathrm{Seq}(\nu)} {}_{\ii}R^{\nu}_{\jj}.
\]

The ring $R$ is defined as 
\[
R=\bigoplus_{\nu\in\mathbb{Z}_{\leq 0}[I]}R^{\nu}.
\]
\end{defn}
As remarked above, the definition of $R^{\nu}$ does not depend on $\lambda$. 
However, when we use a particular $\lambda$, we will write $R^{\nu}1_{\lambda}$. 

Note that $R^{\nu}$ is unital, whereas $R$ has infinitely many idempotents. 

Let $R^{\nu}\text{-}\mathrm{p\textbf{Mod}}_{\mathrm{gr}}$ be the category of graded, 
finitely generated, projective $R^{\nu}$-modules and define 
\[
R\text{-}\mathrm{p\textbf{Mod}}_{\mathrm{gr}}=
\bigoplus_{\nu\in\mathbb{Z}_{\leq 0}[I]}R^{\nu}\text{-}\mathrm{p\textbf{Mod}}_{\mathrm{gr}}.
\]

In Proposition 3.18 in~\cite{kl1}, Khovanov and Lauda showed that 
$R\text{-}\mathrm{p\textbf{Mod}}_{\mathrm{gr}}$ categorifies the negative half of 
$\U$ and $R^{\nu}\text{-}\mathrm{p\textbf{Mod}}_{\mathrm{gr}}$ categorifies the 
$\nu$-root space. 
\vskip0.5cm
We can now recall the definition of the \textit{cyclotomic KLR-algebras}. The reader can find more details in~\cite{kl1} or ~\cite{rou}, for example. 
\begin{defn}
Choose a dominant $\Ugl$-weight $\lambda\in\Lambda(n,n)^+$. 
Let $R_{\lambda}^{\nu}$ be the quotient algebra 
of $R^{\nu}1_{\lambda}$ by the ideal generated by all diagrams of the form
\begin{align*}
\xy
(0,0)*{\includegraphics[width=75px]{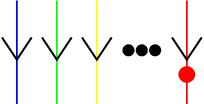}};
(-11,-8.5)*{\scriptstyle i_1};
(-5.5,-8.5)*{\scriptstyle i_2};
(0,-8.5)*{\scriptstyle i_3};
(12,-8.5)*{\scriptstyle i_m};
(14.5,-5)*{\scriptstyle \overline{\lambda}_m};
(14,0)*{\scriptstyle\lambda};
\endxy.
\end{align*}
Recall that $\overline{\lambda}_m=\lambda_m-\lambda_{m+1}$, the $m$-th entry 
of the $\mathfrak{sl}_n$-weight corresponding to $\lambda$. 

Define 
\[
R_{\lambda}=\bigoplus_{\nu\in\mathbb{Z}_{\leq 0}[I]}R_{\lambda}^{\nu}.
\]
\end{defn}

Note that we mod out by relations involving dots on the last strand, rather 
than the first strand as in~\cite{kl1}. This is to make the definition 
compatible with the other definitions in our paper. 

It turns out that $R_{\lambda}$ is a finite dimensional, unital algebra (Corollary 2.2 in~\cite{bk2}). Let 
$R_{\lambda}\text{-}\mathrm{p\textbf{Mod}}_{\mathrm{gr}}$ be its category of 
finite dimensional, graded, projective modules and  
$R_{\lambda}\text{-}\mathrm{\textbf{Mod}}_{\mathrm{gr}}$ its category of 
all finite dimensional modules.  
\vskip0.5cm
There is a strong $\mathfrak{sl}_n$-$2$-representation 
on $R_{\lambda}\text{-}\mathrm{\textbf{Mod}}_{\mathrm{gr}}$, 
which can be restricted to 
$R_{\lambda}\text{-}\mathrm{p\textbf{Mod}}_{\mathrm{gr}}$ 
(Section 4.4 in~\cite{bk}). This basically 
means that the $1$-morphisms of $\Ucat$ act as endofunctors and the 
$2$-functors as natural transformations, such that the relations in $\Ucat$ 
are preserved. For a precise definition of a strong $2$-representation 
see~\cite{cl}, which is almost equal to Rouquier's definition of 
a $2$-representation in~\cite{rou}) but satisfies one extra condition.  

Brundan and Kleshchev~\cite{bk} proved the following result, which was first conjectured by Khovanov and Lauda (see also~\cite{kakash},
~\cite{lv},~\cite{vv} and~\cite{we1}). From now on, we will always use the 
notation 
\[
\mathcal{V}_{\lambda}=R_{\lambda}\text{-}\mathrm{p\textbf{Mod}}_{\mathrm{gr}}.
\]
Since $\mathcal{V}_{\lambda}$ is a strong $\mathfrak{sl}_2$-$2$-representation, 
$K_0^{\oplus}(\mathcal{V}_{\lambda})$ is a $\U$-module. 
\begin{thm}
\label{thm:bk}
There exists a degree preserving isomorphism of $\UZ(\mathfrak{sl}_n)$-modules
\[
\gamma_V\colon V_{\lambda}\to K^{\oplus}_0(\mathcal{V}_{\lambda})_{\bQ(q)}.
\]
Here $V_{\lambda}$ is the irreducible $\UZ(\mathfrak{sl}_n)$-module with 
highest weight $\overline{\lambda}$.  
\end{thm}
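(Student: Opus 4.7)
The plan is to construct $\gamma_V$ using the strong $\mathfrak{sl}_n$-$2$-representation on $\mathcal{V}_{\lambda}$ and then establish that the resulting map is a well-defined isomorphism of $\UZ(\mathfrak{sl}_n)$-modules. By the general formalism for strong $2$-representations recalled just above, the induction and restriction functors associated with the $1$-morphisms $\mathcal{E}_{\pm i}$ descend to operators on $K_0^{\oplus}(\mathcal{V}_{\lambda})_{\bQ(q)}$ satisfying the relations of $\U$, where the grading shift $\{t\}$ induces multiplication by $q^t$. In particular, $K_0^{\oplus}(\mathcal{V}_{\lambda})_{\bQ(q)}$ acquires the structure of a graded $\UZ(\mathfrak{sl}_n)$-module whose weight space decomposition is governed by the decomposition $R_\lambda = \bigoplus_\nu R_\lambda^\nu$.

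Next, I would identify a distinguished highest weight vector. The empty sequence idempotent $1_{\emptyset}\in R_{\lambda}^0$ yields a rank one projective module $P_\emptyset$ whose $\mathfrak{gl}_n$-weight equals $\lambda$, hence whose $\mathfrak{sl}_n$-weight equals $\overline{\lambda}$. The cyclotomic relation forces any composition with a diagram beginning on the right with a dotted strand labeled $i$ for some $i$ with $\overline{\lambda}_m \geq 1$ to vanish through a bounded number of applications, and in particular the functors categorifying the raising operators $E_{+i}$ act by zero on $[P_{\emptyset}]$. By the universal property of $V_{\lambda}$ as the irreducible highest weight $\UZ(\mathfrak{sl}_n)$-module of highest weight $\overline{\lambda}$, there is a unique $\UZ(\mathfrak{sl}_n)$-module homomorphism
\[
\gamma_V\colon V_{\lambda}\to K^{\oplus}_0(\mathcal{V}_{\lambda})_{\bQ(q)}
\]
sending the canonical highest weight vector to $[P_\emptyset]$.

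To prove surjectivity I would argue that every indecomposable graded projective $R_\lambda^\nu$-module is, up to a shift, a direct summand of $\mathcal{F}_{\ii} P_{\emptyset}$ for some signed sequence $\ii$ consisting only of lowering letters, which follows from the fact that the $\mathcal{E}_{-i}$ functors generate $R_\lambda$ from $1_{\emptyset}$ as a module over itself (equivalently, every element of $\mathrm{Seq}(\nu)$ is obtained by adding lowering strands to the empty sequence). For injectivity, one compares graded characters: the categorical action shows that the formal character of $K^{\oplus}_0(\mathcal{V}_{\lambda})_{\bQ(q)}$ is a quotient of that of the Verma module, while a direct computation of $\dim_q\bigl(1_{\ii}R_\lambda^\nu 1_{\jj}\bigr)$ coming from the combinatorics of the cyclotomic relation matches the Shapovalov form on $V_\lambda$; equivalently, by the Chuang--Rouquier theory used inductively on the $\mathfrak{sl}_2$-subalgebras generated by each $E_{\pm i}$, the $\mathfrak{sl}_n$-$2$-representation on $\mathcal{V}_{\lambda}$ is \emph{integrable} of the prescribed highest weight, so $K^{\oplus}_0(\mathcal{V}_{\lambda})_{\bQ(q)}$ is an integrable highest weight module with highest weight $\overline{\lambda}$.

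The principal obstacle is the integrability and simplicity of $K^{\oplus}_0(\mathcal{V}_{\lambda})_{\bQ(q)}$: one must rule out the possibility that $\gamma_V$ lands in a proper submodule or that $K^{\oplus}_0(\mathcal{V}_{\lambda})_{\bQ(q)}$ is strictly larger than $V_\lambda$. The first is handled by explicit constructions of non-zero classes in each $\overline{\lambda}$--dominated weight space via the growth of projective generators under the lowering functors, and the second, which is the harder direction and the content of the Brundan--Kleshchev argument, requires a delicate analysis of the graded dimension of $R_\lambda$ and a comparison with Lusztig's canonical basis of $V_\lambda$; it is this graded dimension computation (or the equivalent inductive Chuang--Rouquier categorical $\mathfrak{sl}_2$-equivalence applied crossing by crossing) that is the technical heart of the proof.
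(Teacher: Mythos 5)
The paper does not prove Theorem~\ref{thm:bk}: it is stated with attribution to Brundan and Kleshchev and is used as a black box, with references to \cite{bk} and, for alternative proofs, to \cite{kakash}, \cite{lv}, \cite{vv}, and \cite{we1}. So there is no ``paper's own proof'' to compare against; what you have written is a sketch of how one might reconstruct the cited result.

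As a sketch, your outline is consistent with one of the cited routes --- roughly the Kang--Kashiwara/Webster/Rouquier strategy, in which one checks directly that $K_0^{\oplus}(\mathcal{V}_\lambda)_{\bQ(q)}$ is an integrable highest-weight $\U$-module with highest weight $\overline{\lambda}$ (integrability coming from Chuang--Rouquier $\mathfrak{sl}_2$-theory applied to each rank-one subalgebra), whence it must equal $V_\lambda$. You correctly identify $[P_\emptyset]$ as the highest-weight class (though the justification you give via the cyclotomic relation is loosely stated: the immediate reason $\mathcal{E}_{+i}$ kills the weight-$\lambda$ block is that it would land in a vanishing weight space, not because of iterated applications of the cyclotomic relation), and you correctly observe that surjectivity follows because every projective is a summand of $\mathcal{F}_{\ii}P_\emptyset$. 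However, the actual Brundan--Kleshchev proof in \cite{bk} proceeds differently: they transfer the problem to cyclotomic Hecke algebras via the explicit isomorphism of \cite{bk2}, and then use graded decomposition numbers and a graded lift of Ariki's categorification theorem. Your sketch therefore mixes the two approaches rather than following either one.

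More importantly, the sketch defers essentially all of the content to two unproved assertions --- integrability of the $2$-representation, and the graded-dimension/Shapovalov computation --- which you yourself flag as ``the technical heart.'' Those are precisely the theorems whose proofs occupy the cited papers, so as a self-contained proof proposal there is a genuine gap: nothing beyond the formal highest-weight reduction has actually been established. If your goal was to reprove the theorem you would need to carry out one of: (i) the Chuang--Rouquier integrability argument in full, as in \cite{rou} or \cite{we1}; or (ii) the graded-dimension computation against Lusztig's bilinear form, as in \cite{kakash}; or (iii) the Hecke-algebra transfer of \cite{bk}. If your goal was to reconstruct what the paper does, the answer is simpler: the paper cites the result and does not reprove it.
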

In Section~\ref{sec:Grothendieck}, we will recall some extra properties of $\gamma_V$ which Brundan and 
Kleshchev showed. 

Rouquier showed that, in a certain sense, 
$R_{\lambda}$ is the universal categorification of 
$V_{\lambda}$. For a proof of the following 
result, see Lemma 5.4, Proposition 5.6 and Corollary 5.7 in~\cite{rou}.

\begin{prop}
\label{prop:rouquier}
Let $\mathcal V$ be any additive, idempotent complete category, which allows 
an integrable, graded, categorical action by $\Ucat$ (for the precise definition 
see~\cite{rou}). Suppose $V_h$ is a highest weight object in $\mathcal V$, i.e 
an object that is killed by ${\mathcal E}_{+i}$, for all $i\in I$, and 
$\mathrm{End}_{\mathcal V}(V_h)\cong \mathbb{C}$. Suppose also 
that any object in $\mathcal V$ is a direct summand of $XV_h$, for some 
object $X\in\Ucat$. Then there exists an equivalence of 
categorical $\Ucat$-representations
\[
\Phi\colon \mathcal{V}_{\lambda}\to {\mathcal V}.
\]
\end{prop}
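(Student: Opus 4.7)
The plan is to exploit the universal property of the minimal categorification $\mathcal{V}_{\lambda}$, which is essentially what this proposition expresses. I would define $\Phi$ by sending the distinguished projective $R_{\lambda}1_{\emptyset}$ (the object corresponding to the highest weight line) to $V_h$, and extending by $\Ucat$-equivariance, so that each indecomposable projective summand of $R_{\lambda}1_{\underline{i}} \cong \mathcal{E}_{-\underline{i}}\cdot R_{\lambda}1_{\emptyset}$ maps to the corresponding summand of $\mathcal{E}_{-\underline{i}}V_h$. Since $\mathcal{V}_{\lambda}$ is the category of finitely generated projectives and every object is a summand of some $R_{\lambda}1_{\underline{i}}$, this determines $\Phi$ up to the choice of how it acts on morphisms, which is in turn forced by the 2-representation structure.

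The key technical point is to show that
\[
\bigoplus_{\underline{i},\underline{j}}\mathrm{HOM}_{\mathcal V}\bigl(\mathcal{E}_{-\underline{j}}V_h,\mathcal{E}_{-\underline{i}}V_h\bigr)\cong R_{\lambda}
\]
as graded algebras, where the morphism from $R_\lambda$ is built from the 2-morphisms of $\Ucat$ (downward strands with dots and crossings) acting on $V_h$ via the categorical action. The cyclotomic relation — a dot raised to power $\overline{\lambda}_m$ on the rightmost strand of $\mathcal{E}_{-i_1}\cdots\mathcal{E}_{-i_m}1_{\lambda}$ — must vanish on $V_h$. This follows from applying the curl relation~\eqref{eq:redtobubbles} to reduce the dotted strand to a composition of a lower power of dots with dotted bubbles living in the highest weight region; since $V_h$ is killed by every $\mathcal{E}_{+i}$, it is convenient to rotate and use that the relevant bubbles on the region labeled $\lambda$ have prescribed values forced by the infinite Grassmannian relation~\eqref{eq_infinite_Grass} and the degree-zero bubble normalization~\eqref{eq:bubb_deg0}. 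Together with the one-dimensionality of $\mathrm{End}_{\mathcal V}(V_h)$ and the integrability of the action, this forces the map from $R_{\lambda}$ to be an isomorphism: surjectivity comes from the construction, and injectivity from the fact that $R_{\lambda}$ already categorifies $V_{\lambda}$ (Theorem~\ref{thm:bk}) so both sides have the same graded dimension once one checks that $\bigoplus_{\underline{i}}[\mathcal{E}_{-\underline{i}}V_h]$ spans a copy of $V_{\lambda}$ inside $K^{\oplus}_0({\mathcal V})_{\bQ(q)}$.

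Granting the endomorphism algebra identification, $\Phi$ is then defined as a tensor (or Hom) functor between the projective module categories over isomorphic algebras, hence is fully faithful on the subcategory generated by the $\mathcal{E}_{-\underline{i}}V_h$. Essential surjectivity follows immediately from the spanning hypothesis together with idempotent completeness of $\mathcal V$: every object of $\mathcal V$ is a summand of some $XV_h$, and one may reduce to $X=\mathcal{E}_{-\underline{i}}$ because $\mathcal{E}_{+i}V_h=0$ and the 2-isomorphisms in~\eqref{eq:EF} allow one to commute $\mathcal{E}_{+i}$'s past $\mathcal{E}_{-j}$'s until they annihilate $V_h$. Equivariance of $\Phi$ with respect to the two $\Ucat$-actions is built into the construction.

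The main obstacle is the verification that the cyclotomic ideal vanishes on $V_h$, i.e.\ that the $\Ucat$-action on $V_h$ genuinely factors through the cyclotomic quotient. All the other pieces — essential surjectivity, constructing the candidate functor, even proving it is full — are formal once this is established. This step is precisely where both the highest weight hypothesis and the integrability hypothesis are used in an essential way, and it requires the careful manipulation of curl and bubble relations alluded to above; this is the heart of Rouquier's argument in Proposition~5.6 of~\cite{rou}.
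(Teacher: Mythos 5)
Your route is the one the paper itself leans on: it gives no independent proof, but defers to Rouquier's Lemma 5.4 and Proposition 5.6 in~\cite{rou}, whose argument is exactly your outline (kill the cyclotomic relation on $V_h$, identify $R_\lambda$ with the endomorphism algebra of $\bigoplus_{\ii}\mathcal{E}_{\ii}V_h$ for downward sequences $\ii$, then conclude by a projective-generator argument, with essential surjectivity coming from the hypothesis that every object is a summand of some $XV_h$ and from commuting the $\mathcal{E}_{+i}$'s past the $\mathcal{E}_{-j}$'s via~\eqref{eq:EF}). Your treatment of the cyclotomic relation is correct in substance: the curl adjacent to the region labelled $\lambda$ factors through $\mathcal{E}_{+i}$ applied to $V_h$ and hence vanishes, all positive-degree bubbles act by zero on $V_h$ because $\mathrm{End}_{\mathcal V}(V_h)\cong\mathbb{C}$ is concentrated in degree zero, and the degree-zero bubble is $\pm 1$, so the extremal dot power must vanish.

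The step that does not hold up as written is the identification of $R_\lambda$ with $\bigoplus \mathrm{HOM}_{\mathcal V}(\mathcal{E}_{-j_1}\cdots\mathcal{E}_{-j_m}V_h,\,\mathcal{E}_{-i_1}\cdots\mathcal{E}_{-i_l}V_h)$. First, surjectivity is not ``by construction'': there is no a priori reason that every morphism in $\mathcal V$ between these objects is the image of a $2$-morphism of $\Ucat$. Second, injectivity cannot be deduced from the fact that the classes $[\mathcal{E}_{-i_1}\cdots\mathcal{E}_{-i_m}V_h]$ span a copy of $V_\lambda$ in $K_0^{\oplus}(\mathcal V)_{\bQ(q)}$: Grothendieck classes do not control graded $\mathrm{HOM}$-dimensions, and a proper graded quotient of $R_\lambda$ could still have the same number of indecomposable projectives and the same $K_0$. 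Both points are settled by one and the same computation, which is the real content of Rouquier's Lemma 5.4: use biadjointness to rewrite $\mathrm{HOM}_{\mathcal V}(\mathcal{E}_{-\jj}V_h,\mathcal{E}_{-\ii}V_h)$ as $\mathrm{HOM}_{\mathcal V}(V_h, \mathcal{E}_{+j_m}\cdots\mathcal{E}_{+j_1}\mathcal{E}_{-i_1}\cdots\mathcal{E}_{-i_l}V_h)$ up to a shift, then apply~\eqref{eq:EF} (together with $\mathcal{E}_{+i}V_h=0$, the bubble evaluations and idempotent completeness) to decompose the right-hand object into a finite direct sum of degree-shifted copies of $V_h$, with all structure maps images of $2$-morphisms. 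Since $\mathrm{End}_{\mathcal V}(V_h)\cong\mathbb{C}$, this gives surjectivity and computes $\dim_q$ of each $\mathrm{HOM}$-space as the corresponding Shapovalov coefficient; comparison with the graded dimension of the matching piece of $R_\lambda$ (via Theorem~\ref{thm:bk} and the fact, recalled from~\cite{bk}, that $\gamma_V$ intertwines the Shapovalov and Euler forms) then yields injectivity. With that repair, the remainder of your argument (full faithfulness, essential surjectivity, equivariance) goes through as in~\cite{rou}.
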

We will not spell out the precise definition of $\Phi$ in this paper. Let us 
just remark that, up to natural isomorphism, $\Phi$ is uniquely determined by 
the fact that it sends the highest weight object to the highest weight object 
and intertwines the $\mathfrak{sl}_n$-$2$-representations.  

There are some subtle differences between 
Rouquier's approach to categorification and Khovanov and Lauda's. However, 
Proposition~\ref{prop:rouquier} holds in both setups, as already remarked by 
Webster in Section 1.4 in~\cite{we1}. The proof of Proposition~\ref{prop:rouquier} consists of Rouquier's remarks in Section 5.1.2 and of the 
contents of his proofs of Lemma 5.4 and Proposition 5.6 in~\cite{rou}, which 
only rely on the assumptions in the statement of our 
Proposition~\ref{prop:rouquier} and the fact that $\mathcal{E}_{+i}$ and 
$\mathcal{E}_{-i}$ are biadjoint in $\Ucat$, for any $i\in I$. The precise 
definition of the units and the counits, i.e. the cups and the caps, 
is not relevant for 
the validity of the proof. Note that we have included the hypothesis 
\[
\mathrm{End}_{\mathcal V}(V_h)\cong \mathbb{C}
\] 
in Proposition~\ref{prop:rouquier}, which is not one of 
Rouquier's assumptions. There are categorifications of 
$V_{\lambda}$ without that property. See Conjecture 7.16 in~\cite{msv2} for 
example. However, in order to get a categorification which is 
really equivalent to $\mathcal{V}_{\lambda}$, 
i.e. with hom-spaces of the same graded dimension, 
one needs to add that assumption because it holds in the latter category.


\section{The $\mathfrak{sl}_{3}$-web algebra ${\mathcal W}^S_c$}
\label{sec:webalgebra}
\setcounter{subsection}{1}
For the rest of this section, let $S$ be a fixed sign string of length 
$\ell(S)$. We are going to define the \textit{web algebra} ${\mathcal W}^S_c$.

\begin{defn}
\textbf{(Web algebra)} \label{defn:webalg} For $u,v\in B^S$, we define 
\[
{}_{u}\mathcal{W}^c_{v}=\F^c(u^*v)\{\ell(S)\},
\] 
where $\{\ell\}$ denotes a grading shift upwards by $\ell(S)$ degrees.

The \textit{web algebra} ${\mathcal W}^S_c$ is defined by 
\[
{\mathcal W}^S_c=\bigoplus_{u,v\in B^S}{}_{u}\mathcal{W}^c_{v}.
\]
 
The multiplication on ${\mathcal W}^S_c$ is defined by taking 
\[
{}_{u}{\mathcal W}^c_{v_1}\otimes {}_{v_2}{\mathcal W}^c_{w} \to 
{}_{u}{\mathcal W}^c_{w}
\]
to be zero, if $v_1\ne v_2$, and by the map to be defined in Definition~\ref{multfoam}, 
if $v_1=v_2=v$. 
\end{defn}

\begin{rem}
In Proposition~\ref{prop:multqgrade} we prove that the 
multiplication foam always has
degree $n$, so the degree shift in the definition above 
makes ${\mathcal W}^S_0$ into a graded algebra and, for any $c\ne 0$, it 
makes ${\mathcal W}^S_c$ into a filtered algebra. 
\end{rem}

\begin{defn} \textbf{(Multiplication of closed webs)}
\label{multfoam}
The \textit{multiplication} 
\[
{}_{u}{\mathcal W}^c_{v}\otimes {}_{v}\mathcal{W}^c_{w} \to 
{}_{u}{\mathcal W}^c_{w}
\] 
is induced by the \textit{multiplication foam}  
\[
m_{u,v,w}\colon u^*vv^*w \xrightarrow{Id_{u^*}m_{v}Id_{w}}u^*w,
\]
where $m_v\colon vv^*\to \Ver_{n}$, with $\Ver_{n}$ being the web of $n$ 
parallel oriented vertical line segments, is defined by the following 
inductive algorithm.
\begin{enumerate}
\item Express $v$ using the growth algorithm, label each level of the 
growth algorithm starting from zero. Then form $vv^*$.
\item At the $k$th level in the growth algorithm, \textit{resolve} 
the corresponding pair 
of arc, H or Y-rules in $v$ and $v^*$ by applying the foams.
\begin{align}
 \xy(0,0)*{\label{multrules}\includegraphics[scale=0.7]{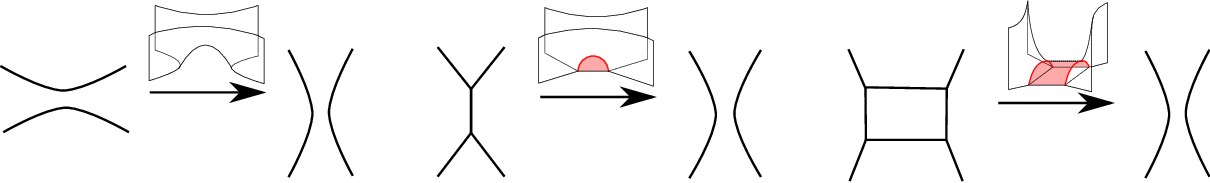}};\endxy
\end{align}
\end{enumerate}
Note that at the last level in the growth algorithm of $v$, only pairs of arcs 
are present.
\end{defn}

\begin{ex}
Let $w$ and $v$ be the following webs. 
\begin{align}
   \xy(0,0)*{\includegraphics[width=180px]{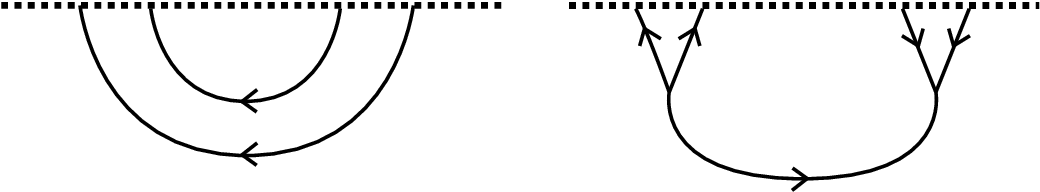}};(-5,0)*{w};
 (28,0)*{v};\endxy
\end{align}
The multiplication foam $m_{w,v,v}$ is given by the following steps.
\begin{align}
   \xy(0,0)*{\includegraphics[width=300px]{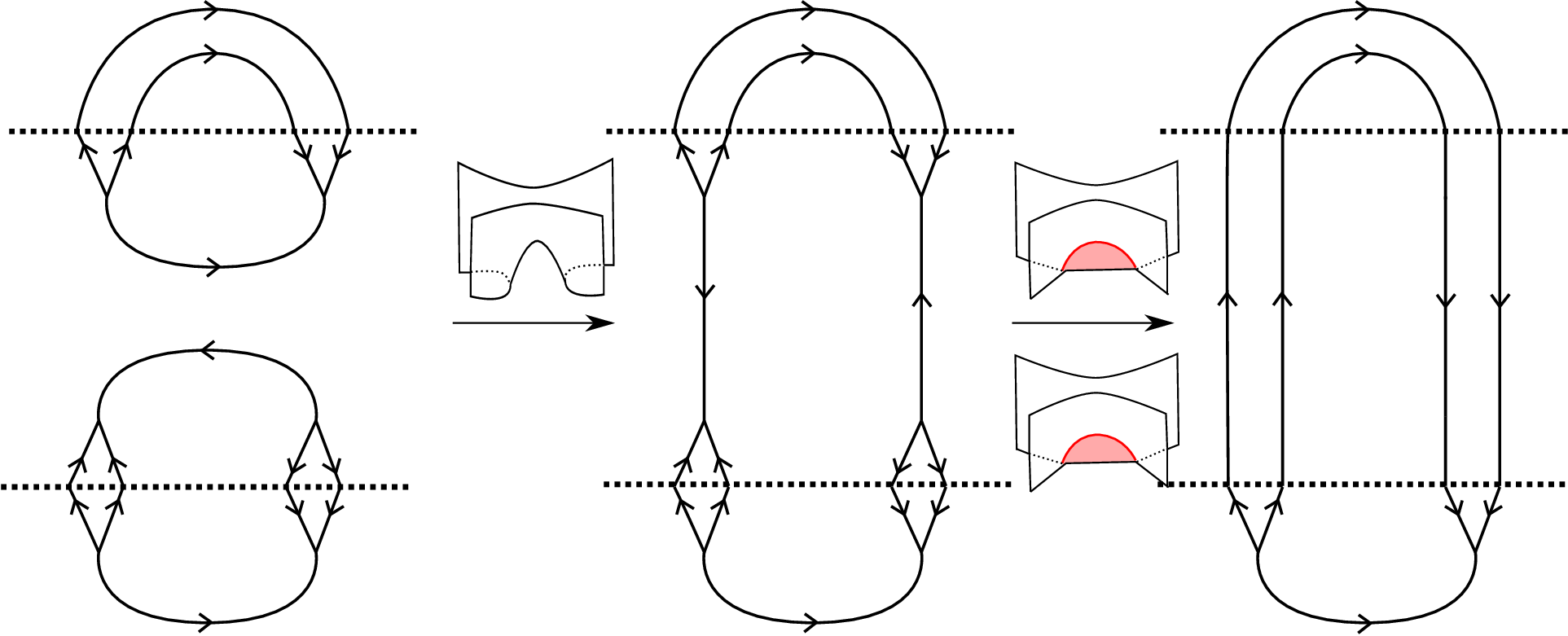}};(51,17)*{w^*};
 (-28,-17)*{v};(-26.3,17)*{w^*};
 (-27.2,-7)*{v^*};(-27,8)*{v};
 (50,-17)*{v};\endxy
\end{align}
\end{ex}
\begin{prop}
The foam $m_v$ in Definition~\ref{multfoam} only depends on the isotopy type of $v$.
\end{prop}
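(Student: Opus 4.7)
The plan is to reduce the assertion to a local commutation property among the foam pieces that build up $m_v$. The construction in Definition~\ref{multfoam} depends a priori on a choice of run of the growth algorithm for $v$, so the real content of the statement is that different valid runs yield isotopic foams.

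First, I would establish a confluence lemma for the growth algorithm on basis webs: any two runs producing the same $v \in B^S$ can be connected by a finite sequence of \emph{elementary swaps}, where such a swap exchanges the order of two rules (H, arc, or Y) applied to disjoint neighboring pairs on the bottom boundary of the partially-built web. I would prove this by induction on the length of the algorithm, and the key point is that whenever two different rules are simultaneously available on the bottom of the current intermediate web, they must act on disjoint boundary segments. Indeed, if two rules tried to share an edge, then completing the growth in either order would force a circle, digon, or square to appear somewhere in $v$, contradicting non-ellipticity of $v \in B^S$.

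Next, I would observe that each local foam piece attached in Definition~\ref{multfoam} is supported in a small three-dimensional neighborhood of the pair of edges where the corresponding growth rule acts, together with its mirror image on the $v^*$-side. Hence, when two growth rules are applied at disjoint horizontal positions on the bottom of the intermediate web, the two corresponding foam pieces (and their $v^*$-mirrors) occupy disjoint sub-prisms of $\mathbb{R}^2 \times [0,1]$. Stacking them in either order therefore produces isotopic composites inside $\foamt^c$, because their supports can be continuously separated vertically without collision. Combining this with the confluence lemma shows that any two runs of the growth algorithm for $v$ yield isotopic foams $m_v$, so $m_v$ depends only on the isotopy type of $v$.

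The hard part will be the combinatorial confluence step, where two subtleties deserve care: the special last level of the growth algorithm, in which only arcs appear (one must check that the pairing of remaining arcs is independent of the chosen run), and the orientation bookkeeping between $v$ and its mirror $v^*$ which ensures that the local foams on either side glue into a genuine pre-foam in $\foamt^c$. Once confluence is granted, the foam-level invariance is a direct three-dimensional isotopy argument, and the extension to $m_{u,v,w} = Id_{u^*} m_v Id_w$ is immediate.
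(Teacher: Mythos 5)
There is a genuine gap, and it sits exactly where the real content of the statement lies. Your argument rests on the claim that any two runs of the growth algorithm for the same $v\in B^S$ differ only by reorderings of rules applied to \emph{disjoint} boundary segments, justified by asserting that two simultaneously available rules sharing an edge would force a circle, digon or square in $v$. That claim is false: two valid runs can differ by genuinely different two-step local moves in the \emph{same} region of $v$, using overlapping boundary edges, without creating any elliptic face. These are precisely the local differences displayed in~\eqref{Yreplace} and~\eqref{Hreplace} of the paper: the same local piece of a non-elliptic web can be built either by a Y-type two-step sequence in one position or by a shifted sequence involving an H, and neither order is obtained from the other by commuting rules with disjoint support. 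So your ``confluence lemma'' cannot be proved in the form you state it, and the disjoint-support stacking argument never gets to address the cases that actually matter.

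Consequently the foam-level part of your argument is also too weak. For the overlapping two-step differences, the resulting multiplication foams are \emph{not} related by sliding disjoint sub-prisms past each other in the height direction; one must exhibit nontrivial isotopies of foams with common support, namely sliding a singular arc over a saddle (Figure~\ref{Yfoam}) and pushing a singular arc sideways across a square foam (Figure~\ref{Hfoam}). This is the heart of the paper's proof; the reordering of rules in distant regions, which is what your proposal handles, is dismissed there in one sentence as a change of height function. To repair your approach you would need to (i) weaken the confluence statement so that the elementary moves connecting two runs include the overlapping exchanges of~\eqref{Yreplace} and~\eqref{Hreplace}, and (ii) verify by hand that the two foams produced by each such exchange are isotopic in $\foamt^c$, which is exactly the case-by-case check the paper performs.
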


\begin{proof} We have to show that $m_v$ is 
independent of the way $v$ is expressed using the growth algorithm 
(Definition~\ref{growth}). Let $G_{1}$ and $G_{2}$ be two different expressions of 
$v$ using the growth algorithm. We have to compare $G_1$ and $G_2$ 
walking backwards in the growth algorithm. Note that we only have to worry 
about two consecutive steps in the same region of $v$. Reordering steps in 
``distant'' regions of $v$ corresponds to an isotopy which simply alters 
the height function on $m_v$. With these observations, the only possible 
remaining difference between the last two steps in $G_1$ and $G_2$ is 
the following.
\begin{align}\label{Yreplace}
	\xy(0,0)*{\includegraphics[width=90px]{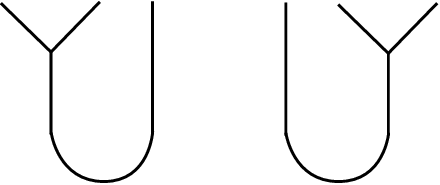}};\endxy
\end{align}
If the last two steps in $G_1$ and $G_2$ are equal, we have to go further back 
in the growth algorithm. Besides two-step differences of the same 
sort as above, we can encounter another one of the following sort.
\begin{align}\label{Hreplace}
   \xy(0,0)*{\includegraphics[width=90px]{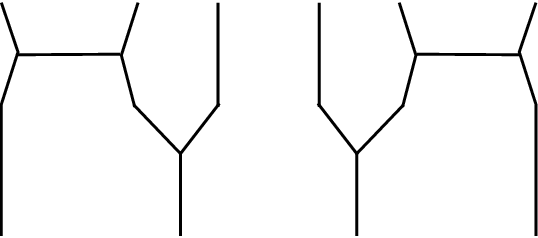}};\endxy
\end{align}
We have to check that the above two-step differences in $G_1$ and $G_2$ 
correspond to equivalent foams. In the first case, the foams in the 
multiplication algorithm are given by 
\begin{figure}[H]
   \centering
     \includegraphics[width=220px]{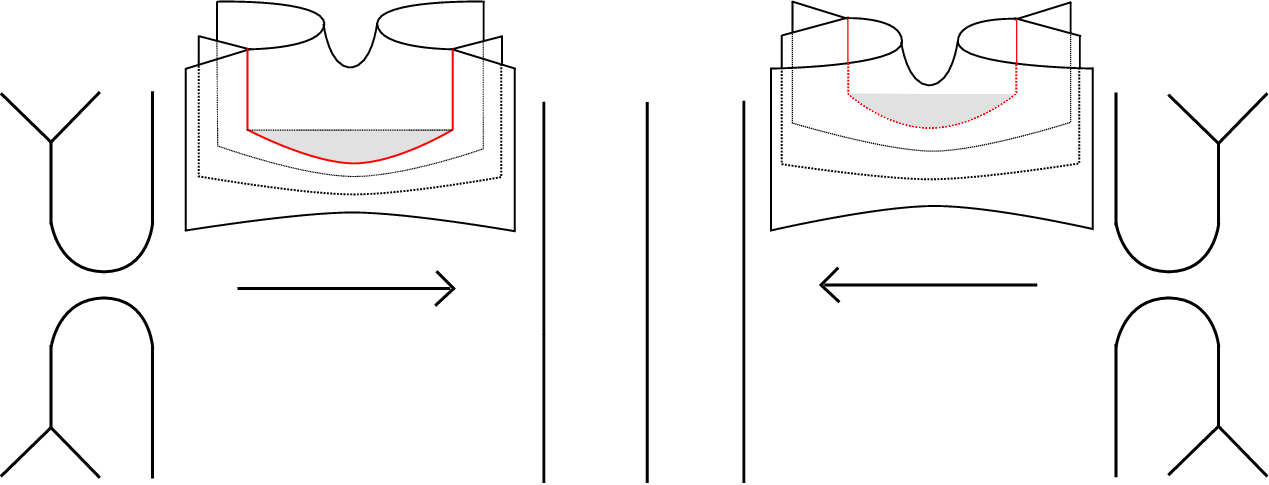}
     \caption{A possible local difference between $m_{G_{1}}$ and $m_{G_{2}}$.}
     \label{Yfoam}
\end{figure}

In the second case, we get 
\begin{figure}[H] 
   \centering
    \includegraphics[width=250px]{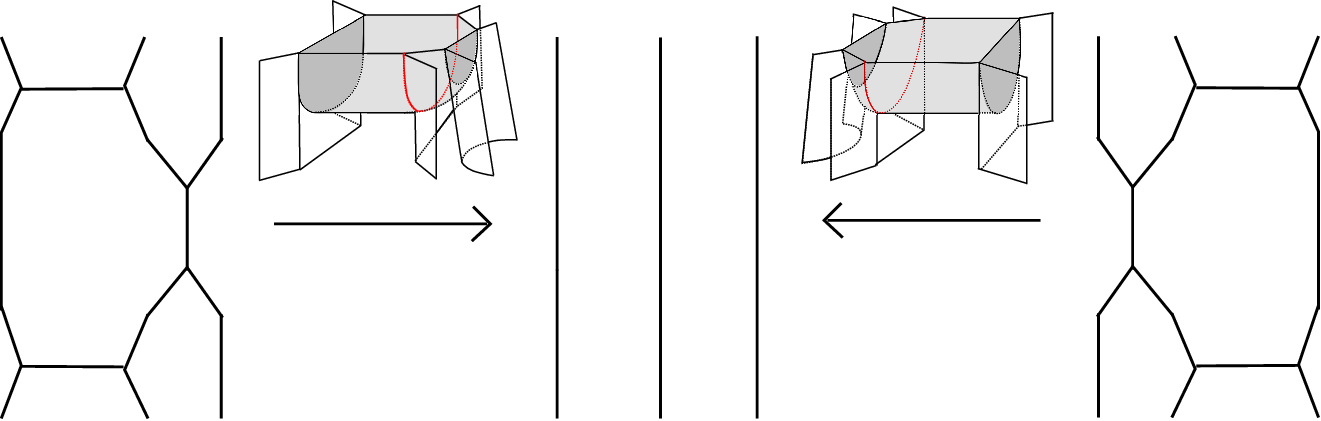}.
    \caption{The other possible local difference between $m_{G_{1}}$ and $m_{G_{2}}$.}
    \label{Hfoam}
\end{figure}

The two foams in Figure~\ref{Yfoam} are isotopic - one foam can be produced 
from the other by sliding the red singular arc over the saddle as illustrated below.
\begin{align}
   \xy(0,0)*{\includegraphics[width=170px]{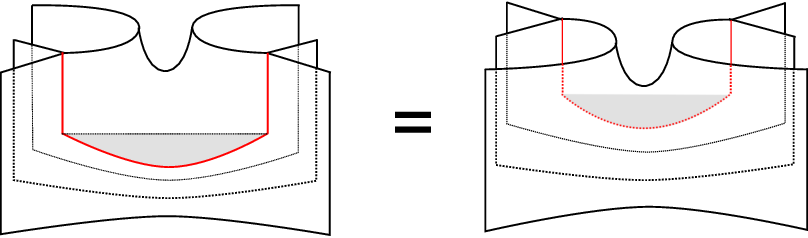}};\endxy
\end{align}
The two foams in Figure~\ref{Hfoam} are also isotopic - one foam can be 
produced from the other by moving the red singular arc to the right or to the 
left as illustrated below.
\begin{align}
   \xy(0,0)*{\includegraphics[width=180px]{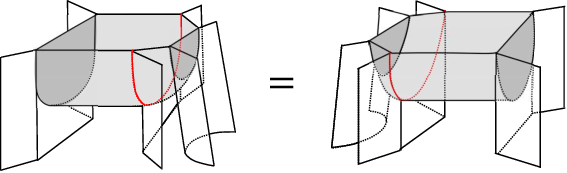}};\endxy 
\end{align}
The cases above are the only possible ones, so their verification provides 
the proof. 
\end{proof}

\begin{prop} \label{prop:multqgrade}
The foam $m_v$ has $q$-grading $\ell(S)$.
\end{prop}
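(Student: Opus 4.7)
My plan is to apply the grading formula $q(U) = \chi(\partial U) - 2\chi(U) + 2d + b$. Since $m_v$ is assembled from the elementary dotless local foams displayed in the figure of Definition~\ref{multfoam}, we have $d = 0$. Moreover, because the source $vv^*$ and target $\mathrm{Ver}_{\ell(S)}$ each carry $\ell(S)$ boundary points on both the top and bottom edges of the web strip, the vertical boundary of $m_v$ consists of $b = 2\ell(S)$ segments.

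First I would compute $\chi(\partial m_v)$ by inclusion--exclusion on the decomposition
\[
\partial m_v = vv^* \cup \mathrm{Ver}_{\ell(S)} \cup (\text{verticals}).
\]
Using $\chi(vv^*) = 2\chi(v)$ (since $vv^*$ is the disjoint union of $v$ with its mirror $v^*$), $\chi(\mathrm{Ver}_{\ell(S)}) = \ell(S)$, $\chi(\text{verticals}) = 2\ell(S)$, together with the fact that each of the two webs meets the vertical segments transversally in $2\ell(S)$ isolated points while being disjoint from each other, I obtain $\chi(\partial m_v) = 2\chi(v) - \ell(S)$. Substituting into the grading formula gives
\[
q(m_v) = \bigl(2\chi(v) - \ell(S)\bigr) - 2\chi(m_v) + 2\ell(S) = 2\bigl(\chi(v) - \chi(m_v)\bigr) + \ell(S),
\]
so the proposition is equivalent to the identity $\chi(m_v) = \chi(v)$.

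I would establish this identity by induction on the number of rules in the growth algorithm for $v$, starting from the empty web as the trivial base case. A direct CW count shows that each rule contributes $\Delta\chi(v) = -1$: the arc-rule adds one new edge and no new vertex; the H-rule adds two new trivalent vertices and three new edges ($2-3 = -1$); the Y-rule adds one trivalent vertex, one new boundary vertex, and a single new free edge after absorbing the two 2-valent vertices produced by extending the strands. One then verifies case-by-case that the corresponding local multiplication foam of the figure in Definition~\ref{multfoam}, when glued to the already-constructed partial foam, contributes a matching $\Delta\chi(m_v) = -1$, by computing the Euler characteristic of the attached piece together with the curve along which it meets the previous foam.

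The main obstacle is the explicit verification of $\Delta\chi(m_v) = -1$ in each of the three cases, particularly the Y-rule, whose local foam carries a singular arc; here one must carefully enumerate the facets, singular edges, and vertices of the attached piece and account for how its gluing curve (including singular-arc endpoints) intersects the previously-constructed partial foam.
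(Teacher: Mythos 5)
Your reduction is correct: with $d=0$, $b=2\ell(S)$ and $\chi(\partial m_v)=2\chi(v)-\ell(S)$, the grading formula indeed gives $q(m_v)=2\bigl(\chi(v)-\chi(m_v)\bigr)+\ell(S)$, so Proposition~\ref{prop:multqgrade} is equivalent to the identity $\chi(m_v)=\chi(v)$, and that identity is true. This is a legitimate repackaging of the statement, different in flavour from the paper's argument, which instead tracks the $q$-degree of the partial foams directly (saddle, unzip and square foam have degree $2$, $1$, $0$ and create $2$, $1$, $0$ new vertical strands, so the degree of the partial foam always equals the number of vertical strands, which is $\ell(S)$ at the end).

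The gap is in the induction you propose for $\chi(m_v)=\chi(v)$. First, the base case is wrong on both sides: the growth algorithm of Definition~\ref{growth} starts from $\ell(S)$ parallel strands, not from the empty web, so the web-side count must start at $\chi=\ell(S)$ (your accounting would give $\chi(v)=-r$ for a web built with $r$ rules, already false for a single arc), and the matching foam-side base has to be the product foam over $\Ver_{\ell(S)}$, again with $\chi=\ell(S)$. Second, and more seriously, the claim that each glued local piece contributes $\Delta\chi(m_v)=-1$ — which you yourself flag as the unverified main obstacle — fails in the bookkeeping your description suggests. If you build $m_v$ in the order in which the algorithm of Definition~\ref{multfoam} resolves, each local piece $L$ is glued along its local source $\sigma$, and the contribution is $\chi(L)-\chi(\sigma)$, which equals $-1$, $0$, $+1$ for the saddle, the unzip and the square foam respectively: the unzip has $\chi(L)=1$ while its source (two trivalent vertices joined by an edge, with four legs) also has $\chi=1$, and the square foam has $\chi(L)=1$ while its source ladder has $\chi=0$. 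The grand total still comes out right only because $\ell(S)=2r_{\mathrm{arc}}+r_{Y}$, so a uniform ``$-1$ per rule'' is simply not what happens in that decomposition. The uniform $-1$ does hold if each local piece is counted against its local target, namely the pair of vertical strands it creates (equivalently, if the partial foams are assembled from the $\Ver_{\ell(S)}$ end), but you have neither fixed this convention nor carried out the case-by-case computation; doing so, including the singular-arc pieces, is exactly the content of the paper's proof, so as written your argument stops short of the decisive step.
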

\begin{proof}
We proceed by backward induction on the level of the growth algorithm 
expressing $v$. At the final level of the growth algorithm, 
the only possible rule is the arc rule. Resolving a corresponding pair of 
arcs in $v$ and $v^*$ results in two new vertical strands and is obtained 
by a saddle point cobordism, which has $q$-grading 2. 

Let $n_{k}$ be the number of vertical strands and $m_v^{k}$ be the foam after 
resolving the last $k$ rules in the growth algorithm of $v$. Suppose that 
$n_k$ is equal to the $q$-degree of $m_v^k$. In the next 
step of the multiplication we can have three cases.
\begin{enumerate}
\item The resolution of a pair of arc-rules. In this case we have 
$n_{k+1}=n_k+2$ and $m_v^{k+1}$ is obtained from $m_v^k$ by adding a saddle, 
which adds 2 to the $q$-grading.
\item The resolution of a pair of Y-rules. In this case we have 
$n_{k+1}=n_k+1$ and $m_v^{k+1}$ is obtained from $m_v^k$ by adding an unzip, 
which adds 1 to the $q$-grading.
\item The resolution of a pair of H-rules. In this case we have 
$n_{k+1}=n_k$ and $m_v^{k+1}$ is obtained from $m_v^k$ by adding a square foam, 
which adds $0$ to the $q$-grading. 
\end{enumerate}
\end{proof}

There is a useful alternative definition of $\mathcal{W}^c_S$, 
which we give below. 
As a service to the reader, we state it as a lemma and prove that it really is 
equivalent to our definition above. Both definitions have their advantages 
and disadvantages, so it is worthwhile to catalogue both in this paper. 

\begin{lem}
\label{lem:webalgaltern}
For any $c\in\mathbb{C}$ and any $u,v\in B^S$, we have a 
grading preserving isomorphism 
\[
\foamt^c(u,v)\cong{}_{u}\mathcal{W}^c_{v}.
\] 

Using this isomorphism, the multiplication  
\[
{}_{u}{\mathcal W}^c_{v}\otimes {}_{v'}{\mathcal W}^c_{w}\to 
{}_{u}{\mathcal W}^c_{w}
\]
corresponds to the composition 
\[
\foamt^c(u,v)\otimes\foamt^c(v',w)\to\foamt^c(u,w),
\]
if $v=v'$, and is zero otherwise. 
\end{lem}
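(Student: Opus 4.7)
The plan is to construct an explicit ``bending'' isomorphism
\[
\Phi_{u,v}\colon \foamt^c(u,v)\longrightarrow \F^c(u^*v)
\]
by rotating the bottom boundary $u$ of a foam $U\colon u\to v$ up around the left vertical wall of the horizontal strip, so that after bending, $u$ reappears on top with reversed orientations as $u^*$, adjacent to the unchanged $v$. The biadjoint (zig-zag) structure intrinsic to $\foamt^c$, coming from the cap and cup foams on elementary arcs, provides a two-sided inverse that unbends $u^*$ back down. Hence $\Phi_{u,v}$ is a $\mathbb{C}$-linear isomorphism.

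To see that $\Phi_{u,v}$ is grading preserving when the target carries the shift $\{\ell(S)\}$, I would use the formula $q(U)=\chi(\partial U)-2\chi(U)+2d+b$. The underlying surface, the dot count $d$, and $\chi(\partial U)$ are all preserved by the bending: a short Euler characteristic count shows that the contribution of the $\ell(S)$ vertical strands to $\chi(\partial U)$ before bending matches the effect of gluing $u^*$ to $v$ along their common $\ell(S)$ boundary points after bending. However the number of vertical boundary components $b$ drops from $\ell(S)$ to $0$, so $q(\Phi(U))=q(U)-\ell(S)$. This is exactly compensated by the shift $\{\ell(S)\}$ in the definition of ${}_u\mathcal{W}^c_v$.

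For the multiplication compatibility, given $U\colon u\to v$ and $V\colon v\to w$, I would verify the identity
\[
\Phi_{u,w}(V\circ U)\;=\;m_{u,v,w}\circ\bigl(\Phi_{u,v}(U)\otimes \Phi_{v,w}(V)\bigr)
\]
inside $\F^c(u^*w)$. The right-hand side stacks the two bent foams horizontally and caps them with $\mathrm{id}_{u^*}\,m_v\,\mathrm{id}_w$; by the zig-zag identities, this equals the bent version of the vertical stacking $V\circ U$, provided that $m_v$ coincides with the canonical evaluation foam $\mathrm{ev}_v\colon vv^*\to \Ver_n$ furnished by biadjointness. The identification $m_v=\mathrm{ev}_v$ is the main obstacle. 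I would proceed by induction on the number of levels in the growth-algorithm expression of $v$: at each level, the arc/Y/H-resolution foams prescribed in Definition~\ref{multfoam} match the elementary components of the standard evaluation foam (a saddle, an unzip, or a square foam), and concatenating them over all levels yields the full evaluation. The already established independence of $m_v$ from the choice of growth-algorithm expression, together with the foam relations, makes this inductive identification unambiguous.

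When $v_1\neq v_2$ the multiplication is zero by definition, and composition of foams with incompatible intermediate boundaries is likewise undefined, so the compatibility is trivial in that case.
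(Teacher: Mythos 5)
Your proposal is correct and takes essentially the same route as the paper: both proofs rest on the ``bending'' isomorphism $\foamt^c(u,v)\cong\F^c(u^*v)$ (the paper shows this and the arc/Y/H compatibility purely as figures), and your inductive identification of $m_v$ with the biadjointness counit is exactly what the paper's three figures for the arc, Y, and H resolutions encode pictorially. You go somewhat further than the written proof by making the grading shift explicit: the Euler-characteristic count showing $\chi(\partial U)$ is preserved while $b$ drops by $\ell(S)$, hence $q(\Phi(U))=q(U)-\ell(S)$, is correct and supplies a verification that the paper leaves implicit.
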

\begin{proof}
The isomorphism of the first claim is sketched in the following figure.
\begin{align*}
	\xy(0,0)*{\includegraphics[width=180px]{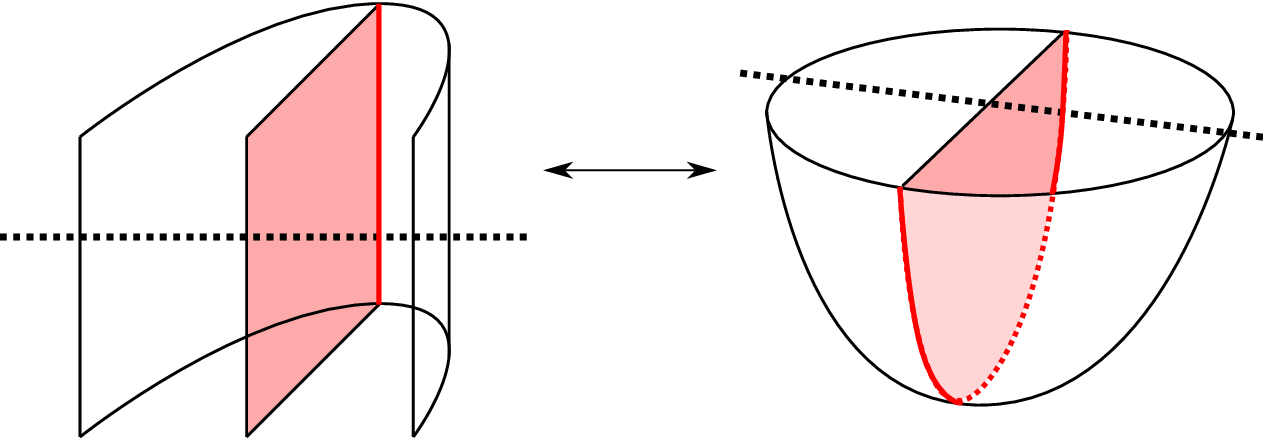}};(-5.5,5)*{v^*};
 (28.5,10.5)*{v^*};(-6,-6)*{v};
 (23,-1)*{v};\endxy
\end{align*}
The proof of the second claim follows from analysing what the 
isomorphism does to the resolution of a pair of arc, Y or H-rules in the 
multiplication foam. This is done below. 
\begin{align*}
	\xy(0,0)*{\includegraphics[width=180px]{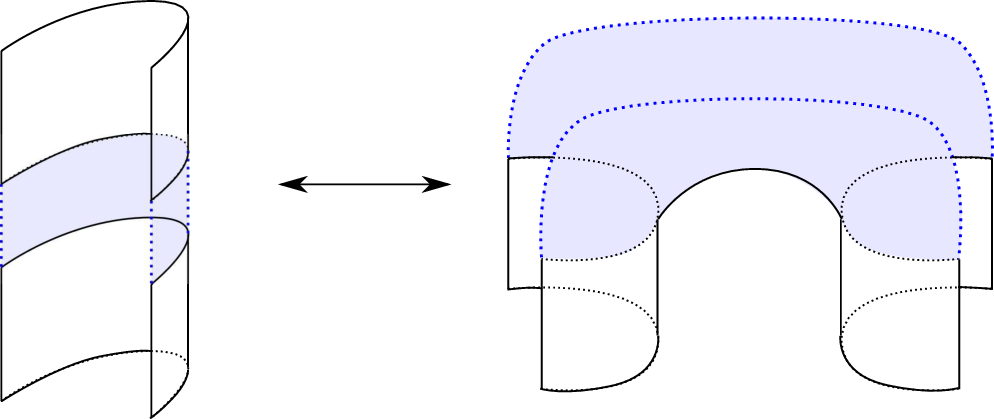}};(-17,8)*{f_2};
 (-1,-7.5)*{f_1};(-17,-7)*{f_1};
 (33,-7.5)*{f_2};\endxy\\
	\xy(0,0)*{\includegraphics[width=180px]{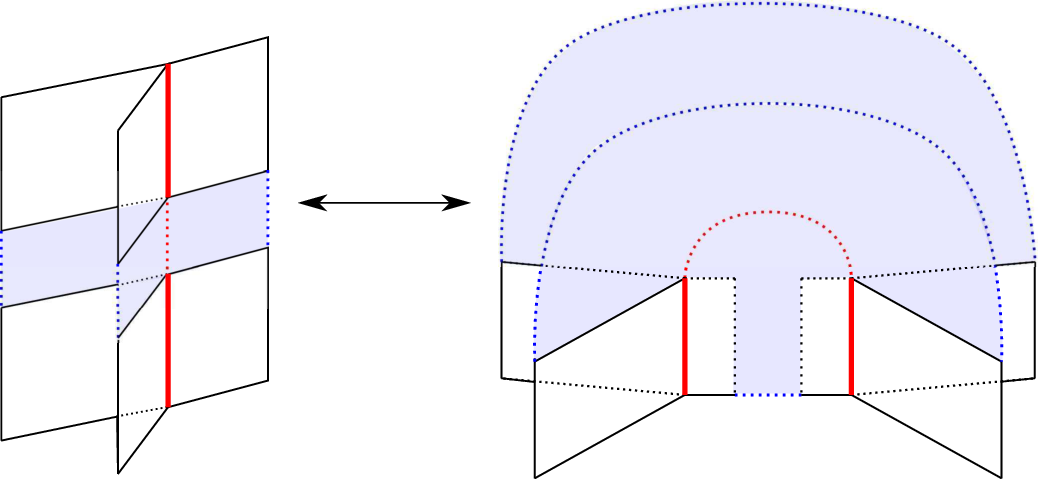}};(-13,9)*{f_2};
 (-4,-9.5)*{f_1};(-13,-5.5)*{f_1};
 (33,-9.5)*{f_2};\endxy\\
	\xy(0,0)*{\includegraphics[width=180px]{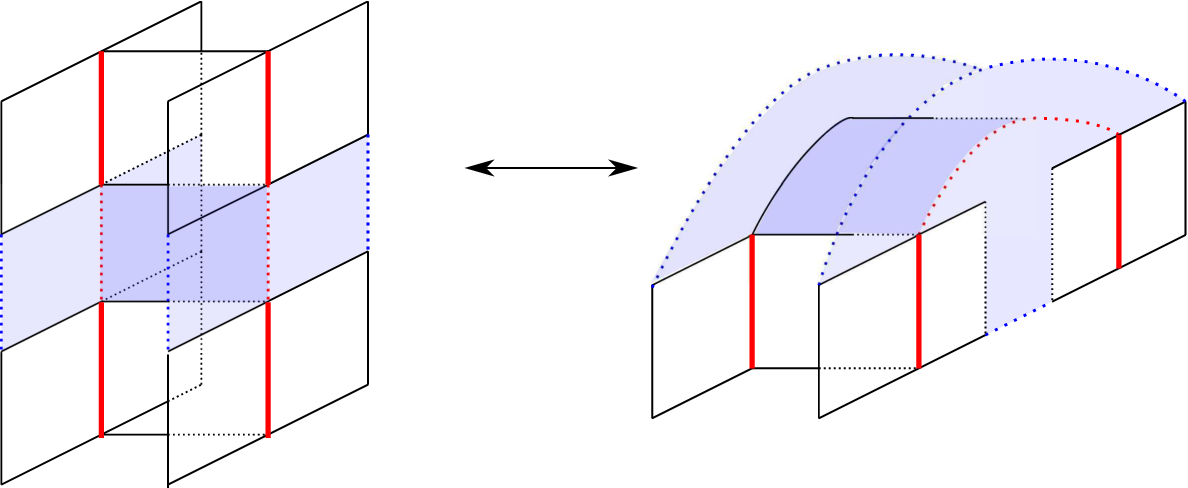}};(-9,10)*{f_2};
 (20,-8.6)*{f_1};(-9,-4.5)*{f_1};
 (30,-3)*{f_2};\endxy
\end{align*}
\end{proof}

Note that Lemma~\ref{lem:webalgaltern} implies that ${\mathcal W}^S_c$ is 
associative and unital, something that is not immediately clear from 
Definition~\ref{defn:webalg}. 
For any $u\in B^S$, the identity $1_u\in\foamt^c(u,u)$ defines an idempotent. 
We have 
\[
1=\sum_{u\in B^S} 1_u\in {\mathcal W}^S_c.
\]

Alternatively, one can see ${\mathcal W}^S_c$ as a category whose 
objects are the elements in $B^S$ such that the module of morphisms 
between $u\in B^S$ and $v\in B^S$ is given by $\foamt^c(u,v)$. 
In this paper we will mostly see 
${\mathcal W}^S_c$ as an algebra, but will sometimes refer to the category 
point of view. 
\vskip0.5cm
In this paper, we will study ${\mathcal W}^S_c$ for two special 
values of $c\in\mathbb{C}$. 
\begin{defn}
Let $K^S$ and $G^S$ be the complex algebras obtained from ${\mathcal W}^S_c$ 
by setting $c=0$ and $c=1$, respectively. We call them 
\textit{Khovanov's web algebra} and \textit{Gornik's web algebra}, respectively, 
to distinguish them throughout the paper.  
\end{defn}
Note that $G^S$ is a filtered algebra. Its associated 
graded algebra is $K^S$. By Lemma~\ref{lem:webalgaltern}, 
both $K^S$ and $G^S$ are finite dimensional, unital, associative algebras. 
They also have similar decompositions as shown below.
\[
K^S=\bigoplus_{u,v\in B^S}{}_uK_v\,,\quad\quad G^S=\bigoplus_{u,v\in B^S}{}_uG_v.
\] 
We now recall the definition of complex, 
graded and filtered Frobenius algebras. 
Let $A$ be a finite dimensional, graded, complex algebra and let  
$\mathrm{Hom}_{\mathbb{C}}(A,\mathbb{C})$ be the complex vector space of 
grading preserving 
maps. The \textit{dual} of $A$ is defined by      
\[
A^{\vee}=\bigoplus_{n\in\mathbb{Z}}\mathrm{Hom}_{\bC}(A,\mathbb{C}\{n\}),
\] 
where $\{n\}$ denotes an upward degree shift of size $n$.
Note that $A^{\vee}$ is also a graded module, such that 
\begin{equation}
\label{eq:dualgrading}
(A^{\vee})_i=(A_{-i})^{\vee},
\end{equation}
for any $i\in \mathbb{Z}$.
Then $A$ is called a \textit{graded, 
symmetric Frobenius algebra of Gorenstein parameter} $\ell$, 
if there exists an isomorphism of graded $(A,A)$-bimodules 
\[
A^{\vee}\cong A\{-\ell\}.
\] 
If $A$ is a complex, finite dimensional, filtered algebra, 
let $\mathrm{Hom}_{\mathbb{C}}(A,\mathbb{C})$ be the complex vector space of 
filtration preserving 
maps. The \textit{dual} of $A$ is defined by     
\[
A^{\vee}=\bigoplus_{n\in\mathbb{Z}}\mathrm{Hom}_{\bC}(A,\mathbb{C}\{n\}),
\]
where $\{n\}$ denotes an upward suspension of size $n$.
Note that $A^{\vee}$ is also a filtered module, such that 
\begin{equation}
\label{eq:dualfiltration}
(A^{\vee})_i=(A_{-i})^{\vee},
\end{equation}
for any $i\in \mathbb{Z}$.
Then $A$ is called a \textit{filtered, 
symmetric Frobenius algebra of Gorenstein parameter} $\ell$, 
if there exists an isomorphism of filtered $(A,A)$-bimodules 
\[
A^{\vee}\cong A\{-\ell\}.
\]
For more information on graded Frobenius algebras, see~\cite{ue} and 
the references therein, for example. We do not have a good reference for 
filtered Frobenius algebras, but it is a straightforward generalization of 
the graded case. We will explain some basic results on the character theory of 
filtered and graded symmetric Frobenius algebras in 
Section~\ref{sec:grothendieck}. 
   
\begin{thm}\label{thm:frob} For any sign string $S$, 
the algebra $K^S$ is a graded symmetric Frobenius algebra 
and $G^S$ is a filtered symmetric Frobenius algebra, both of Gorenstein 
parameter $2\ell(S)$. 
\end{thm}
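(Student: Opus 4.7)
The plan is to exhibit a symmetric, nondegenerate trace form $\tau$ on $\mathcal{W}^S_c$ of the appropriate degree, yielding the Frobenius structure with Gorenstein parameter $2\ell(S)$. Using Lemma~\ref{lem:webalgaltern} I identify ${}_u\mathcal{W}^c_v \cong \foamt^c(u,v)\{\ell(S)\}$, so multiplication becomes foam composition. Define $\tau \colon \mathcal{W}^S_c \to \mathbb{C}$ to vanish on ${}_u\mathcal{W}^c_v$ for $u \neq v$, and on ${}_u\mathcal{W}^c_u$ to send a foam $f \colon u \to u$ to the scalar obtained by closing $f$ off with the canonical cap foam $\eta_u \colon u^*u \to \emptyset$, where $\eta_u$ is the identity cylinder $u \times [0,1]$ viewed via the bending isomorphism of Lemma~\ref{lem:webalgaltern} as a foam from $u^*u$ to $\emptyset$, and evaluating the resulting closed foam using the relations $\ell = (3D, NC, S, \Theta)$ together with the closure relation.

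Step one is the $q$-degree. A direct computation with $q(U) = \chi(\partial U) - 2\chi(U) + 2d + b$ gives $q(\eta_u) = (2\chi(u) - \ell(S)) - 2\chi(u) + 0 + 0 = -\ell(S)$, using $\chi(u^*u) = 2\chi(u) - \ell(S)$ and the fact that bending eliminates the vertical boundary. Consequently $\tau$ is homogeneous of degree $-2\ell(S)$: one $-\ell(S)$ comes from $\eta_u$ and the other $-\ell(S)$ from the two upward shifts of $\ell(S)$ built into ${}_u\mathcal{W}^c_v$ and ${}_v\mathcal{W}^c_u$ paired against each other. The same formula yields the filtered degree in the $G^S$ case, so once nondegeneracy is established the induced isomorphism $(\mathcal{W}^S_c)^\vee \cong \mathcal{W}^S_c\{-2\ell(S)\}$ of bimodules has the required Gorenstein parameter.

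Step two is symmetry $\tau(fg) = \tau(gf)$. For $f \in {}_u\mathcal{W}^c_v$ and $g \in {}_v\mathcal{W}^c_u$, capping $fg$ with $\eta_u$ and capping $gf$ with $\eta_v$ yield two closed foams in $\mathbb{R}^3$ which are ambient isotopic, related by a cyclic slide of $g$ around $f$ through the closure. Since the evaluation of a closed foam in $\foamt^c$ depends only on its isotopy type (the content of the closure relation), $\tau(fg) = \tau(gf)$. Bimodule compatibility of $\tau$ with the algebra structure reduces to associativity of foam composition.

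The main obstacle is nondegeneracy of the induced pairing ${}_u\mathcal{W}^c_v \otimes {}_v\mathcal{W}^c_u \to \mathbb{C}$. My plan is to use the decompositions (NC), (DR) and (SqR) of Section~\ref{subsec:foams} to iteratively reduce $\mathcal{F}^c(u^*v)$ to a direct sum of foam homologies of webs containing neither squares nor digons, which by the closed-web relations~\eqref{eq:circle},~\eqref{eq:digon},~\eqref{eq:square} are disjoint unions of $m$ circles for various $m$. On each summand $\mathcal{F}^c$ is $(\mathbb{C}[x]/(x^3-c))^{\otimes m}$, a tensor product of copies of the single-circle Frobenius algebra with standard Frobenius trace sending $x^2 \mapsto -1$ (by relation (S)), which is manifestly nondegenerate. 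Since the local decompositions are isomorphisms of $\foamt^c$-modules compatible with foam composition, nondegeneracy on each reduced piece implies nondegeneracy on ${}_u\mathcal{W}^c_v \otimes {}_v\mathcal{W}^c_u$ and hence on $\mathcal{W}^S_c$ as a whole. The argument applies verbatim for $K^S$ ($c=0$, graded) and $G^S$ ($c=1$, filtered); alternatively, the filtered statement for $G^S$ follows from the graded one for $K^S$ via the filtered-to-graded facts collected in Appendix~\ref{sec:filt}.
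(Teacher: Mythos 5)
Your proposal follows the paper's proof closely in structure: the same trace form (zero off the diagonal, defined by closing with the identity foam on the diagonal), the same degree computation yielding the Gorenstein parameter $2\ell(S)$, and the same symmetry argument via sliding one foam around the closed-up picture. Where you diverge is in establishing non-degeneracy. The paper simply observes that non-degeneracy ``follows immediately from the closure relation,'' and indeed that axiom of $\foamt^c$ is, after unwinding, precisely the statement that the pairing $\mathrm{tr}(g\cdot -)$ separates foams with a given boundary: if $f\ne 0$ then some cap $g$ makes the closed foam $gf$ evaluate to a nonzero scalar. Invoking it is a one-liner. Your alternative --- iteratively applying (NC), (DR), (SqR) to decompose $\mathcal{F}^c(u^*v)$ down to tensor powers of $\mathbb{C}[x]/(x^3-c)$ on circles and checking nondegeneracy there --- is in the right spirit and would work, but is not a free lunch: you would need to verify that the idempotent decompositions of $\mathcal{F}^c(u^*v)$ and of its pairing partner $\mathcal{F}^c(v^*u)$ are dual to each other and that the trace pairing is block-diagonal with respect to these decompositions, which is a genuine (if routine) compatibility to check. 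In effect you would be re-deriving the content of the closure relation rather than using it. I'd recommend citing the closure relation directly, as the paper does.

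Two smaller points. Your sketch of the filtered case for $G^S$ --- ``follows from the graded one for $K^S$ via the filtered-to-graded facts in Appendix~\ref{sec:filt}'' --- is a bit imprecise; the paper makes this concrete by constructing the filtration-preserving bijection $G^S\{-2\ell(S)\}\to (G^S)^{\vee}$, observing that its associated graded is the $K^S$-isomorphism, and then invoking Proposition~\ref{prop:Srid} to conclude the filtered map is a strict isomorphism. You should name that proposition explicitly, since the implication ``associated graded of $f$ is an iso $\Rightarrow$ $f$ is a strict iso'' is exactly what is needed and is not automatic. Also, your phrase ``bimodule compatibility of $\tau$ with the algebra structure reduces to associativity of foam composition'' glosses over the step the paper carries out: one must verify that the nondegenerate form actually defines a graded $(K^S,K^S)$-bimodule isomorphism $(K^S)^{\vee}\cong K^S\{-2\ell(S)\}$, which uses both nondegeneracy and the identity $\mathrm{tr}(fg)=\mathrm{tr}(gf)$ together with the grading bookkeeping~\eqref{eq:dualgrading}.
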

\begin{proof} First, let $c=0$. 
We take, by definition, the trace form 
\[
\mathrm{tr}\colon K^S\to \mathbb{C}
\] 
to be zero on ${}_uK_v$, when $u\ne v\in B^S$. 
For any $v\in B^S$, we define 
\[
\mathrm{tr}\colon {}_vK_v\to\mathbb{C}
\] 
by closing any foam $f_v$ with $1_v$, e.g.as pictured below.
\begin{align*}
	\xy(0,0)*{\includegraphics[width=60px]{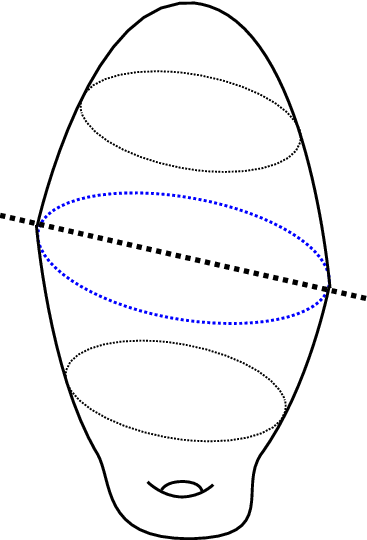}};(-9,10)*{1_v};
 (-9,-8)*{f_v};(10.5,3)*{v^*};
 (9.5,-3.5)*{v};\endxy
\end{align*}  
Equivalently, in $\foamt^0(v,v)$, closing $f_v$ by $1_v$,
\begin{align*}
	\xy(0,0)*{\includegraphics[width=180px]{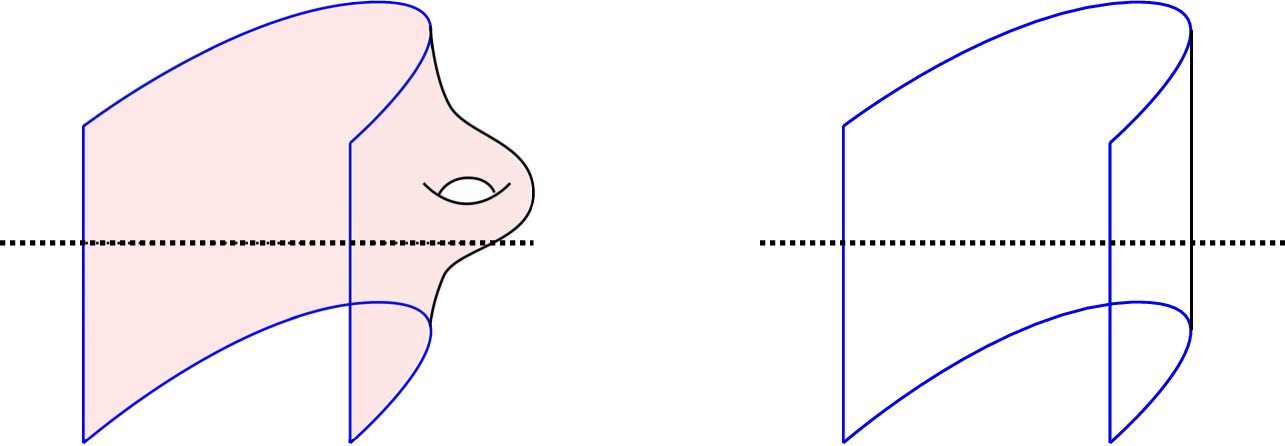}};(-30,2)*{f_v};(-4.5,5)*{v^*};
 (-5,-5.5)*{v};(8,2)*{1_v};(30,5)*{v^*};
 (29.5,-5.5)*{v};\endxy
\end{align*}  
gives
\begin{align*}
	\xy(0,0)*{\includegraphics[width=95px]{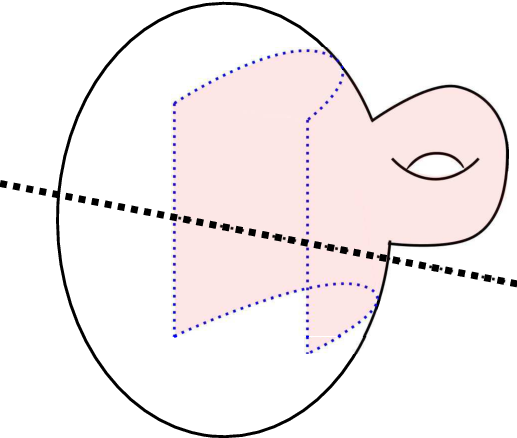}};(-15,4.5)*{1_v};
 (19,4.5)*{f_v};\endxy
\end{align*}
The fact that the trace form is non-degenerate follows immediately 
from the closure relation in Subsection~\ref{subsec:foams}.

The fact that $\mathrm{tr}(gf)=\mathrm{tr}(fg)$ holds follows from sliding 
$f$ around the closure until it appears on the other side of $g$, e.g. as shown below.
\begin{align*}
\xy(0,1)*{\includegraphics[width=90px]{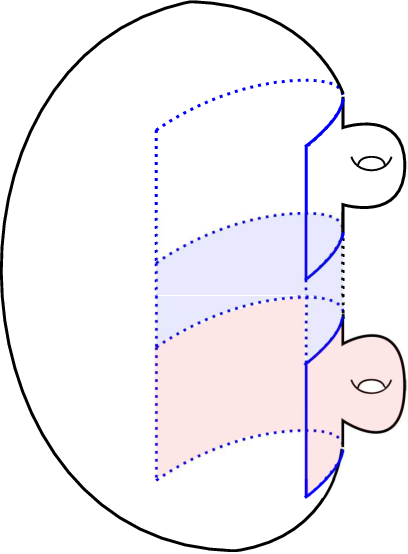}};(-18.5,1)*{1_u};
 (18.5,1)*{1_v};(17.5,-8)*{f};
 (17.5,9)*{g};\endxy\quad=\quad\xy(0,0)*{\includegraphics[width=88px]{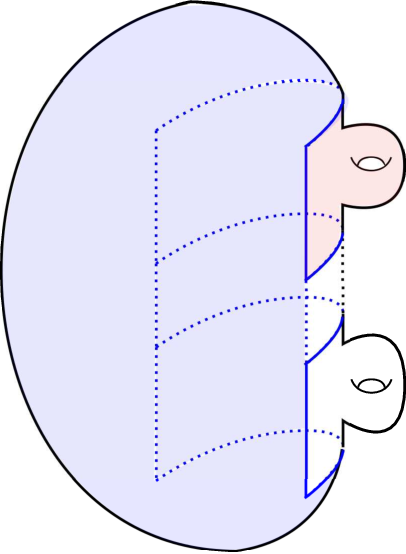}};(-18,0)*{1_v};
 (18.5,0)*{1_u};(17.5,-9)*{g};
 (17.5,8)*{f};\endxy
\end{align*}
Note that a closed foam can only have non-zero evaluation if it has 
degree zero. Therefore, for any $u\in B^S$ and any 
two homogeneous elements $f\in \F^0(u^*v)$ and $g\in \F^0(v^*u)$, we have 
$\mathrm{tr}(fg)\ne 0$ unless $\deg(f)=-\deg(g)$. By the shift $\ell(S)$ in  
\[
{}_uK_v=\mathcal{F}^0(u^*v)\{\ell(S)\}
\]
and by~\eqref{eq:dualgrading}, 
this implies that 
the non-degenerate trace form on $K^S$ gives rise to a graded 
$(K^S,K^S)$-bimodule isomorphism   
\begin{equation}
\label{eq:frobK}
(K^S)^{\vee}\cong K^S\{-2\ell(S)\}.
\end{equation}

Now, let $c=1$. Then the construction above also gives a 
non-degenerate bilinear form on $G^S$. Moreover, it induces 
a filtration preserving bijective $\mathbb{C}$-linear map 
of filtered $(G^S,G^S)$-bimodules 
\begin{equation}
\label{eq:frobG}
G_s\{-2n\}\to G_s^{\vee}.
\end{equation}
The associated graded map is precisely the isomorphism in 
\eqref{eq:frobK}. By Proposition~\ref{prop:Srid}, this implies that 
the map in~\eqref{eq:frobG} is a strict isomorphism 
of filtered $(G^S,G^S)$-bimodules.   
\end{proof}

We now explain some of Gornik's results, which are relevant for $G^S$. 
Recall that $R^1_{u^*v}$ is the commutative ring associated to $u^*v$, 
generated by the edge variables of $u^*v$ and mod out by the ideal, which,  
for each trivalent vertex in $u^*v$, is generated by the relations 
\begin{equation}
\label{eqn:R1}
x_1+x_2+x_3=0,\qquad x_1x_2+x_1x_3+x_2x_3=0,\qquad x_1x_2x_3=1,
\end{equation}
where $x_1,x_2$ and $x_3$ are the edge variables around the vertex. 
The algebra $R^1_{u^*v}$ acts on ${}_uG_v$ in such a way 
that each edge variable corresponds to adding a dot on the incident facet.
See~\cite{g},~\cite{kv} and~\cite{mv} for the precise definition and more details. 

In what follows, 3-colorings will always be assumed to be admissible and 
we therefore omit the adjective. Theorem 3 in~\cite{g} proves the following.
\begin{thm}\label{thm:Gornik}(\textbf{Gornik})
There is a complete set of orthogonal idempotents $e_T\in R^1_{u^*v}$, indexed 
by the 3-colorings $T$ of $u^*v$. The number of 3-colorings of $u^*v$ is 
exactly equal to $\dim_q({}_uG_v)$. 

These idempotents are not filtration preserving, 
but as an $R^1_{u^*v}$-module (i.e. forgetting the filtration on ${}_uG_v$ 
and its left ${}_uG_u$ and right ${}_vG_v$-module structures) 
we have  
\[
{}_uG_v\cong \bigoplus_T \mathbb{C}e_T.
\]
\end{thm}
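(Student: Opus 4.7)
The plan is to unpack the structure of $R^1_{u^*v}$ as a finite-dimensional commutative $\mathbb{C}$-algebra, show it is semisimple with maximal ideals in bijection with 3-colorings, and then use this to decompose ${}_uG_v$ via the $R^1_{u^*v}$-action.

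First I would analyze $R^1_{u^*v}$ locally at a trivalent vertex. The three relations in~\eqref{eqn:R1} are exactly the elementary-symmetric-polynomial identities asserting that $x_1, x_2, x_3$ are the three roots of $t^3-1=0$. Consequently a $\mathbb{C}$-algebra homomorphism $R^1_{u^*v}\to\mathbb{C}$ amounts to an assignment, to each edge $e$ of $u^*v$, of a cube root of unity $T(e)\in\{1,\zeta,\zeta^{2}\}$ such that the three edges meeting at any trivalent vertex receive three \emph{distinct} values. This is precisely the notion of admissible 3-coloring recalled in Remark~\ref{rem:3color}. Thus the closed points of $\mathrm{Spec}(R^1_{u^*v})$ are in canonical bijection with the set of admissible 3-colorings of $u^*v$.

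Second, since $t^3-1$ has distinct roots, the vertex ideals are radical; globally this makes the defining ideal of $R^1_{u^*v}$ radical as well, so $R^1_{u^*v}$ is reduced and, being finite-dimensional over $\mathbb{C}$, is semisimple. By the Chinese remainder theorem we therefore have an isomorphism
\[
R^1_{u^*v}\;\cong\;\prod_{T}\mathbb{C},
\]
with $T$ running over admissible 3-colorings. The pre-images of the standard basis vectors are a complete family of pairwise orthogonal primitive idempotents $\{e_T\}$. Explicitly, one can write
\[
e_T\;=\;\prod_{e}\tfrac{1}{3}\bigl(1+T(e)^{-1}x_e+T(e)^{-2}x_e^{2}\bigr),
\]
each factor being the Lagrange projector onto the eigenvalue $T(e)$ of $x_e$; because these factors mix filtration degrees $0,1,2$ the idempotents $e_T$ are visibly not filtration-preserving.

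Third, I would match dimensions. By Remark~\ref{rem:3color}, admissible 3-colorings of $u^*v$ correspond bijectively to flows on $u^*v$, and by Remark~\ref{rem:flowsfoams} the flows on $u^*v$ index a basis of $\mathcal{F}^0(u^*v)$. Since $K^S$ is the associated graded of $G^S$, the underlying vector spaces of ${}_uG_v$ and ${}_uK_v=\mathcal{F}^0(u^*v)\{\ell(S)\}$ coincide, so $\dim_{\mathbb{C}}{}_uG_v$ equals the number of admissible 3-colorings, which equals $\dim_{\mathbb{C}}R^1_{u^*v}$.

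Finally, the $R^1_{u^*v}$-action decomposes ${}_uG_v=\bigoplus_{T}e_T\cdot{}_uG_v$, and the dimension count forces each summand to have dimension at most one, with total dimension realized. The main obstacle is to rule out the possibility that some $e_T\cdot{}_uG_v$ vanishes, i.e.\ to show every $e_T$ acts non-trivially on ${}_uG_v$. I see two ways to overcome this: either transport the statement from Gornik's matrix-factorization setting via the equivalence of~\cite{mv}, where an explicit cohomology basis indexed by 3-colorings is constructed; or argue intrinsically, using the symmetric Frobenius structure on $G^S$ from Theorem~\ref{thm:frob} together with the fact that the trace pairing is non-degenerate, to conclude that no nontrivial idempotent of $R^1_{u^*v}$ can annihilate ${}_uG_v$. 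Either route upgrades the vector-space decomposition to the free rank-one $R^1_{u^*v}$-module isomorphism ${}_uG_v\cong\bigoplus_T\mathbb{C}e_T$ claimed in the theorem.
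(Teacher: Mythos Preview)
The paper does not supply its own proof here; the theorem is attributed to Gornik and simply cited from~\cite{g}. Your first three steps are correct and give a clean account of why $R^1_{u^*v}$ is a finite product of copies of $\mathbb{C}$ indexed by admissible $3$-colorings and why its dimension matches that of ${}_uG_v$. (One small caveat: the paper's description of $R^1_{u^*v}$ lists only the trivalent-vertex relations, so if $u^*v$ has circle components you must also impose $x_e^3=1$ on those edges to get a finite-dimensional reduced algebra; this comes from the foam relation (3D).)

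The real content is your fourth step, and your two proposed routes are not on equal footing. Route one---importing Gornik's coloring-indexed basis via the foam/matrix-factorization comparison of~\cite{mv}---is precisely what the paper relies on by citing~\cite{g}, so that is fine. Route two, however, does not close the gap as stated. The Frobenius trace on $G^S$ pairs ${}_uG_v$ with ${}_vG_u$, and since dots slide through the pairing it is $R^1_{u^*v}$-balanced; non-degeneracy therefore gives only the symmetry
\[
e_T\cdot{}_uG_v=0 \iff e_T\cdot{}_vG_u=0,
\]
which does not exclude both sides from vanishing for some $T$. To finish intrinsically you must exhibit, for every coloring $T$, an element of ${}_uG_v$ not killed by $e_T$. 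Gornik does this by constructing an explicit basis; a foam-side argument would proceed by induction on the closed web $u^*v$, using~\eqref{eq:cn},~\eqref{eq:dr} and~\eqref{eq:sqr} to reduce to disjoint circles, where the statement is immediate.
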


Let us have a closer look at Gorniks idempotents.  
First of all, in the proof of Theorem 3 in~\cite{g} Gornik notes 
that for any edge $i$ and any 3-coloring $T$ of $u^*v$, we have 
\begin{equation}
\label{eqn:edgeaction}
x_ie_T=\zeta^{T_i}e_T\in R^1_{u^*v},
\end{equation}
where $\zeta$ is a primitive third root of unity, $x_i$ is the edge variable 
and $T_i$ the color of the edge 
(see (4) in~\cite{mv} for this result in the context of foams). 

Furthermore, a 3-coloring of $u^*v$ 
actually corresponds to a pair of 3-colorings of $u$ and $v^*$ 
that match at the boundary. Of course, there is a bijective correspondence 
between 3-colorings of $u$ and $v^*$, so we see that a 3-coloring of 
$u^*v$ corresponds to a matching pair of 3-colorings of $u$ and $v$.

Recall that ${}_uG_v$ is a left ${}_uG_u$-module and a right ${}_vG_v$-module.  
Let $T_1$ and $T_2$ be a pair of matching 3-colorings of $u$ and $v$, 
respectively, which together give a 3-coloring $T$ of $u^*v$. 
Then the action of $e_T$ on any $f\colon u\to v$ can be written as 
\[
e_{T_1}fe_{T_2}.
\]
To show that this notation really makes sense, define 
\textit{Gornik's symmetric idempotent} associated to $T_1$ as
\[
e_{u,T_1}=e_{T_1}1_ue_{T_1}.
\]
So we let the Gornik idempotent associated to the symmetric 3-coloring 
of $u^*u$, given by $T_1$ both on $u$ and $u^*$, act on $1_u$. Then 
we have 
\[
e_{T_1}fe_{T_2}=e_{u,T_1}fe_{v,T_2},
\]
where on the right-hand side we really mean composition. 

We immediately see that 
\[
e_{T_1}1_ue_{T_2}=0\Leftrightarrow T_1\ne T_2
\]
and 
\[
e_{T_1}1_ue_{T_1}e_{T_2}1_ue_{T_2}=\delta_{T_1,T_2} e_{T_1}1_ue_{T_1}\quad\text{and}\quad 
\sum_{T}e_{T}1_ue_{T}=1_u,
\]
where the sum is over all 3-colorings of $u$. 
This shows that the $e_{u,T}$, for all 3-colorings 
$T$ of a given $u\in B^S$, are orthogonal idempotents in ${}_uG_u$.  
It also implies that 
\[
e_{T_1}1_ue_{T_1}=e_{T_1}1_u=1_ue_{T_1},
\]
so it is enough to label just the source or just the target of $1_u$.
For this purpose, we define $R^1_u$ to be ``half'' of $R^1_{u^*u}$, i.e. the 
subring which is only generated by the edge variables of $u$. To be precise, 
we have 
\[
R^1_{u^*u}\cong R^1_u\otimes_{S}R^1_u,
\]
where $\otimes_{S}$ indicates that we impose the relation 
$x\otimes 1=1\otimes x$, for any $x$ corresponding to a boundary edge of $u$. 

If $u$ has no closed cycles, then all the 3-colorings of 
$u^*u$ are symmetric, because they are completely determined by the colors on 
the boundary of $u$. In that case 
\[
e_T\mapsto e_{u,T}
\]
defines an isomorphism of algebras $R^1_u\cong {}_uG_u$. In particular, 
${}_uG_u$ is commutative. This is not true in general, but we can prove 
the following.
\begin{lem}
\label{lem:embedding} 
For any $u\in B^S$, the map 
\[
x\mapsto x1_u
\]
defines a strict embedding of filtered $R^1_u$-modules
\[
\iota\colon R^1_u\to {}_uG_u.
\]
In particular, we see that $(R^1_u)_0\cong\mathrm{im}(\iota)_0\cong\mathbb{C}1_u$.
\end{lem}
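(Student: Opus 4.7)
The strategy is to detect the kernel of $\iota$ through the diagonal action of edge variables on Gornik's symmetric idempotents, and then reduce injectivity to the structural fact that $R^1_u$ is a finite product of copies of $\bC$ indexed by $3$-colorings of $u$. The map $\iota$ is tautologically $R^1_u$-linear, and filtration-preservation is immediate: a monomial of polynomial degree $d$ in the edge variables is sent to the identity foam $1_u$ decorated with $d$ dots, of $q$-grading $2d$, matching the filtration on $R^1_u$.

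For injectivity, fix $x\in R^1_u$ with polynomial representative $p$, and expand via $1_u=\sum_T e_{u,T}$ (summed over the $3$-colorings $T$ of $u$, as in the discussion preceding the lemma) together with the eigenvalue relation~\eqref{eqn:edgeaction}:
\[
\iota(x) \;=\; p\cdot 1_u \;=\; \sum_T p(\zeta^T)\,e_{u,T},
\]
where $p(\zeta^T)$ denotes the evaluation of $p$ with each edge variable replaced by the corresponding cube root of unity dictated by $T$. Orthogonality of the $e_{u,T}$ forces $p(\zeta^T)=0$ for every $3$-coloring $T$ of $u$. I would then argue that this vanishing already implies $x=0$ in $R^1_u$. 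The vertex relations~\eqref{eqn:R1} exhibit the multiset $\{x_i,x_j,x_k\}$ at each trivalent vertex as the roots of $t^3-1$, so in particular $x_i^3=1$ in $R^1_u$ for every edge variable; thus $R^1_u$ is a quotient of the semisimple algebra $\bC[x_{e_1},\ldots,x_{e_m}]/(x_{e_1}^3-1,\ldots,x_{e_m}^3-1)\cong\bC^{3^m}$, hence itself semisimple. Its surviving spectrum consists of those tuples in $\{1,\zeta,\zeta^2\}^m$ satisfying the vertex relations at every vertex, i.e., precisely the admissible $3$-colorings of $u$ (Remark~\ref{rem:3color}), giving $R^1_u\cong\prod_T \bC$ with the joint evaluation $p\mapsto(p(\zeta^T))_T$ realising the identification. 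Vanishing at every $T$ therefore implies $x=0$, so $\iota$ is injective.

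Strictness of the filtration amounts to injectivity of the associated graded map $\mathrm{gr}\,\iota\colon R^0_u\to{}_uK_u$, which follows from the very same Gornik-style argument in the graded (Khovanov) setting, since the analogous idempotent decomposition and eigenvalue identity survive passage to the associated graded. The ``in particular'' claim is then immediate: the filtration-degree-zero component of $R^1_u$ is $\bC\cdot 1$, mapped by $\iota$ to $\bC\cdot 1_u\subseteq {}_uG_u$. The main obstacle is the structural identification $R^1_u\cong\prod_T\bC$: although it follows cleanly once one recognises $R^1_u$ as a quotient of a semisimple algebra, this step relies on the (otherwise routine) derivation $x_i^3=1$ from the vertex relations and on the identification of the surviving spectrum with the admissible colorings of $u$.
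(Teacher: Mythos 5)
Your injectivity argument is correct but takes a genuinely different route from the paper's. The paper observes that the singular arcs of $1_u$ are precisely (trivalent vertices of $u$)$\,\times I$, so the dot migration relations~\eqref{eq:dotm} impose on dots decorating $1_u$ exactly the vertex relations~\eqref{eqn:R1} defining $R^1_u$ and nothing else; since each dot shifts the $q$-degree by $2$, this realizes $R^1_u$ as a filtered subalgebra directly, with strictness built in. You instead invoke Gornik's idempotent decomposition of $1_u$ from Theorem~\ref{thm:Gornik} to see that $p\cdot 1_u=0$ forces $p(\zeta^T)=0$ for every admissible $3$-coloring $T$, and then prove the structural fact $R^1_u\cong\prod_T\bC$ from the observation $x_e^3=1$ (so $R^1_u$ is a quotient of the semisimple ring $\bC[x_e]/(x_e^3-1)$, with spectrum the admissible colorings). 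That is a valid and illuminating way to get injectivity.

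The strictness step, however, has a genuine gap. You correctly reduce strictness to injectivity of the associated graded map $E(\iota)\colon R^0_u\to{}_uK_u$, but then assert that the idempotent decomposition and eigenvalue identity ``survive passage to the associated graded.'' They do not: $R^0_u=E(R^1_u)$ is cut out by~\eqref{eqn:R0}, where the third relation is $x_1x_2x_3=0$, so every edge variable satisfies $x_e^3=0$; hence $R^0_u$ is a local graded ring with no nontrivial idempotents at all. The paper says this explicitly (``There are no analogues of the Gornik idempotents in $R^0_u$''), and Theorem~\ref{thm:Gornik} itself stresses that Gornik's idempotents are not filtration-preserving, which is precisely why they do not descend to the associated graded. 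Your reduction therefore leaves injectivity of $E(\iota)$ --- equivalently, Lemma~\ref{lem:gradedembedding}, and with it strictness and the ``in particular'' clause --- unproved. The paper's dot-migration argument is immune to this issue because the relations~\eqref{eq:dotm} hold for $c=1$ and $c=0$ alike, delivering the filtered and graded embeddings simultaneously; to repair your approach you would need an independent argument for the graded case, since the semisimplicity you rely on is a genuinely $c\neq 0$ phenomenon.
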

\begin{proof}
The map is clearly a homomorphism of filtered algebras. 

The relations~\eqref{eq:dotm} correspond precisely to 
the relations in $R^1_u$, because the only singular edges in $1_u$ are the 
ones corresponding to the trivalent vertices of $u$. 
This shows that it is a strict embedding. 
\end{proof}

For any $u\in B^S$, we define the graded ring 
\[
R^0_u=E(R^1_u).
\]
This ring is the one which appears 
in Khovanov's original paper~\cite{kv}. In $R^0_u$ we have the relations
\begin{equation}
\label{eqn:R0}
x_1+x_2+x_3=0,\qquad x_1x_2+x_1x_3+x_2x_3=0,\qquad x_1x_2x_3=0.
\end{equation}
The reader should compare them to~\eqref{eqn:R1}.

There are no analogues of the Gornik idempotents in $R^0_u$, but we do 
have an analogue of Lemma~\ref{lem:embedding}.

\begin{lem}
\label{lem:gradedembedding} 
For any $u\in B^S$, the map 
\[
x\mapsto x1_u
\]
defines an embedding of graded $R^0_u$-modules
\[
E(\iota)\colon R^0_u\to {}_uK_u.
\]
In particular, we see that $(R^0_u)_0\cong\mathrm{im}(E(\iota))_0\cong\mathbb{C}1_u$.
\end{lem}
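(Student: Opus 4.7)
The plan is to deduce this lemma by applying the associated graded functor $E$ to the strict embedding $\iota\colon R^1_u \to {}_uG_u$ established in Lemma~\ref{lem:embedding}. The setup is ideal for this: $K^S$ is by construction the associated graded algebra of $G^S$, so ${}_uK_u = E({}_uG_u)$, and $R^0_u = E(R^1_u)$ by the very definition just given. Comparing relations~\eqref{eqn:R1} and~\eqref{eqn:R0}, we see that passing from $R^1_u$ to $R^0_u$ replaces the non-homogeneous relation $x_1x_2x_3 = 1$ by its homogeneous analogue $x_1x_2x_3 = 0$, which is precisely the effect of taking the associated graded with respect to the filtration by total degree in the edge variables.

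First, I would recall the standard fact (collected in Appendix~\ref{sec:filt}) that the associated graded functor sends a strict morphism of filtered modules to a morphism of graded modules, and that it preserves injectivity of strict morphisms. Since Lemma~\ref{lem:embedding} establishes that $\iota$ is strict and injective, we obtain an injective homomorphism of graded modules
\[
E(\iota)\colon E(R^1_u) \longrightarrow E({}_uG_u).
\]
Under the identifications $E(R^1_u) = R^0_u$ and $E({}_uG_u) = {}_uK_u$, this is exactly the claimed map $R^0_u \to {}_uK_u$. The formula $x \mapsto x1_u$ is preserved because the associated graded functor acts as the identity on representatives of homogeneous pieces, and both $x$ and the dotted foam $x1_u$ live in the same filtration degree before and after.

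For the final assertion, note that $R^0_u$ is generated in degree $2$ by the edge variables (dots contribute $2$ to the $q$-grading) modulo homogeneous relations, so $(R^0_u)_0 = \mathbb{C}$, spanned by the unit. Under $E(\iota)$ this maps isomorphically onto $\mathbb{C}1_u \subset ({}_uK_u)_0$, which gives the stated identification.

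The main thing to be careful about is the strictness claim: we are using that $\iota\colon R^1_u \to {}_uG_u$ is strict (not merely filtration preserving), because only strict embeddings of filtered modules descend to embeddings of associated graded modules. This is exactly what was verified in the proof of Lemma~\ref{lem:embedding}, so no new work is required; the present lemma is essentially the graded shadow of its filtered predecessor.
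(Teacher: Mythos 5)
Your proof is correct, and it is exactly the argument the paper has in mind: the paper states Lemma~\ref{lem:gradedembedding} without proof, treating it as the immediate associated-graded shadow of Lemma~\ref{lem:embedding} via the exactness of $E$ on strict morphisms recorded in Appendix~\ref{sec:filt}. Your write-up fills in precisely those implicit steps (strictness is used to guarantee $E(\iota)$ stays injective, and the identifications $E(R^1_u)=R^0_u$, $E({}_uG_u)={}_uK_u$ hold by definition), so there is nothing to add.
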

 
Another interesting consequence of Theorem~\ref{thm:Gornik} is the following. 
\begin{prop}
\label{prop:Gsemisimple}
As a complex algebra, i.e. without taking the filtration into account, 
$G^S$ is semisimple.
\end{prop}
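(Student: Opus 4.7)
The plan is to produce an explicit Wedderburn decomposition of $G^S$. First, by the discussion following Theorem~\ref{thm:Gornik}, for each $u\in B^S$ and each $3$-coloring $T$ of $u$ we have the symmetric Gornik idempotent $e_{u,T}\in{}_uG_u$; the collection $\{e_{u,T}\}_{u,T}$ forms a complete set of pairwise orthogonal idempotents in $G^S=\bigoplus_{u,v\in B^S}{}_uG_v$, summing to $1=\sum_u 1_u$.

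Next, I would apply Theorem~\ref{thm:Gornik} to the closed web $u^*v$, combined with the bijection between $3$-colorings of $u^*v$ and matching pairs of $3$-colorings $(T_1,T_2)$ on $(u,v)$ and the action formula~\eqref{eqn:edgeaction} on the shared boundary edge variables, to establish
\[
\dim_{\bC}\bigl(e_{u,T_1}G^Se_{v,T_2}\bigr)=\begin{cases} 1 & \text{if } T_1|_{\partial}=T_2|_{\partial},\\ 0 & \text{otherwise.}\end{cases}
\]
For each $3$-coloring $c$ of the boundary of $S$ set $e_c:=\sum_{(u,T):\,T|_{\partial}=c}e_{u,T}$. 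The vanishing above forces each $e_c$ to be central, yielding the block decomposition $G^S=\bigoplus_c A_c$ with $A_c:=e_cG^Se_c$ of dimension $N(c)^2$, where $N(c)$ counts the pairs $(u,T)$ with $T|_{\partial}=c$.

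To finish I would identify each block $A_c$ with the matrix algebra $M_{N(c)}(\bC)$. The $N(c)$ primitive idempotents $e_{u,T}$ (with $T|_{\partial}=c$) sum to $e_c$, and each slot $e_{u,T_1}A_ce_{v,T_2}$ is one-dimensional, so it suffices to show that for any two such idempotents, non-zero elements $x\in e_{u,T_1}A_ce_{v,T_2}$ and $y\in e_{v,T_2}A_ce_{u,T_1}$ satisfy $xy\ne 0$ in $\bC\, e_{u,T_1}$. The main obstacle is exactly this compositional non-degeneracy. I would argue it via the Frobenius trace $\mathrm{tr}$ of Theorem~\ref{thm:frob}: closing $xy$ with $1_u$ produces a closed foam which, by Gornik's root-of-unity evaluation~\eqref{eqn:edgeaction} and the computations in~\cite{g} and~\cite{mv}, evaluates to a non-zero product of third roots of unity attached to the joint compatible $3$-coloring; hence $\mathrm{tr}(xy)\ne 0$ and in particular $xy\ne 0$. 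All primitive idempotents in $A_c$ are then mutually equivalent, and by Wedderburn $A_c\cong M_{N(c)}(\bC)$, so $G^S\cong\bigoplus_c M_{N(c)}(\bC)$ is semisimple.
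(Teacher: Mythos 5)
Your strategy is sound and genuinely different from the paper's. The paper argues module-theoretically: it sets $P_{u,T}=(G^S)e_{u,T}$, uses Theorem~\ref{thm:Gornik} to see that $\mathrm{Hom}_{G^S}(P_{u,T},P_{v,T'})\cong e_{u,T}(G^S)e_{v,T'}$ is $\bC$ when $T$ and $T'$ match at $S$ and $\{0\}$ otherwise, concludes from this that every $P_{u,T}$ has a single composition factor and is therefore irreducible, and then quotes the standard fact (Proposition 1.8.5 in~\cite{be}) that an algebra which decomposes into simple projectives is semisimple. You instead construct the Wedderburn decomposition by hand: central idempotents $e_c$ indexed by boundary colorings (these are precisely the $z_J$ that reappear in Lemma~\ref{lem:dimZG}), blocks of dimension $N(c)^2$ in which every idempotent sandwich is one-dimensional, and matrix units. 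Both routes rest on the same input, namely Gornik's symmetric idempotents and the matching of $3$-colorings across the boundary; yours has the merit of producing the explicit block structure $G^S\cong\bigoplus_c M_{N(c)}(\bC)$, which the paper only gestures at in the remark following the proposition.

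The one step that needs repair is your justification that $xy\ne 0$. As written, you claim that closing $xy$ with $1_u$ gives a closed foam evaluating to a nonzero product of third roots of unity; but $x$ and $y$ are arbitrary nonzero elements of one-dimensional spaces, hence linear combinations of foams, and no evaluation formula of that kind is available --- relation~\eqref{eqn:edgeaction} only describes the action of edge variables on the idempotents, not the value of a general closed foam. The correct way to extract what you need from Theorem~\ref{thm:frob} is nondegeneracy of the trace: the pairing ${}_uG_v\times{}_vG_u\to\bC$, $(x,y)\mapsto \mathrm{tr}(xy)$, is nondegenerate, and cyclicity together with orthogonality of the Gornik idempotents gives $\mathrm{tr}\bigl(e_{T_1}xe_{T_2}\cdot e_{T_2'}ye_{T_1'}\bigr)=0$ unless $T_1=T_1'$ and $T_2=T_2'$; so the decompositions of ${}_uG_v$ and ${}_vG_u$ obtained by sandwiching with these idempotents pair block by block, and the pairing restricts nondegenerately to each matched pair of one-dimensional blocks. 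Hence $\mathrm{tr}(xy)\ne 0$, and in particular $xy\ne 0$, for any nonzero $x\in e_{u,T_1}G^Se_{v,T_2}$ and $y\in e_{v,T_2}G^Se_{u,T_1}$. With that substitution your argument is complete.
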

\begin{proof}
For any $u\in B^S$ and any 3-coloring $T$ of $u$, define the projective 
$G^S$-module 
\[
P_{u,T}=(G^S)e_{u,T},
\]
where $e_{u,T}$ is Gornik's symmetric idempotent in $G^S$ defined above. 
Theorem~\ref{thm:Gornik} and our subsequent analysis of Gornik's idempotents 
show that the $P_{u,T}$ form a complete set of indecomposable, projective 
$G^S$-modules. Furthermore, we have 
\[
\text{Hom}_{G^S}(P_{u,T},P_{v,T'})\cong e_{u,T}(G^S)e_{v,T'}\cong 
\begin{cases}
\mathbb{C},&\quad\text{if}\; T\;\text{and}\;T'\;\text{match at}\; S,\\
\{0\},&\quad\text{else}.
\end{cases}
\]  
This shows that $P_{u,T}\cong P_{v,T'}$ if and only if 
$T$ and $T'$ match at the common boundary. It also shows that 
if $P_{u,T}\not\cong P_{v,T'}$, then 
\[
\text{Hom}_{G^S}(P_{u,T},P_{v,T'})=\text{Hom}_{G^S}(P_{v,T'},P_{u,T})=\{0\}.
\]
Finally, it shows that each $P_{u,T}$ has only one composition factor, i.e. 
$P_{u,T}$ is irreducible. 

It is well-known that this implies that $G^S$ is semisimple, e.g. see 
Proposition 1.8.5 in~\cite{be}.   
\end{proof}

By Proposition~\ref{prop:Gsemisimple}, it is clear that for each 
$u\in B^S$ and each coloring $T$ of $u$, the corresponding block 
in $G^S$ is isomorphic to $\mathrm{End}(P_{u,T})$. 
In Section~\ref{sec:center}, we will determine the central idempotents 
of $G^S$. 

\section{The center of the web algebra and 
the cohomology ring of the Spaltenstein variety}
\label{sec:center}
For the rest of this section, choose arbitrary but fixed non-negative integers 
$n\geq 2$ and $k\leq n$, such that $d=3k\geq n$. Let 
\[
\Lambda(n,d)=\left\{\mu\in \mathbb{N}^n\mid \sum_{i=1}^n\mu_i=d\right\}
\]
be the set of \textit{compositions} of $d$ of length $n$. 
By $\Lambda^+(n,d)\subset\Lambda(n,d)$ we denote the 
subset of \textit{partitions}, i.e. all $\mu\in\Lambda(n,d)$ such that 
\[
\mu_1\geq\mu_2\geq \ldots\geq \mu_n\geq 0.
\] 
Also for the rest of this section, choose an arbitrary but fixed sign string 
$S$ of length $n$. We associate to $S$ a unique element 
$\mu=\mu_S\in\Lambda(n,d)$, such that    
\[
\mu_i=
\begin{cases}
1,&\quad \mathrm{if}\quad s_i=+,\\
2,&\quad \mathrm{if}\quad s_i=-. 
\end{cases}
\] 
Let $\Lambda(n,d)_{1,2}\subset \Lambda(n,d)$ be the subset of compositions 
whose entries are all $1$ or $2$. For any sign string $S$, we have 
$\mu_S\in \Lambda(n,d)_{1,2}$. 

Let $\lambda=(3^k)\in \Lambda(n,d)$. Let $\mathrm{Col}_{\mu}^{\lambda}$ be the 
set of column strict tableaux of shape $\lambda$ and type $\mu$, both 
of length $n$. It is well-known that there is a bijection between 
$\mathrm{Col}_{\mu}^{\lambda}$ and the tensor basis of 
\[
V^{\mu}=V^{\mu_1}\otimes \cdots\otimes V^{\mu_n},
\]
where $V^1=V^+$ and $V^2=V^1\wedge V^1\cong V^-$ (see Section 3 in~\cite{ms}, 
for example). However, we are interested in tensors as summands in 
the decomposition of elements in $B^S$. Therefore, we prove 
Proposition~\ref{prop:tableauxflows} in Subsection~\ref{subsec:tableauxflows}. 
The reader, who is not interested in the details of the proof of this 
proposition, can choose to skip this subsection at a first reading and 
just read the statement of the proposition.  


\subsection{Tableaux and flows}
\label{subsec:tableauxflows}
Let $p_S$ be the number of positive entries and $n_S$ the number of negative 
entries of $S$. By definition, we have that $d=p_S+2n_S$. The key idea 
in this subsection is to reduce all proofs to the case where $n_S=0$. 

\begin{defn} \label{def:posstr}
Fix any state string $J$ of length $n$, we define a new state string 
$\hat{J}$ of length $d$ by the following algorithm.
\begin{enumerate}
\item Let ${}_{0}\hat{J}$ be the empty string.
\item For $1\leq i\leq n$, let $_{i}\hat{J}$ be the result of 
concatenating $j_{i}$ to $_{i-1}\hat{J}$ if $\mu_{i}=1$. 
If $\mu_{i}=2$ then
\begin{enumerate}
\item concatenate $(1, 0)$ to $_{i-1}\hat{J}$ if $j_{i} = 1$.
\item concatenate $(0, -1)$ to $_{i-1}\hat{J}$ if $j_{i} = -1$.
\item concatenate $(1, -1)$ to $_{i-1}\hat{J}$ if $j_{i} = 0$.
\end{enumerate}
\end{enumerate}
We set $\hat{J} = {}_{n}\hat{J}$. Lastly, for any $c\in \{-1,0,1\}$, 
we define $\hat{J}^{c}$ to be the number of entries in $\hat{J}$ that is 
equal to $c$.
\end{defn}

\begin{prop} 
\label{prop:tableauxflows}
There is a bijection between $\mathrm{Col}_{\mu}^{\lambda}$ and the 
set of state strings $J$ such that there exists a $w\in B^S$ and 
a flow $f$ on $w$ which extends $J$. 
\end{prop}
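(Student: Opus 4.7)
The plan is to reduce the proposition to the case in which $S$ consists entirely of positive signs, via the doubling construction implicit in Definition~\ref{def:posstr}, and then apply a standard combinatorial bijection between dominant lattice words and Young tableaux.

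\textbf{Reduction.} Let $\hat S=(+,\ldots,+)$ of length $d=3k$, obtained from $S$ by replacing each ``$-$'' by two ``$+$'''s, and let $\hat J$ be the expanded state string of Definition~\ref{def:posstr}. I would prove that $J$ extends to a flow on some $w\in B^S$ if and only if $\hat J$ extends to a flow on some $\hat w\in B^{\hat S}$. For the forward direction, attach at each ``$-$'' boundary edge of $w$ a trivalent $Y$-vertex splitting it into two ``$+$'' edges; the resulting web $\hat w$ has boundary $\hat S$ and remains non-elliptic since attaching $Y$'s at the boundary cannot introduce circles, digons, or squares. The axiom that a flow uses exactly two of the three edges at each trivalent vertex forces the new boundary pair of states to be exactly the pair prescribed by Definition~\ref{def:posstr}, matching the identification $V^-\cong V^+\wedge V^+$ on basis vectors. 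The reverse direction is dual: merge the designated pairs of boundary ``$+$'' edges via $Y$-vertices, reduce the output modulo the web relations of Subsection~\ref{sec:webs}, and appeal to the triangularity of Theorem~\ref{thm:upptriang} to conclude that $J$ must appear in the expansion of at least one basis web in $B^S$.

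\textbf{All-positive case and tableaux.} With $\hat S=(+,\ldots,+)$, Kuperberg's indexing of $B^{\hat S}$ by closed dominant paths in the $\mathfrak{sl}_3$-weight lattice together with Theorem~\ref{thm:upptriang} shows that the state strings realisable as boundaries of flows on basis webs are exactly the dominant lattice words in $\{-1,0,1\}^d$ with $k$ occurrences of each value. Sending the value $\hat J_m$ to a column of $\lambda=(3^k)$ (e.g.\ $+1\mapsto$ column $1$, $0\mapsto$ column $2$, $-1\mapsto$ column $3$) and placing $m$ in the next available cell of that column yields the classical bijection with standard Young tableaux of shape $\lambda$: column-strictness is automatic from the ``next available cell'' rule, and the ballot condition at every prefix translates precisely into row-strictness. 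To pass from standard tableaux back to $\mathrm{Col}_\mu^\lambda$, define $T\mapsto\hat T$ by expanding each entry $i$ with $\mu_i=2$ into a consecutive pair of new entries placed in the same two cells of $T$; column-strictness of $T$ forces those cells into two distinct columns, and the three possibilities match exactly the pairs $(1,0)$, $(0,-1)$, and $(1,-1)$ dictated by Definition~\ref{def:posstr}. This map is manifestly invertible and compatible with the reduction, so composition yields the desired bijection.

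\textbf{Main obstacle.} The technical heart lies in the reverse direction of the reduction: verifying that undoubling $\hat w\in B^{\hat S}$, together with its flow, descends to a \emph{basis} web over $S$ with a compatible flow. This requires a careful local analysis of the configurations near each merged pair of boundary edges, using the digon and square relations to guarantee that the result is non-elliptic after reduction, and an argument that the lift and the descent are mutually inverse at the level of realisable state strings (rather than at the level of individual webs). Once this is in place, the remainder is the classical ballot-to-tableau correspondence combined with the straightforward combinatorial expansion matching Definition~\ref{def:posstr}.
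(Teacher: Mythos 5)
Your reduction to the all-positive case is plausible in outline (the paper itself uses the same doubling trick, for a different purpose, in the proof of Lemma~\ref{lem:I}), but the combinatorial heart of your argument rests on a false characterization. You claim that for $\hat S=(+,\ldots,+)$ the state strings realisable as flow boundaries on basis webs are exactly the \emph{dominant} lattice words with $k$ occurrences of each value, and you then match these with \emph{standard} Young tableaux. Neither Kuperberg's indexing nor Theorem~\ref{thm:upptriang} gives this: Kuperberg's dominant paths index the basis webs via their \emph{canonical} flows, while a basis web carries many non-canonical flows whose boundary states are strictly lower in the ordering and in general violate any ballot condition (this is exactly the phenomenon behind Remark~\ref{rem:counter}). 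A concrete counterexample is $\hat S=(+,+,+)$: the single trivalent-vertex web admits six flows, realising all six permutations of $(1,0,-1)$ as boundary states, whereas only one of these is a lattice word and there is only one standard tableau of shape $(3)$. The correct characterization, which the paper establishes as Lemma~\ref{lem:strtoflow}, is merely the equal-count condition~\eqref{eqn:conds}, $\hat J^{-1}=\hat J^{0}=\hat J^{1}$, with no dominance requirement; correspondingly the tableaux on the other side are the column-strict ones $\mathrm{Col}^{\lambda}_{\mu}$ (rows unconstrained), which are far more numerous than standard tableaux. So your proposed bijection has the wrong target on both sides and cannot yield the proposition.

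Moreover, even after correcting the characterization, the genuinely hard direction — that \emph{every} state string satisfying~\eqref{eqn:conds} is the boundary of a flow on some \emph{non-elliptic} web — is not addressed by your argument at all; this is where the paper invests most of its effort, via an inductive construction using arc-, Y- and H-moves together with a verification that no digons or squares can arise. Two smaller points: in your forward reduction the pair of states on the two new boundary edges at an attached $Y$ is not \emph{forced} (the two non-matching colours can be distributed in either order), so you must \emph{choose} the order prescribed by Definition~\ref{def:posstr}; and in the reverse reduction, your appeal to Theorem~\ref{thm:upptriang} should instead be an explicit positivity argument (the coefficients $c(S,J,J')$ and the Kuperberg reduction coefficients lie in $\mathbb{N}[q,q^{-1}]$, so no cancellation can remove a state that is realised by a flow on the merged web) — that part of your idea can be salvaged, but it does not repair the main gap.
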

\noindent The proof of Proposition~\ref{prop:tableauxflows} follows 
directly from 
Lemmas~\ref{lem:ttostr} and~\ref{lem:strtoflow}.

\begin{lem} \label{lem:ttostr}
There is a bijection between $\mathrm{Col}_{\mu}^{\lambda}$ and 
state strings $J$ of length $n$ such that 
\begin{equation}\label{eqn:conds}
\hat{J}^{-1} = \hat{J}^{0} = \hat{J}^{1}.
\end{equation}
where the $\hat{J}^{c}$ are as defined in Definition~\ref{def:posstr}.
\end{lem}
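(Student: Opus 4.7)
The plan is to construct the bijection explicitly by reading off, for each tableau entry, the column (or pair of columns) that it occupies, and to check that this assignment is unambiguous thanks to column-strictness. The numerical condition $\hat{J}^{-1}=\hat{J}^0=\hat{J}^1$ will then simply record that the shape $(3^k)$ is exactly filled.

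First I would establish the forward map $T\mapsto J$. Given $T\in\mathrm{Col}_{\mu}^{\lambda}$, for each $1\leq i\leq n$ I read off the columns in which the value $i$ appears: by column-strictness this is exactly one column if $\mu_i=1$, and exactly two distinct columns if $\mu_i=2$. I then set $j_i$ according to the dictionary used implicitly in Definition~\ref{def:posstr}: for $\mu_i=1$ the columns $1,2,3$ correspond to $j_i=1,0,-1$ respectively; for $\mu_i=2$ the column pairs $(1,2),(1,3),(2,3)$ correspond to $j_i=1,0,-1$ respectively.

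Next I would verify the numerical condition. By construction, the concatenation $\hat{J}$ lists the column indices (encoded as $1,0,-1$) of all $d=\sum_i\mu_i$ cells of $T$, written out in the order $i=1,2,\dots,n$. Hence $\hat{J}^c$ equals the number of cells of $T$ lying in the column labeled by $c$; since $T$ has shape $(3^k)$, each of the three columns contains exactly $k$ cells, so $\hat{J}^{-1}=\hat{J}^0=\hat{J}^1=k$.

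I would then construct the inverse map $J\mapsto T$. Given a state string $J$ satisfying the numerical condition, I fill the shape $(3^k)$ by processing $i=1,\ldots,n$ in order: $j_i$ selects a single column (if $\mu_i=1$) or a pair of columns (if $\mu_i=2$) via the same dictionary, and I place $i$ in the topmost empty cell of each selected column. Column-strictness is automatic because entries are inserted in increasing order of $i$ and never more than one copy of $i$ enters the same column; the numerical condition $\hat{J}^c=k$ ensures every column is filled to height $k$, so the recipe fills the shape $(3^k)$ exactly and yields a valid element of $\mathrm{Col}_{\mu}^{\lambda}$. The two maps are then manifestly mutually inverse, as each only records or reads the column data encoded by $j_i$. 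The main (very mild) obstacle is purely one of bookkeeping: keeping the identification of $\{1,0,-1\}$ with the three columns consistent across the $\mu_i=1$ and $\mu_i=2$ cases, and verifying that this identification is exactly what Definition~\ref{def:posstr} encodes.
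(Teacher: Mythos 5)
Your proposal is correct and follows essentially the same route as the paper: the paper also labels the three columns by $1,0,-1$, builds the tableau top to bottom by placing $i$ in the column(s) dictated by $j_i$ (with the pair of columns summing to $j_i$ when $\mu_i=2$), recovers $J$ from $T$ via $j_i=\sum_{i\text{ in column }c}c$, and observes that $\hat{J}^{c}$ counts the cells in column $c$, so condition~\eqref{eqn:conds} is equivalent to all three columns having length $k$. Your dictionary for the $\mu_i=2$ case is exactly the encoding of Definition~\ref{def:posstr}, so no further changes are needed.
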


\begin{proof}
Given a state string $J$ satisfying (\ref{eqn:conds}), we first give an 
algorithm to build a 3-column tableau $Y_J$, filled 
with integers from 1 to $n$. Afterwards, we show that $Y_J$ has 
shape $\lambda$. 

Begin by labeling the three columns with $1, 0$ and $-1$, 
reading from left to right. We are going to build up $Y_J$ from top to bottom. 
Start by taking $Y_J$ to be the empty tableau. Then, from $i=1$ to $i=n$, 
do the following:
\begin{enumerate}
\item If $\mu_{i}=1$, add one box labeled $i$ to column $j_i$ in $Y_J$.  
\item If $\mu_{i} = 2$, add two boxes labeled $i$ to columns $c_{1}$ and 
$c_{2}$, such that $c_1\ne c_2$ and $c_1+c_2=j_i$.
\end{enumerate}

We have to show that $Y_J$ belongs to $\mathrm{Col}_{\mu}^{\lambda}$. 
Since the algorithm builds up from top to bottom, $Y_J$ is strictly column 
increasing. To see that $Y_J$ has shape $\lambda$, we need to show 
that every row in $Y_J$ has three entries. Observe that the number of 
filled boxes in column $c$ of $Y_J$ is exactly equal to $\hat{J}^{c}$. 
Since we have assumed condition~\eqref{eqn:conds}, all three columns 
have the same length, therefore every row in $Y_J$ must have 
exactly three entries.


Conversely, let $T\in\mathrm{Col}_{\mu}^{\lambda}$. We define a state string 
$J$ as follows.
\[
j_{i}=\sum_{i\mathrm{\,appears\,in\,column\,c}} c.
\]
Since $\mu$ corresponds to a sign string and $T$ is column strict, 
we see that, for each $1\leq i\leq n$, $i$ 
can appear at most twice in $T$ but never twice in the same 
column. Thus, $j_{i}\in\{-1,0,1\}$, i.e. $J$ is a state string. It follows from 
the definition of $\hat{J}$ that $\hat{J}^{c}$ is equal to the length of 
column $c$ of $T$. Since $T$ is of shape $\lambda$, the number of boxes in 
each column is the same. Hence, condition (\ref{eqn:conds}) holds for $J$. 

It is straightforward to check that the above two constructions are inverse 
to each other and therefore determine a bijection. 
\end{proof}

\begin{lem} \label{lem:strtoflow}
A state string $J$ corresponds to the boundary state of a flow on a 
web $w\in B^S$ if and only if condition~\eqref{eqn:conds} holds for $J$.
\end{lem}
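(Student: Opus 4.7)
\begin{pf}
The plan is to reduce both implications to the all-``+'' case by means of the doubling construction, which replaces each ``-'' boundary edge of $w$ with two ``+'' boundary edges joined at a new Y-vertex placed at the top boundary. Diagrammatically this realizes the isomorphism $V^-\cong V^+\wedge V^+$. Under doubling, a web $w\in B^S$ becomes a web $\hat{w}$ with boundary $\hat{S}=(+)^d$, and $\hat{w}$ remains non-elliptic since the new Y-vertices placed at the boundary cannot create digons or squares. I would show that a flow on $w$ extending $J$ lifts uniquely to a flow on $\hat{w}$ extending $\hat{J}$: at each new Y-vertex the three incident colors must be the three distinct elements of $\{-1,0,1\}$, and the rule recorded in Definition~\ref{def:posstr} specifies precisely the pair of colors assigned to the two new ``+'' edges that is compatible with the color of the adjacent ``-'' edge. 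Moreover, condition~\eqref{eqn:conds} on $\hat{J}$ is equivalent to $e^{\hat{S}}_{\hat{J}}$ having weight zero in $V^{\otimes d}$, and holds for $\hat{J}$ if and only if it holds for $J$.

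For the forward direction, I would argue as follows. If a flow $f$ on $w\in B^S$ extends $J$, then $\hat{w}$ is a basis web in $B^{\hat{S}}$, so $\hat{w}=w^{\hat{S}}_{\hat{J}_0}$ for some closed dominant $\hat{J}_0$. By Theorem~\ref{thm:upptriang}, the coefficient of $e^{\hat{S}}_{\hat{J}}$ in the expansion of $\hat{w}$ equals $c(\hat{S},\hat{J}_0,\hat{J})$, which counts flows on $\hat{w}$ extending $\hat{J}$ and is strictly positive because the lifted flow $\hat{f}$ is such a flow. Since $\hat{w}$ is a $U_q(\mathfrak{sl}_3)$-invariant tensor, it lies in the weight-zero subspace of $V^{\otimes d}$, so $e^{\hat{S}}_{\hat{J}}$ is weight zero, which yields condition~\eqref{eqn:conds}.

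For the converse direction, given $J$ satisfying condition~\eqref{eqn:conds}, I would apply the growth algorithm of Definition~\ref{growth} to the closed path $(S,J)$, obtaining a web $w(S,J)$ with boundary $S$ together with the flow induced by the canonical-flow rules~\eqref{canrule}, which extends $J$ on the boundary by construction. If $(S,J)$ is not dominant, $w(S,J)$ need not be a basis web, but it can be rewritten as an $\mathbb{N}[q,q^{-1}]$-linear combination of basis webs in $B^S$ by iterated application of the digon and square relations~\eqref{eq:digon} and~\eqref{eq:square}. Using the bijection of Remark~\ref{rem:3color} between flows and admissible 3-colorings, I would perform a local analysis showing that each admissible 3-coloring of a digon restricts to an admissible 3-coloring of the single strand with matching boundary colors, and each admissible 3-coloring of a square is compatible with exactly one of the ``vert'' or ``horiz'' resolutions and restricts to an admissible 3-coloring thereof. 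Iterating these local restrictions on the 3-coloring induced on $w(S,J)$ yields a flow on at least one basis web in the reduction that extends $J$, as required.

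The hard part will be the local flow-tracking through the digon and square reductions; this amounts to a careful combinatorial check that admissible 3-colorings restrict compatibly at each step, which is routine but must be verified in detail.
\end{pf}
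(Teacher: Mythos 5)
Your \emph{forward} direction is correct and takes a genuinely different route from the paper's. The paper proves this direction by a direct induction along the growth algorithm, checking that each arc/Y/H move with flow preserves condition~\eqref{eqn:conds}. You instead observe that a basis web is an invariant, hence weight-zero, tensor, so every elementary tensor appearing with nonzero coefficient in its expansion must itself be of weight zero; since the flow $f$ forces $c(S,J_0,J)\ne 0$ by the flow-counting formula preceding Theorem~\ref{thm:upptriang}, and since condition~\eqref{eqn:conds} is precisely the statement that $e^S_J$ has weight zero (this is exactly what Definition~\ref{def:posstr} encodes, matching the weights of $e^-_{j}$ with $e^+_{a}\otimes e^+_{b}$), the conclusion follows. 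This is cleaner and more conceptual than the paper's induction. Two small remarks: the doubling of the web is not actually needed here — you can work directly with $w\in B^S\subset V^S$ — and the lift of $f$ to $\hat w$ is not literally unique (each Y-vertex admits two orderings), though the weight-zero argument is insensitive to this.

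Your \emph{converse} direction has a genuine gap. The step ``apply the growth algorithm to $(S,J)$, obtaining $w(S,J)$ with a flow extending $J$'' is not established. For a non-dominant closed path, the growth algorithm as stated applies rules keyed to the \emph{sign} string, and the canonical-flow decoration of~\eqref{canrule} is defined so as to produce the flow extending $J_0$, the dominant state string of the resulting web, not the arbitrary $J$ you started with. If instead you mean a flow-decorated growth step chosen to be compatible with $J$ at each level, then you must address applicability and termination: the arc and Y rules with flow are only compatible with certain state pairs, and H-rules do not decrease the boundary length, so without a careful ordering of moves the procedure can stall or cycle. This is precisely the content of the paper's case analysis (Case~1, Case~2, and the repeated H-moves pushing a zero into position); your proposal simply skips it. Furthermore, the flow-tracking through square reductions that you defer as ``routine'' is subtler than stated: an admissible 3-coloring of a square with all four boundary edges the same color is compatible with \emph{both} the ``vert'' and ``horiz'' resolutions, so the claimed dichotomy fails and a finer ($q$-graded) matching is required. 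And you must also handle circle removal, which~\eqref{eq:circle} introduces. Unless these points are filled in, the converse is not proved; in fact the paper's construction is, in effect, the required argument, so your approach doesn't actually shorten this half.
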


\begin{proof}
Let $w\in B^S$ be equipped with a flow with boundary state string 
$J$. We are going to show that $J$ satisfies condition~\eqref{eqn:conds} 
by induction on $n$. For $n=2$, $w$ can only be an arc. 
In this case it is simple to check that all flows on $w$ have corresponding 
boundary state strings satisfying condition~\eqref{eqn:conds}. 

For $n>2$, we express $w$ using the growth algorithm in an arbitrary, but 
fixed way, with the restriction that only one rule is applied per level. 
Let $_{k}J$ denote the boundary state string at the beginning of the $k$-th 
level in the growth algorithm and $_{k}\hat{J}$ the associated string as in 
Definition~\ref{def:posstr}. Similarly, let $_{k}\mu$ denote the composition 
corresponding to the sign string at the $k$-th level. 
Let us compare $_{k+1}J$ and $_{k}J$. They can 
only differ in the following ways.
\begin{enumerate}
\item In case an arc-rule is applied at the $k$-th level, $_{k}J$ 
can be obtained from $_{k+1}J$ by inserting the substring 
$(1,-1)$, $(0, 0)$ or $(-1,1)$ 
between the $i$-th and $i+1$-th entries in $_{k+1}J$. $_{k}\mu$ can be obtained 
from $_{k+1}\mu$ by inserting the substring $(1, 2)$ or $(2, 1)$ 
between the $i$-th and $i+1$-th entries in $_{k+1}\mu$.
\begin{align} \label{fig:arcflow}
   \xy(0,0)*{\includegraphics[width=150px]{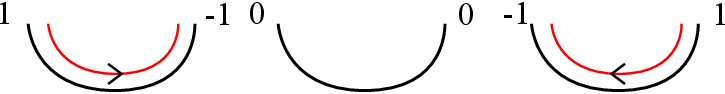}};\endxy
\end{align}
\item In case a Y-rule is applied, $_{k}J$ can be obtained from $_{k+1}J$ 
by replacing the $i$-th entry in $_{k+1}J$ with a length two substring 
whose sum is equal to the $i$-th entry. $_{k}\mu$ can be obtained from 
$_{k+1}\mu$ by replacing the $i$-th entry in $_{k+1}\mu$ with the 
substring $(3-{}_{k+1}\mu_{i}, 3-{}_{k+1}\mu_{i})$.
\begin{align} \label{fig:yflow}
   \xy(0,0)*{\includegraphics[width=300px]{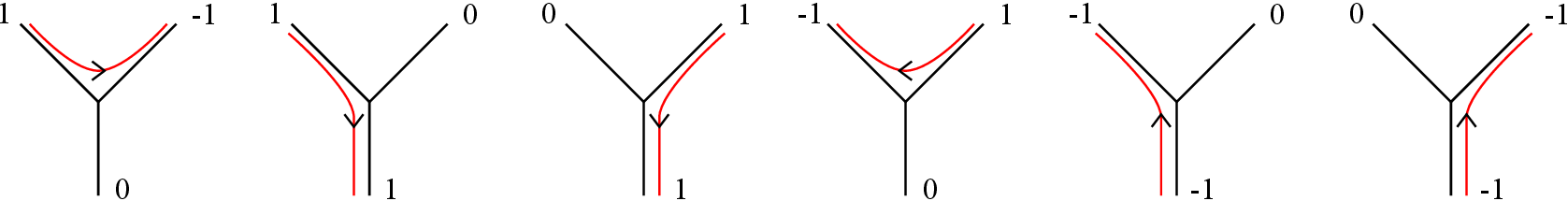}};\endxy
\end{align}
\item In case an H-rule is applied, $_{k}\mu$ can be obtained from 
$_{k+1}\mu$ by replacing a substring $(1, 2)$ or $(2, 1)$, at the $i$-th and 
$(i+1)$-th position in $_{k+1}\mu$, with 
$(3-{}_{k+1}\mu_{i}, 3-{}_{k+1}\mu_{i+1})$. $_{k}J$ can be obtained from 
$_{k+1}J$ by replacing a substring of length two in $_{k+1}J$ at the $i$-th 
and $(i+1)$-th position according to the schema.
\begin{align} \label{fig:hflow}
  &\xy(0,0)*{\includegraphics[width=220px]{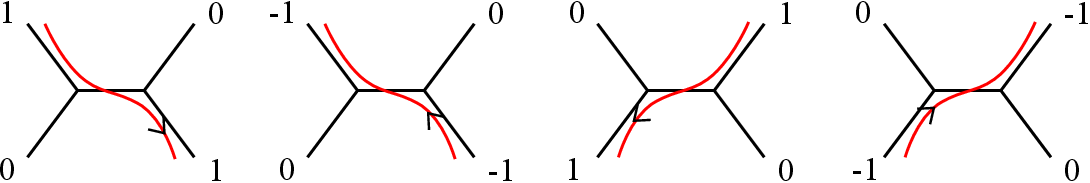}};\endxy\\
   &\xy(0,0)*{\includegraphics[width=220px]{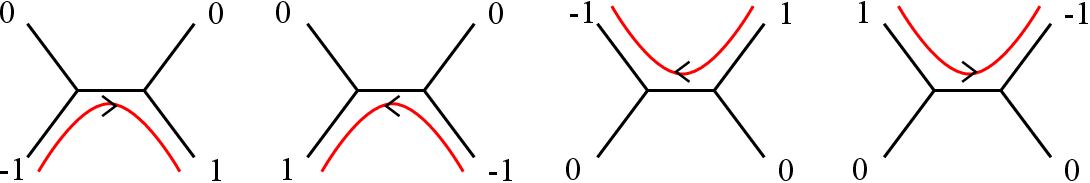}};\endxy
\end{align}
\end{enumerate}
It is straightforward to check that $_{k+1}J$ satisfies 
condition~\eqref{eqn:conds}, with composition $_{k+1}\mu$, if and only if 
$_{k}J$ does, with composition $_{k}\mu$. Take, for example, an instance where 
a Y-rule is applied; suppose also that the $i$-th entry in $_{k+1}\mu$ is 
2 and the $i$-th entry in $_{k+1}J$ is 0. Thus, the $i$-th entry in 
$_{k+1}J$ contributes a pair $(1, -1)$ to $_{k+1}\hat{J}$. 
By~\eqref{fig:yflow}, $_{k}J$ is 
obtained from $_{k+1}J$ by replacing the $i$-th entry in $_{k+1}J$ 
with $(1, -1)$ and the $i$-th entry in $_{k+1}\mu$ with $(1, 1)$. 
We see that $_{k}\hat{J}$ is in fact exactly equal to $_{k+1}\hat{J}$. 
Therefore $_{k+1}\hat{J}$ satisfies condition~\eqref{eqn:conds} if and only if  
$_{k}\hat{J}$ does. Similar analysis apply to all cases 
in~\eqref{fig:arcflow}, \eqref{fig:yflow} and~\eqref{fig:hflow}.

Let $k$ be the first level in the growth algorithm of $w$ where a Y or an 
arc-rule is applied. From the $(k+1)$-th level down we have a 
non-elliptic web $w'$ with flow, whose boundary state string $_{k+1}J$ and 
composition $_{k+1}\mu$ both have length less than $n$. Thus, by our 
induction hypothesis, $_{k+1}J$, with composition $_{k+1}\mu$, satisfies 
condition~\eqref{eqn:conds}. 

By the above argument, then ${}_{i}J$ also satisfy condition~\eqref{eqn:conds}, 
for any $0\leq i\leq k$. In particular, $J={}_0J$ satisfies that condition, 
which is what we had to prove.  
\vskip0.5cm

Conversely, let $J$ satisfy condition~\eqref{eqn:conds}, with composition 
$\mu$. We show, by induction on $n$, that there is a $w\in B^S$ 
with flow whose boundary state string is exactly $J$. More specifically, 
we first construct a $w\in W^S$ and then show that $w$ is non-elliptic, i.e. 
$w\in B^S$. 

For $n=2$, then $w$ must be an arc. 
It is simple to check that if $J$ satisfies condition~\eqref{eqn:conds}, 
$J$ is the boundary state of a flow on an arc. 

For $n>2$, suppose it is possible to apply an arc or Y-rule to 
the pair $\mu$ and $J$, depicted in~\eqref{fig:arcflow} 
and~\eqref{fig:yflow}. Then we obtain a new pair $\mu'$ and 
$J^{\prime}$ with length less than $n$. Thus, by induction, there exist a web $w'\in 
W_{S^{\prime}}$ and flow extending $J^{\prime}$. Gluing the arc or Y on top 
of $w'$ results in a web $w\in W^S$ with a flow extending $J$.

Suppose, then, that it is not possible to apply an arc or Y-rule to 
$\mu$ and $J$. This means that one of the following must hold.
\begin{enumerate}
\item \label{case1} $\mu$ does not contain a substring of type 
$(1,2)$ or $(2,1)$ and $J = (1,...,1)$, $J=(-1,...,-1)$ or $J=(0,...,0)$.
\item \label{case2}$\mu$ contains at least one substring of the form $(1, 2)$ 
or $(2, 1)$. For every substring in $\mu$ of the form $(1, 2)$ or $(2, 1)$, 
the corresponding substring in $J$ is $(\pm1, \pm1)$, $(0, 1)$ or $(1,0)$. 
For every substring in $\mu$ of the form $(1, 1)$ or $(2,2)$, 
the corresponding substring in $J$ is $(1,1)$, $(-1,-1)$ or $(0,0)$.
\end{enumerate}
Case 1 contradicts the assumption that $J$ satisfies 
condition~\eqref{eqn:conds}. 

Case~\ref{case2} contains several subcases, each of which contains details 
which are slightly different. However, the general idea is the same for all of 
them and is very simple: apply H-moves until you can apply an arc or a Y-rule 
and finish the proof by induction. 

We first suppose, without loss of generality, that 
$\mu$ contains a substring $(\mu_{i}, \mu_{i+1})=(1, 2)$ and that the 
corresponding substring in $J$ is $(j_{i}, j_{i+1})=(1, 1)$ 
(the subcase for $(j_{i}, j_{i+1})=(-1, -1)$ is analogous). 
We see that $(1, 1)$ in $J$ contributes a substring $(1, 0, 1)$ to $\hat{J}$. 
Thus, our assumption that $\hat{J}$ satisfies condition~\eqref{eqn:conds} 
implies that $\hat{J}$ contains at least one more entry equal to $-1$. 
This means that for some $r\neq i, i+1$, $1\leq r\leq n$, one of the 
following is true.
\begin{itemize}
\item[(a)] $j_{r} = -1$, $\mu_{r} = 1$, denoted for brevity by
\begin{align} 
   \xy(0,0)*{\includegraphics[width=70px]{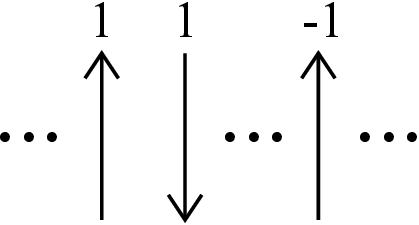}};\endxy
\end{align}
\item[(b)] $j_r = -1$, $\mu_{r} = 2$, denoted
\begin{align} 
   \xy(0,0)*{\includegraphics[width=70px]{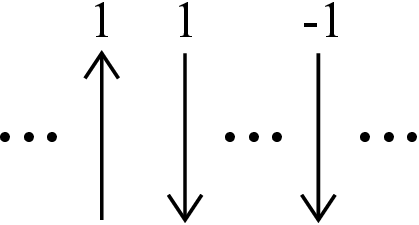}};\endxy
\end{align}
\item[(c)] $j_r = 0$, $\mu_{r} = 2$, denoted
\begin{align} 
   \xy(0,0)*{\includegraphics[width=70px]{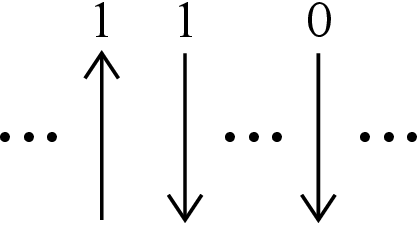}};\endxy
\end{align}
\end{itemize}
Without loss of generality, let us assume $i+1<r$. 
Consider subcases (a) and (b). If $j_{m}\neq 0$ for all $i+1<m<r$, then it 
is possible to apply an arc or Y-move to $J$ and $\mu$, contrary to our 
assumption in case~\ref{case2}. Thus, in all three scenarios above it suffices 
to analyze the following two configurations:
\begin{align} \label{fig:110}
   \xy(0,0)*{\includegraphics[width=70px]{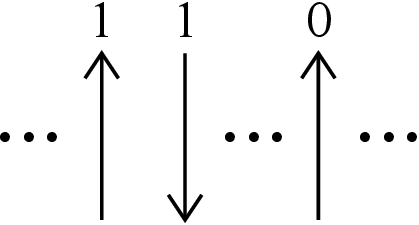}};\endxy && 			\xy(0,0)*{\includegraphics[width=70px]{section41/udd110}};\endxy
\end{align}
Let $i+1< r \leq n$ be smallest integer where $j_r = 0$. We must have that 
$\mu_{r-1}=3-\mu_{r}$ and $j_{r-1}=\pm 1$. For any other values of $\mu_{r-1}$ 
and $j_{r-1}$ we would be able to apply an arc or a Y-move, 
contradicting our assumptions for case~\ref{case2}. In both situations, 
we can apply an H-rule to the substrings $(j_{r-1}, j_{r})$ and 
$(\mu_{r-1}, \mu_{r})$ as shown below.
\begin{align} \label{fig:hmove}
   \xy(0,0)*{\includegraphics[width=80px]{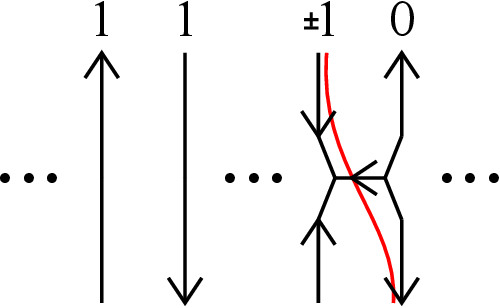}};\endxy &&
   \xy(0,0)*{\includegraphics[width=80px]{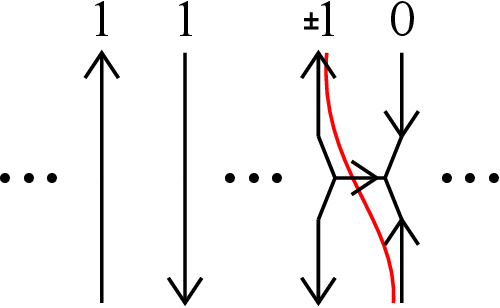}};\endxy
\end{align}
This results in new sign and state strings, each with length $n$ satisfying 
condition~\eqref{eqn:conds}. The application of the H-rule in~\eqref{fig:hmove} 
moves the zero at the $r$-th position to the $r-1$ position. 
Either we can now apply an arc or Y-rule to the new strings or 
by repeatedly applying an H-rule in the manner of~\eqref{fig:hmove}, 
we obtain one of the following pairs.
\begin{align} 
   \xy(0,0)*{\includegraphics[width=45px]{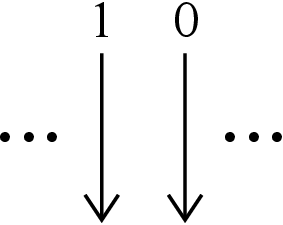}};\endxy &&
\xy(0,0)*{\includegraphics[width=45px]{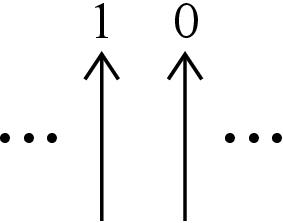}};\endxy
\end{align}
To either of the above diagrams we can apply a Y-rule, after which we 
can use induction. 

To complete our analysis of case~\ref{case2}, now suppose, without loss of 
generality, that $\mu$ contains a substring $(\mu_{i}, \mu_{i+1})=(1, 2)$ and 
that the corresponding substring in $J$ is $(j_{i}, j_{i+1})=(1, 0)$ 
(the subcases for $(0,\pm 1)$ or $(-1,0)$ are analogous).  

We see that $(1, 0)$ in $J$ contributes a substring $(1, 1,-1)$ 
to $\hat{J}$. Thus, our assumption that $\hat{J}$ satisfies 
condition~\eqref{eqn:conds} implies that $\hat{J}$ contains at least one more 
entry equal to $-1$. This means that for some $r$, with $1\leq r\leq n$, 
one of the following is true.
\begin{itemize}
\item[(a)] $j_{r} = -1$, $\mu_{r} = 1$, denoted for brevity by
\begin{align} 
   \xy(0,0)*{\includegraphics[width=70px]{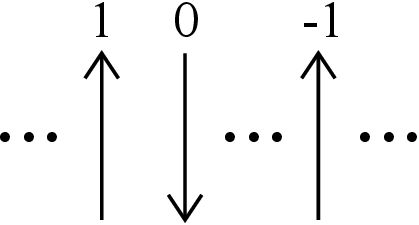}};\endxy
\end{align}
\item[(b)] $j_{r} = -1$, $\mu_{r} = 2$, denoted
\begin{align} 
   \xy(0,0)*{\includegraphics[width=70px]{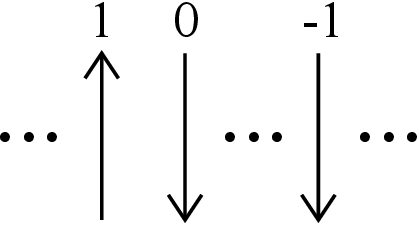}};\endxy
\end{align}
\item[(c)] $j_{r} = 0$, $\mu_{r} = 2$, denoted
\begin{align} 
   \xy(0,0)*{\includegraphics[width=70px]{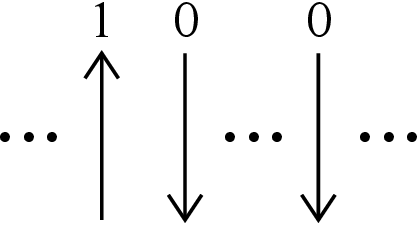}};\endxy
\end{align}
\end{itemize}
For subcases (a) and (b), if $\mu_{i+2}=1$ and $j_{i+2} =-1$, we may apply an 
H-rule to $(\mu_{i+1}, \mu_{i+2})$, $(j_{i+1}, j_{i+2})$ to obtain a new 
pair $\mu'$ and $J^{\prime}$. Subsequently we can apply a Y-rule 
to the $i$-th and $(i+1)$-th entries of $\mu'$ and $J^{\prime}$:
\begin{align}
   \xy(0,0)*{\includegraphics[width=60px]{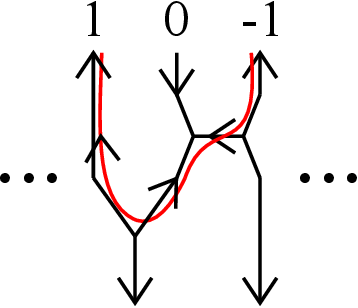}};\endxy 
\end{align}
After applying the Y-rule, we can use induction. 

Otherwise, we can show, just as before, that all three scenarios 
above reduce to an analysis of the following two configurations.
\begin{align} \label{fig:100}
   \xy(0,0)*{\includegraphics[width=70px]{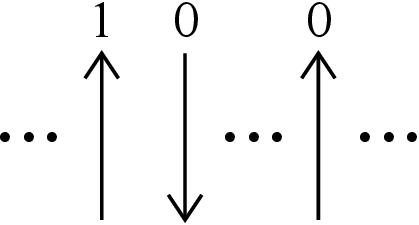}};\endxy && \xy(0,0)*{\includegraphics[width=70px]{section41/udd100}};\endxy
\end{align}
That is, we can assume that $\mu$ contains a substring 
$(\mu_{i}, \mu_{i+1})=(1, 2)$ with the corresponding substring in $J$ being 
$(j_{i}, j_{i+1})=(1, 0)$, and for some $0<r\neq i+1<n$ we have $j_{r} = 0$. 
In particular, this tells us that there exist a $0<r\neq i+1<n$ such that 
$\mu_{r}=1$ and $j_{r} = 0$.
\begin{align}
   \xy(0,0)*{\includegraphics[width=70px]{section41/udu100}};\endxy
\end{align}
This has to hold because otherwise $\hat{J}$ cannot satisfy 
condition~\eqref{eqn:conds}. Let us assume $r$ to be the smallest integer 
such that $i+1<r$, $\mu_{r}=1$ and $j_{r} = 0$. By our assumption that we 
cannot apply an arc or Y-rule to $J$ and $\mu$, we see that 
$\mu_{r-1} = 2$ and $j_{r-1} = \pm 1$. Applying an H-rule to 
$(j_{r-1}, j_{r})$ and $(\mu_{r-1}, \mu_{r})$ 
\begin{align}
   \xy(0,0)*{\includegraphics[width=80px]{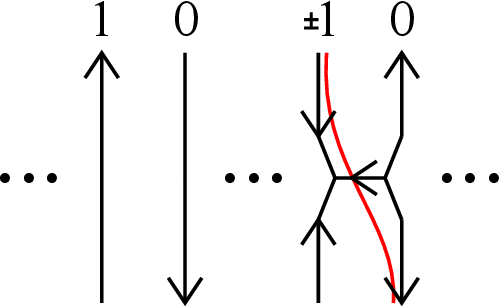}};\endxy
\end{align}
results in new sign and state strings, also with length $n$, satisfying 
condition~\eqref{eqn:conds}. The application of the H-rule in the above 
case moves the zero at the $r$-th position to the $r-1$-th position. 
Either we can now apply an arc or a Y-rule to the new sign and state strings, 
or by repeatedly apply an H-rule in the manner of~\eqref{fig:hmove}, 
we obtain a pair as below.
\begin{align} 
   \xy(0,0)*{\includegraphics[width=45px]{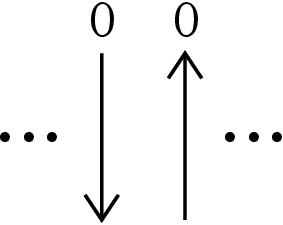}};\endxy
\end{align}
to which we can apply an arc-rule. Finally, apply induction.
\vskip0.5cm
It remains to show that the web $w$ (with flow) produced from the above algorithm is an element of $B^S$, that is, $w$ does not contain digons or squares. 
We note that, just as in~\cite{kk}, in the expression of $w$ using 
the arc, Y and H-rules, digons can only appear as the result of applying an 
arc-rule to the bottom of an H-rule, i.e. we have
\begin{align} 
   \xy(0,0)*{\includegraphics[width=25px]{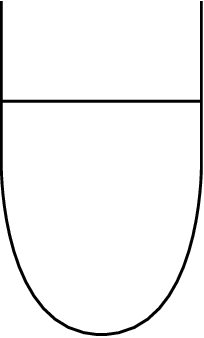}};\endxy
\end{align}

A square can only result from the following sequence of arc, Y and H-rules.
\begin{align} 
   \xy(0,0)*{\includegraphics[width=250px]{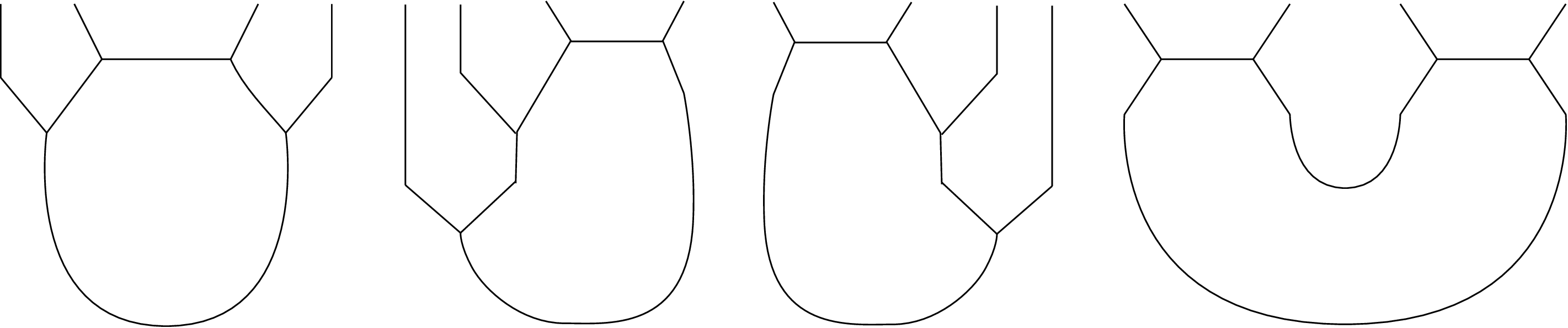}};\endxy
\end{align}
Note that in the above, we do not consider the case in which we apply 
an H-rule to the bottom of another H-rule. This is because such a case cannot arise in 
our construction of $w$.

Recall that in our inductive construction of $w$, we only apply H-rules 
equipped with the following flows.
\begin{align} 
   \xy(0,0)*{\includegraphics[width=90px]{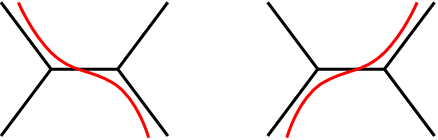}};\endxy
\end{align}

We can immediately see that is it not possible to apply an arc-rule with flow 
to the bottom of such an H-rule as shown below.
\begin{align} 
   \xy(0,0)*{\includegraphics[width=30px]{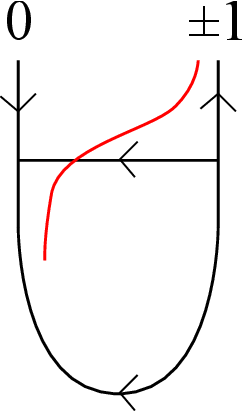}};\endxy
\end{align}

Since we only use the above two H-rules with flow, the induced flows on 
squares are as follows.
\begin{align} 
   \xy(0,0)*{\includegraphics[width=130px]{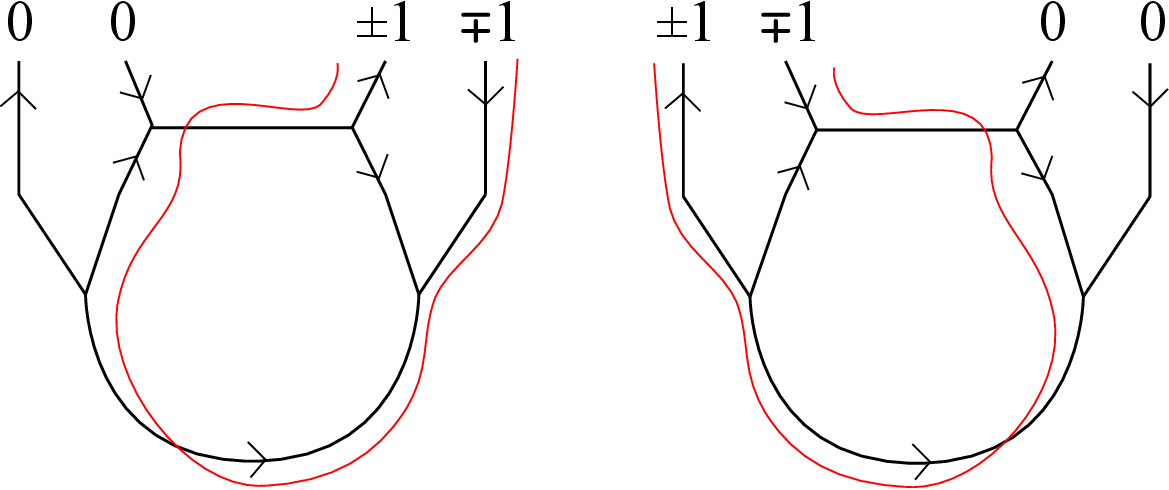}};\endxy
\end{align}

\begin{align} 
   \xy(0,0)*{\includegraphics[width=130px]{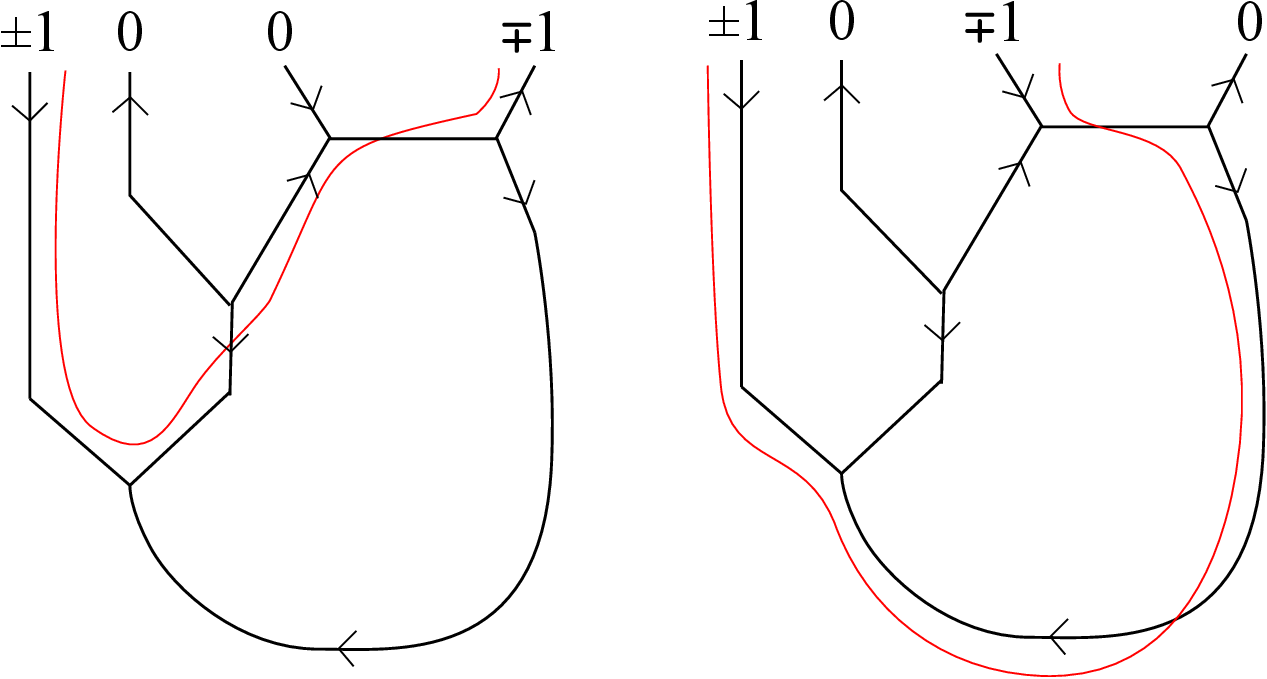}};\endxy
\end{align}

\begin{align} 
   \xy(0,0)*{\includegraphics[width=180px]{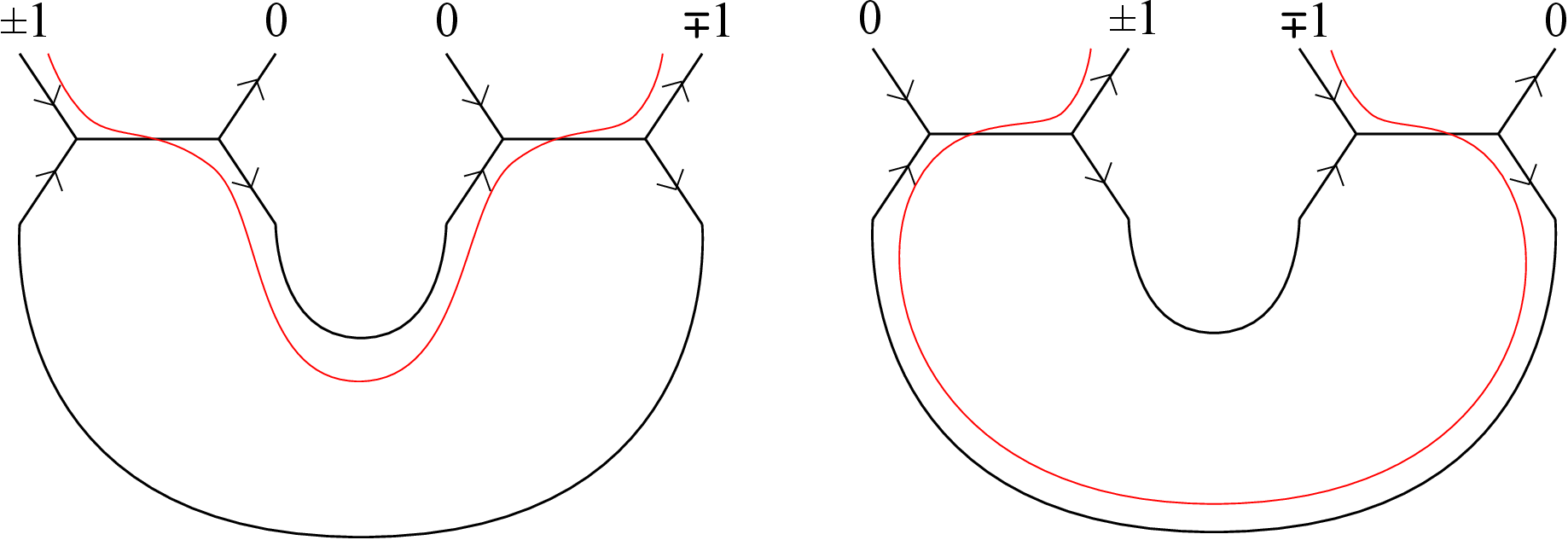}};\endxy
\end{align}
In each case, one can check that it is possible to apply an arc-rule to the 
state and sign strings 
(the same analysis applies to the cases where the faces above are given 
the opposite edge orientations). 
However, recall that an H-rule is used in our construction 
only in the case for which it is not possible to apply any other rules 
to the boundary. This implies that none of the above faces 
can appear during the construction of $w$.
\end{proof}

Implicit in the proof of Lemma~\ref{lem:strtoflow} is a procedure to construct,
from a state string $J$ satisfying condition~\eqref{eqn:conds},  
a non-elliptic web $w$ with flow extending $J$, such that $\partial w=\mu$. Note
that this procedure is not deterministic. That is, it is possible to produce 
different webs with flows extending $J$ by making different choices in the 
construction. 

\begin{ex}
\label{ex:tableauxflows}
The procedure is exemplified below. If we choose to replace the substrings as indicated in the right figure, the tableau on the left gives rise to the web with flow next to it.  
\begin{align} 
   \xy(0,0)*{\includegraphics[width=80px]{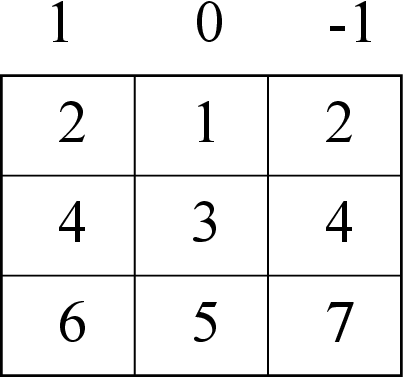}};\endxy && \xy(0,0)*{\includegraphics[width=160px]{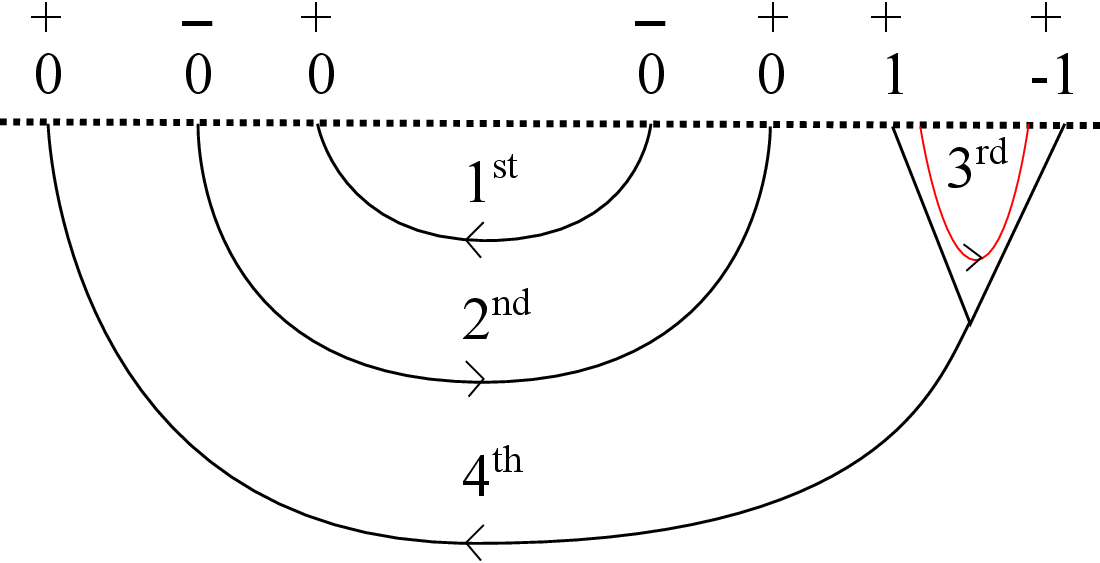}};\endxy
\end{align}
However, for other choices the same tableau generates the following web with flow.
\begin{align} 
   \xy(0,0)*{\includegraphics[width=160px]{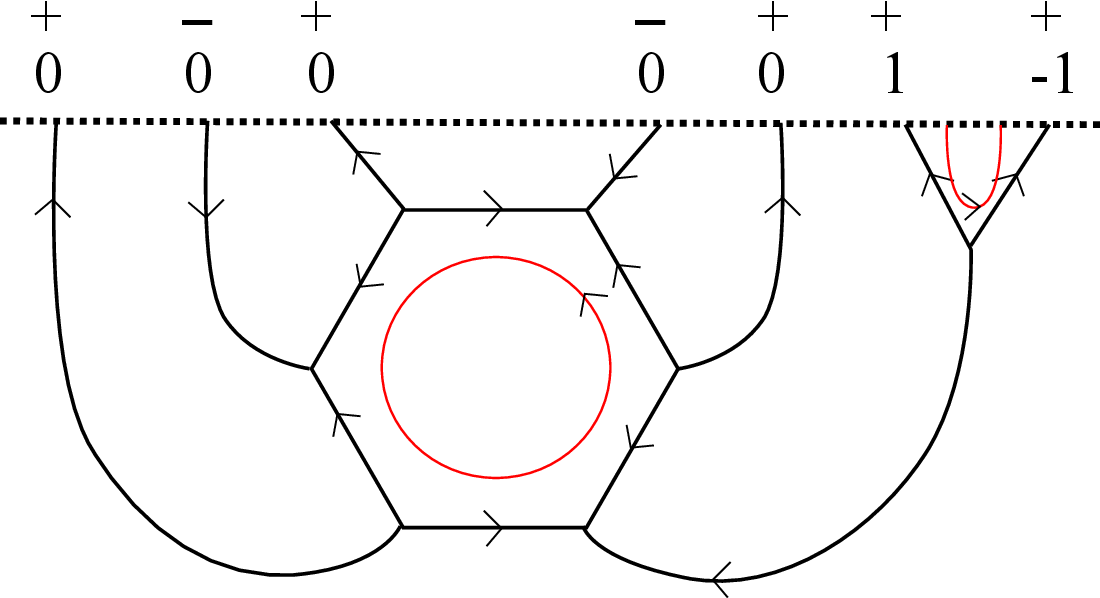}};\endxy
\end{align}
As a matter of fact, we could also invert the orientation of the flow in the internal cycle. The resulting web with flow 
would still correspond to the same tableau. 
\end{ex}

However, when we restrict to semi-standard tableaux, the 
procedure gives a unique web with flow, the canonical flow. 
One can check that the procedure implicit in Lemma~\ref{lem:strtoflow}, 
restricts to the same bijection between $\mathrm{Std}_{\mu}^{\lambda}$ and 
non-elliptic webs as defined by Russell in~\cite{ru}.


\subsection{$Z(G^S)$ and $E(Z(G^S))$}

In this subsection, $S$ continues to be a fixed sign string of length $n$. 
Moreover, we continue to use some of the other notations and conventions from 
the previous subsection as well, e.g. $d=3k\geq n$ etc.  
Let $\mu$ be the composition associated to $S$ and 
let $S_{\mu}$ be the corresponding parabolic subgroup of the 
symmetric group $S_d$. 

Let $Z(K^S)$ be the center of $K^S$ and let $X^{\lambda}_{\mu}$ be 
the Spaltenstein variety, with $\lambda=(3^k)$ and 
the notation as in~\cite{bo}. If $n_s=0$, 
then $X^{\lambda}_{\mu}=X^{\lambda}$, the latter being the Springer fiber 
associated to $\lambda$.\footnote{When comparing to Khovanov's result 
for $\mathfrak{sl}_2$, the reader should be aware that he labels the 
Springer fiber by $\lambda^T$, the transpose of $\lambda$.} 

In Theorem~\ref{thm:center}, we are going to prove that $H^*(X^{\lambda}_{\mu})$ 
and $Z(K^S)$ are isomorphic as graded algebras. 
\vskip0.5cm
Recall the following result by Tanisaki~\cite{ta}. Let 
$P=\mathbb{C}[x_1,\ldots,x_d]$ and let $I^{\lambda}$ be the ideal generated by
\begin{equation}
\label{eqn:SpringerI}
\left\{
e_r(i_1,\ldots,i_m)\;\middle\vert\;
\begin{array}{c}
m\geq 1, 1\leq i_1< \cdots < i_m\leq d\\
r> m-\lambda_{d-m+1}-\cdots-\lambda_n 
\end{array}
\right\}, 
\end{equation}
where $e_r(i_1,\ldots,i_m)\in P$ is the $r$-th elementary symmetric 
polynomial. Write 
\[
R^{\lambda}=P/I^{\lambda}.
\]
Tanisaki showed that 
\[
H^*(X^{\lambda})\cong R^{\lambda}.
\]

Note that $S_{\mu}$ acts on $P$ by permuting the variables and that it maps 
$I^{\lambda}$ to itself. Moreover, let $P^{\mu}=P^{S_{\mu}}\subset P$ be the subring of 
polynomials which are invariant under $S_{\mu}$.

For 
$1\leq i_1\leq \cdots\leq i_m\leq n$ and $r\geq 1$, we let 
$e_r(\mu, i_1,\ldots,i_m)$ denote the $r$-th elementary 
symmetric polynomials in the variables $X_{i_1}\cup\cdots\cup X_{i_m}$, where 
\[
X_{p}=
\left\{x_k\mid \mu_1+\cdots+\mu_{p-1}+1\leq k\leq \mu_1+\cdots+\mu_p\right\}.
\]
So, we have 
\[
e_r(\mu, i_1,\ldots,i_m)=\sum_{r_1+\cdots+r_m=r}e_{r_1}(\mu;i_1)\cdots 
e_{r_m}(\mu;i_m).
\]
If $r=0$, we set $e_r(\mu, i_1,\ldots,i_m)=1$ and if $r<0$, 
we set $e_r(\mu, i_1,\ldots,i_m)=0$. 
Let $I^{\lambda}_{\mu}$ be the ideal generated by 
\begin{equation}
\label{eqn:SpaltensteinI}
\left\{
e_r(\mu, i_1,\ldots,i_m)\;\middle\vert\;
\begin{array}{l}
m\geq 1, 1\leq i_1< \cdots < i_m\leq d\\
r> m-\mu_{i_1}+\cdots+\mu_{i_m}-\lambda_{l+1}-\cdots-\lambda_n \\
\mathrm{where}\;l=\#\{i\mid \mu_i>0, i\ne i_1,\ldots, i_m\}
\end{array}
\right\}.
\end{equation}
Note that $I^{\lambda}_{\mu}\subseteq I^{\lambda}$ holds. Write 
\[
R^{\lambda}_{\mu}=P^{\mu}/I^{\lambda}_{\mu}.
\]
Brundan and Ostrik~\cite{bo} proved that 
\[
H^*(X^{\lambda}_{\mu})\cong R^{\lambda}_{\mu}.
\]

First we want to show that $R^{\lambda}_{\mu}$ acts on $K^S$. Clearly, 
$P^{\mu}$ acts on $K^S$, by converting polynomials into dots on the facets 
meeting $S$. 

\begin{lem}
\label{lem:I}
The ideal $I^{\lambda}_{\mu}$ annihilates any foam in $K^S$. 
\end{lem}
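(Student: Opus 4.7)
My plan is to make the $P^\mu$-action on $K^S$ explicit and then verify the annihilation of the generators of $I^\lambda_\mu$ using the local foam relations, via a reduction to Tanisaki's Springer setting. First, I would spell out the $P^\mu$-action. The variables of $P = \bC[x_1, \ldots, x_d]$ split into $n$ blocks $X_1, \ldots, X_n$ with $|X_p| = \mu_p \in \{1, 2\}$. For a ``$+$'' position $p$ of $S$, the unique variable in $X_p$ acts on a foam by placing a dot on the facet meeting the $p$-th boundary edge; for a ``$-$'' position $p$, the two variables of $X_p$ act by first locally unzipping the boundary ``$-$'' edge into two parallel ``$+$'' edges (realizing $V^-\cong V^+\wedge V^+$) and then placing dots on the two resulting facets, with the $S_\mu$-invariance of $P^\mu$ making the prescription well-defined. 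In particular, the action of $P^\mu$ on ${}_uK_v$ factors through the edge-variable ring $R^0_u$ of Lemma~\ref{lem:gradedembedding}.

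Second, I would reduce to the Springer case $\mu=(1^n)$ via an unzip construction that replaces every ``$-$'' entry of $S$ by two ``$+$''s. Each foam $f\in{}_uK_v$ lifts to a foam $\widetilde{f}\in{}_{\widetilde{u}}K_{\widetilde{v}}$ with all-``$+$'' sign string $\widetilde{S}$ of length $d$, and the $P^\mu$-action on $f$ is intertwined with the $P$-action on $\widetilde{f}$ restricted to $P^\mu\subseteq P$. Since $I^\lambda_\mu\subseteq I^\lambda$ inside $P$, it then suffices to show that Tanisaki's ideal $I^\lambda$ annihilates every foam in the Springer setting, i.e., for sign strings consisting only of ``$+$''s.

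Third, for the Springer annihilation I would use the vertex relations $e_1 = e_2 = e_3 = 0$ (among the three edge variables at any trivalent vertex, coming from the defining relations of $R^0_{\widetilde{u}}$) together with the (3D) relation at $c = 0$ (which forces three dots on any closable facet to be zero). The Tanisaki generator $e_r(x_{i_1}, \ldots, x_{i_m})$ is then handled by induction on $m$: for $m = 1$ we have $e_r(x_i) = x_i^r$, and $x_i^3 = 0$ once the edge $i$ enters a trivalent vertex; for the inductive step, Newton's identities express $e_r$ as a polynomial in lower $e_j$ and power sums $p_j = \sum x^j$, and both kinds of factors vanish by induction once the appropriate bound on $r$ is in place.

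The main obstacle is the careful combinatorial matching between the Tanisaki bound $r > m - \lambda_{d-m+1} - \cdots - \lambda_n$ and the foam relations available for the given boundary-variable subset $\{i_1, \ldots, i_m\}$. The choice $\lambda = (3^k)$ is particularly clean: each facet can accommodate at most two nontrivial dot powers before (3D) kicks in, and summing over subsets of the boundary the bound tracks exactly this dot capacity. One must also verify that dot migrations through the interior of the foam preserve the vanishing, which follows from the dot-migration relations of Subsection~\ref{subsec:foams}.
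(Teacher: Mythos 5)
Your reduction to the all-$+$ case is essentially the paper's: glue a $Y$ onto each $-$ boundary edge (equivalently, unzip using $V^-\cong V^+\wedge V^+$), lift $f$ to $\hat f$ with boundary $(+^d)$, use $I^{\lambda}_{\mu}\subseteq I^{\lambda}$ and the fact that $f$ is recovered from $\hat f$ by capping with dotted digon foams. That part is fine.

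The Springer annihilation step, however, has a genuine gap, and it is precisely where you invoke Newton's identities and upward induction on $m$. First, Newton's identities relate the $e_j$ and $p_j$ in the \emph{same} variable set; they do not change the number of variables, so they cannot serve as the inductive step of an induction from $m$ variables to $m+1$ (or $m-1$). Second, if you instead try to use the fundamental-theorem expansion of $e_r(x_{i_1},\ldots,x_{i_m})$ as a $\bQ$-polynomial in $p_1,\ldots,p_r$, that expansion contains a $p_1^r$ term (and mixed $p_1,p_2$ terms). Only $p_j$ with $j\geq 3$ kills a foam, via (3D) with $c=0$; $p_1$ and $p_2$ do not. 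So the expansion cannot be made to vanish term by term, and your ``dot capacity'' heuristic, while morally right, is not actually implemented by this argument. You also never address the critical base case $m=d$, which is where the vertex/dot-migration relations are genuinely needed; the $m=1$ case you discuss is vacuous, since $e_r$ of one variable is already $0$ for $r\geq 2$.

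The paper closes the gap by running the induction \emph{downward} from $m=d$. For $m=d$, every completely symmetric polynomial of positive degree kills every foam: by symmetry all dots can be migrated to the three facets around a single singular seam, where the relations~\eqref{eq:dotm} (your $e_1=e_2=e_3=0$) apply. For $m=d-\ell$ with $\ell\geq 1$, one uses the complementary recursion $e_j(x_1,\ldots,x_d)=e_j(x_1,\ldots,x_{d-1})+e_{j-1}(x_1,\ldots,x_{d-1})x_d$ (iterated) together with the already-established $e_j(x_1,\ldots,x_d)f=0$ to rewrite $e_r(x_1,\ldots,x_{d-\ell})f$ as a linear combination of monomials $x_{d-\ell+1}^{r_1}\cdots x_d^{r_{\ell}}f$ with $r_1+\cdots+r_{\ell}=r$. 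The Tanisaki bound $r>2\ell$ then forces some $r_j\geq 3$, so each term dies by (3D). That downward recursion, not Newton's identities, is the correct way to exploit the dot-capacity observation.
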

\begin{proof}
The following argument demonstrates that it suffices to show this for the 
case when $n_s=0$.
Let $u,v\in B^S$. For each $1\leq i\leq n$ with $s_i=-$, glue a $Y$ onto 
the $i$-th boundary edge of $u$ and $v$, respectively. Call these 
new webs $\hat{u}$ and $\hat{v}$, respectively. 
Note that $\partial\hat{u}=\partial\hat{v}=\hat{S}$, 
where $\hat{S}=(+^d)$. 
Let $f\in {}_uK_v$ be any foam. For each $1\leq i\leq n$ with $s_i=-$, 
glue a digon foam on top of the 
$i$-th facet of $f$ meeting $S$. The new foam $\hat{f}$, 
obtained in this way, belongs to ${}_{\hat{u}}K_{\hat{v}}$. Note that we can 
re-obtain $f$ by capping off $\hat{f}$ with dotted digon foams. Any 
polynomial $p\in I^{\lambda}_{\mu}\subseteq I^{\lambda}$ acting on $f$ 
also acts on $\hat{f}$ (using the relations in~\eqref{eq:dotm} on the digon foams). 
So, if we know that $p\hat{f}=0$, then it follows that 
$pf=0$. 
\vskip0.5cm
Thus, without loss of generality, assume that $n_s=0$. 
We are now going to show that $I^{\lambda}$ annihilates $K^S$.  

As follows from Definition in~\eqref{eqn:SpringerI}, 
$I^{\lambda}$ is generated by the elementary symmetric polynomials 
$e_r(x_{i_1},\ldots,x_{i_m})$, for the following values of $m$ and $r$.
\[
\begin{array}{lll}
m=2n+1 &;&r>2n-2,\\
m=2n+2 &;&  r>2n-4,\\
\vdots &\vdots& \vdots\\
m=3n-1 &;& r>2,\\
m=3n & ;& r>0.
\end{array}
\]
Note that for $m=3n$, we simply get all completely symmetric polynomials of 
positive degree in the variables $x_1,\ldots,x_d$. Any such polynomial $p$ 
annihilates any foam $f\in {}_uK_v$, because 
by the complete symmetry of $p$, the dots can all be moved to the 
three facets around one singular edge.
The relations~\eqref{eq:dotm} then show that $p$ kills $f$.

Now suppose $m=3n-\ell$, for $\ell>0$. So we must have $r>2\ell$. The 
argument we are going give does 
not depend on the particular choice of 
$i_1,\ldots,i_m\subseteq \{1,2,\ldots,d\}$, so, without loss of 
generality, let us assume that $(i_1,\ldots,i_m)=(1,\ldots,m)$. 

Let $f$ be any foam in ${}_uK_v$. 

First assume that $\ell=1$. 
\begin{align*}
&e_r(x_1,\ldots,x_{d-1})f\\
=&-e_{r-1}(x_1,\ldots,x_{d-1})x_df\\
=&e_{r-2}(x_1,\ldots,x_{d-1})x_d^2f\\
\vdots&\phantom{.}\\
=&(-1)^r x_d^rf,
\end{align*}
All these equations follow from the fact that, for any $j>0$, we have  
\[
e_j(x_1,\ldots,x_{d})=e_j(x_1,\ldots,x_{d-1})+e_{j-1}(x_1,\ldots,x_{d-1})x_d,
\]
and the fact that $e_j(x_1,\ldots,x_{d})f=0$, as we proved above in the 
previous case for $m=3n$.  
Since in this case we have $r>2$, we see that 
\[
(-1)^r x_d^rf=0,
\]
by Relation (3D). This finishes the proof for this case. 

In general, for $\ell\geq 1$, we get that 
$e_r(x_1,\ldots,x_{d-\ell})f$ is equal to a linear combination of 
terms of the form 
\[
x_{d-\ell+1}^{r_1}x_{d-\ell+2}^{r_2}\cdots x_{d}^{r_{\ell}}f,
\]
with $r_1+\cdots+r_{\ell}=r$. Since $r>2\ell$, there exists 
a $1\leq j\leq \ell$ such that $r_j>2$, in each term. 
So each term kills $f$, by Relation (3D). This 
finishes the proof.
\end{proof}

Note that Lemma~\ref{lem:I} shows that there is a well-defined homomorphism of 
graded algebras 
$c_S\colon R^{\lambda}_{\mu}\to Z(K^S)$, defined by 
\[
c_S(p)=p1.
\] 

Similarly, there is a filtration preserving homomorphism 
\[
P^{\mu}\to Z(G^S)
\]
defined by 
$p\mapsto p1$. This homomorphism does not descend to 
$R^{\lambda}_{\mu}$, because the relations in $G^S$ are deformations 
of those in $K^S$, but the associated graded homomorphism 
maps $P^{\mu}$ to $E(Z(G^S))$ and we have 
\[
E(P^{\mu}1)=R^{\lambda}_{\mu}1.
\]
Before giving our following result, we recall that Brundan and Ostrik~\cite{bo} 
showed that 
\[
\dim H^*(X^{\lambda}_{\mu})=\#\mathrm{Col}^{\lambda}_{\mu}.
\]
They actually gave a concrete basis, but we do not need it here. 

\begin{lem}
\label{lem:dimZG}
We have 
\[
\dim Z(G^S)=\#\mathrm{Col}^{\lambda}_{\mu}.
\] 
\end{lem}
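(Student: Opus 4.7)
The plan is to invoke the semisimplicity of $G^S$ established in Proposition~\ref{prop:Gsemisimple}. For any finite dimensional semisimple $\bC$-algebra, the dimension of the center equals the number of blocks, which is the number of isomorphism classes of indecomposable projective (equivalently simple) modules. So the task reduces to counting the blocks of $G^S$.

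Next I would use the description of the indecomposable projectives $P_{u,T} = (G^S)e_{u,T}$ given just after the proof of Proposition~\ref{prop:Gsemisimple}. That analysis shows that the $P_{u,T}$, indexed by pairs $(u,T)$ with $u\in B^S$ and $T$ an admissible $3$-coloring of $u$, form a complete set of indecomposable projectives, and that $P_{u,T}\cong P_{v,T'}$ if and only if $T$ and $T'$ restrict to the same coloring on the boundary $S$. Hence the number of blocks of $G^S$ is exactly the number of boundary $3$-colorings of $S$ that extend to an admissible $3$-coloring of at least one $u\in B^S$.

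Now I translate colorings into the language of flows via Remark~\ref{rem:3color}: admissible $3$-colorings of a web $w$ are in bijection with flows on $w$, and the coloring restricted to the boundary edges matches, via the conventions fixed in Section~\ref{sec:webs}, the boundary state string $J$ of the corresponding flow. Consequently, the blocks of $G^S$ are in bijection with state strings $J$ of length $n$ for which there exists some $w\in B^S$ admitting a flow extending $J$. Applying Proposition~\ref{prop:tableauxflows}, this set of state strings is in bijection with $\mathrm{Col}^{\lambda}_{\mu}$, which yields the desired equality of cardinalities.

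The only real subtlety is bookkeeping: one must check that the correspondence ``$3$-coloring $\leftrightarrow$ flow'' of Remark~\ref{rem:3color} genuinely restricts on the boundary to the map ``boundary coloring $\leftrightarrow$ state string'' used in Proposition~\ref{prop:tableauxflows}, so that matching at $S$ in the sense of Gornik's idempotents is the same as equality of state strings. This is immediate from the conventions in Section~\ref{sec:webs} (an upward flow line contributes $+1$, a downward one $-1$, and absence of a flow line contributes $0$), so no additional argument is needed. All the nontrivial content has already been packaged in Proposition~\ref{prop:Gsemisimple} and Proposition~\ref{prop:tableauxflows}, which is why the proof is essentially an assembly of previously established facts.
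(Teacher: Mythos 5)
Your proposal is correct and uses the same essential ingredients as the paper: the block structure of the semisimple algebra $G^S$ from Proposition~\ref{prop:Gsemisimple}, the classification of Gornik's idempotents by colorings matching at the boundary, the flow/coloring dictionary of Remark~\ref{rem:3color}, and Proposition~\ref{prop:tableauxflows}. The only difference in execution is that you count blocks abstractly (number of isomorphism classes of simples equals $\dim Z$ for a finite dimensional semisimple $\bC$-algebra), whereas the paper constructs explicit orthogonal central idempotents $z_J=\sum_{u}\sum_{T\,\text{extends}\,J} e_{u,T}$, one for each state string $J$ satisfying condition~\eqref{eqn:conds}, shows they are nonzero and span $Z(G^S)$ (via Theorem~\ref{thm:Gornik}), and crucially shows $z_J\in P^{\mu}1$. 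That last constructive detail is not needed for Lemma~\ref{lem:dimZG} itself, but it is reused in the proof of Theorem~\ref{thm:center} to conclude that $p\mapsto p1$ gives a surjection $P^{\mu}\to Z(G^S)$; your more economical argument establishes the dimension count but would not, by itself, supply that surjectivity.
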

\begin{proof}
Let $J$ be any state-string satisfying condition~\eqref{eqn:conds}. 
We define 
\begin{equation}
\label{eq:centralidempotent}
z_J=\sum_{u\in B^S} \sum_{T} e_{u,T}\in G^S,
\end{equation}
where the second sum is over all 3-colorings of $u$ extending $J$.

First we show that $z_J\in Z(G^S)$. For any $u,v\in B^S$, 
let $f\in {}_uG_v$. Choose two arbitrary 
compatible colorings $T_1$ and $T_2$ of $u$ and $v$, respectively. 
Assume that $e_{T_1}fe_{T_2}\ne 0$. Then we have 
\[
z_Je_{T_1}fe_{T_2}=
\begin{cases}
e_{T_1}fe_{T_2},&\quad\text{if}\quad T_1\;\text{extends}\;J,\\
0&\quad\text{else}. 
\end{cases}
\]  
We also have 
\[
e_{T_1}fe_{T_2}z_J=
\begin{cases}
e_{T_1}fe_{T_2},&\quad\text{if}\quad T_2\;\text{extends}\;J,\\
0&\quad\text{else}. 
\end{cases}
\]  
This shows that $z_J\in Z(G^S)$, because $T_1$ and $T_2$ are compatible, 
and so $T_1$ extends $J$ if and only if $T_2$ extends $J$.

Note that 
\[
\sum_J z_J=1\quad\text{and}\quad z_Jz_{J^{\prime}}=\delta_{J,J^{\prime}}z_J.
\]
In particular, the $z_J$'s are linearly independent. 
\vskip0.5cm
For any state-string $J$ satisfying condition~\eqref{eqn:conds}, 
the central idempotent $z_J$ belongs to $P^{\mu}1$. In order to see this, 
first note that, for 
any $u\in B^S$, the element
\[
z_J1_u=\sum_{T\;\mathrm{extends}\;J} e_{u,T},
\]
belongs to $P^{\mu}1_u$. This holds, because only the colors of the boundary 
edges of $u$ are fixed. We can sum over all possible 3-colorings of the other 
edges, which implies that these edges only contribute a factor $1$ to 
$z_J1_u$. Furthermore, we see that $z_J1_u=p_J1_u$, for a fixed polynomial 
$p_J\in P^{\mu}$, i.e. $p_J$ is independent of $u$. Therefore, we have 
\[
z_J=\sum_{u\in B^S} p_J1_u=p_J1\in P^{\mu}1.
\]
It remains to show that $Z(G^S)z_J=\mathbb{C}z_J$. 
Let $z\in Z(G^S)$. By the orthogonality of Gornik's symmetric idempotents, 
we have 
\[
z=\sum_{u,T}e_{u,T}ze_{u,T}.
\] 
By Theorem~\ref{thm:Gornik}, we know that 
\[
e_{u,T}ze_{u,T}=\lambda_{u,T}(z) e_{u,T},
\]
for a certain $\lambda_{u,T}(z)\in\mathbb{C}$. Therefore, we have 
\[
z=\sum_{u,T}\lambda_{u,T}(z)e_{u,T}\in \bigoplus_{u,T}\mathbb{C}e_{u,T}.
\]
By Lemma~\ref{lem:strtoflow}, we know that $z_J\ne 0$. This shows that 
\[
\{z_J\mid J\;\text{satisfying condition~\eqref{eqn:conds}}\}
\] 
forms a basis of $Z(G^S)$. By Proposition~\ref{prop:tableauxflows}, 
the claim of the lemma follows. 
\end{proof}

\begin{thm}
\label{thm:center}
The degree preserving algebra homomorphism
\[
c_S\colon R^{\lambda}_{\mu_S}\to Z(K^S)
\]
is an isomorphism.
\end{thm}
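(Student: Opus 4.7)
The plan is to establish $c_S$ as a graded algebra isomorphism by showing injectivity via the deformation $G^S$ and surjectivity via Proposition~\ref{prop:morita}. The map $c_S$ is well-defined by Lemma~\ref{lem:I}, and it is degree preserving: acting by $x_i$ places a dot of $q$-degree $2$ on the corresponding facet of any foam, matching $\deg x_i = 2$ in $P^\mu$.

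For injectivity, the key point is that $\mathrm{Im}(c_S) = E(Z(G^S))$ as subalgebras of $K^S$. From the proof of Lemma~\ref{lem:dimZG}, the basis $\{z_J\}$ of $Z(G^S)$ satisfies $z_J = p_J \cdot 1$ with $p_J \in P^\mu$, so $Z(G^S) \subseteq P^\mu \cdot 1$; the reverse inclusion is immediate since $S_\mu$-symmetric dot configurations slide freely across any foam. Hence $Z(G^S) = P^\mu \cdot 1$, and passing to associated graded (combined with the identification $E(P^\mu \cdot 1) = R^\lambda_\mu \cdot 1$ from the paragraph preceding Lemma~\ref{lem:dimZG}) gives
\[
E(Z(G^S)) = R^\lambda_\mu \cdot 1 = \mathrm{Im}(c_S) \subseteq Z(K^S).
\]
The dimension count
\[
\dim \mathrm{Im}(c_S) = \dim E(Z(G^S)) = \dim Z(G^S) = \#\mathrm{Col}^\lambda_\mu = \dim R^\lambda_\mu,
\]
using Lemma~\ref{lem:dimZG}, Proposition~\ref{prop:tableauxflows}, and the Brundan--Ostrik formula $\dim H^*(X^\lambda_\mu) = \#\mathrm{Col}^\lambda_\mu$, then shows $c_S$ is injective.

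Surjectivity is the main obstacle. My plan is to invoke Proposition~\ref{prop:morita}: the algebra $K_{(3^k)} = \bigoplus_{\mu_S \in \Lambda(n,n)_3} K^S$ is Morita equivalent to the cyclotomic KLR algebra $R_{(3^k)}$, and Morita-equivalent algebras have isomorphic centers. Since the sign-string decomposition of $K_{(3^k)}$ is compatible with the corresponding block decomposition of $R_{(3^k)}$, this determines $\dim Z(K^S)$ via the analogous dimension on the KLR side, which matches $\#\mathrm{Col}^\lambda_\mu$ by the Brundan--Kleshchev type results recalled in Theorem~\ref{thm:bk} together with the cellularity of $R_{(3^k)}$. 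Combined with the injectivity above and the matching dimensions, $c_S$ must therefore be surjective, and hence a graded algebra isomorphism.
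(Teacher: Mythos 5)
Your overall strategy is the one the paper itself follows: injectivity via the Gornik deformation (the identification $Z(G^S)=P^{\mu}1$ from the proof of Lemma~\ref{lem:dimZG}, passage to the associated graded with $E(P^{\mu}1)=R^{\lambda}_{\mu}1$, and the dimension count using Lemma~\ref{lem:dimZG}, Proposition~\ref{prop:tableauxflows} and Brundan--Ostrik), and surjectivity reduced to the equality $\dim Z(K^S)=\dim R^{\lambda}_{\mu_S}$ obtained from the Morita equivalence of Proposition~\ref{prop:morita}. The injectivity half of your argument is essentially identical to the paper's and is fine; using Proposition~\ref{prop:morita} is also legitimate, since the paper states explicitly that Theorem~\ref{thm:center} depends on it and there is no circularity.

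There is, however, a genuine gap in your justification of the dimension on the KLR side. You claim that $\dim Z(R_{(3^k)}^{\nu_S})=\#\mathrm{Col}^{\lambda}_{\mu_S}$ follows from Theorem~\ref{thm:bk} together with the cellularity of $R_{(3^k)}$. Neither ingredient gives this: Theorem~\ref{thm:bk} only identifies the split Grothendieck group (i.e.\ counts graded indecomposable projectives), and the dimension of the center of a non-semisimple algebra is not determined by its Grothendieck group, nor by the existence of a graded cellular basis. For instance, $\mathbb{C}$ and $\mathbb{C}[x]/(x^2)$ are both graded cellular with exactly one simple module, yet their centers have dimensions $1$ and $2$. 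The input actually needed --- and the one the paper uses, packaged as Corollary~\ref{cor:moritacenter} --- is Brundan and Kleshchev's isomorphism identifying $R_{(3^k)}^{\nu_S}$ with a block of a cyclotomic Hecke algebra~\cite{bk2}, combined with Theorem 3.2 of~\cite{bru2}, which says that the center of that block has the same dimension as $H^*(X^{(3^k)}_{\mu_S})$. With your citation replaced by Corollary~\ref{cor:moritacenter} (equivalently, by those two external results), the surjectivity step closes and your proof coincides with the paper's.
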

\begin{proof}
In Corollary~\ref{cor:moritacenter} it will be shown that 
\[
\dim H^*(X^{\lambda}_{\mu_S})=\dim Z(K^S),
\] 
so it suffices to show that $c_S$ is injective.   

Lemma~\ref{lem:I} shows that (as graded complex algebras)
\[
R^{\lambda}_{\mu}1\subset Z(K^S).
\] 
As already mentioned above, Brundan and Ostrik~\cite{bo} showed that 
\[
H^*(X^{\lambda}_{\mu})\cong R^{\lambda}_{\mu}
\] 
as graded complex algebras. 

The proof of Lemma~\ref{lem:dimZG} shows that the filtration preserving 
homomorphism 
\[
P^{\mu}\to Z(G^S),
\]
defined by 
$p\mapsto p1$, is surjective. Note the $E(\cdot)$ is not a map. However, 
a filtered algebra $A$ and its associated graded $E(A)$ 
are isomorphic as vector spaces. In particular, they satisfy 
\[
\dim A =\dim E(A). 
\]
Therefore, since $p\mapsto p1$ is a surjection of vector spaces, we have 
\[
\dim Z(G^S)=\dim E(Z(G^S))=\dim E(P^{\mu}1)=\dim P^{\mu}1.
\]
Recall that $E(P^{\mu}1)=R^{\lambda}_{\mu}1$ and 
$\dim Z(G^S)=\dim R^{\lambda}_{\mu}$. This shows 
\[
\dim R^{\lambda}_{\mu}1=\dim P^{\mu}1=\dim Z(G^S)=\dim R^{\lambda}_{\mu},
\]
which implies that the map $c_S$ is injective. 
\end{proof}


\section{Web algebras and the cyclotomic KLR algebras}
\label{sec:grothendieck}

\subsection{Howe duality}
\label{subsec:howe}
We first recall classical Howe duality briefly. Our main references are~\cite{ho2} and~\cite{ho1}, 
where the reader can find the proofs of the results which we recall below and other details. 

Let us briefly explain Howe duality.\footnote{We follow Kamnitzer's exposition in ``The ubiquity of Howe duality'', which is online available at https://sbseminar.wordpress.com/2007/08/10/the-ubiquity-of-howe-duality/.} The two natural actions of $\mathrm{GL}_m=\mathrm{GL}(m,\mathbb{C})$ and of $\mathrm{GL}_n=\mathrm{GL}(n,\mathbb{C})$ on $\mathbb{C}^m\otimes \mathbb{C}^n$ commute and the two groups are each others commutant. We say that the actions of $\mathrm{GL}_m$ and $\mathrm{GL}_n$ are \textit{Howe dual}. 

More interestingly, their actions on the symmetric powers
\[
S^p\left(\mathbb{C}^m\otimes \mathbb{C}^n\right)
\]
and on the alternating powers
\[
\Lambda^p\left(\mathbb{C}^m\otimes \mathbb{C}^n\right)
\]
are also Howe dual, for any $p\in\mathbb{N}$. These are called the 
\textit{symmetric} and the \textit{skew} Howe duality of $\mathrm{GL}_m$ and 
$\mathrm{GL}_n$, respectively. In this paper, we are only considering 
skew Howe duality. 

Note that skew Howe duality implies that 
we have the following decomposition into irreducible 
$\mathrm{GL}_m\times \mathrm{GL}_n$-modules.
\begin{equation}
\label{eq:decomp}
\Lambda^p\left(\mathbb{C}^m\otimes \mathbb{C}^n\right)\cong 
\bigoplus_{\lambda} V_{\lambda}\otimes W_{\lambda'},
\end{equation}
where $\lambda$ ranges over all partitions with $p$ boxes and at most 
$m$ rows and $n$ columns and $\lambda'$ is the transpose of $\lambda$.

Here $V_{\lambda}$ is the unique irreducible $\mathrm{GL}_m$-module of 
highest weight $\lambda$ and $W_{\lambda'}$ is the unique irreducible 
$\mathrm{GL}_n$-module of highest weight $\lambda'$.

Without giving a full 
proof of~\eqref{eq:decomp}, which can be found in Section 4.1 of~\cite{ho2}, 
we note that it is easy to write down 
the highest weight vectors in the decomposition of 
\[
\Lambda^p\left(\mathbb{C}^m\otimes \mathbb{C}^n\right).
\]
Define 
\[
\epsilon_{ij}=\epsilon_i\otimes \epsilon_j,
\]
for any $1\leq i\leq m$ and $1\leq j\leq n$. Here the $\epsilon_i$ and the 
$\epsilon_j$ are the canonical basis elements of $\mathbb{C}^m$ and $\mathbb{C}^n$ 
respectively. Let $\lambda$ be one of the highest $\mathrm{GL}_m$ weights 
in~\eqref{eq:decomp}. Write $\lambda=(\lambda_1,\ldots,\lambda_m)$ with 
$n\geq \lambda_1\geq 
\lambda_2\geq \cdots\geq \lambda_m\geq 0$. Then  
\begin{eqnarray*}
v_{\lambda,\lambda'}&=&\left(\epsilon_{11}\wedge \cdots\wedge \epsilon_{1\lambda_1}\right)\wedge
  \left(\epsilon_{21}\wedge \cdots\wedge \epsilon_{2\lambda_2}\right)\wedge
  \left(\epsilon_{m1}\wedge \cdots\wedge \epsilon_{m\lambda_m}\right)\\
&=&\pm\left(\epsilon_{11}\wedge \cdots\wedge \epsilon_{\lambda'_1 1}\right)\wedge
  \left(\epsilon_{12}\wedge \cdots\wedge \epsilon_{\lambda'_2 2}\right)\wedge
  \left(\epsilon_{1n}\wedge \cdots\wedge \epsilon_{\lambda'_n n}\right)
\end{eqnarray*}
is a highest $\mathrm{GL}_m\times\mathrm{GL}_n$-weight. 
By convention, we exclude factors $\epsilon_{ij}$ for which $\lambda_i=0$ or 
$\lambda_j'=0$.

Now restrict to $\mathrm{SL}_m$ and assume that $p=mk$, 
for some $k\in\mathbb{N}$. 
By Schur's lemma, the decomposition in~\eqref{eq:decomp} implies that 
\begin{equation}
\label{eq:howehom1}
\mathrm{Inv}_{\mathrm{GL}_m}\left(\Lambda^p\left(\mathbb{C}^m\otimes 
\mathbb{C}^n\right)\right)\cong 
\mathrm{Hom}_{\mathrm{SL}_m}\left(\mathbb{C},\Lambda^p\left(\mathbb{C}^m\otimes 
\mathbb{C}^n\right)\right)\cong W_{(m^k)},
\end{equation}  
where $\mathbb{C}$ denotes the trivial representation. 

Decompose 
\[
\mathbb{C}^n\cong \mathbb{C}\epsilon_1\oplus \mathbb{C}\epsilon_2\oplus 
\cdots\oplus \mathbb{C}\epsilon_n
\]
into its one-dimensional $\mathfrak{gl}_n$-weight spaces. Then we have 
\begin{equation}
\label{eq:decomp2}
\Lambda^p\left(\mathbb{C}^m\otimes \mathbb{C}^n\right)\cong 
\bigoplus_{(p_1,\ldots,p_n)\in \Lambda(n,p)}\Lambda^{p_1}\left(\mathbb{C}^m\right) 
\otimes \Lambda^{p_2}\left(\mathbb{C}^m\right) \otimes \cdots\otimes 
\Lambda^{p_n}\left(\mathbb{C}^m\right) 
\end{equation}
as $\mathrm{GL}_m\times T$-modules, where $T$ is the diagonal torus in 
$\mathrm{GL}_n$.  

This decomposition implies that  
\begin{equation}
\label{eq:howehom2}
\mathrm{Inv}_{\mathrm{SL}_m}\left(\Lambda^{p_1}\left(\mathbb{C}^m\right) 
\otimes \Lambda^{p_2}\left(\mathbb{C}^m \right)\otimes \cdots\otimes 
\Lambda^{p_n}\left(\mathbb{C}^m\right)\right)\cong W(p_1,\ldots,p_n), 
\end{equation}
where $W(p_1,\ldots,p_n)$ denotes the 
$(p_1,\ldots,p_n)$-weight space of $W_{(m^k)}$.  
\vskip0.5cm
In the next subsection, 
we use Kuperberg's $\mathfrak{sl}_3$-webs to give a $q$-version 
of the isomorphism in~\eqref{eq:howehom2}, for $U_q(\mathfrak{sl}_3)$ 
and $U_q(\mathfrak{gl}_n)$ with $n=3k$ and $k\in\mathbb{N}$ arbitrary 
but fixed.

When we finished the first complete version of this paper, 
the only available results on the quantum version of skew Howe duality 
were those in Section 6.1 in~\cite{caut}. Cautis's paper does not contain 
Definition~\ref{defn:phi} nor Lemma~\ref{lem:phi}, which are our main 
results in the next subsection. 

Independently and around the same time as our preprint appeared, 
Cautis, Kamnitzer and Morrison finished a paper on $\mathfrak{sl}_n$-webs in 
which they gave the $\mathfrak{sl}_n$-generalization of 
Definition~\ref{defn:phi} and 
Lemma~\ref{lem:phi}. Their paper is now the best reference for 
quantum skew Howe duality in 
general. We therefore refer to their paper 
for the general case of quantum skew Howe duality 
(for some extra details the reader might also want to have a look 
at~\cite{mack1}) and restrict ourselves to the case of interest to us 
in this paper. 

As already mentioned in the introduction, Lauda, Queffelec and Rose~\cite{lqr} 
wrote an independent paper in which they defined and used $\mathfrak{sl}_2$ and 
$\mathfrak{sl}_3$-foams to categorify special cases of quantum skew Howe duality. Their $\mathfrak{sl}_3$ case is very similar to ours, but is used for 
the purpose of studying $\mathfrak{sl}_3$-knot homology. The decategorification of 
their results also contains the analogue of Definition~\ref{defn:phi} and 
Lemma~\ref{lem:phi}.       
\subsection{The uncategorified story}\label{sec-webhowea}
\subsubsection{Enhanced sign sequences}
In this section we slightly generalize the notion of a sign sequence/string. We call this generalization \textit{enhanced sign sequence} or \textit{enhanced sign string}. Note that, with a slight abuse of notation, we use $\hat{S}$ for sign strings and $S$ for enhanced sign string throughout the whole section. 
\begin{defn} An {\em enhanced sign sequence/string} is 
a sequence $S=(s_1,\ldots, s_n)$ with entries $s_i\in\{\circ,-1,+1,\times\}$, for 
all $i=1,\ldots n$. The corresponding weight $\mu=\mu_S\in\Lambda(n,d)$ is 
given by the rules
\[
\mu_i=
\begin{cases}
0,&\quad\text{if}\;s_i=\circ,\\
1,&\quad\text{if}\;s_i=1,\\
2,&\quad\text{if}\;s_i=-1,\\
3,&\quad\text{if}\;s_i=\times.
\end{cases}
\] 
Let $\Lambda(n,d)_3\subset \Lambda(n,d)$ be the subset of weights 
with entries between $0$ and $3$. Recall that $\Lambda(n,d)_{1,2}$ denotes the subset of weights with only $1$ and $2$ as entries.
\end{defn}

Let $n=d=3k$. For any enhanced sign string $S$ such that 
$\mu_S\in\Lambda(n,n)_3$, we define $\hat{S}$ to be the sign sequence 
obtained from $S$ by deleting all entries that are equal to $\circ$ or 
$\times$ and keeping the linear ordering 
of the remaining entries. Similarly, for any $\mu\in\Lambda(n,n)_3$, 
let $\hat{\mu}$ be the weight obtained from $\mu$ by deleting all entries 
which are equal to $\circ$ or $3$. Thus, if $\mu=\mu_S$, for a certain enhanced 
sign string $S$, then $\hat{\mu}=\mu_{\hat{S}}$. 
Note that $\hat{\mu}\in\Lambda(m,d)_{1,2}$, for a certain 
$0\leq m\leq n$ and $d=3(k-(n-m))$. 

Note that for any semi-standard tableau $T\in \mathrm{Std}^{(3^k)}_{\mu}$, 
there is a unique 
semi-standard tableau $\hat{T}\in \mathrm{Std}^{(3^{k-(n-m)})}_{\hat{\mu}}$, 
obtained by deleting any cell in $T$ whose label appears three times and 
keeping the linear ordering of the remaining cells within each column. 

Conversely, let $\mu'\in\Lambda(m,d)_{1,2}$, 
with $m\leq n$ and $d=3(k-(n-m))$. In general, there is more than one 
$\mu\in \Lambda(n,n)_3$ such that $\hat{\mu}=\mu'$, but at least one. Choose one of them, 
say $\mu_0$. Then, given any $T'\in \mathrm{Std}^{(3^{k-(n-m)})}_{\mu'}$, there is 
a unique $T\in \mathrm{Std}^{(3^k)}_{\mu_0}$ such that 
$\hat{T}=T'$.

The construction of $T$ is as follows. Suppose that 
$i$ is the smallest number such that $(\mu_0)_i=3$.
\begin{itemize}
\item[(1)] In each column $c$ of $T'$, there is a unique vertical position such that 
all cells above that position have label smaller than $i$ and 
all cells below that position have label greater than $i$. Insert a new 
cell labeled $i$ precisely in that position, for each column $c$.
\item[(2)] In this way, we obtain a new tableau of shape $(3^{k-(n-m)+1})$. It is easy to 
see that this new tableau is semi-standard. Now apply this procedure 
recursively for each $i=1,\ldots, n$, such that $(\mu_0)_i=3$.
\item[(3)] In this way, we obtain a tableau $T$ of shape $(3^k)$. Since in each step the 
new tableau that we get is semi-standard, we see that $T$ belongs to 
$\mathrm{Std}^{(3^k)}_{\mu_0}$.  
\end{itemize} 
Note also that $\hat{T}=T'$. This shows that 
for a fixed $\mu\in\Lambda(n,n)_3$, we have a bijection
\[
\mathrm{Std}^{(3^k)}_{\mu}\ni T\; \longleftrightarrow\; \hat{T}\in
\mathrm{Std}^{(3^{k-(n-m)})}_{\hat{\mu}}.
\]
Given an enhanced sign sequence $S$, such that $\mu_S\in\Lambda(n,n)_3$, 
we define 
\[
W^S=W^{\hat{S}}.
\]
In other words, as a vector space $W^S$ does not depend on the $\circ$ and 
$\times$-entries of $S$. However, they do play an important role below.   
Similarly, we define 
\[
B^S=B^{\hat{S}}\quad\text{and}\quad K^S=K^{\hat{S}}.
\]
\subsubsection{An instance of $q$-skew Howe duality}
Let $V_{(3^k)}$ be the irreducible 
$U_q(\mathfrak{gl}_n)$-module of highest weight $(3^k)$. 
By restriction, $V_{(3^k)}$ is also a $U_q(\mathfrak{sl}_n)$-module 
and, since it is a weight representation, it is a 
$\dot{\mathbf U}(\mathfrak{sl}_n)$-module, too.
It is well-known (see~\cite{fu} and~\cite{ma}) for example) that 
\[
\dim V_{(3^k)}=\sum_{\mu\in\Lambda(n,n)_3}\#\mathrm{Std}^{(3^k)}_{\mu}.
\] 
Note that a tableau of shape $(3^k)$ can only be semi-standard if its 
filling belongs to $\Lambda(n,n)_3$, so strictly speaking we could 
drop the $3$-subscript. More precisely, if 
\[
V_{(3^k)}=\bigoplus_{\mu\in\Lambda(n,n)_3} V^{\mu}
\]
is the $U_q(\mathfrak{gl}_n)$-weight decomposition of $V_{(3^k)}$, then 
\[
\dim V^{\mu}=\#\mathrm{Std}^{(3^k)}_{\mu}.
\] 
Note that the action of $U_q(\mathfrak{gl}_n)$ on $V_{(3^k)}$ descends to 
$S_q(n,n)$ and recall that there exists a surjective algebra homomorphism 
\[
\psi_{n,n}\colon \U\to S_q(n,n).
\]
The action of $\U$ on $V_{(3^k)}$ is equal to the pull-back of the 
action of $S_q(n,n)$ via $\psi_{n,n}$. 

Define 
\[
W_{(3^k)}=\bigoplus_{S\in \Lambda(n,n)_3} W^S.
\]
Below, we will show that $S_q(n,n)$ acts on $W_{(3^k)}$. Pulling 
back the action via $\psi_{n,n}$, we see that $W_{(3^k)}$ is a 
$\U$-module. We will also show that 
\[
W_{(3^k)}\cong V_{(3^k)}
\]
as $S_q(n,n)$-modules, and therefore also as $\U$-modules, and that the space
$W^S$ corresponds to the $\mu_S$-weight space of $V_{(3^k)}$. 
\vskip0.5cm
Let us define the aforementioned left action of $S_q(n,n)$ on $W_{(3^k)}$. 
The reader should compare this action to the categorical action on the 
objects in Section 4.2 in~\cite{msv2}. Note that our conventions in 
this paper are different from those in~\cite{msv2}. 
 
\begin{defn}
\label{defn:phi}
Let 
\[
\phi\colon S_q(n,n)\to \mathrm{End}_{\mathbb{Q}(q)}\left(W_{(3^k)}\right)
\] 
be the homomorphism of $\mathbb{Q}(q)$-algebras defined by gluing 
the following webs on top of the elements in $W_{(3^k)}$.
\begin{align*}
1_{\lambda}&\mapsto
\;\xy
(0,0)*{\includegraphics[width=60px]{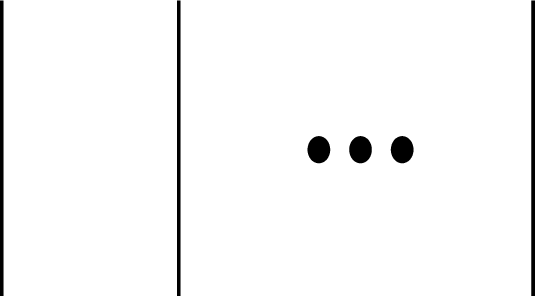}};
(-10,-8)*{\scriptstyle\lambda_1};
(-3,-8)*{\scriptstyle\lambda_{2}};
(10,-8)*{\scriptstyle\lambda_n};
\endxy
\\[0.5ex]
E_{\pm i}1_{\lambda}&\mapsto
\;\xy
(-0,0)*{\includegraphics[width=150px]{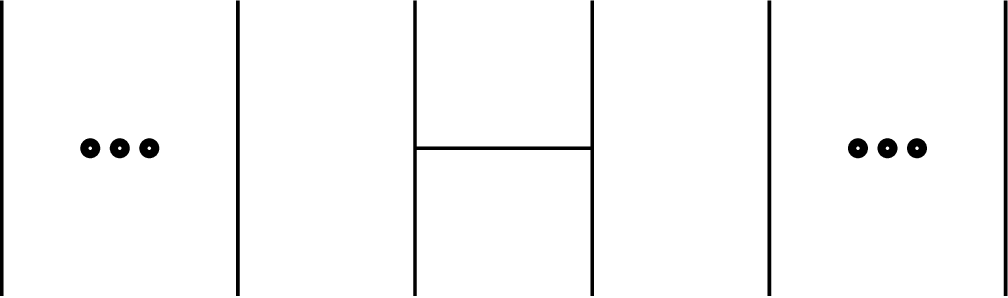}};
(-27,-10)*{\scriptstyle\lambda_1};
(-14,-10)*{\scriptstyle\lambda_{i-1}};
(-5,-10)*{\scriptstyle\lambda_{i}};
(5,-10)*{\scriptstyle\lambda_{i+1}};
(-5,10)*{\scriptstyle\lambda_{i}\pm 1};
(5,10)*{\scriptstyle\lambda_{i+1}\mp 1};
(14,-10)*{\scriptstyle\lambda_{i+2}};
(27,-10)*{\scriptstyle\lambda_n};
\endxy
\end{align*}
We use the convention that vertical edges labeled 1 are oriented upwards, 
vertical edges labeled 2 are oriented downwards and edges labeled 0 or 3 
are erased.
\vspace*{0.25cm}

The orientation of the horizontal edges is uniquely determined by 
the orientation of the vertical edges. With these conventions, one can 
check that the horizontal edge is always oriented from right 
to left for $E_{+i}$ and from left to right for $E_{-i}$.  
\vspace*{0.25cm}

Furthermore, let $\lambda\in\Lambda(n,n)$ and let $S$ be any 
sign string such that $\mu_S\in\Lambda(n,n)_3$. For any $w\in W^S$, 
we define 
\[
\phi(1_{\lambda})w=0,\quad\text{if}\quad\mu_S\ne\lambda.
\]
By 
$\phi(1_{\lambda})w$ we mean the left action of $\phi(1_{\lambda})$ on $w$.
 
In particular, for any $\lambda>(3^k)$, we have $\phi(1_{\lambda})=0$ in 
$\mathrm{End}_{\mathbb{Q}(q)}\left(W_{(3^k)}\right)$.  
\end{defn}
\vspace*{0.15cm}

Let us give two examples to show how these conventions work. We only write 
down the relevant entries of the weights and only draw the important edges. 
We have 
\begin{align*}
E_{+1}1_{(22)}&\mapsto
\;\xy
(-0,0)*{\includegraphics[width=30px]{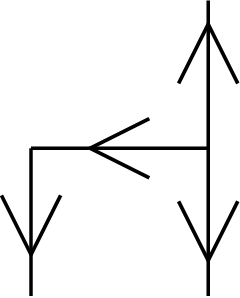}};
(-4,-8)*{\scriptstyle 2};
(4,-8)*{\scriptstyle 2};
(-4,8)*{\scriptstyle 3};
(4,8)*{\scriptstyle 1};
\endxy
\\[0.5ex]
E_{-2}E_{+1}1_{(121)}&\mapsto
\;\xy
(-0,0)*{\includegraphics[width=60px]{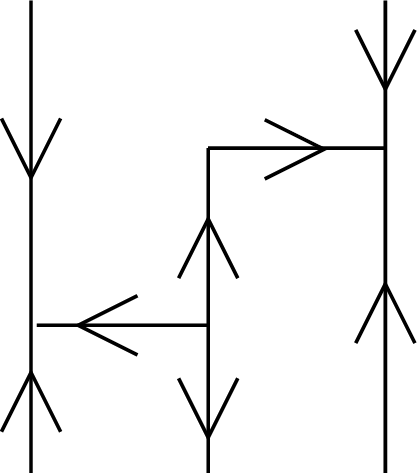}};
(-9,-14)*{\scriptstyle 1};
(0,-14)*{\scriptstyle 2};
(9,-14)*{\scriptstyle 1};
(-9,14)*{\scriptstyle 2};
(0,14)*{\scriptstyle 0};
(9,14)*{\scriptstyle 2};
\endxy
\end{align*}
\vspace*{0.15cm}

\begin{rem} Note that the introduction of enhanced sign strings is necessary 
for the definition of $\phi$ to make sense. Although as a vector space 
$W^S$ does not depend on the entries of $S$ which are equal to 
$\circ$ or $\times$, the $S_q(n,n)$ action on $W^S$ does depend on them. 
\end{rem}

\begin{lem}
\label{lem:phi} 
The map $\phi$ in Definition~\ref{defn:phi} is well-defined. 
\end{lem}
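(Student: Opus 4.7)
The plan is to verify that $\phi$ respects the defining relations \eqref{eq:schur1}--\eqref{eq:schur4} of $S_q(n,n)$. Relations \eqref{eq:schur1}, \eqref{eq:schur2} and \eqref{eq:schur3} are essentially built into Definition~\ref{defn:phi}: the idempotents $1_\lambda$ act by the identity on $W^S$ when $\mu_S=\lambda$ and by zero otherwise, so they compose diagonally and sum to the identity on $W_{(3^k)}$; moreover the $H$-web representing $E_{\pm i}1_\lambda$ changes the boundary weight from $\lambda$ to $\lambda\pm\alpha_i$, which gives \eqref{eq:schur3}. The only thing worth noting is that, thanks to the conventions on vertical edges labelled $0$ and $3$ (erased) and on horizontal edge orientations, one has to check that the prescribed $H$-web is still a legal oriented trivalent graph in every boundary configuration; this is a case analysis on whether $\lambda_i,\lambda_{i+1}\in\{0,1,2,3\}$, reading off what the picture looks like once the $0$- and $3$-strands are suppressed.

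The substantive point is relation \eqref{eq:schur4}, the commutator
\[
\phi(E_i)\phi(E_{-j})-\phi(E_{-j})\phi(E_i)=\delta_{ij}\sum_{\lambda\in\Lambda(n,n)}[\overline\lambda_i]\,\phi(1_\lambda).
\]
When $i\ne j$, the two $H$-webs are supported on disjoint pairs of adjacent strands, so a planar isotopy moves one past the other, giving commutativity; again a short case analysis covers the degenerate configurations. When $i=j$, one stacks the two $H$-webs, picks a weight $\lambda$ with $\mu_S=\lambda$, and simplifies the composite in $W^S$ using the Kuperberg relations \eqref{eq:circle}, \eqref{eq:digon}, \eqref{eq:square}. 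The main case is $1\le\lambda_i,\lambda_{i+1}\le 2$, where the composite contains an internal square; the square relation \eqref{eq:square} produces two terms, one of which matches the corresponding term of the other ordering after a digon simplification, and the other yields a pair of vertical strands multiplied by $[2]$. Collecting signs and quantum factors gives exactly $[\overline\lambda_i]\phi(1_\lambda)$ on the right hand side, since $[\overline\lambda_i]=[\lambda_i-\lambda_{i+1}]$ is produced by exactly the digon factors appearing in the difference.

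The main obstacle is the boundary-case bookkeeping: when $\lambda_i$ or $\lambda_{i+1}$ equals $0$ or $3$, some edges in the diagram are erased and the composite degenerates. For instance, $\lambda_i=0$ forces $\phi(E_{-i})\phi(E_i)1_\lambda=0$ because $E_i$ would raise $\lambda_i$ to $-1$, and one checks that $\phi(E_i)\phi(E_{-i})1_\lambda$ reduces via \eqref{eq:digon} to $[\lambda_{i+1}]1_\lambda=[-\overline\lambda_i]1_\lambda$; the other extremal cases ($\lambda_i=3$, $\lambda_{i+1}=0$, $\lambda_{i+1}=3$) are dual and handled by the same simplifications. Going through the $4\times 4$ table of possibilities for $(\lambda_i,\lambda_{i+1})$ and checking that in each entry the signed difference of stacked webs simplifies via \eqref{eq:circle}--\eqref{eq:square} to $[\overline\lambda_i]$ times the identity web establishes \eqref{eq:schur4} and completes the proof.
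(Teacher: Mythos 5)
Your plan coincides with the paper's: relations~\eqref{eq:schur1}--\eqref{eq:schur3} are immediate from the definition, and~\eqref{eq:schur4} is checked case by case against the Kuperberg relations~\eqref{eq:circle}--\eqref{eq:square}, the square relation~\eqref{eq:square} being the decisive one in the generic case $1\le\lambda_i,\lambda_{i+1}\le 2$ (this is precisely the $(2,1)$ example the paper displays).

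However, your illustrative degenerate case is garbled. Since $E_{+i}1_\lambda=1_{\lambda+\alpha_i}E_{+i}$ with $\alpha_i=\epsilon_i-\epsilon_{i+1}$, the operator $E_{+i}$ \emph{raises} $\lambda_i$ (to $\lambda_i+1$) and it is $E_{-i}$ that lowers it. So when $\lambda_i=0$, the term that dies is $\phi(E_i)\phi(E_{-i})1_\lambda$, because $E_{-i}$ would push $\lambda_i$ to $-1$; $\phi(E_i)1_\lambda$ is perfectly fine as long as $\lambda_{i+1}\ge 1$. The surviving composite is $\phi(E_{-i})\phi(E_i)1_\lambda$, and it reduces (using the circle relation when $\lambda_{i+1}=3$, the digon relation when $\lambda_{i+1}=2$, and isotopy when $\lambda_{i+1}=1$) to $[\lambda_{i+1}]\phi(1_\lambda)$. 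With the roles in the correct places, the commutator evaluates to $0-[\lambda_{i+1}]\phi(1_\lambda)=[-\lambda_{i+1}]\phi(1_\lambda)=[\,\overline{\lambda}_i\,]\phi(1_\lambda)$, which is what~\eqref{eq:schur4} demands; in your version the commutator would come out as $[\lambda_{i+1}]=[-\overline{\lambda}_i]=-[\,\overline{\lambda}_i\,]$, the wrong sign. Once you swap the two composites (and the corresponding justification), the sketch matches the paper's proof and the remaining entries of the table go through as you describe.
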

\begin{proof}
It follows immediately from its definition that $\phi$ preserves the three 
relations~\eqref{eq:schur1}, \eqref{eq:schur2} and \eqref{eq:schur3}.  

Checking case by case, one can easily show that $\phi$ 
preserves~\eqref{eq:schur4} by using the relations~\eqref{eq:circle}, 
\eqref{eq:digon} and \eqref{eq:square}. We do just one example and 
leave the other cases to the reader. The figure 
below shows the image of the relation 
$$
E_1E_{-1}1_{(21)}-E_{-1}E_11_{(21)}=1_{(21)}
$$
under $\phi$.   
\begin{align*}
\xy
(0,1)*{\includegraphics[width=35px]{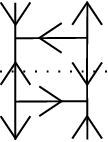}};
(-4.5,-9)*{\scriptstyle 2};
(4,-9)*{\scriptstyle 1};
(-4.5,11)*{\scriptstyle 2};
(4,11)*{\scriptstyle 1};
\endxy
\;-\;
\xy
(0,0)*{\includegraphics[width=35px]{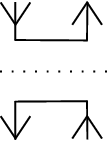}};
(-4.5,-10)*{\scriptstyle 2};
(4,-10)*{\scriptstyle 1};
(-4.5,10)*{\scriptstyle 2};
(4,10)*{\scriptstyle 1};
\endxy
\;=\;
\xy
(0,0)*{\includegraphics[width=35px]{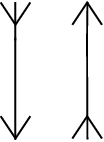}};
(-4.5,-10)*{\scriptstyle 2};
(4,-10)*{\scriptstyle 1};
(-4.5,10)*{\scriptstyle 2};
(4,10)*{\scriptstyle 1};
\endxy
\end{align*}
This relation is exactly the third Kuperberg relation in~\eqref{eq:square}.
\end{proof}

\begin{lem} The map $\phi$ gives rise to an isomorphism 
\label{lem:isoirrep}
\[
\phi\colon V_{(3^k)}\to W_{(3^k)}
\]
of $S_q(n,n)$-modules.  
\end{lem}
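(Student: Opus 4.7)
The plan proceeds in three steps: exhibit a highest weight vector in $W_{(3^k)}$, construct a nonzero $S_q(n,n)$-equivariant map from $V_{(3^k)}$ via the universal property of the simple module, and match dimensions weight-by-weight.

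First, set $\lambda_{\mathrm{hw}}=(3^k,0^{n-k})\in\Lambda(n,n)_3$. The corresponding enhanced sign string $S_{\mathrm{hw}}$ consists of $k$ entries equal to $\times$ followed by $n-k$ entries equal to $\circ$, so $\hat S_{\mathrm{hw}}$ is empty and $W^{S_{\mathrm{hw}}}=W^{\hat S_{\mathrm{hw}}}\cong\bQ(q)$ is spanned by the empty web $w_{\mathrm{hw}}$. By the conventions at the end of Definition~\ref{defn:phi}, for every $i\in\{1,\ldots,n-1\}$ the weight $\lambda_{\mathrm{hw}}+\alpha_i$ falls outside $\Lambda(n,n)_3$ (some entry either exceeds $3$ or becomes negative), hence $\phi(E_{+i})w_{\mathrm{hw}}=0$. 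Thus $w_{\mathrm{hw}}$ is a highest weight vector of weight $\lambda_{\mathrm{hw}}$ in the $S_q(n,n)$-module $W_{(3^k)}$ provided by Lemma~\ref{lem:phi}.

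Second, since $S_q(n,n)$ is a finite-dimensional semisimple $\bQ(q)$-algebra and $V_{(3^k)}$ is the unique simple $S_q(n,n)$-module of highest weight $\lambda_{\mathrm{hw}}$, the existence of $w_{\mathrm{hw}}$ yields a nonzero $S_q(n,n)$-equivariant map $\phi\colon V_{(3^k)}\to W_{(3^k)}$ sending a chosen highest weight generator of $V_{(3^k)}$ to $w_{\mathrm{hw}}$. Simplicity of $V_{(3^k)}$ forces $\phi$ to be injective.

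Third, I verify $\dim W^S=\dim V^{\mu_S}$ for each enhanced sign string $S$ with $\mu_S\in\Lambda(n,n)_3$. On the representation side $\dim V^{\mu_S}=\#\mathrm{Std}^{(3^k)}_{\mu_S}$; on the web side $\dim W^S=\#B^{\hat S}$. The bijection $T\mapsto\hat T$ of Subsection~\ref{sec-webhowea} (deleting cells labeled $i$ whenever $(\mu_S)_i=3$) gives $\#\mathrm{Std}^{(3^k)}_{\mu_S}=\#\mathrm{Std}^{(3^{k-r})}_{\hat\mu_S}$, where $r$ is the number of entries of $\mu_S$ equal to $3$, and the Khovanov-Kuperberg / Russell bijection recalled in Subsection~\ref{subsec:tableauxflows} identifies the latter with $\#B^{\hat S}$. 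Summing over $S$ yields $\dim W_{(3^k)}=\dim V_{(3^k)}$, so the injection $\phi$ becomes an isomorphism. The main obstacle is the careful bookkeeping with enhanced sign strings, ensuring they encode the correct $\mathfrak{gl}_n$-weight data and that the universal-property argument in step two is applied to genuine highest weight vectors rather than merely weight vectors; once that is in place, the combinatorial dimension identity is essentially a composition of two established bijections.
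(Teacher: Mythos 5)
Your proposal is correct and follows essentially the same route as the paper: exhibit the empty web $w_h$ generating $W_{(\times^k,\circ^{2k})}$ as a highest weight vector of weight $(3^k)$, use irreducibility of $V_{(3^k)}$ to get injectivity of the induced map, and conclude by the dimension count $\dim V_{(3^k)}=\sum_{\mu_S\in\Lambda(n,n)_3}\#\mathrm{Std}^{(3^k)}_{\mu_S}=\sum_S\dim W^S$. The only (harmless) deviation is in how the map is produced: the paper lets $\phi$ act on $S_q(n,n)1_{(3^k)}$ via $x1_{(3^k)}\mapsto\phi(x)w_h$ and observes that the ideal $(\mu>(3^k))$ lies in the kernel because $\mathfrak{sl}_3$-webs admit no labels outside $\{0,1,2,3\}$, whereas you invoke semisimplicity of $S_q(n,n)$ and uniqueness of singular vectors in the simples to map $V_{(3^k)}$ onto the cyclic submodule generated by $w_h$; both yield the same morphism.
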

\begin{proof}
Note that the empty web $w_h=w_{(3^k)}$, which generates 
$W_{(\times^k,0^{2k})}\cong \mathbb{Q}(q)$, is a highest weight vector. 

The map $\phi$ induces a homomorphism of 
$S_q(n,n)$-modules 
\[
\phi\colon S_q(n,n)1_{(3^k)}\to W_{(3^k)},
\]
defined by 
\[
\phi(x1_{(3^k)})=\phi(x)w_h.
\]

It is well-known that 
\[
V_{(3^k)}\cong 
S_q(n,n)1_{(3^k)}/(\mu > (3^k)),
\]
where $(\mu > (3^k))$ is the ideal generated by all 
elements of the form $x1_{\mu}y1_{(3^k)}$ such that we have $x,y\in S_q(n,n)$ 
and $\mu$ is some weight greater than $(3^k)$. 
Since we are using $\mathfrak{sl}_3$-webs, the kernel of 
$\phi$ contains $(\mu > (3^k))$, so $\phi$ descends to $V_{(3^k)}$. 
Since $V_{(3^k)}$ is irreducible and $\phi$ is clearly non-zero, the map 
$\phi\colon V_{(3^k)}\to W_{(3^k)}$ is injective. 

As we already remarked above, we have
\begin{align*}
\dim V_{(3^k)}&=\sum_{\mu_S\in\Lambda(n,n)_3} 
\#\mathrm{Std}^{(3^k)}_{\mu_S}\\
&=\sum_{\mu_S\in\Lambda(n,n)_3} \dim W^S
=\dim W_{(3^k)}.
\end{align*} 
Therefore, we have 
\[
V_{(3^k)}\cong \phi\left(S_q(n,n)\right)w_h\cong W_{(3^k)}, 
\]
which finishes the proof. 
\end{proof}
\vskip0.2cm
We want to explain two more facts about the 
isomorphism in Lemma~\ref{lem:isoirrep}, which we will need later.  

Recall that there is an inner product on $V_{(3^k)}$. 
First of all, there is a $q$-antilinear involution ({\em bar involution}) 
on 
$\mathbb{Q}(q)$ determined by 
\[
\overline{f(q)}=f(q^{-1}),
\]
for any $f(q)\in\bQ(q)$. Recall Lusztig's $q$-antilinear 
algebra anti-automorphism $\tau$ of $S_q(n,n)$ defined by 
\[
\tau(1_{\lambda})=1_{\lambda},\;\;
\tau(1_{\lambda+\alpha_i}E_i1_{\lambda})= 
q^{-1-\overline{\lambda}_i}1_{\lambda}E_{-i}1_{\lambda+\alpha_i},\;\;
\tau(1_{\lambda}E_{-i}1_{\lambda+\alpha_i})= q^{1+\overline{\lambda}_i}
1_{\lambda+\alpha_i}E_i1_{\lambda}.
\] 
The $q$-\textit{Shapovalov form} $\langle\;\cdot\;,\;\cdot\;\rangle_{\mathrm{Shap}}$ on 
$V_{(3^k)}$ is the unique $q$-sesquilinear form such that 
\begin{itemize}
\item $\langle v_h,v_h\rangle_{\mathrm{Shap}} =1$, for a fixed highest weight vector $v_h$.
\item $\langle x v, v^{\prime}\rangle_{\mathrm{Shap}}=\langle v,\tau(x) v^{\prime}\rangle_{\mathrm{Shap}}$, for any 
$x\in S_q(n,n)$ and any $v,v^{\prime}\in V_{(3^k)}$.
\item $\langle f(q) v, g(q) v^{\prime}\rangle_{\mathrm{Shap}}=
\overline{f(q)} g(q) \langle v,v^{\prime}\rangle_{\mathrm{Shap}}$, 
for any any $v,v^{\prime}\in V_{(3^k)}$ and $f(q),g(q)\in \mathbb{Q}(q)$.
\end{itemize}  
\vskip0.5cm
We can also define an inner product on $W_{(3^k)}$, using the 
Kuperberg bracket. Let $S$ be any enhanced sign string $S$, 
such that $\mu_S\in\Lambda(n,n)_3$. Denote the length of the sign string 
$\hat{S}$ by $\ell(\hat{S})$.  
\begin{defn}
\label{defn:normkuperform}
Define the $q$-sesquilinear 
\textit{normalized Kuperberg form} by 
\begin{itemize}
\item $\langle w_h,w_h\rangle_{\mathrm{Kup}}=1$, for a fixed highest weight vector $w_h$.
\item $\langle u,v\rangle_{\mathrm{Kup}}=q^{\ell(\hat{S})}\langle u^*v\rangle_{\mathrm{Kup}}$, for any $u,v\in B^S$.
\item $\langle f(q)u,g(q)v\rangle_{\mathrm{Kup}}=\overline{f(q)}g(q)\langle u,v\rangle_{\mathrm{Kup}}$, for any $u,v\in B^S$ and $f(q),g(q)\in \mathbb{Q}(q)$. 
\end{itemize}
\end{defn}
The following lemma motivates the normalization of the Kuperberg form. 
\begin{lem}
\label{lem:phiisometry}
The isomorphism of $S_q(n,n)$-modules 
\[
\phi\colon V_{(3^k)}\to W_{(3^k)}
\]
is an isometry. 
\end{lem}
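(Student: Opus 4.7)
The plan is to verify that the pullback along $\phi$ of the normalized Kuperberg form on $W_{(3^k)}$ satisfies the three defining axioms of the Shapovalov form on $V_{(3^k)}$, and conclude by uniqueness. The $\mathbb{Q}(q)$-linearity of $\phi$ gives $q$-sesquilinearity of the pullback for free. For the normalization, I observe that the construction in Lemma~\ref{lem:isoirrep} sends the chosen highest weight vector $v_h$ to the empty web $w_h$, so $\langle v_h,v_h\rangle_{\mathrm{Shap}} = 1$ transports to $\langle w_h,w_h\rangle_{\mathrm{Kup}} = 1$, which holds by Definition~\ref{defn:normkuperform}.

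The main step is $\tau$-adjointness, which it suffices to check on the algebra generators $1_\lambda$ and $E_{\pm i}1_\lambda$. For $1_\lambda$ both sides vanish unless $u$ and $v$ both lie in $W^S$ with $\mu_S = \lambda$, where the identity is trivial; different weight spaces are orthogonal since $u^*v$ fails to be a closable web when $u$ and $v$ have incompatible boundaries. For $x = E_i 1_\lambda$, using Lusztig's formula $\tau(E_i 1_\lambda) = q^{-1-\overline{\lambda}_i}E_{-i}1_{\lambda+\alpha_i}$, the goal is
\[
\langle \phi(E_i 1_\lambda) u, v\rangle_{\mathrm{Kup}} = q^{-1-\overline{\lambda}_i}\langle u,\phi(E_{-i}1_{\lambda+\alpha_i}) v\rangle_{\mathrm{Kup}},
\]
for $u\in W^S$ with $\mu_S=\lambda$ and $v\in W^{S'}$ with $\mu_{S'}=\lambda+\alpha_i$; the $E_{-i}$ case is symmetric.

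The geometric input is that the $*$-operation converts $\phi(E_i 1_\lambda)$ into $\phi(E_{-i}1_{\lambda+\alpha_i})$: direct inspection of the H-web in Definition~\ref{defn:phi} shows that horizontal reflection swaps the boundary labels between top and bottom (consistent with the interchange of $\lambda$ and $\lambda+\alpha_i$), while orientation reversal flips the horizontal bar from right-to-left (the $E_i$ convention) to left-to-right (the $E_{-i}$ convention). Hence $(\phi(E_i 1_\lambda))^* = \phi(E_{-i} 1_{\lambda+\alpha_i})$, and consequently the closed webs $(\phi(E_i 1_\lambda)u)^* v$ and $u^*\phi(E_{-i}1_{\lambda+\alpha_i})v$ coincide, giving identical Kuperberg brackets. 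Comparing the prefactors $q^{\ell(\widehat{S'})}$ and $q^{\ell(\hat{S})}$ dictated by Definition~\ref{defn:normkuperform} then reduces the claim to the numerical identity $\ell(\widehat{S'})-\ell(\hat{S}) = -1-\overline{\lambda}_i$.

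The remaining obstacle, such as it is, is this bookkeeping. Since $\ell(\hat{S})$ counts only entries of $\mu_S$ lying in $\{1,2\}$, and since $E_i$ changes only the entries at positions $i$ and $i+1$, one checks the identity across the nine possible transitions $(\mu_i,\mu_{i+1})\mapsto (\mu_i+1,\mu_{i+1}-1)$ with $\mu_i\in\{0,1,2\}$ and $\mu_{i+1}\in\{1,2,3\}$. In each case the change in the count of $\{1,2\}$-entries at positions $i$ and $i+1$ equals $-1-(\mu_i-\mu_{i+1})=-1-\overline{\lambda}_i$, as required. This verification also makes transparent why the factor $q^{\ell(\hat{S})}$ was built into Definition~\ref{defn:normkuperform}: it is precisely the correction needed to turn $\phi$ into an isometry, rather than just an intertwiner between forms that differ by a boundary-dependent scalar.
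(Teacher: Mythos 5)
Your proof is correct and takes essentially the same approach as the paper's: both rest on the bracket identity $\langle (E_{\pm i}u)^*v\rangle_{\mathrm{Kup}}=\langle u^*E_{\mp i}v\rangle_{\mathrm{Kup}}$ coming from the web $*$-operation, followed by matching the normalization of the Kuperberg form against the $\tau$-twist of the Shapovalov form. The paper asserts the normalization check "case by case" and displays only two illustrative cases, whereas you distill it into the clean identity $\ell(\widehat{S'})-\ell(\hat{S})=-1-\overline{\lambda}_i$ and verify it over the nine admissible transitions; this is the same argument made explicit.
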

\begin{proof}
First note that  
\[
\langle (E_{\pm i}u)^*v\rangle_{\mathrm{Kup}}=
\langle u^*E_{\mp i}v\rangle_{\mathrm{Kup}},
\]
for any $u,v\in W^S$ and any $i=1,\ldots, n$, which is exactly (2) from above. 
This shows that the result of the lemma holds up to normalization. 

Our normalization of the Kuperberg form matches the 
normalization of the $q$-Shapovalov form. One can easily check this 
case by case. Let us just do two examples. Let $i=1$. Then one has
$E_11_{(a,b,\ldots)}=1_{(a+1,b-1,\ldots)}E_1$. If 
$(a,b,\ldots)\in\Lambda(n,n)_3$ such that $a-b=-1$, 
then 
\[
\ell(\widehat{(a,b)})=\ell(\widehat{(a+1,b-1)}),
\]
where $\ell$ indicates the length of the sign sequence. 
This matches 
\[
\tau(E_11_{(a,b)})=1_{(a,b)}E_{-1}.
\]  
If $(a,b)=(2,1)$, then $E_11_{(2,1,\ldots)}=1_{(3,0,\ldots)}E_1$. Note 
that 
\[
\ell(\widehat{(2,1,\ldots)})=\ell(\widehat{(3,0,\ldots)})+2.
\]
This $+2$ 
cancels exactly with the $-2$, which appears as the exponent of 
$q$ in 
\[
\tau(E_11_{(2,1,\ldots)})=q^{-2}1_{(2,1,\ldots)}E_{-1}.
\]    
\end{proof}

We will need one more fact about $\phi$. For any $i=1,\ldots,n$ 
and any $a\in\mathbb{N}$, let  
\[
E_{\pm i}^{(a)}=\dfrac{E_{\pm i}^a}{[a]!}
\]
denote the \textit{divided power} in $S_q(n,n)$. 
Recall the following relations for the divided powers.
\begin{eqnarray}
\label{eq:divpow1}
E_{\pm i}^{(a)}E_{\pm i}^{(b)}1_{\lambda}&=&\qbin{a+b}{a}E_{\pm i}^{(a+b)}1_{\lambda},\\
\label{eq:divpow2}
E_{+i}^{(a)}E_{-i}^{(b)}1_{\lambda}&=&
\sum_{j=0}^{\min(a,b)}\qbin{a-b+\lambda_i-\lambda_{i+1}}{j}
E_{-i}^{(b-j)}E_{+i}^{(a-j)}1_{\lambda},\\
\label{eq:divpow3}
E_{-i}^{(b)}E_{+i}^{(a)}1_{\lambda}&=&
\sum_{j=0}^{\min(a,b)}\qbin{b-a-(\lambda_i-\lambda_{i+1})}{j}
E_{+i}^{(a-j)}E_{-i}^{(b-j)}1_{\lambda}.
\end{eqnarray}
Here $[a]!$ denotes the \textit{quantum factorial} and $\qbin{a}{b}$ 
denotes the \textit{quantum binomial}.

The images of the divided powers under 
\[
\phi\colon S_q(n,n)\to \mathrm{End}(W_{(3^k)})
\]
are easy to compute. For example, we have (for simplicity, we only 
draw two of the strands and write $E=E_{+i}$)
\[
\phi(E^21_{(0,2)})=
\;\xy
(0,0)*{\includegraphics[width=30px]{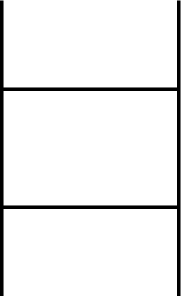}};
(-6,-10)*{\scriptstyle 0};
(6,-10)*{\scriptstyle 2};
(-7,0)*{\scriptstyle 1};
(7,0)*{\scriptstyle 1};
(-6,10)*{\scriptstyle 2};
(6,10)*{\scriptstyle 0};
\endxy 
\;
=
\;\xy
(0,0)*{\includegraphics[width=40px]{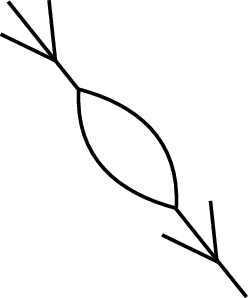}};
(6,10)*{\scriptstyle \circ};
(-6,10)*{\scriptstyle -};
(-6,-10)*{\scriptstyle \circ};
(6,-10)*{\scriptstyle -};
\endxy 
=
\;
[2]
\;\xy
(0,0)*{\includegraphics[width=40px]{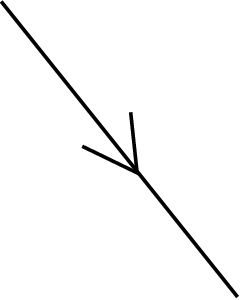}};
(6,10)*{\scriptstyle \circ};
(-6,10)*{\scriptstyle -};
(-6,-10)*{\scriptstyle \circ};
(6,-10)*{\scriptstyle -};
\endxy. 
\]
Therefore, we get 
\[
\phi(E^{(2)}1_{(0,2)})=
\;\xy
(0,0)*{\includegraphics[width=40px]{section51/DivEE02}};
(6,10)*{\scriptstyle \circ};
(-6,10)*{\scriptstyle -};
(-6,-10)*{\scriptstyle \circ};
(6,-10)*{\scriptstyle -};
\endxy. 
\]
Another interesting example is 
\[
\phi(E^{2}1_{(0,3)})=
\;\xy
(-0,0)*{\includegraphics[width=30px]{section51/HHweb}};
(-6,-10)*{\scriptstyle 0};
(6,-10)*{\scriptstyle 3};
(-7,0)*{\scriptstyle 1};
(7,0)*{\scriptstyle 2};
(-6,10)*{\scriptstyle 2};
(6,10)*{\scriptstyle 1};
\endxy
\;
=
\;
[2]
\;\xy
(0,0)*{\includegraphics[width=40px]{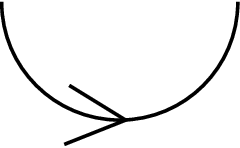}};
(6,10)*{\scriptstyle +};
(-6,10)*{\scriptstyle -};
(-6,-10)*{\scriptstyle \circ};
(6,-10)*{\scriptstyle \times};
\endxy, 
\]
which shows that  
\[
\phi(E^{(2)}1_{(03)})=
\;
\xy
(0,0)*{\includegraphics[width=40px]{section51/rightcup}};
(6,10)*{\scriptstyle +};
(-6,10)*{\scriptstyle -};
(-6,-10)*{\scriptstyle \circ};
(6,-10)*{\scriptstyle \times};
\endxy. 
\]
The final example we will consider is $\phi(E^{(3)}1_{(0,3)})$.
We see that 
\[
\phi(E^{3}1_{(0,3)})=
\;\xy
(-0,0)*{\includegraphics[width=30px]{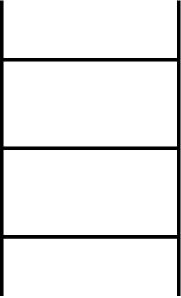}};
(-6,-10)*{\scriptstyle 0};
(6,-10)*{\scriptstyle 3};
(-7,-3)*{\scriptstyle 1};
(7,-3)*{\scriptstyle 2};
(-7,3)*{\scriptstyle 2};
(7,3)*{\scriptstyle 1};
(-6,10)*{\scriptstyle 3};
(6,10)*{\scriptstyle 0};
\endxy
\;
=
\;
\xy
(0,0)*{\includegraphics[width=40px]{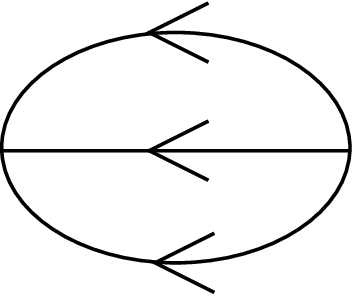}};
(-6,10)*{\scriptstyle \times};
(6,10)*{\scriptstyle \circ};
(-6,-10)*{\scriptstyle \circ};
(6,-10)*{\scriptstyle \times};
\endxy
\;
=
\;
[3]!
\xy
(-6,10)*{\scriptstyle \times};
(6,10)*{\scriptstyle \circ};
(-6,-10)*{\scriptstyle \circ};
(6,-10)*{\scriptstyle \times};
\endxy.
\]
Thus, we have 
\[
\phi(E^{(3)}1_{(0,3)})=\;\xy
(-6,10)*{\scriptstyle \times};
(6,10)*{\scriptstyle \circ};
(-6,-10)*{\scriptstyle \circ};
(6,-10)*{\scriptstyle \times};
\endxy,
\]
which is the unique empty web from $(\circ,\times)$ to 
$(\times,\circ)$.

Note that~\eqref{eq:divpow2} and~\eqref{eq:divpow3} imply that, 
for any $a\in\mathbb{N}$, we have 
\begin{equation}
\label{eq:adjust1}
E_{-i}^{(a)}E_{+i}^{(a)}1_{(\ldots,0,a,\ldots)}=1_{(\ldots,0,a,\ldots)}
\quad\text{and}\quad 
E_{+i}^{(a)}E_{-i}^{(a)}1_{(\ldots,a,0,\ldots)}=1_{(\ldots,a,0,\ldots)}
\end{equation}
in $S_q(n,n)$. Similarly, 
let $S_q(n,n)/I$, where $I$ denotes the two-sided ideal generated by 
all $1_{\mu}$ such that $\mu>(3^k)$. Again by~\eqref{eq:divpow2} 
and~\eqref{eq:divpow3}, we have
\begin{equation}
\label{eq:adjust2}
E_{-i}^{(3-a)}E_{+i}^{(3-a)}1_{(\ldots,a,3,\ldots)}=1_{(\ldots,a,3,\ldots)}
\quad\text{and}\quad 
E_{+i}^{(3-a)}E_{-i}^{(3-a)}1_{(\ldots,3,a,\ldots)}=1_{(\ldots,3,a,\ldots)}
\end{equation}
in $S_q(n,n)/I$.
One can check that $\phi$ maps the two sides of the equations 
in~\eqref{eq:adjust1} and~\eqref{eq:adjust2} to isotopic diagrams. 
For example, $\phi$ maps 
\[
E_{-}^{(2)}E_{+}^{(2)}1_{(0,2)}=1_{(0,2)}
\]
to 
\[
\xy
(0,0)*{\includegraphics[width=22px]{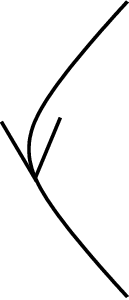}};
(-6,12)*{\scriptstyle \circ};
(6,12)*{\scriptstyle -};
(-6,0)*{\scriptstyle -};
(6,0)*{\scriptstyle \circ};
(-6,-12)*{\scriptstyle \circ};
(6,-12)*{\scriptstyle -};
\endxy
\;
=
\;
\xy
(6,0)*{\includegraphics[width=11.3px]{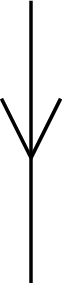}};
(-6,12)*{\scriptstyle \circ};
(6,12)*{\scriptstyle -};
(-6,-12)*{\scriptstyle \circ};
(6,-12)*{\scriptstyle -};
\endxy.
\]

The proof of the following lemma is based on an algorithm, which we call the 
\textit{enhanced inverse growth algorithm}. The result is needed later to 
show essential surjectivity in Theorem~\ref{thm:equivalence}. 
\begin{lem}
\label{lem:phisurj}
Let $S$ be any enhanced sign string such that $\mu_S\in\Lambda(n,n)_3$. 
For any $w\in B^S$, there exists a product of divided powers $x$, such that
\[
\phi(x1_{(3^k)})=w.
\] 
\end{lem}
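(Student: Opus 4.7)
The plan is to prove the lemma constructively via an \emph{enhanced inverse growth algorithm}, proceeding by induction on the total number of trivalent vertices plus arcs in $w$. The base case is $w = w_h$, the empty web with $\mu_S = (3^k)$ (so $S = (\times^k, \circ^{2k})$), for which $x = 1$ trivially satisfies $\phi(1\cdot 1_{(3^k)}) = w_h$.

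For the inductive step, I would use the standard growth algorithm of Definition~\ref{growth} to express $w$ as being built top-down from its boundary $\hat{S}$ via a sequence of arc, Y and H-rules, and identify the bottom-most piece of $w$ (the final stage of the growth algorithm). The explicit computations of $\phi$ on divided powers displayed just before the lemma statement show how each elementary piece arises as the image $\phi(E_{\pm i}^{(a)}1_\lambda)$ of a single divided power: an H-piece corresponds to $a = 1$ with $\lambda_i, \lambda_{i+1} \in \{1,2\}$; a Y-piece corresponds to $a = 2$ with weight configurations such as $(\ldots,0,2,\ldots)$, $(\ldots,2,0,\ldots)$, $(\ldots,1,3,\ldots)$ or $(\ldots,3,1,\ldots)$; and an arc (cup) corresponds to $a = 2$ with $(\ldots,0,3,\ldots)$ or $(\ldots,3,0,\ldots)$. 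Matching the bottom-most piece of $w$ to the appropriate divided power, letting $w'$ denote $w$ with that piece removed (a web strictly simpler than $w$, whose new enhanced boundary $S'$ still satisfies $\mu_{S'} \in \Lambda(n,n)_3$), the inductive hypothesis yields a product $x'$ with $\phi(x'\cdot 1_{(3^k)}) = w'$, and then $x = x'\cdot E_{\pm i}^{(a)}$ satisfies $\phi(x\cdot 1_{(3^k)}) = w$.

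The main obstacle is the bookkeeping of enhanced sign strings: although $W^S$ depends only on $\hat{S}$ as a vector space, the specific divided power that realizes a given ladder piece depends on the \emph{full} enhanced boundary, because the positions of $\circ$ and $\times$ around the piece determine which weight $\lambda$ must be used. Here the adjustment identities~\eqref{eq:adjust1} and~\eqref{eq:adjust2} are essential, since $\phi$ sends both sides of each identity to isotopic webs; this allows us to freely permute $0$s and $3$s past adjacent positive-weight entries without altering the underlying web. Using these moves, the current weight $\lambda$ can always be manoeuvred into one of the specific configurations listed above, so that a single divided power completes the inductive step. Termination is immediate because each step strictly decreases the number of growth-algorithm stages in the decomposition of the remaining web, until one arrives at the base case.
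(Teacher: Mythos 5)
Your overall strategy---decompose $w$ via the growth algorithm, realize each elementary piece as the image under $\phi$ of a (divided power of a) generator, and use~\eqref{eq:adjust1} and~\eqref{eq:adjust2} to shuffle the $0$s and $3$s past the other entries---is exactly the strategy of the paper's proof, but as written your induction has a structural flaw: you peel off the wrong end of the web. If you remove the \emph{bottom-most} piece (the final growth rule), the remaining object $w'$ has a non-empty lower boundary, namely the edges to which the removed piece was attached, so it is not an element of $B^{S'}$ for any enhanced sign string and the inductive hypothesis does not apply to it; correspondingly $\phi(x'1_{(3^k)})=\phi(x')w_h$ is by definition a web with a single boundary and can never equal such a $w'$. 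Moreover, since $\phi$ glues webs \emph{on top} of its argument, the rightmost factor of $x$ acts first on $w_h$, so setting $x=x'E^{(a)}_{\pm i}$ requires knowing what the operator $\phi(x')$ does to the bottom-piece web, which the inductive hypothesis (a statement about $\phi(x')w_h$ only) does not provide. The repair is to peel off the \emph{first} growth rule, adjacent to $S$: then the truncated web $w''$ does lie in $B^{S''}$ for the new boundary $S''$, and the factor realizing the removed piece must be multiplied on the \emph{left}, $x=y\,x''$. Carried out for every step, this corrected recursion is precisely the paper's algorithm $x=1_{\mu_S}x_1x_2\cdots x_{m+1}1_{(3^k)}$, whose final factor $x_{m+1}$ is a product of divided powers reordering the terminal string of $3$s and $0$s into $(3^k)$ (a step your base case also needs, since the recursion ends at an empty web whose weight is in general only a permutation of $(3^k,0^{2k})$).

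There is also a concrete error in your dictionary: a $Y$-piece is \emph{not} the image of a divided square. A $Y$ (two boundary edges on top, one on the bottom) is the image of a single generator, $a=1$, with one of the two adjacent weight entries equal to $0$ or $3$; for instance $E_{-i}1_{(\ldots,2,0,\ldots)}$ has bottom $(2,0)$ and top $(1,1)$, and $E_{+i}1_{(\ldots,1,3,\ldots)}$ has bottom $(1,3)$ and top $(2,2)$. The divided powers $E^{(2)}_{\pm i}$ at the weights you list do something different: as the paper's computation of $\phi(E^{(2)}1_{(0,2)})$ just before the lemma shows, at $(0,2)$, $(2,0)$, $(1,3)$ or $(3,1)$ they merely transport a strand sideways (a bent edge with no trivalent vertex). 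Divided squares are needed only for arcs, with bottom weight of type $(3,0)$ or $(0,3)$, and for the $0$/$3$ shuffles. With the assignment $a=2$ for the $Y$-rules, the element you construct is not mapped to $w$ by $\phi$, so this needs to be corrected along with the direction of the induction.
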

\begin{proof}
Choose any $w\in B^S$. We consider $w\in B_{(\times^k,\circ^{2k})}^S$, i.e. 
a non-elliptic web with (empty) lower boundary determined by 
$(\times^k,\circ^{2k})$ and upper boundary determined by $S$. 
Express $w$ using the growth algorithm, 
in an arbitrary way. Suppose there are $m$ steps in this instance of the 
growth algorithm. The element $x$ is built up in $m+2$ steps, i.e. 
an initial step, one step for each step in the growth algorithm, and a last 
step. During the construction of $x$, we always keep track of 
the $\circ$s and $\times$s. At each step 
the strands of $w$ are numbered according to their position in $x$. 

If the $H$, $Y$ or arc-move is applied to two non-consecutive strands, 
we first have to apply some divided powers, 
as in~\eqref{eq:adjust1} and~\eqref{eq:adjust2}, 
to make them consecutive. Let $x_k\in S_q(n,n)$ be the element 
assigned to the $k$-th step and let $\mu^k$ be the weight after the 
$k$-step, i.e. $x_k=1_{\mu^{k-1}}x_k1_{\mu^k}$. 
The element $x$ we are looking for is the product of all $x_k$. 
\begin{enumerate}
\item Take $x_0=1_{\mu_S}$.
\item Suppose that the $k$-th step in the growth algorithm is applied to 
the strands $i$ and $i+r$, for some $r\in\mathbb{N}_{>0}$. This means that 
the entries of $\mu^{k-1}$ satisfy $\mu_j\in\{0,3\}$, 
for all $j=i+1,\ldots,i+r-1$. Let $x_k^{\prime}$ be the product of divided powers 
which ``swap'' the $(\mu_{i+1},\ldots,\mu_{i+r-1})$ and $\mu_{i+r}$. So, we 
first swap $\mu_{i+r-1}$ and $\mu_{i+r}$, then $\mu_{i+r-2}$ and $\mu_{i+r}$ 
etc. Now, the rule in the growth algorithm, still corresponding to the 
$k$-th step, can be applied to the strands $i$ and $i+1$. 
\item Suppose that it is an $H$-rule. If the bottom of the $H$ is a pair 
(up-arrow down-arrow), then take $x_k=x_k^{\prime}E_{+i}$. If the bottom of the 
$H$ is a pair (down-arrow up-arrow), then take $x_k=x_k^{\prime}E_{-i}$. 
\item Suppose that the rule, corresponding to the $k$-th step in 
the growth algorithm, is a $Y$-rule. If the bottom strand of $Y$ is oriented 
downward, then take $x_k=x_k^{\prime}E_{-i}$. If it is oriented upward, take 
$x_k=x_k^{\prime}E_{+i}$. Note that these two choices are not unique. They depend on 
where you put $0$ or $3$ in $\mu^k$. The choice we made corresponds to taking 
$(\mu^k_i,\mu^k_{i+1})=(2,0)$ in the first case and 
$(\mu^k_i,\mu^k_{i+1})=(1,3)$ in the second case. Other choices would be 
perfectly fine and would lead to equivalent elements in $S_q(n,n)1_{(3^k)}/
(\mu > (3^k))$. 
\item Suppose that the rule, corresponding to the $k$-th step in 
the growth algorithm, is an arc-rule. If the arc is oriented clockwise, 
take $x_k=x_k^{\prime}E_{-i}^{(2)}$. If the arc is oriented counter-clockwise, 
take $x_k=x_k^{\prime}E_{-i}$. Again, these choices are not unique. They correspond 
to taking $(\mu^k_i,\mu^k_{i+1})=(3,0)$ in both cases. 
\item After the $m$-th step in the growth algorithm, which is the last one, 
we obtain $\mu^m$, which is a sequence of $3$'s and $0$'s. Let $x_{m+1}$ be 
the product of divided powers which reorders the entries of $\mu^m$, so that 
$\mu^{m+1}=(3^k)$. 
\item Take $x=1_{\mu_S}x_1x_2\cdots x_{m+1}1_{(3^k)}\in S_q(n,n)$. Note that 
$x$ is of the form $E_{\ii}1_{(3^k)}$.      
\end{enumerate}
From the analysis of the images of the divided powers under $\phi$, it is 
clear that 
\[
\phi(x)=w.
\] 
\end{proof}

We do a simple example to illustrate Lemma~\ref{lem:phisurj}.
Let 
\[
w=
\;
\xy
(0,0)*{\includegraphics[width=40px]{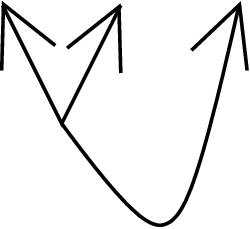}};
(-6,10)*{\scriptstyle 1};
(0,10)*{\scriptstyle 1};
(6,10)*{\scriptstyle 1};
\endxy
\] 
Then the algorithm in the proof of Lemma~\ref{lem:phisurj} gives 
\[
x=1_{(111)}E_{-1}E_{-2}E_{-1}1_{(300)},
\]
or as a picture (read from bottom to top)
\[
\xy
(0,0)*{\includegraphics[width=50px]{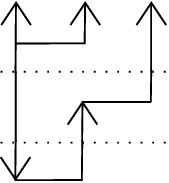}};
(-9,12)*{\scriptstyle 1};
(0.3,12)*{\scriptstyle 1};
(9,12)*{\scriptstyle 1};
(-15,5)*{\scriptstyle{E_{-1}}};
(-9,2)*{\scriptstyle 2};
(0.3,2)*{\scriptstyle 0};
(9,2)*{\scriptstyle 1};
(-15,-1.5)*{\scriptstyle{E_{-2}}};
(-9,-5.5)*{\scriptstyle 2};
(0.3,-5.5)*{\scriptstyle 1};
(9,-5.5)*{\scriptstyle 0};
(-15,-9)*{\scriptstyle{E_{-1}}};
(-9,-12)*{\scriptstyle 3};
(0.3,-12)*{\scriptstyle 0};
(9,-12)*{\scriptstyle 0};
\endxy.
\]

We are now ready to start explaining the categorified story. 

\subsection{And its categorification}\label{sec-webhoweb}

Let $\mathcal{W}^S=K^S\text{-}\mathrm{p\textbf{Mod}}_{\mathrm{gr}}$ 
be the category of all finite dimensional projective 
graded $K^S$-modules. 
In what follows, it will sometimes be useful to consider homomorphisms 
of arbitrary degree, so we define
\[
\mathrm{HOM}_{B}(M,N)=\bigoplus_{t\in\mathbb{Z}}\text{Hom}_B(M\{t\},N),
\]
for any finite dimensional associative unital graded algebra $B$ and 
any finite dimensional graded $B$-modules $M$ and $N$. Note that for 
almost all 
$t\in\mathbb{Z}$ we have $\text{Hom}_B(M\{t\},N)$, so 
$\text{HOM}_B(M,N)$ is still finite dimensional. We also define 
$$\dim_q \mathrm{HOM}_{B}(M,N)=\sum_{t\in\bZ} q^t\dim \text{Hom}_B(M\{t\},N).$$

Moreover, we need the following notions throughout the rest of the section.

Suppose that $S$ is an enhanced sign string 
such that $\mu_S\in\Lambda(n,n)_3$. For any $u\in B^S$, let 
\[
P_u=\bigoplus_{w\in B^S} {}_uK_w.
\]
Then we have   
\[
K^S=\bigoplus_{u\in B^S} P_u,
\]
and so $P_u$ is an object in $\mathcal{W}^S$, 
for any $u\in B^S$. Note that, for any $u,v\in B^S$, we have  
\[
\mathrm{HOM}(P_u,P_v)\cong {}_uK_v,
\]
where an element in ${}_uK_v$ acts on $P_u$ by composition on the 
right-hand side. 

Similarly, we can define 
\[
{}_uP=\bigoplus_{w\in B^S} {}_uK_w,
\]
which is a right graded projective $K^S$-module. 

\begin{rem}
Just one warning: The reader should not confuse 
$P_u$ with $P_{u,T}$ in Section~\ref{sec:webalgebra}.
\end{rem} 
\subsubsection{The definition of ${\mathcal W}_{(3^k)}$}
Recall that $S$ denotes an enhanced sign string. Define  
\[
K_{(3^k)}=\bigoplus_{\mu_S\in\Lambda(n,n)_3} K^S
\]
and
\[
{\mathcal W}_{(3^k)}=K_{(3^k)}\text{-}\mathrm{p\textbf{Mod}}_{\mathrm{gr}}\cong 
\bigoplus_{\mu_S\in\Lambda(n,n)_3}\mathcal{W}^S.
\]
The main goal of this section is to show that ${\mathcal W}_{(3^k)}$ is 
a strong $\mathfrak{sl}_n$-$2$-representation and 
that 
\[
{\mathcal W}_{(3^k)}\cong {\mathcal V}_{(3^k)}
\]
as strong-$\mathfrak{sl}_n$ 2-representations. 

This will imply that 
\[
K^{\oplus}_0({\mathcal W}_{(3^k)})_{\bQ(q)}\cong V_{(3^k)}.
\]
Note that 
\[
K^{\oplus}_0({\mathcal W}_{(3^k)})_{\bQ(q)}\cong \bigoplus_{\mu_S\in\Lambda(n,n)_3}K^{\oplus}_0(\mathcal{W}^S)_{\bQ(q)}.
\]
We will show that this corresponds exactly to 
the $U_q(\mathfrak{gl}_n)$-weight space decomposition of $V_{(3^k)}$. 
In particular, this will show that 
\begin{equation}
\label{eq:fundequality}
K^{\oplus}_0(\mathcal{W}^S)_{\bQ(q)}\cong W^S,
\end{equation}
for any enhanced sign sequence $S$ such that $\mu_S\in \Lambda(n,n)_3$. 
\vskip0.5cm
First, we have to recall the definitions of 
\textit{sweet} bimodules.
\subsubsection{Sweet bimodules}
Note that the following definitions and results are 
the $\mathfrak{sl}_3$ analogues of those in Section 2.7 in~\cite{kh}.

\begin{defn} Given rings $R_1$ and $R_2$, a $(R_1,R_2)$-bimodule $N$ is 
called \textit{sweet} if it is finitely generated and projective as a 
left $R_1$-module and as a right $R_2$-module. 
\end{defn}
If $N$ is a sweet $(R_1,R_2)$-bimodule, then the functor 
\[
N\otimes_{R_2}- \colon 
R_2\text{-}\mathrm{\textbf{Mod}}\to R_1\text{-}\mathrm{\textbf{Mod}}
\]
is exact and sends projective modules to projective modules. Given a 
sweet $(R_1,R_2)$-bimodule $M$ and a sweet $(R_2,R_3)$-bimodule $N$, then the 
tensor product $M\otimes_{R_2}N$ is a sweet $(R_1,R_3)$-bimodule.

Let $S$ and $S^{\prime}$ be two enhanced sign strings. Then $\widehat{B}_S^{S^{\prime}}$ 
denotes the set of all monomial webs whose boundary is divided 
into a lower part, determined by $S$, and an upper part, determined by $S^{\prime}$. 
By a {\em monomial} web we mean a web given by one diagram. 
Let $B_S^{S^{\prime}}\subset \widehat{B}_S^{S^{\prime}}$ be 
the subset of non-elliptic webs. 

For any $w\in \widehat{B}_S^{S^{\prime}}$, define a graded finite dimensional 
$(K^{S^{\prime}},K^{S})$-bimodule $\Gamma(w)$ by 
\[
\Gamma(w)=\bigoplus_{u\in B^{S^{\prime}},v\in B^{S}}{}_u\Gamma(w)_v,
\]
with 
\[
{}_u\Gamma(w)_v={\mathcal F}^c(u^*wv)\{\ell(\hat{S})\}.
\] 
The left and right actions of $K^S$ on 
$\Gamma(w)$ are defined by applying the multiplication foam in~\ref{multfoam} 
to 
\[
{}_rK_u\otimes{}_u\Gamma(w)_v\to {}_r\Gamma(w)_v\quad\text{and}\quad 
{}_u\Gamma(w)_v\otimes {}_vK_r\to {}_u\Gamma(w)_r.
\]
Let $w\in\widehat{B}_S^{S^{\prime}}$. Then $w=c_1w_1+\cdots +c_tw_t$, for 
certain $w_i\in B_S^{S^{\prime}}$ and $c_i\in \mathbb{N}[q,q^{-1}]$. Since all 
relations which are satisfied by the Kuperberg bracket have categorical 
analogues for foams, this shows that 
\[
\Gamma(w)\cong \Gamma(w_1)^{\oplus c_1}\oplus \cdots\oplus \Gamma(w_t)^{\oplus c_t}.
\]

We have the following analogue of Proposition 3 in~\cite{kh}.
\begin{prop}\label{prop-sweet}
For any $w\in\widehat{B}_S^{S^{\prime}}$, the graded $(K^{S^{\prime}},K^{S})$-bimodule 
$\Gamma(w)$ is sweet. 
\end{prop}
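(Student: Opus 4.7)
The plan is to prove that $\Gamma(w)$ is finitely generated projective as a left $K^{S'}$-module; the right $K^S$-module statement then follows by the symmetric argument obtained by exchanging the roles of $S$ and $S'$. By the remark immediately preceding the proposition, any $w \in \widehat{B}_S^{S'}$ decomposes as $\Gamma(w) \cong \bigoplus_i \Gamma(w_i)^{\oplus c_i}$ with $w_i \in B_S^{S'}$ and $c_i \in \mathbb{N}[q,q^{-1}]$, so it suffices to treat the case where $w$ is a non-elliptic web in $B_S^{S'}$.

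The key idea is to close off the lower boundary one factor at a time. For each fixed $v \in B^S$, form the composite web $wv$ whose only remaining boundary is the upper boundary $S'$. Applying the Kuperberg relations \eqref{eq:circle}, \eqref{eq:digon} and \eqref{eq:square} to $wv$ yields an equality $wv = \sum_i c_i(v)\, w_i(v)$ in $W^{S'}$, with $w_i(v) \in B^{S'}$ and $c_i(v) \in \mathbb{N}[q,q^{-1}]$. As discussed in Subsection~\ref{subsec:foams}, each of these web relations lifts to a direct sum decomposition of $\F^c$ via the idempotents appearing in (NC), (DR) and (SqR). Because those idempotents are supported strictly inside the region occupied by $wv$, they commute with the multiplication foam used to define the left $K^{S'}$-action on the $u^*$ placed on top. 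Hence we obtain a grading preserving isomorphism of left $K^{S'}$-modules
\[
\bigoplus_{u \in B^{S'}} \F^c(u^* wv)\{\ell(\hat{S})\} \;\cong\; \bigoplus_i c_i(v) \bigoplus_{u \in B^{S'}} \F^c(u^* w_i(v))\{\ell(\hat{S})\}.
\]
For each basis web $w_i(v) \in B^{S'}$, the right-hand summand $\bigoplus_{u \in B^{S'}} \F^c(u^* w_i(v))$ is, up to an overall grading shift, precisely the column $K^{S'}1_{w_i(v)} = \bigoplus_u {}_uK_{w_i(v)}$ of the idempotent decomposition $1 = \sum_{w \in B^{S'}} 1_w$, hence a projective direct summand of $K^{S'}$ as a left module over itself. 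Summing the displayed isomorphism over $v \in B^S$ realises $\Gamma(w)$ as a finite direct sum of graded shifts of such summands of $K^{S'}$, so $\Gamma(w)$ is finitely generated and projective as a left $K^{S'}$-module.

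The main point requiring care is the locality claim: one must verify that the foam-level idempotents from (NC), (DR) and (SqR) that implement the decomposition of $wv$ really commute with the left action of $K^{S'}$, i.e.\ that the resulting isomorphism is $K^{S'}$-linear in $u$ and not merely a vector space isomorphism. This is a straightforward but slightly tedious locality check, since the relevant idempotent foams are supported in a horizontal slab disjoint from the multiplication foam $m_{r,u,w_i(v)}$ used for the left action and therefore slide past it by an isotopy. A secondary bookkeeping point is to confirm that the grading shifts $\ell(\hat{S})$ in the definition of ${}_u\Gamma(w)_v$ and $\ell(\hat{S'})$ implicit in the identification with $K^{S'}1_{w_i(v)}$ combine consistently; this is harmless for projectivity but must be stated correctly.
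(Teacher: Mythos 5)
Your proposal is correct and follows essentially the same route as the paper: fix $v\in B^S$, expand $wv=\sum_i c_i u_i$ in $W^{S^{\prime}}$ via the Kuperberg relations, lift this to a direct sum decomposition of foam spaces using the idempotents in (NC), (DR) and (SqR), and identify each summand (up to the shift $\ell(\hat{S})-\ell(\hat{S}^{\prime})$) with a projective column of $K^{S^{\prime}}$, with the right-module case handled symmetrically. Your explicit locality check that the decomposition is $K^{S^{\prime}}$-linear is a point the paper leaves implicit (it is subsumed in the fact that any foam $wv\to u_i$ induces a bimodule map $\Gamma(wv)\to\Gamma(u_i)$), and your preliminary reduction to non-elliptic $w$ is unnecessary but harmless.
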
 
\begin{proof}
As a left $K^S$-module, we have 
\[
\Gamma(w)\cong \bigoplus_{v\in B^{S}} \Gamma(w)_v,
\]
where 
\[
\Gamma(w)_v=\bigoplus_{u\in B^{S^{\prime}}}{}_u\Gamma(w)_v.
\]
So, as far as the left action is concerned, it is sufficient to show that 
$\Gamma(w)_v$ is a left projective $K^{S^{\prime}}$-module. Note that, as a left 
$K^{S^{\prime}}$-module, we have  
\[
\Gamma(w)_v\cong \bigoplus_{u\in B^{S^{\prime}}}{\mathcal F}^0(u^*wv)\{\ell(\hat{S})\}.
\] 
Then $wv=c_1u_1+\cdots+c_tu_t$, for certain 
$u_i\in B^{S^{\prime}}$ and $c_i\in\mathbb{N}[q,q^{-1}]$. By the remarks above, this 
means that 
\begin{equation}
\label{eq:projbimodule}
\Gamma(w)_v\cong P_{u_1}^{\oplus c_1}\{\ell(\hat{S})-\ell(\hat{S}')\}\oplus \cdots\oplus P_{u_t}^{\oplus c_t}
\{\ell(\hat{S})-\ell(\hat{S}')\}.
\end{equation}
This proves that $\Gamma(w)$ is projective as a left $K^{S^{\prime}}$-module. 

The proof that $\Gamma(w)$ is projective as a right $K^{S}$-module is 
similar.   
\end{proof}

It is not hard to see that (see for example~\cite{kh}), for any $w\in \widehat{B}_S^{S^{\prime}}$ and 
$w'\in \widehat{B}_{S^{\prime}}^{S^{\prime\prime}}$, we have 
\begin{equation}
\label{eq:sweettensor}
\Gamma(w'w)\cong \Gamma(w')\otimes_{K^{S^{\prime}}}\Gamma(w).
\end{equation}
\begin{lem} Let $w, w'\in \widehat{B}_{S}^{S^{\prime}}$. An isotopy between 
$w$ and $w'$ induces an isomorphism between $\Gamma(w)$ and $\Gamma(w')$. 
Two isotopies between $w$ and $w'$ induce the same isomorphism if and only 
if they induce the same bijection between the connected components of $w$ 
and $w'$.  
\end{lem}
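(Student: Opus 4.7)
Given an ambient isotopy $\Phi=\{\phi_t\}_{t\in[0,1]}$ between $w$ and $w'$, fixing the boundary pointwise, I would first associate to $\Phi$ the \emph{trace foam}
\[
F_\Phi=\{(x,t) : x\in \phi_t(w),\,t\in[0,1]\}
\]
sitting inside $\mathrm{strip}\times[0,1]$, viewed as a foam from $w$ (at $t=0$) to $w'$ (at $t=1$). For any $u\in B^{S'}$, $v\in B^S$, extending $F_\Phi$ by the identity cylinders on $u^*$ (on the left) and $v$ (on the right) gives a foam
\[
1_{u^*}\,F_\Phi\,1_v\colon u^*wv\longrightarrow u^*w'v,
\]
and vertical composition with this foam yields a $\mathbb{C}$-linear map ${}_u\Gamma(w)_v\to{}_u\Gamma(w')_v$ of degree zero (since the foam is essentially a cylinder and thus has Euler characteristic equal to that of its boundary). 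These maps commute with the multiplication foams of Definition~\ref{multfoam} — because those act only on the $u^*$ and $v$ pieces, which $F_\Phi$ leaves untouched — so they assemble into a bimodule morphism $\Gamma(\Phi)\colon\Gamma(w)\to\Gamma(w')$.

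To see $\Gamma(\Phi)$ is invertible, I would consider the reversed isotopy $\Phi^{-1}=\{\phi_{1-t}\}$. The composition $F_{\Phi^{-1}}\circ F_\Phi$ is, as a foam with fixed boundary, ambient isotopic rel boundary to the identity cylinder on $w$, and likewise for the composition in the opposite order. Such ambient isotopic foams are equal in $\foamt^c$ (by definition of the morphism spaces), so $\Gamma(\Phi^{-1})$ is a two-sided inverse to $\Gamma(\Phi)$.

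The substantive part is the dependence statement. Given two isotopies $\Phi_0,\Phi_1$ from $w$ to $w'$ inducing the same bijection $\sigma$ between the connected components of $w$ and $w'$, I would show that the corresponding trace foams are equal in $\foamt^c$ by analysing each component of $w$ separately. For a component that meets the boundary, the space of embeddings of that component into the strip, with prescribed behaviour on $\partial$, is contractible — any two trace foams on such a component are ambient isotopic rel boundary. For a closed component $C$ of $w$ (necessarily a circle, a digon, a square, a theta, or a more general closed sub-web matched by $\sigma$ with the corresponding component of $w'$), the space of self-isotopies of $C$ rel the already-chosen identification is generated by Dehn-twist-like loops, and I would reduce the associated twist foam to the identity cylinder using relations (NC), (DR), (SqR), (Bubble), ($\Theta$) together with the closure relation, exactly as Kuperberg's skein relations reduce $C$ itself. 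Conversely, if $\Phi_0$ and $\Phi_1$ induce different bijections, I would produce elements of $\Gamma(w)$ (e.g.\ foams decorated with a single dot on the facet corresponding to a particular component of $w$) whose images under $\Gamma(\Phi_0)$ and $\Gamma(\Phi_1)$ can be distinguished by capping with suitable test foams and evaluating via the TQFT — dot migration shows the labelling of components is detected faithfully.

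The main obstacle I expect is the Dehn-twist reduction in the third paragraph: one must check that every self-isotopy of a closed sub-web produces a foam that lies in the ideal making $\foamt^c$, which amounts to verifying a finite list of \emph{local} twist moves (on a cylinder, on a digon facet, on a square facet, on a theta-shaped neck) and confirming each is trivial modulo the foam relations. This is exactly where the sufficiency of Khovanov's foam relations must be used, and it is the place where one has to be most careful about the interplay between the topological isotopies of webs and the algebraic relations in $\foamt^c$.
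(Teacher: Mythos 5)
The paper states this lemma without proof — it is the $\mathfrak{sl}_3$ analogue of a statement in Section 2.7 of Khovanov's arc-algebra paper~\cite{kh}, and the authors note just before Proposition~\ref{prop-sweet} that the surrounding definitions and results are imported from there — so there is no in-paper argument to compare against. Your trace-foam construction of $\Gamma(\Phi)$ is the expected one, and your first two paragraphs are correct: the trace of an isotopy is $w\times[0,1]$ as a topological space, so it has $q$-degree zero; it commutes with the multiplication foams because it is the identity on the $u^*$ and $v$ sides; and reversing the isotopy produces a two-sided inverse because $\gamma*\gamma^{-1}$ is null-homotopic rel endpoints, so the composite trace is isotopic to the product cylinder.

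The gap is in the ``if'' direction of the last sentence. You reduce to one connected component at a time and, for a closed component $C$, analyse loops in the space of embeddings of $C$ alone (the ``Dehn-twist-like loops''). But two isotopies inducing the same component bijection can differ by a loop in the configuration space of the whole sub-web $w$ in the strip that is trivial on each component separately. The basic example: $w$ a disjoint union of two circles, one isotopy holding both still, the other letting one circle orbit once around the other. Both induce the identity bijection on components, but the trace foams are a pair of product cylinders versus a braided pair of annuli, and these need not be isotopic rel boundary in $\mathrm{strip}\times[0,1]$ since the cores realize a nontrivial pure braid. This is not a self-isotopy of either component, so it falls outside your reduction. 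The two foams nevertheless induce the same bimodule map, but the mechanism is neck-cutting: relation (NC) — and its analogues (DR), (SqR), $(\Theta)$ for closed components with trivalent vertices — together with the closure relation let you sever the trace annulus or trace tube of each closed component, and after cutting, the braiding and orbiting become irrelevant because the severed pieces are capped off and can be moved freely. So the argument must be run on the whole web $w$ at once: cut all closed-component traces simultaneously, reduce to the trace of the union of boundary-meeting components, and only then invoke contractibility. The same caveat applies to your contractibility claim for boundary-meeting components, since the space of embeddings of such a component subject to disjointness from the rest of $w$ need not be contractible until the closed components it could wind around have been cut. Your ``only if'' paragraph is plausible in outline, but you should exhibit a concrete element of some ${}_u\Gamma(w)_v$ and a pairing foam — using the nondegenerate trace form from Theorem~\ref{thm:frob} — whose evaluations under the two maps differ.
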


\begin{lem}
Let $w,w'\in \widehat{B}_{S}^{S^{\prime}}$ and let $f\in \foamt^0(w,w')$ be a foam 
of degree $t$. Then $f$ induces a bimodule map 
\[
\Gamma(f)\colon \Gamma(w)\to \Gamma(w')
\] 
of degree $t$.
\end{lem}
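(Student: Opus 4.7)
The plan is to define $\Gamma(f)$ directly by horizontal composition with $f$, and then to verify, in order: (i) well-definedness, (ii) degree, (iii) the bimodule property. Concretely, given $u\in B^{S'}$ and $v\in B^S$, and any foam
\[
g\colon \emptyset \to u^* w v
\]
representing an element of ${}_u\Gamma(w)_v=\mathcal{F}^0(u^*wv)\{\ell(\hat S)\}$, I set
\[
\Gamma(f)(g) \;=\; (1_{u^*}\otimes f \otimes 1_v)\circ g \;\in\; \mathcal{F}^0(u^* w' v)\{\ell(\hat S)\}={}_u\Gamma(w')_v,
\]
and extend by $\mathbb{C}$-linearity and by taking the direct sum over all $(u,v)\in B^{S'}\times B^S$. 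Well-definedness amounts to the observation that the relations $\ell=(3D, NC, S, \Theta)$ and the closure relation are local and stable under horizontal composition with identity foams, so relations among representatives of $g$ are carried to relations among representatives of $\Gamma(f)(g)$; similarly for relations among representatives of $f$.

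For the degree, recall that $q(U)=\chi(\partial U)-2\chi(U)+2d+b$ is additive under horizontal composition with identity strips (each such strip has $q$-degree $0$). Thus $\deg \bigl((1_{u^*}\otimes f\otimes 1_v)\circ g\bigr)=\deg(g)+\deg(f)=\deg(g)+t$, and the grading shift $\{\ell(\hat S)\}$ is the same on source and target (both bottom boundaries are $S$), so $\Gamma(f)$ is homogeneous of degree $t$ as a map of graded vector spaces.

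The bimodule property is where a bit of care is needed, but it is essentially automatic from the fact that the multiplication foams $m_{u,u',u''}$ in $K^{S'}$ and $m_{v,v',v''}$ in $K^S$ act in regions of the foam that are spatially disjoint from the horizontal strip where $f$ lives. More precisely, for $h\in {}_{u}K_{u'}$, the left action on ${}_{u'}\Gamma(w)_v$ is given by horizontal composition of the form $(m_{u,u',\star}\otimes 1_w\otimes 1_v)\circ(h\otimes g)$ in the region above $u'$, while $\Gamma(f)$ acts by inserting $f$ in the region containing $w$. By functoriality of the foam 2-category (i.e.\ the interchange law for 2-morphisms) these two operations commute, so $\Gamma(f)(h\cdot g)=h\cdot \Gamma(f)(g)$. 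The same argument, applied on the other side with the multiplication in the region below $v$, gives compatibility with the right $K^S$-action.

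The main (and essentially only) obstacle is bookkeeping: one must check that ``inserting $f$ in the middle'' is compatible with the precise way multiplication in $K^S$ and $K^{S'}$ was defined via the algorithmic multiplication foams $m_v$ of Definition~\ref{multfoam}. This reduces, via Lemma~\ref{lem:webalgaltern}, to ordinary composition in $\foamt^0$, where the interchange law makes the compatibility manifest; no further case analysis is needed.
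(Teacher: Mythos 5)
Your proposal is correct and follows essentially the same route as the paper: define $\Gamma(f)$ by gluing $1_{u^*}f1_v$ on top of elements of $\mathcal{F}^0(u^*wv)$, observe that the common shift $\{\ell(\hat S)\}$ makes the map homogeneous of degree $t$, and obtain the bimodule property because the multiplication foams and the insertion of $f$ act on horizontally disjoint regions (equivalently, via the Lemma~\ref{lem:webalgaltern}-type identification with hom-spaces in $\foamt^0$, where they become compositions on opposite sides). The only cosmetic issue is the phrase ``region above $u'$'' for where the multiplication foam acts --- it is horizontally adjacent to the strip containing $w$, not above it --- but this does not affect the argument.
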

\begin{proof}
Note that, for any $u\in B^{S^{\prime}}$ and $v\in B^{S}$, the foam $f$ induces a 
linear map 
\[
{\mathcal F}^0(1_{u^*}f1_v)\colon {\mathcal F}^0(u^*wv)\to \F^0(u^*w'v),
\]
by glueing $1_{u^*}f1_v$ on top of any element in 
${\mathcal F}^0(u^*wv)=\foamt^0(\emptyset, u^*wv).$ 
This map has degree $t$, e.g. the identity has degree $0$ 
because the multiplication in $K^S$ is degree preserving. 
By taking the direct sum over all 
$u\in B^{S^{\prime}}$ and $v\in B^{S}$, we get a linear map 
\[
\Gamma(f)\colon \Gamma(w)\to \Gamma(w').
\]
With the shift $\ell(\hat{S})$ in the definition of 
$\Gamma(w)$ and $\Gamma(w')$, we get exactly 
$\deg \Gamma(f)=t$.
\vskip0.5cm 
The fact that $\Gamma(f)$ is a left $K^S$-module map follows from 
the following observation. For any $u\in B^S$ and $v\in B^{S^{\prime}}$, 
the linear map ${\mathcal F}^0(1_{u^*}f1_v)$ corresponds to the linear map 
\[
\foamt^0(u,wv)\to \foamt^0(u,w'v)
\]
determined by horizontally composing with $f1_v$ on the right-hand side. 
This map clearly commutes with any composition on the left-hand side. 

Analogously, the linear map ${\mathcal F}^0(1_{u^*}f1_v)$ corresponds to 
the linear map  
\[
\foamt^0(w^*u,v)\to \foamt^0((w')^*u,v)
\]
determined by horizontally composing with $f^*1_u$ on the left-hand side. 
This map clearly commutes with any composition on the right-hand side. 

These two observations show that $\Gamma(f)$ is a $(K^{S^{\prime}},K^{S})$-bimodule map. 
\end{proof}
It is not hard to see that, for any $f\in \foamt^0(w,w')$ and 
$g\in\foamt^0(w',w'')$, we have 
\[
\Gamma(fg)=\Gamma(f)\Gamma(g).
\]
And similarly, for any $u_1,u_2\in \widehat{B}_{S}^{S^{\prime}}$ and 
$u_1',u_2'\in \widehat{B}_{S^{\prime}}^{S^{\prime\prime}}$ and for any 
$f\in\foamt^0(u_1,u_2)$ and $f'\in\foamt^0(u'_1,u'_2)$, we have 
a commuting square 
\[
\begin{CD}
\Gamma(u_1u_1')&@>{\Gamma(f\circ f')}>>&\Gamma(u_2u_2')\\
@V{\cong}VV&&@V{\cong}VV\\
\Gamma(u_1)\otimes_{K_{S^{\prime}}}\Gamma(u_1')&@>{\Gamma(f)\otimes \Gamma(f')}>>&
\Gamma(u_2)\otimes_{K_{}}\Gamma(u_2')
\end{CD}
\]
where the vertical isomorphisms are as in~\eqref{eq:sweettensor}.

\subsubsection{The strong $\mathfrak{sl}_n$-$2$-representation on 
${\mathcal W}_{(3^k)}$}
We are now going to use sweet bimodules to define a strong 
$\mathfrak{sl}_n$-$2$-representation on $\mathcal{W}_{(3^k)}$. 
It suffices to define a functorial action of 
$\Scat(n,n)$ on ${\mathcal W}_{(3^k)}$, which then gives the 
desired strong $\mathfrak{sl}_n$-$2$-representation by pulling back 
along the $2$-functor $\Psi_{n,n}\colon \Ucat\to \Scat(n,n)$. 
For convenience, we consider $\Scat(n,n)$ to be a monoidal category 
rather than a 2-category in this section.  
\begin{defn}
\label{defn:cataction}
\textbf{On objects:} The functorial action of any object 
$\mathcal{E}_{\ii}1_{\lambda}$ in $\Scat(n,n)$ on $\mathcal{W}_{(3^k)}$ 
is defined by tensoring with the sweet bimodule (see Proposition~\ref{prop-sweet})
\[
\Gamma\left(\phi\left(E_{\ii}1_{\lambda}\right)\right).
\]
Recall that $\phi\colon S_q(n,n)\to \mathrm{End}_{\mathbb{Q}(q)}(W_{(3^k)})$ 
was defined in Definition~\ref{defn:phi}.
\vskip0.5cm
\textbf{On morphisms:} We give a list of the foams associated to the 
generating morphisms of $\Scat(n,n)$. 
Applying $\Gamma$ to these foams determines 
the natural transformations associated to the morphisms of $\Scat(n,n)$. 

As before, we only draw the most important part of the foams, 
omitting partial identity foams. Our conventions are the following.
\begin{itemize}
\item[(1)]We read the regions of the morphisms in 
$\Scat(n,n)$ from right to left and the morphisms themselves from bottom to 
top.
\item[(2)]The corresponding foams we read from bottom to top and from front to 
back.
\item[(3)]Vertical front edges labeled $1$ are assumed to be oriented upward and 
vertical front edges labeled $2$ are assumed to 
be oriented downward.
\item[(4)]The convention for the orientation of the back edges is 
precisely the opposite.
\item[(5)]A facet is labeled 0 or 3 if and only if its boundary has edges 
labeled 0 or 3. 
\end{itemize}
In the list below, we always assume that $i<j$. 
Finally, all facets labeled 0 or 3 in the images below have to be erased, 
in order to get real foams. For any $\lambda>(3^k)$, the image of 
the elementary morphisms below is taken to be zero, by convention.

{\allowdisplaybreaks
\begin{align*}
\xy
(4,1.5)*{\includegraphics[width=9px]{section23/upsimpledot}};
(7,-4)*{{\scriptstyle i,\lambda}};
\endxy &\mapsto 
\;\xy
(0,0)*{\includegraphics[width=60px]{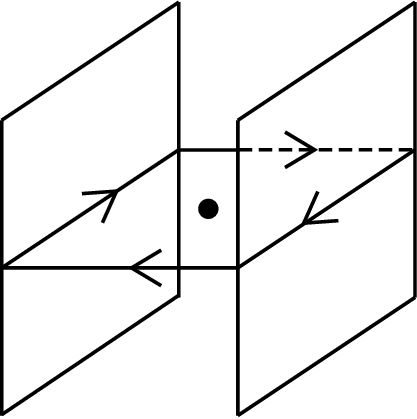}};
(-8,-12)*{\scriptstyle\lambda_i};
(6,-12)*{\scriptstyle\lambda_{i+1}};
\endxy
\\
\xy
(4,1.5)*{\includegraphics[width=9px]{section23/downsimpledot}};
(7,-4)*{{\scriptstyle i,\lambda}};
\endxy &\mapsto 
\;\xy
(0,0)*{\includegraphics[width=60px]{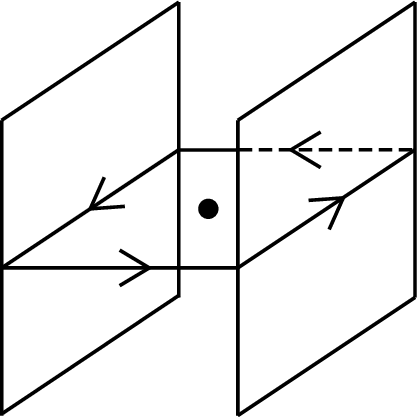}};
(-8,-12)*{\scriptstyle\lambda_i};
(6,-12)*{\scriptstyle\lambda_{i+1}};
\endxy
\\ \xy
(6,1.5)*{\includegraphics[width=20px]{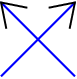}};
(12.5,-4)*{{\scriptstyle i,i,\lambda}};
\endxy  &\mapsto
\;\;-\;\;
\xy
(0,0)*{\includegraphics[width=60px]{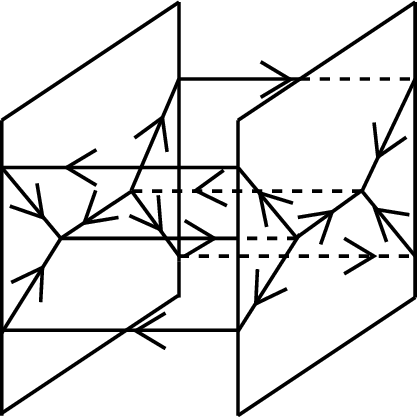}};
(-8,-12)*{\scriptstyle\lambda_i};
(6,-12)*{\scriptstyle\lambda_{i+1}};
\endxy
\\
\xy
(6,1.5)*{\includegraphics[width=20px]{section23/upcross}};
(12.5,-4)*{{\scriptstyle i,i+1,\lambda}};
\endxy &\mapsto
\;\;(-1)^{\lambda_{i+1}}\;\;
\xy
(0,0)*{\includegraphics[width=100px]{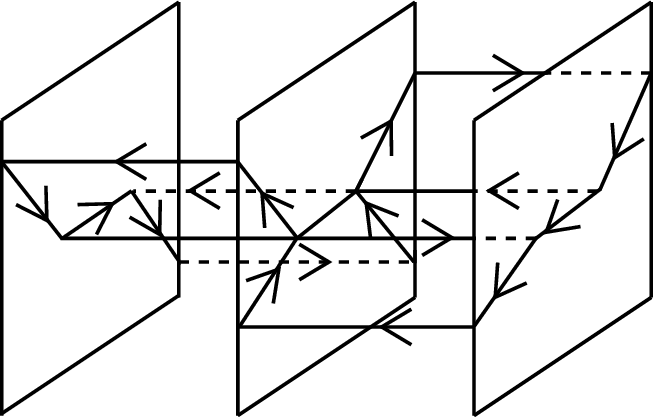}};
(-14,-12)*{\scriptstyle\lambda_i};
(0,-12)*{\scriptstyle\lambda_{i+1}};
(13,-12)*{\scriptstyle\lambda_{i+2}};
\endxy
\\
\xy
(6,1.5)*{\includegraphics[width=20px]{section23/upcross}};
(12.5,-4)*{{\scriptstyle i+1,i,\lambda}};
\endxy &\mapsto
\;
\xy
(0,0)*{\includegraphics[width=100px]{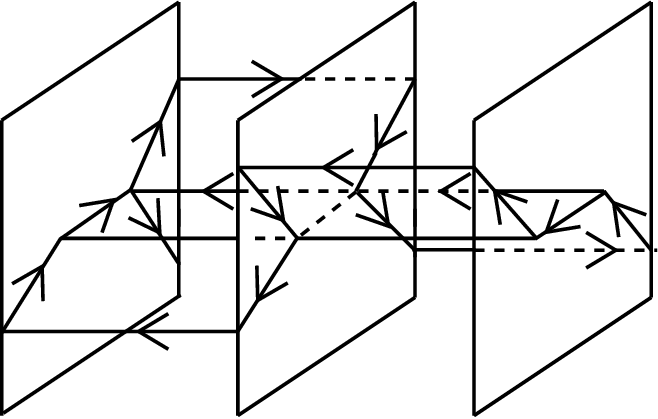}};
(-14,-12)*{\scriptstyle\lambda_i};
(0,-12)*{\scriptstyle\lambda_{i+1}};
(13,-12)*{\scriptstyle\lambda_{i+2}};
\endxy
\\
\xy
(6,1.5)*{\includegraphics[width=20px]{section23/upcross}};
(12.5,-4)*{{\scriptstyle i,j,\lambda}};
\endxy &\mapsto
\;
\xy
(0,0)*{\includegraphics[width=130px]{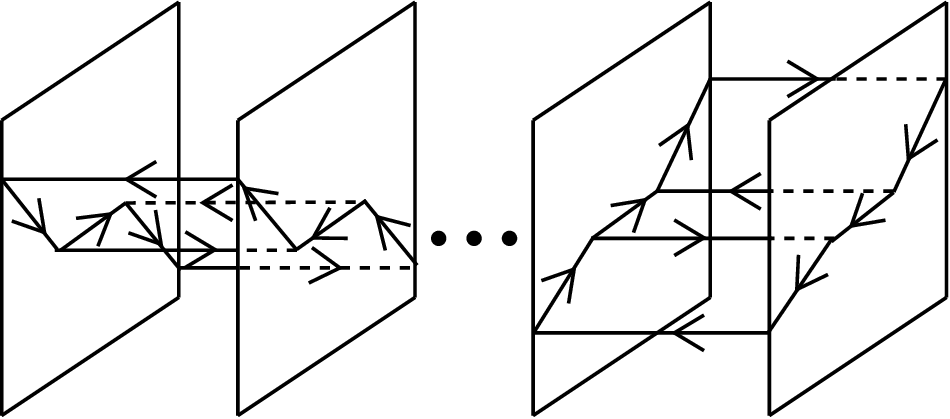}};
(-22,-12)*{\scriptstyle\lambda_i};
(-7,-12)*{\scriptstyle\lambda_{i+1}};
(4,-12)*{\scriptstyle\lambda_{j}};
(19,-12)*{\scriptstyle\lambda_{j+1}};
\endxy
\\
\xy
(6,1.5)*{\includegraphics[width=20px]{section23/upcross}};
(12.5,-4)*{{\scriptstyle j,i,\lambda}};
\endxy &\mapsto
\;
\xy
(0,0)*{\includegraphics[width=130px]{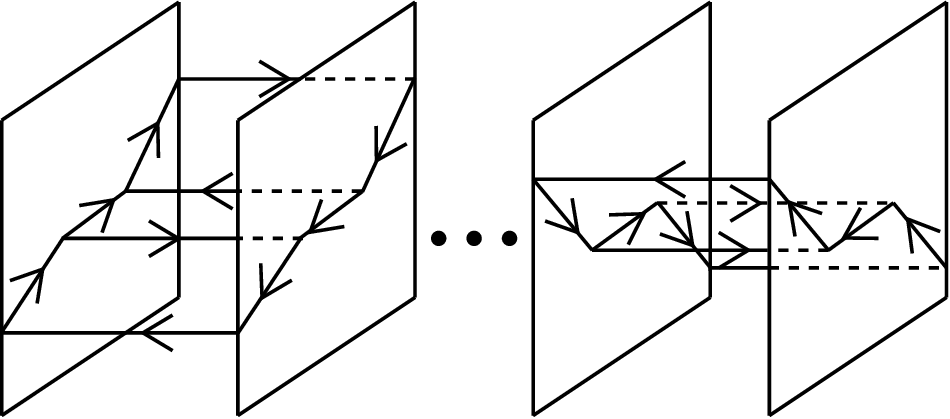}};
(-22,-12)*{\scriptstyle\lambda_i};
(-7,-12)*{\scriptstyle\lambda_{i+1}};
(4,-12)*{\scriptstyle\lambda_{j}};
(19,-12)*{\scriptstyle\lambda_{j+1}};
\endxy
\\
\xy
(8,0)*{\includegraphics[width=25px]{section23/rightcup}};
(12.5,-4)*{{\scriptstyle i,\lambda}};
\endxy &\mapsto
\;\xy
(0,0)*{\includegraphics[width=60px]{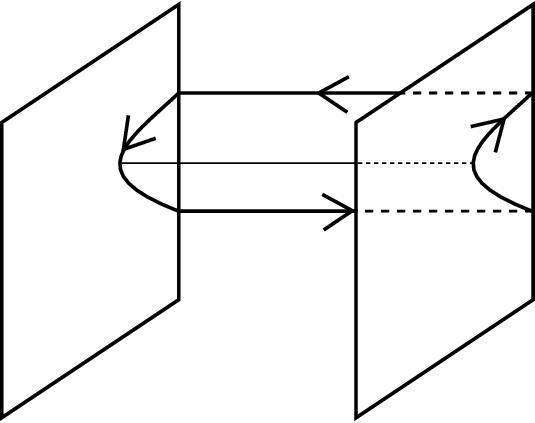}};
(-8,-10)*{\scriptstyle\lambda_i};
(8,-10)*{\scriptstyle\lambda_{i+1}};
\endxy
\\
\xy
(8,0)*{\includegraphics[width=25px]{section23/leftcup}};
(12.5,-4)*{{\scriptstyle i,\lambda}};
\endxy &\mapsto
(-1)^{\lfloor\frac{\lambda_i}{2}\rfloor+\lceil\frac{\lambda_{i+1}}{2}\rceil}\;\xy
(0,0)*{\includegraphics[width=60px]{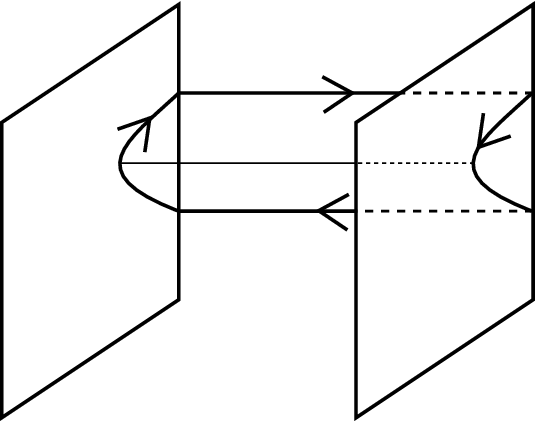}};
(-8,-10)*{\scriptstyle\lambda_i};
(8,-10)*{\scriptstyle\lambda_{i+1}};
\endxy
\end{align*}
\begin{align*}
\xy
(8,0)*{\includegraphics[width=25px]{section23/leftcap}};
(12.5,-4)*{{\scriptstyle i,\lambda}};
\endxy &\mapsto 
(-1)^{\lceil\frac{\lambda_i}{2}\rceil+\lfloor\frac{\lambda_{i+1}}{2}\rfloor}\;\xy
(0,0)*{\includegraphics[width=60px]{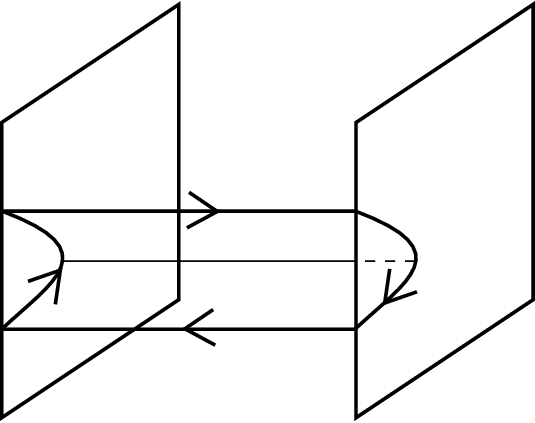}};
(-8,-10)*{\scriptstyle\lambda_i};
(8,-10)*{\scriptstyle\lambda_{i+1}};
\endxy
\\
\xy
(8,0)*{\includegraphics[width=25px]{section23/rightcap}};
(12.5,-4)*{{\scriptstyle i,\lambda}};
\endxy &\mapsto
\;\xy
(0,0)*{\includegraphics[width=60px]{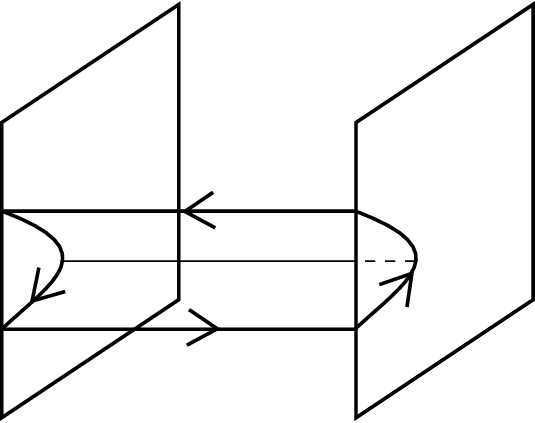}};
(-8,-10)*{\scriptstyle\lambda_i};
(8,-10)*{\scriptstyle\lambda_{i+1}};
\endxy
\end{align*} 
}
\end{defn}  

\begin{prop} 
\label{prop:cataction}
The formulas in Definition~\ref{defn:cataction} 
determine a well-defined graded functorial action of 
$\Scat(n,n)$ on ${\mathcal W}_{(3^k)}$. 
\end{prop}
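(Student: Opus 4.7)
The plan is to verify that the assignments in Definition~\ref{defn:cataction} respect every defining relation of $\Scat(n,n)$. Since tensoring with sweet bimodules is exact and sends projectives to projectives (Proposition~\ref{prop-sweet}), and since $\Gamma$ is multiplicative with respect to horizontal composition by~\eqref{eq:sweettensor}, the only issues are (i) checking that degrees match, (ii) checking each $2$-morphism relation, and (iii) checking that any object or $2$-morphism with a region label outside $\Lambda(n,n)$ is sent to zero. For (iii), observe that if $\lambda\notin\Lambda(n,n)_3$ then $\phi(1_\lambda)=0$ by construction, so the corresponding bimodule is zero; for labels satisfying $\lambda_i<0$ or $\lambda_i>3$, the pictured foam either contains an erased facet incident to a singular edge (which makes it zero) or the web $\phi(E_{\ii}1_\lambda)$ is already zero.

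Next, I would handle (i) by direct inspection. For each generator, the $q$-degree formula $q(U)=\chi(\partial U)-2\chi(U)+2d+b$ has to match the degree prescribed in the table of Definition~\ref{def_glcat}. For instance, a dot on a facet labeled $i,\lambda$ contributes $2=i\cdot i$; the saddle foam corresponding to a crossing with $i\cdot j=-1$ contributes $1$, matching $-i\cdot j=1$; the cup and cap foams contribute degrees that, after accounting for the shift $\ell(\hat S)$ built into $\Gamma$, yield $1\pm \overline{\lambda}_i$. These are all straightforward local checks using the definition of the $q$-grading on foams.

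The substantive step is (ii). I would organize this in blocks matching the groups of relations in Definition~\ref{def_glcat}. Biadjointness~\eqref{eq_biadjoint1}--\eqref{eq_biadjoint2} and cyclicity~\eqref{eq_cyclic_dot}--\eqref{eq_cyclic_cross-gen} reduce to foam isotopies combined with the sign choices $(-1)^{\lfloor\lambda_i/2\rfloor+\lceil\lambda_{i+1}/2\rceil}$ and $(-1)^{\lceil\lambda_i/2\rceil+\lfloor\lambda_{i+1}/2\rfloor}$ on left cups and caps; these exponents are precisely what is needed so that the ``left'' and ``right'' cup-cap composites give the identity foam up to the erasure convention on $0$- and $3$-labeled facets. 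The nilHecke relations~\eqref{eq_nil_rels} and dot-slide~\eqref{eq_nil_dotslide} follow from the foam identities proved in~\cite{kv} and~\cite{mv}: the square of the $i,i$-crossing is zero (bamboo/square removal), the braid relation among three like-colored strands is a direct foam computation, and the dot slide is a consequence of the relation (RD). The mixed crossing relations~\eqref{eq_downup_ij-gen}--\eqref{eq_dot_slide_ij-gen} for $i\ne j$ and the easy Reidemeister-III~\eqref{eq_r3_easy-gen} reduce to isotopies, since the foams involved live on disjoint neighborhoods. The bubble positivity and degree-zero bubble identifications~\eqref{eq_positivity_bubbles}--\eqref{eq:bubb_deg0} follow from the evaluation of closed theta-foams via the relation $(\Theta)$, together with the ``closure relation'' used to reduce closed foams to $\mathbb C$-linear combinations of dotted spheres; the sign $(-1)^{\lambda_{i+1}}$ matches the foam evaluation after taking into account the orientation of the singular circle.

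The hard part will be the EF/FE decomposition~\eqref{eq:EF} together with its interaction with the fake-bubble conventions~\eqref{eq_infinite_Grass} and the curl reduction~\eqref{eq:redtobubbles}, and also the ``hard'' Reidemeister-III~\eqref{eq_r3_hard-gen} for $i\cdot j=-1$. For the EF relation I would argue as follows: the left-hand side corresponds to a foam built from a zip followed by an unzip on the web $\phi(E_iE_{-i}1_\lambda)$, which after applying (DR) and (SqR) on the relevant digon and square regions produces exactly the sideways crossing plus a sum of terms in which one of the two facets is capped off by a dotted sphere. Re-expressing these capped terms via the infinite Grassmannian relation~\eqref{eq_infinite_Grass} gives the prescribed sum of nested dotted bubbles. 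The analysis for the FE relation is symmetric, and both splits into three cases according to the sign of $\overline\lambda_i$; in each case the signs introduced on left cups/caps precisely cancel the signs arising from the orientation-reversing isotopies used to put the foam in the standard form. Relation~\eqref{eq_r3_hard-gen} is the most involved; it reduces, after Jordan-H\"older-style decomposition of the relevant square foam, to the identity obtained from~\eqref{eq:sqr} combined with the nilHecke braid relation. Once all of these identities are checked, functoriality is automatic because $\Gamma$ is compatible with both compositions, and the resulting action descends from $\Ucat$ to $\Scat(n,n)$ through $\Psi_{n,n}$ since every $2$-morphism with a forbidden weight label acts as zero.
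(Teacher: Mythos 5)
Your overall strategy — check degrees locally, check each family of $2$-morphism relations against local foam identities from~\cite{kv} and~\cite{mv}, handle the $\Scat(n,n)$ quotient by noting $\phi(1_\lambda)=0$ off $\Lambda(n,n)_3$, and rely on the multiplicativity of $\Gamma$ for functoriality — is the same as the paper's. The paper in fact refers most of the case-by-case computation to the mod~2 version in~\cite{mack} and concentrates on explaining how the signs on the left cups/caps and on the crossings were determined and on two illustrative worked examples.

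There is, however, a concrete error in your treatment of the mixed-color relations. You assert that the relations~\eqref{eq_downup_ij-gen}--\eqref{eq_dot_slide_ij-gen} for $i\ne j$ and the easy Reidemeister III~\eqref{eq_r3_easy-gen} ``reduce to isotopies, since the foams involved live on disjoint neighborhoods.'' That is correct only when $|i-j|>1$. When $|i-j|=1$ the two ladder rungs share a vertical strand, the corresponding pieces of the foam overlap, and the relations are not trivially isotopies. In that range, relation~\eqref{eq_downup_ij-gen} holds only up to a sign that is absorbed by the factor $(-1)^{\lambda_{i+1}}$ attached to the $i,i{+}1$-crossing, and relation~\eqref{eq_r2_ij-gen} genuinely requires the local foam identity (RD) to convert the square-removal term into a difference of dotted identity foams — this is precisely the second worked example in the paper. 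A verification following your plan literally would fail at exactly this step, and it is also where the remaining undetermined sign in the definition is pinned down. You correctly flag the hard R3 relation~\eqref{eq_r3_hard-gen} as nontrivial, but the adjacent-color $(i\cdot j=-1)$ cases of~\eqref{eq_downup_ij-gen} and~\eqref{eq_r2_ij-gen} are equally substantive and should not be dismissed as isotopies of disjoint pieces.
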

\begin{proof}
A tedious but straightforward case by case check, for each 
generating morphism and each $\lambda$ which give a non-zero foam, 
shows that each of the foams in 
Definition~\ref{defn:cataction} has the 
same degree as the elementary morphism in $\Scat(n,n)$ to which it is 
associated. Note that it is important to erase the facets labeled 0 or 3, 
before computing the degree of the foams. We do just one example here. We 
have 
\begin{align*}
\xy
(8,0)*{\includegraphics[width=25px]{section23/leftcup}};
(12.5,-4)*{{\scriptstyle i,(12)}}
\endxy &\mapsto
-\;\xy
(0,0)*{\includegraphics[width=80px]{section52/lcupfoam}};
(-12,-12)*{\scriptstyle 1};
(7,-12)*{\scriptstyle 2};
(-5.5,2.5)*{\scriptstyle 0};
(12.5,2.5)*{\scriptstyle 3};
\endxy=f
&\text{and}\quad \deg(\xy
(8,0)*{\includegraphics[width=25px]{section23/leftcup}};
(12.5,-4)*{{\scriptstyle i,(12)}}
\endxy)=2.
\end{align*}
We see that $f$ has one facet labeled 0 and another 
labeled 3, so those two facets have to be erased. Therefore, $f$ has 
12 vertices, 14 edges and 3 faces, i.e.
\[
\chi(f)=12-14+3=1.
\]
The boundary of $f$ has 12 vertices and 12 edges, so 
\[
\chi(\partial f)=12-12=0.
\]
Note that the two circular edges do not belong to $\partial f$, because 
the circular facets have been removed. 
In this section we draw the foams horizontally, so 
$b$ is the number of horizontal edges at the top and the bottom of $f$, 
which go from the front to the back. Thus, for $f$ we have 
\[
b=4.
\] 
Altogether, we get 
\[
q(f)=0-2+4=2.
\]
\vskip0.5cm
In order to show that the categorical action is well-defined, one has to 
check that it preserves all the relations in Definition~\ref{def_glcat}. 
modulo 2 this was done in the proof of Theorem 4.2 in~\cite{mack}. At the 
time there was a small issue about the signs in~\cite{kl3}, which prevented 
the author to formulate and prove Theorem 4.2 in~\cite{mack} over 
$\mathbb{C}$. That issue has now been solved (see~\cite{kl4} and~\cite{msv2} 
for more information) and in this paper we use the sign conventions 
from~\cite{msv2}, which are compatible with those from~\cite{kl4}.  
We laboriously checked all these relations again, but now over $\mathbb{C}$ 
and with the signs above. The arguments are exactly the same, 
so let us not repeat them one by one here. Instead, we first explain 
how we computed the signs for the categorical 
action above and why they give the desired result over $\mathbb{C}$. 
After that, we will do an example. 
For a complete case by case check, we refer to the arguments used in the 
proof of Theorem 4.2 in~\cite{mack}. The reader should check that our signs above 
remove the sign ambiguities in that proof. 

One can compute the signs above as follows: first check the relations 
only involving strands of one color, i.e. the $\mathfrak{sl}_2$-relations. 
The first thing to notice is that 
the foams in the categorical action do not satisfy 
relation~\eqref{eq_nil_dotslide}, i.e. 
for all $\lambda$, which give a non-zero foam, the sign is wrong. 
Therefore, one is forced to multiply the foam associated to 
\[
\xy
(6,1.5)*{\includegraphics[width=20px]{section52/upcrossblue}};
(12.5,-4)*{{\scriptstyle i,i,\lambda}};
\endxy
\]
by $-1$, for all $\lambda$. 

After that, compute the foams 
associated to the degree zero bubbles (real bubbles, not fake bubbles) and 
adjust the signs of the images of the left cups and caps accordingly. 
This way, most of the signs of the images of the 
left cups and caps get determined. The remaining ones can be determined by 
imposing the zig-zag relations in~\eqref{eq_biadjoint1} 
and~\eqref{eq_biadjoint2}. 
  
Of course, one could also choose to adjust the signs of the images of 
the right cups and caps. That would determine a categorical 
action that is naturally isomorphic to the one in this paper. 
  
After these signs have been determined, one can check that all 
$\mathfrak{sl}_2$-relations are preserved by the categorical action. 

The next and final step consists in determining the signs of
\[
\xy
(6,1.5)*{\includegraphics[width=20px]{section23/upcross}};
(12.5,-4)*{{\scriptstyle i,j,\lambda}};
\endxy,
\]
for $i\ne j$. First one can check that cyclicity is already preserved. 
The relations in~\eqref{eq_cyclic_cross-gen} are preserved by the corresponding 
foams, which are all isotopic, with our sign choices for the foams 
associated to the left cups and caps. Therefore, cyclicity does not 
determine any more signs. 

The relations in~\eqref{eq_downup_ij-gen} are preserved on the nose, for 
$i=j$ and $|i-j|>1$. For $|i-j|=1$, they are only preserved up to a sign. 
Note that, since the corresponding foams are all isotopic, the signs 
actually come from the sign choice for the foams associated to the 
left cups and caps. Thus, whenever the total sign 
in the image of~\eqref{eq_downup_ij-gen} becomes negative, one has 
to change the sign of one of the two crossings (not of both of course). 
Our choice has been to change the sign of the foam associated to 
\[
\xy
(6,1.5)*{\includegraphics[width=20px]{section23/upcross}};
(12.5,-4)*{{\scriptstyle i,i+1,\lambda}};
\endxy,
\]
whenever necessary. Any other choice, consistent with all the previous 
sign choices, leads to a naturally isomorphic categorical action. 
It turns out that the sign has to be equal to $(-1)^{\lambda_{i+1}}$, after 
checking for all $\lambda$. 

After this, one can check that all relations involving two or three colors 
are preserved by the categorical action. Note that we have not specified an 
image for the fake bubbles. As stressed repeatedly in~\cite{kl3}, fake bubbles 
do not exist as separate entities. They are merely formal symbols, used as 
computational devices to keep the computations involving real bubbles 
tidy and short. As we are using $\mathfrak{sl}_3$-foams in this paper, 
most of the dotted bubbles are mapped to zero. Therefore, under the 
categorical action it is very easy to convert the fake bubbles 
in the relations in Definition~\ref{def_glcat} 
into linear combinations of real bubbles, using the 
infinite Grassmannian relation~\eqref{eq_infinite_Grass}. 
Thus, there is no need to use fake bubbles in this paper.  
\vskip0.5cm
Finally, let us do two examples, i.e. one involving only one color and another 
involving two colors. 

The left side of the equation in~\eqref{eq:EF}, for $i=1$ and 
$\lambda=(1,2)$ (the other entries are omitted for simplicity), becomes
\begin{align*}
\xy
(0,0)*{\includegraphics[width=120px]{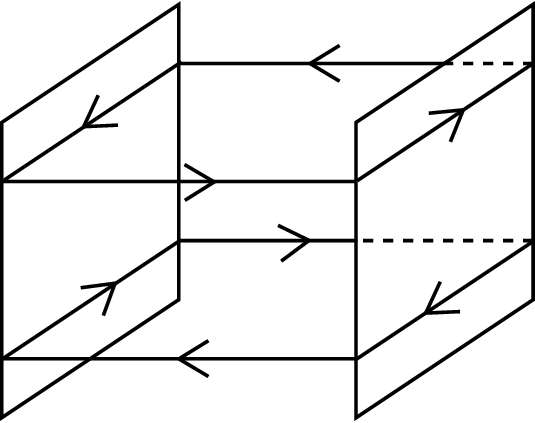}};
(-23,-18)*{\scriptstyle 1};
(5,-18)*{\scriptstyle 2};
(-23,-4)*{\scriptstyle 2};
(5,-4)*{\scriptstyle 1};
(-23,7)*{\scriptstyle 1};
(5,7)*{\scriptstyle 2};
\endxy
\;=\;
-\;\xy
(0,0)*{\includegraphics[width=120px]{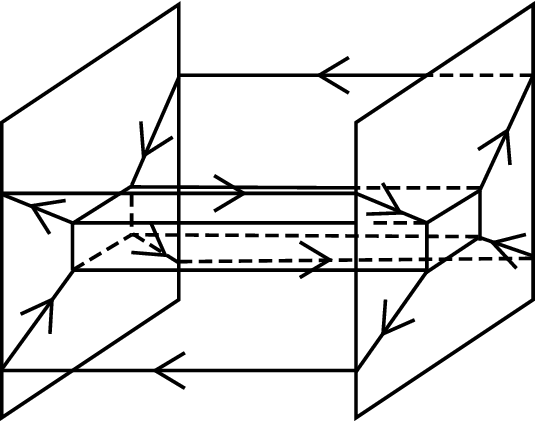}};
(-23,-18)*{\scriptstyle 1};
(5,-18)*{\scriptstyle 2};
(-19,-6)*{\scriptstyle 2};
(8,-7)*{\scriptstyle 1};
(-23,7)*{\scriptstyle 1};
(5,7)*{\scriptstyle 2};
(-14,-2.3)*{\scriptstyle 3};
(14,-2)*{\scriptstyle 0};
(-8.5,3)*{\scriptstyle 2};
(19,3)*{\scriptstyle 1};
\endxy
\;-\;
\xy
(0,0)*{\includegraphics[width=120px]{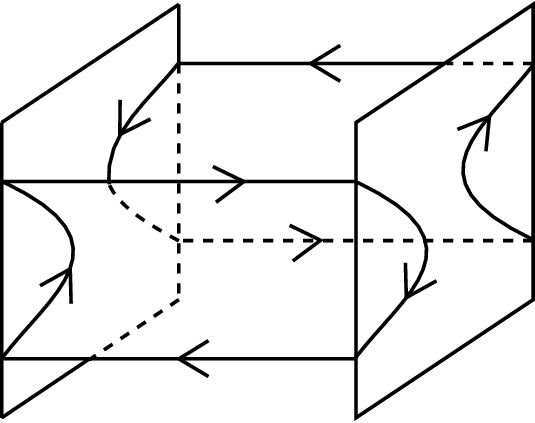}};
(-23,-18)*{\scriptstyle 1};
(5,-18)*{\scriptstyle 2};
(-19,-6)*{\scriptstyle 2};
(8,-7)*{\scriptstyle 1};
(-23,7)*{\scriptstyle 1};
(5,7)*{\scriptstyle 2};
(-8.5,4)*{\scriptstyle 2};
(19,4)*{\scriptstyle 1};
\endxy.
\end{align*}  
This foam equation is precisely the relation (SqR). 
Note that the signs match perfectly, because we have 
\[
\mathrm{sign}\left(\xy
(8,0)*{\includegraphics[width=25px]{section23/leftcap}};
(12.5,-4)*{{\scriptstyle i,(12)}}
\endxy\right)=+\quad\text{and}\quad 
\mathrm{sign}\left(\xy
(8,0)*{\includegraphics[width=25px]{section23/leftcup}};
(12.5,-4)*{{\scriptstyle i,(12)}}
\endxy\right)=-.
\]
\vskip0.5cm
The equation in~\eqref{eq_r2_ij-gen}, for $(i,j)=(1,2)$ and 
$\lambda=(121)$ (the other entries are omitted for simplicity), becomes
\begin{align*}
\xy
(0,0)*{\includegraphics[width=120px]{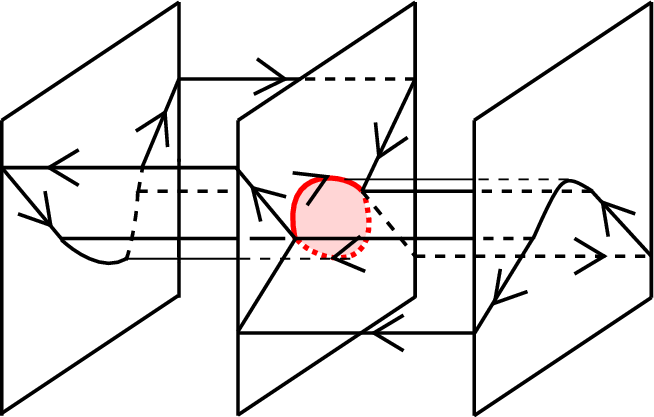}};
(-17,-13)*{\scriptstyle 1};
(-2,-13)*{\scriptstyle 2};
(13,-13)*{\scriptstyle 1};
(-13,13)*{\scriptstyle 2};
(2,13)*{\scriptstyle 2};
(17,13)*{\scriptstyle 0};
(0.5,-0.5)*{\scriptstyle 1};
(-4.5,-3.5)*{\scriptstyle 3};
(4.5,3)*{\scriptstyle 3};
\endxy
\;=\;
\xy
(0,0)*{\includegraphics[width=120px]{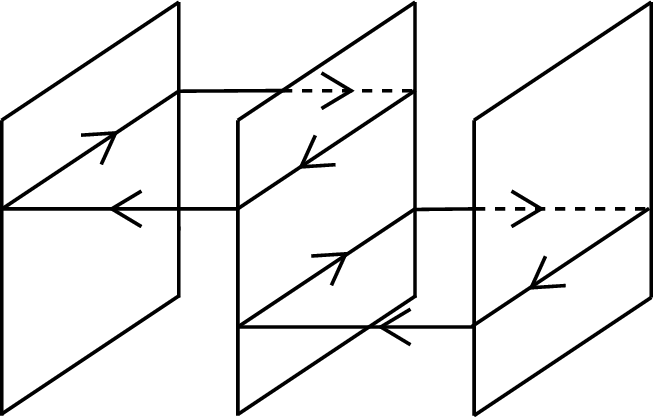}};
(-17,-13)*{\scriptstyle 1};
(-2,-13)*{\scriptstyle 2};
(13,-13)*{\scriptstyle 1};
(-13,13)*{\scriptstyle 2};
(2,13)*{\scriptstyle 2};
(17,13)*{\scriptstyle 0};
(0.5,-0.5)*{\scriptstyle 3};
(7.5,-4)*{\bullet};
\endxy
\;-\;
\xy
(0,0)*{\includegraphics[width=120px]{section52/R2ijrhsfoam}};
(-17,-13)*{\scriptstyle 1};
(-2,-13)*{\scriptstyle 2};
(13,-13)*{\scriptstyle 1};
(-13,13)*{\scriptstyle 2};
(2,13)*{\scriptstyle 2};
(17,13)*{\scriptstyle 0};
(0.5,-0.5)*{\scriptstyle 3};
(-7.5,4)*{\bullet};
\endxy.
\end{align*} 
To see that this holds, apply the (RD) relation to the foam on the l.h.s., 
in order to remove the disc bounded by the red singular circle on the middle 
sheet. 
\end{proof}

Let $W_h\cong \mathbb{C}$ be the unique indecomposable projective graded 
$K^{(\times^k,\circ^{2k})}$-module of degree zero. 
Recall that $K^{(\times^k,\circ^{2k})}$ is 
generated by the empty diagram, so $W_h$ is indeed one-dimensional. It is the 
categorification of $w_h$, the highest weight vector in $W_{(3^k)}$. 

As already remarked, we can pull back the functorial action of $\Scat(n,n)$ on 
${\mathcal W}_{(3^k)}$ 
via 
\[
\Psi_{n,n}\colon \Ucat\to \Scat(n,n).
\]
We are now able to prove one of our main results. Recall that $\mathcal V$ is any additive, idempotent complete category, which allows an integrable, graded categorical action by $\Ucat$.
\begin{thm}
\label{thm:equivalence}
There exists an equivalence of strong $\mathfrak{sl}_n$-$2$-representations   
\[
\Phi\colon {\mathcal V}_{(3^k)}\to {\mathcal W}_{(3^k)}.
\]
\end{thm}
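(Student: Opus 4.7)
The plan is to invoke Rouquier's universality result, namely Proposition~\ref{prop:rouquier}, with $\mathcal{V} = \mathcal{W}_{(3^k)}$ and $V_h = W_h$. To do this, we have four things to check: (i) $\mathcal{W}_{(3^k)}$ is additive and idempotent complete; (ii) it carries an integrable, graded categorical $\Ucat$-action; (iii) $W_h$ is a highest weight object killed by each $\mathcal{E}_{+i}$ and satisfies $\mathrm{End}_{\mathcal{W}_{(3^k)}}(W_h)\cong\bC$; (iv) every object of $\mathcal{W}_{(3^k)}$ is a direct summand of $X W_h$ for some $X\in\Ucat$. Once these are verified, Rouquier's proposition supplies the required equivalence $\Phi$.

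Conditions (i) and (ii) are essentially in place. Since $\mathcal{W}_{(3^k)}$ is the category of finite dimensional, graded, projective modules over the finite dimensional graded algebra $K_{(3^k)}$, it is additive and idempotent complete. For the categorical action, I would pull back the functorial $\Scat(n,n)$-action on $\mathcal{W}_{(3^k)}$ given by Proposition~\ref{prop:cataction} along $\Psi_{n,n}\colon\Ucat\to\Scat(n,n)$; integrability (local finiteness) follows from the fact that all the bimodules $\Gamma(\phi(E_{\ii}1_\lambda))$ are finite dimensional and that at most finitely many weights $\lambda$ act nontrivially on any given object, because only $\lambda\in\Lambda(n,n)_3$ can survive in $\mathcal{W}_{(3^k)}$.

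For condition (iii): the algebra $K^{(\times^k,\circ^{2k})}$ is generated by the empty web, hence is one-dimensional, so $W_h\cong\bC$ and $\mathrm{End}_{\mathcal{W}_{(3^k)}}(W_h)\cong\bC$. To see that $\mathcal{E}_{+i}W_h=0$ for every $i\in I$, note that in $\Scat(n,n)$ every $\mathcal{E}_{+i}1_{(3^k)}$ factors through an object $\mathbf{1}_\lambda$ with some entry $\lambda_j>3$, hence lies in the kernel by the very definition of $\Scat(n,n)$; equivalently, $\phi(E_{+i}1_{(3^k)})=0$ in $\mathrm{End}(W_{(3^k)})$ since $\mu>(3^k)$. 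Thus $W_h$ is a highest weight object.

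Condition (iv) is the main obstacle, and this is where the work of Subsection~\ref{sec-webhowea} pays off. I would argue that for every enhanced sign string $S$ with $\mu_S\in\Lambda(n,n)_3$ and every $u\in B^S$, the indecomposable projective $P_u$ is a direct summand of $\mathcal{E}_{\ii}1_{(3^k)}\cdot W_h$ for some suitable $\ii\in\sseq$. To produce the required $\ii$, apply Lemma~\ref{lem:phisurj}, which by the enhanced inverse growth algorithm constructs a product of divided powers $x=E_{\ii}1_{(3^k)}$ with $\phi(x)=u$ literally as a web diagram. Categorifying, the action of $\mathcal{E}_{\ii}1_{(3^k)}$ on $W_h$ is tensoring with $\Gamma(\phi(E_{\ii}1_{(3^k)}))=\Gamma(u)$, and by the decomposition~\eqref{eq:projbimodule} (applied with the trivial web $v$ and $w=u$) the module $\Gamma(u)\otimes_{K^{(\times^k,\circ^{2k})}}W_h$ is isomorphic to $P_u$ up to a grading shift; here one uses that divided powers categorify via direct summands of powers of $\mathcal{E}_{\pm i}$, which is already built into the definition of $\Ucat$ and its Karoubi completion. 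Since the $P_u$, as $u$ ranges over $B^S$ and over enhanced sign strings with $\mu_S\in\Lambda(n,n)_3$, exhaust the indecomposable projectives of $\mathcal{W}_{(3^k)}$ up to shift, condition (iv) follows. The remaining subtlety is bookkeeping with the grading shifts introduced by~\eqref{eq:projbimodule} and by divided powers, which is routine given Proposition~\ref{prop:multqgrade} and the normalization in Definition~\ref{defn:normkuperform}. With all four hypotheses verified, Proposition~\ref{prop:rouquier} yields the equivalence $\Phi\colon\mathcal{V}_{(3^k)}\to\mathcal{W}_{(3^k)}$, and by construction $\Phi$ intertwines the two strong $\mathfrak{sl}_n$-2-representations and sends the canonical highest weight object of $\mathcal{V}_{(3^k)}$ to $W_h$.
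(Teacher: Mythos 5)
Your proof is correct and follows essentially the same route as the paper: verify that $W_h$ has scalar endomorphisms, show via Lemma~\ref{lem:phisurj} and Proposition~\ref{prop:cataction} (together with the fact that divided powers are direct summands of ordinary powers in the Karoubi envelope) that every object is a summand of $X W_h$, and then invoke Proposition~\ref{prop:rouquier}. One small inaccuracy: your first justification for $\mathcal{E}_{+i}W_h=0$ --- that $\mathcal{E}_{+i}1_{(3^k)}$ factors through a region with some entry $>3$ and so dies ``by the very definition of $\Scat(n,n)$'' --- is not right, since for $i<k$ the weight $(3^k)+\alpha_i$ (with an entry $4$) still lies in $\Lambda(n,n)$ and is therefore not annihilated in $\Scat(n,n)$; only for $i\ge k$ does one get a negative entry and hence a genuinely killed region. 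The ``or equivalently'' version you give next is the correct one: $(3^k)+\alpha_i\notin\Lambda(n,n)_3$ for every $i$, so $\phi(1_{(3^k)+\alpha_i})=0$ by Definition~\ref{defn:phi} and the vanishing happens at the level of the functorial action on $\mathcal{W}_{(3^k)}$, not in $\Scat(n,n)$ itself.
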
   
\begin{proof}
As we already mentioned above, we have  
\[
\mathrm{End}_{{\mathcal W}_{(3^k)}}(W_h)\cong \mathbb{C}.
\]
Let $Q$ be any indecomposable object in ${\mathcal W}_{(3^k)}$. 
There exists an enhanced sign string $S$ such that $Q$ belongs to 
$\mathcal{W}^S$. Therefore, there exists a 
basis web $w\in B^S$ and a $t\in\mathbb{Z}$, such that $Q$ is a graded 
direct summand of $P_w\{t\}$. Without loss of generality, we may assume that 
$t=-\ell(\hat{S})$.   

By Lemma~\ref{lem:phisurj} and Proposition~\ref{prop:cataction}, 
there exists an object of 
$X$ in $\Scat(n,n)$ such that $Q$ is a direct summand of $XW_h$. 
This holds, because in $\ScatD(n,n)$, the Karoubi envelope of $\Scat(n,n)$, 
the divided powers correspond to direct summands of ordinary powers. 
For more details on the categorification of the divided powers 
see~\cite{kl3} and~\cite{klms}.

Proposition~\ref{prop:rouquier} now proves the existence of $\Phi$.  
\end{proof}

An easy consequence of Theorem~\ref{thm:equivalence} is the following.
\begin{cor}
\label{cor:equivalence}
By Theorem~\ref{thm:equivalence}, the $S_q(n,n)$-module map 
\[
K^{\oplus}_0(\Phi)_{\bQ(q)}\colon K^{\oplus}_0({\mathcal V}_{(3^k)})_{\bQ(q)}\to 
K^{\oplus}_0({\mathcal W}_{(3^k)})_{\bQ(q)}
\]
is an isomorphism.\qed 
\end{cor}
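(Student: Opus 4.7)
The plan is to derive Corollary~\ref{cor:equivalence} formally from Theorem~\ref{thm:equivalence} by decategorifying. The argument is essentially a two-line observation, but I would want to make the functoriality of $K_0^{\oplus}$ and the compatibility with the module structures explicit.

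First, I would recall that $K_0^{\oplus}$ is a functor from the category of additive categories (with additive functors) to the category of $\bZ$-modules, and that it sends an equivalence of additive categories to an isomorphism of abelian groups. Moreover, the grading shift $\{t\}$ on objects decategorifies to multiplication by $q^t$, so any grading-preserving equivalence between additive graded categories decategorifies to an isomorphism of $\bZ[q,q^{-1}]$-modules. Applying this to the equivalence $\Phi\colon \mathcal{V}_{(3^k)}\to\mathcal{W}_{(3^k)}$ of Theorem~\ref{thm:equivalence} yields an isomorphism of $\bZ[q,q^{-1}]$-modules $K_0^{\oplus}(\Phi)\colon K_0^{\oplus}(\mathcal{V}_{(3^k)})\to K_0^{\oplus}(\mathcal{W}_{(3^k)})$, and extending scalars to $\bQ(q)$ preserves isomorphisms.

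Next, I would argue that $K_0^{\oplus}(\Phi)_{\bQ(q)}$ intertwines the $S_q(n,n)$-actions. By Proposition~\ref{prop:cataction}, the categorical action of $\Ucat$ on $\mathcal{W}_{(3^k)}$ descends to $\Scat(n,n)$, and the $\Scat(n,n)$-action on $\mathcal{V}_{(3^k)}$ (obtained from the $\Ucat$-action via $\Psi_{n,n}$) decategorifies on $K_0^{\oplus}$ to the $S_q(n,n)$-action via $\psi_{n,n}$, by the main theorem of~\cite{msv2}. Since $\Phi$ is an equivalence of strong $\mathfrak{sl}_n$-$2$-representations, for every 1-morphism $X$ in $\Ucat$ there is a natural isomorphism $\Phi\circ X\cong X\circ\Phi$; passing to Grothendieck classes, this implies $[X]\cdot K_0^{\oplus}(\Phi)([M])=K_0^{\oplus}(\Phi)([X]\cdot[M])$ for all $M\in\mathcal{V}_{(3^k)}$, i.e. $K_0^{\oplus}(\Phi)_{\bQ(q)}$ is $S_q(n,n)$-linear.

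The only mild subtlety — which is not really an obstacle — is bookkeeping: one should check that the action of $\Scat(n,n)$ defined in Definition~\ref{defn:cataction} really decategorifies to the homomorphism $\phi$ of Definition~\ref{defn:phi}, so that under the identification $K_0^{\oplus}(\mathcal{W}_{(3^k)})_{\bQ(q)}\cong W_{(3^k)}$ the $S_q(n,n)$-action matches $\phi$. This is immediate from how $\Gamma$ was defined: tensoring with $\Gamma(\phi(E_{\ii}1_{\lambda}))$ on a module $P_w$ corresponds, via~\eqref{eq:projbimodule}, exactly to the Kuperberg-bracket expansion of $\phi(E_{\ii}1_{\lambda})w$, which is what $\phi$ does on $W^S$. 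Combining these pieces gives the desired isomorphism of $S_q(n,n)$-modules, and hence the corollary.
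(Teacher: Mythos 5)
Your proposal is correct and takes essentially the same route as the paper, which states the corollary as an immediate consequence of Theorem~\ref{thm:equivalence} (an equivalence of strong $\mathfrak{sl}_n$-$2$-representations decategorifies to an isomorphism of $\bZ[q,q^{-1}]$-modules, hence of $\bQ(q)$-modules after extension of scalars, and the intertwining of the categorical actions gives $S_q(n,n)$-linearity) and offers no further written argument. Your extra bookkeeping that the $\Scat(n,n)$-action of Definition~\ref{defn:cataction} decategorifies to $\phi$ is sound and is the same compatibility the paper invokes later in the proof of Theorem~\ref{thm:equivalence2}.
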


The following consequence of Theorem~\ref{thm:equivalence} is very important 
and we thank Ben Webster for explaining its proof. 
\begin{prop}
\label{prop:morita} 
The graded algebras $K_{(3^k)}$ and $R_{(3^k)}$ are Morita equivalent. 
\end{prop}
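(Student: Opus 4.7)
The plan is to deduce this from Theorem~\ref{thm:equivalence}, by invoking the graded version of the standard Morita theorem: two finite dimensional graded algebras $A$ and $B$ are graded Morita equivalent if and only if the additive categories $A\text{-}\mathrm{p\textbf{Mod}}_{\mathrm{gr}}$ and $B\text{-}\mathrm{p\textbf{Mod}}_{\mathrm{gr}}$ are equivalent as graded $\bC$-linear categories. Since Theorem~\ref{thm:equivalence} already furnishes a graded equivalence
\[
\Phi\colon \mathcal{V}_{(3^k)}\to \mathcal{W}_{(3^k)},
\]
and both $R_{(3^k)}$ and $K_{(3^k)}$ are finite dimensional unital graded algebras, it is essentially this classical statement that closes the argument. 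So the first step would simply be to cite/record Theorem~\ref{thm:equivalence} and note that it is a graded equivalence, not just a $2$-representation equivalence.

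The second step is to exhibit the progenerator explicitly. The regular module $R_{(3^k)}$ lies in $\mathcal{V}_{(3^k)}$ and is a graded projective generator there. Put
\[
P=\Phi(R_{(3^k)})\in \mathcal{W}_{(3^k)}.
\]
Because $\Phi$ is an equivalence, $P$ is a graded projective generator of $K_{(3^k)}\text{-}\mathrm{p\textbf{Mod}}_{\mathrm{gr}}$, and moreover there is a grading preserving isomorphism of graded $\bC$-algebras
\[
\mathrm{END}_{K_{(3^k)}}(P)^{\mathrm{op}}\;\cong\; \mathrm{END}_{R_{(3^k)}}(R_{(3^k)})^{\mathrm{op}}\;\cong\; R_{(3^k)},
\]
the first isomorphism coming from $\Phi$ being fully faithful (with respect to $\mathrm{HOM}$), the second being the usual identification of a ring with the opposite of its own graded endomorphism ring as a module over itself. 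Hence $P$ is a graded $(K_{(3^k)},R_{(3^k)})$-progenerator bimodule, and the classical formula $M\mapsto \mathrm{HOM}_{K_{(3^k)}}(P,M)$ yields an equivalence $K_{(3^k)}\text{-}\mathbf{Mod}_{\mathrm{gr}}\simeq R_{(3^k)}\text{-}\mathbf{Mod}_{\mathrm{gr}}$, i.e.\ the desired graded Morita equivalence.

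The main obstacle, and the only point that needs real verification, is that $\Phi$ genuinely is an equivalence of \emph{graded} additive categories (as opposed to merely categories of ungraded projectives together with a shift functor), so that the endomorphism isomorphism above is grading preserving on the nose. This is built into the setup: the strong $\mathfrak{sl}_n$-$2$-representations on both sides are graded, Rouquier's Proposition~\ref{prop:rouquier} produces a graded equivalence, and the natural transformations supplied by the action of $\Ucat$ act with the correct degrees by Proposition~\ref{prop:cataction}. A secondary minor point is to check that $\Phi$ sends $R_{(3^k)}$, regarded as a finitely generated graded projective over itself, to an object still in the finite dimensional projective subcategory $\mathcal{W}_{(3^k)}$; this is automatic since $\Phi$ is an equivalence of the finite dimensional graded projective categories in the statement of Theorem~\ref{thm:equivalence}.
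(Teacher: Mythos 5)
Your proposal is correct and follows essentially the same route as the paper: both use Theorem~\ref{thm:equivalence} to produce a progenerator whose endomorphism algebra is the cyclotomic KLR algebra, and then invoke classical Morita theory. The only cosmetic difference is that the paper works one $\mathfrak{gl}_n$-weight at a time and exhibits the progenerator explicitly as $\Theta_{\mu_S}=\bigoplus_{\ii\in\mathrm{Seq}(\nu_S)}\mathcal{E}_{\ii}W_h$, re-deriving the generator property from the highest-weight/biadjointness structure (every object is a summand of $XW_h$, and such $XW_h$ are summands of degree-shifted copies of $\Theta_{\mu_S}$, plus existence of projective covers), whereas you take the image $P=\Phi(R_{(3^k)})$ of the regular module and transport the generator property along $\Phi$; since $\Phi$ maps the regular module to $\bigoplus_S\Theta_{\mu_S}$ up to isomorphism, these are the same object, so the two arguments differ only in how explicitly the progenerator is identified.
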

\begin{proof}
For each weight $\mu_S\in \Lambda(n,n)_3$, let 
$\nu_S=(\nu_1,\ldots,\nu_n)$ be the unique element of $\mathbb{Z}_{\leq 0}[I]$ such that 
\[
\mu_S-(3^k)=\sum_{i=1}^n\nu_i\alpha_i.
\]
We are going to show that for each such $\mu_S$, 
the graded algebras $K^S$ and $R_{(3^k)}^{\nu_S}$ are Morita equivalent. This proves 
the proposition after taking direct sums over all weights. 

Let $\mu_S\in\Lambda(n,n)_3$. Define 
\[
\Theta_{\mu_S}=\bigoplus_{\ii\in\mathrm{Seq}(\nu_S)}
\mathcal{E}_{\ii}W_h \in \mathcal{W}^S.
\] 
In the proof of Theorem~\ref{thm:equivalence}, we already showed that 
every object in 
$\mathcal{W}^S$ is a direct summand 
of $XW_h$ for some object $X\in\Scat(n,n)$. By the biadjointness of 
the $\mathcal{E}_i$ and $\mathcal{E}_{-i}$ in $\Scat(n,n)$ (i.e. up to a shift) 
and the fact that 
$W_h$ is a highest weight object, it is not hard to see that 
$XW_h$ itself is a direct summand of a finite direct sum of 
degree shifted copies of 
$\Theta_{\mu_S}$. This shows that 
every object in $\mathcal{W}^S$ is a direct summand 
of a finite direct sum of degree shifted copies of $\Theta_{\mu_S}$. Since $K^S$ is a finite dimensional, complex algebra, every 
finite dimensional, graded $K^S$-module has a projective cover and is therefore 
a quotient of a finite direct sum of degree-shifted copies of $\Theta_{\mu_S}$. 
This shows that $\Theta_{\mu_S}$ is a projective generator of 
$K^S\text{-}\mathrm{\textbf{Mod}}_{\mathrm{gr}}$. 

Theorem~\ref{thm:equivalence} also shows that 
\[
\mathrm{End}_{K^S}\left(\Theta_{\mu_S}\right)\cong R_{(3^k)}^{\nu_S}
\]
holds. 

By a general result due to Morita, it follows that 
the above observations imply that 
$K^S$ and $R_{(3^k)}^{\nu_S}$ are Morita 
equivalent. For a proof see Theorem 5.55 in~\cite{rot}, for example.
\end{proof}

We can draw two interesting conclusions from Proposition~\ref{prop:morita}.

In~\cite{bk2}, Brundan and Kleshchev defined an explicit isomorphism between 
blocks of cyclotomic Hecke algebras and cyclotomic KLR-algebras. 
Theorem 3.2 in~\cite{bru2} implies that the center of the 
cyclotomic Hecke algebra, which under Brundan and Kleshchev's isomorphism 
corresponds to $R_{(3^k)}^{\nu_S}$, has the same dimension as 
$H^*(X^{(3^k)}_{\mu_S})$.

\begin{cor}
\label{cor:moritacenter}
The center of $K^S$ is isomorphic to the center of $R_{(3^k)}^{\nu_S}$. 
In particular, we have 
\[
\dim Z(K^S)=\dim Z(R_{(3^k)}^{\nu_S})=\dim 
H^*(X^{(3^k)}_{\mu_S}).
\]
\end{cor}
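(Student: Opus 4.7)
The plan is to assemble this corollary from three ingredients already stated in the paper, all of which pull quite a lot of the weight. The first ingredient is the general fact that Morita equivalent rings have isomorphic centers; this is a classical result (it follows, for instance, by noting that $Z(A)\cong\mathrm{End}(\mathrm{Id}_{A\text{-}\mathbf{Mod}})$, which is a Morita-invariant notion, or equivalently from $Z(A)\cong Z(\mathrm{End}_A(P))$ for any progenerator $P$). Since we need the isomorphism to respect the grading, I would quickly note that the Morita equivalence constructed in Proposition~\ref{prop:morita} is induced by the graded progenerator $\Theta_{\mu_S}$, so the resulting isomorphism $Z(K^S)\cong Z(R^{\nu_S}_{(3^k)})$ is automatically degree preserving.

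The second ingredient is Proposition~\ref{prop:morita} itself: $K^S$ and $R^{\nu_S}_{(3^k)}$ are Morita equivalent. Combining the first two ingredients immediately gives the isomorphism part of the statement, hence the equality $\dim Z(K^S)=\dim Z(R^{\nu_S}_{(3^k)})$.

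The third ingredient supplies the dimension equality with the Spaltenstein cohomology. I would invoke Brundan and Kleshchev's isomorphism from~\cite{bk2} to transport $R^{\nu_S}_{(3^k)}$ to the corresponding block of the cyclotomic Hecke algebra of level three, and then apply Theorem 3.2 of~\cite{bru2}, which computes the dimension of the center of such a block as $\dim H^{*}(X^{(3^k)}_{\mu_S})$. These two inputs are cited in the paragraph just before the corollary, so at the proof level this step is a one-line citation; the only care needed is to check that the block indexing matches the weight $\mu_S$ in our conventions.

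The proof itself is therefore essentially a composition of citations and the standard Morita-invariance of the center. The only mildly subtle point, and the one I would double-check, is the bookkeeping at the last step: verifying that Brundan-Kleshchev's identification of blocks matches the weight $\nu_S$ (equivalently $\mu_S-(3^k)$) that appears on our side, so that the Spaltenstein variety showing up in~\cite{bru2} is exactly $X^{(3^k)}_{\mu_S}$ and not a twisted or transposed version.
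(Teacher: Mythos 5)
Your proposal matches the paper's own argument: the paper proves the corollary by combining Proposition~\ref{prop:morita} with the standard fact that Morita equivalent algebras have isomorphic centers (citing Corollary 18.42 of~\cite{lam}), and obtains the dimension equality with $H^*(X^{(3^k)}_{\mu_S})$ from exactly the same citations of~\cite{bk2} and Theorem 3.2 of~\cite{bru2} given in the paragraph preceding the corollary. Your extra remarks on degree preservation and block bookkeeping are reasonable but go beyond what the paper's short proof records.
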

\begin{proof}
We only have to prove the first statement, which follows from 
the well known fact that 
Morita equivalent algebras have 
isomorphic centers. For a proof see for example Corollary 18.42 in~\cite{lam}.
\end{proof}

In Theorem~\ref{thm:center} we used 
Corollary~\ref{cor:moritacenter} to give an explicit 
isomorphism  
\[
H^*(X^{(3^k)}_{\mu_S})\to Z(K^S).
\] 

\begin{rem}
Just for completeness, we remark that the aforementioned results 
in~\cite{bru2} and~\cite{bk2} together with the results in~\cite{bru}, 
which we have not explained, imply that 
\[
H^*(X^{(3^k)}_{\mu_S})\cong Z(R_{(3^k)}^{\nu_S}),
\] 
so we have not proved anything new about 
$Z(R_{(3^k)}^{\nu_S})$.
\end{rem}

Another interesting consequence of Proposition~\ref{prop:morita} is 
the following.
\begin{cor}
\label{cor:moritacellular}
$K^S$ is a graded cellular algebra. 
\end{cor}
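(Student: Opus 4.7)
The plan is to combine three ingredients: the Morita equivalence of Proposition~\ref{prop:morita}, the graded cellularity of the cyclotomic KLR algebra $R_{(3^k)}$ due to Hu and Mathas~\cite{hm}, and a general principle that graded cellularity is preserved under a Morita equivalence induced by a suitably symmetric idempotent (i.e.\ the idempotent theorem of K\"onig and Xi, in its graded incarnation).

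First I would recall, from the proof of Proposition~\ref{prop:morita}, that the Morita equivalence $K^S \sim R_{(3^k)}^{\nu_S}$ is realized concretely by the projective generator $\Theta_{\mu_S} = \bigoplus_{\ii \in \mathrm{Seq}(\nu_S)} \mathcal{E}_{\ii} W_h$, giving an isomorphism $\mathrm{End}_{K^S}(\Theta_{\mu_S}) \cong R_{(3^k)}^{\nu_S}$. Dually, this means there is an idempotent $e_S \in R_{(3^k)}^{\nu_S}$ (the sum of the idempotents corresponding to the sequences $\ii$ appearing in $\Theta_{\mu_S}$ that project onto the summands isomorphic to a given basis web $u \in B^S$) such that $R_{(3^k)}^{\nu_S} e_S R_{(3^k)}^{\nu_S} = R_{(3^k)}^{\nu_S}$ and $e_S R_{(3^k)}^{\nu_S} e_S \cong K^S$ as graded algebras. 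By Hu and Mathas~\cite{hm}, each block $R_{(3^k)}^{\nu_S}$ is a graded cellular algebra, and its cellular anti-involution is the diagrammatic reflection of KLR diagrams which visibly fixes each idempotent $1_{\ii}$, hence fixes $e_S$.

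Next I would invoke the graded version of K\"onig and Xi's idempotent theorem: if $A$ is a graded cellular algebra with cellular anti-involution $\iota$, and $e \in A$ is a homogeneous idempotent with $\iota(e) = e$, then $eAe$ inherits the structure of a graded cellular algebra. Applied to $A = R_{(3^k)}^{\nu_S}$ and $e = e_S$, this produces the desired graded cellular structure on $K^S$.

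The main obstacle I anticipate is the verification that the idempotent $e_S$ is genuinely fixed by the Hu--Mathas anti-involution, i.e.\ that the Morita equivalence of Proposition~\ref{prop:morita} can be realized through an idempotent which is symmetric with respect to the cellular anti-involution. Two points make this go through: the KLR anti-involution acts on the generators $1_{\ii}$ as $\iota(1_{\ii}) = 1_{\ii}$, and the set of sequences $\mathrm{Seq}(\nu_S)$ entering $\Theta_{\mu_S}$ is closed under the natural reversal, so $e_S$ is a sum of $\iota$-fixed idempotents and hence $\iota$-fixed. Once this symmetry point is in place, the graded K\"onig--Xi principle applies and Corollary~\ref{cor:moritacellular} follows.
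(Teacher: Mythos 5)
Your plan takes a genuinely different route from the paper, and the route you chose has a real gap. The paper's argument is a one-liner: K\"onig and Xi proved in~\cite{kx} that ``being a cellular algebra'' is a Morita-invariant property over a field of characteristic different from two, so Proposition~\ref{prop:morita} together with the Hu--Mathas result~\cite{hm} finishes the proof immediately, with no need to realize the equivalence via an idempotent. You instead try to exhibit $K^S$ as an idempotent truncation $e_S R_{(3^k)}^{\nu_S} e_S$ with $e_S$ fixed by the Hu--Mathas anti-involution, and then quote the (different) K\"onig--Xi theorem on $\iota$-invariant idempotent truncations. That is a strictly stronger statement than what is needed, and it is not established here.

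The concrete problem is the construction of $e_S$. You describe it as ``the sum of the idempotents corresponding to the sequences $\ii$ appearing in $\Theta_{\mu_S}$ that project onto the summands isomorphic to a given basis web $u \in B^S$,'' suggesting $e_S$ is a sum of the diagrammatic idempotents $1_{\ii}$. But $1_{\ii}$ projects onto the whole summand $\mathcal{E}_{\ii}W_h \cong \Gamma(\phi(E_{\ii}1_{(3^k)}))W_h$, which for a generic sequence $\ii$ decomposes as a direct sum of several $P_u$'s with $q$-multiplicities (this is exactly equation~\eqref{eq:projbimodule} applied to $\phi(E_{\ii}1_{(3^k)}) = \sum c_u u$). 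The primitive idempotents in $R_{(3^k)}^{\nu_S}$ cutting $\Theta_{\mu_S}$ down to a copy of $K^S$ are therefore \emph{not} the $1_{\ii}$ and are not obviously $\iota$-invariant; establishing that would be a nontrivial piece of additional work, roughly the $\mathfrak{sl}_3$ analogue of the explicit isomorphism between arc algebras and idempotent truncations of level-two cyclotomic KLR algebras that Brundan and Stroppel had to prove for $\mathfrak{sl}_2$. It is also not verified that $K^S$ occurs as a graded direct summand of $\Theta_{\mu_S}$ at all; one knows that $\Theta_{\mu_S}$ is a projective generator and $K^S$ is projective, but extracting the required idempotent involves matching graded multiplicities. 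The paper sidesteps all of this by using the Morita-invariance form of K\"onig--Xi directly, which is both the simpler and the correct citation for the bare statement you need. I would recommend replacing the idempotent-truncation argument by the direct invocation of Morita invariance, noting the characteristic-not-two hypothesis which is satisfied since we work over $\mathbb{C}$.
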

\begin{proof}
In Corollary 5.12 in~\cite{hm}, Hu and Mathas proved that 
$R_{(3^k)}^{\nu_S}$ is a graded cellular algebra. 

In~\cite{kx}, K\"{o}nig and Xi showed that ``being a cellular algebra'' is 
a Morita invariant property, provided that the algebra is defined over 
a field whose characteristic is not equal to two.  

These two results together with Proposition~\ref{prop:morita} 
prove that the $\mathfrak{sl}_3$-web algebra $K^S$ is indeed a graded cellular algebra. 
\end{proof}

The precise definition of a graded cellular algebra can be found 
in~\cite{hm}. We will not recall it here. In a follow-up paper, we 
intend to discuss the cellular basis of $K^S$ in detail and use it to 
derive further results on the representation theory of $K^S$.  

\begin{rem} 
Corollary~\ref{cor:moritacellular} is the $\mathfrak{sl}_3$ analogue of 
Corollary 3.3 in~\cite{bs}, which proves that Khovanov's arc algebra $H^m$ 
is a graded cellular algebra. It is easy to give a 
cellular basis of $H^m$. The proof of cellularity follows from checking a 
small number of cases by hand. 
For $K^S$, we tried to mimic that approach, but had to give up because the 
combinatorics got too complex.      
\end{rem}

\subsubsection{The Grothendieck group of $W_{(3^k)}$}
\label{sec:Grothendieck}
Recall that $W^S$ has an inner product defined 
by the normalized Kuperberg form 
(see Definition~\ref{defn:normkuperform}). 
The \textit{Euler form}
\[
\langle [P],[Q]\rangle_{\mathrm{Eul}}=\dim_q \mathrm{HOM}(P,Q)
\]
defines a $q$-sesquilinear form on $K^{\oplus}_0(\mathcal{W}^S)_{\bQ(q)}$.

\begin{lem}
\label{lem:grothendieck}
Let $S$ be an enhanced sign sequence. Take 
\[
\gamma_S\colon W^S\to K^{\oplus}_0(\mathcal{W}^S)_{\bQ(q)}
\] 
to be the $\mathbb{Q}(q)$-linear 
map defined by 
\[
\gamma_S(u)=q^{-\ell(\hat{S})}[P_u],
\]
for any $u\in B^S$. Then $\gamma_S$ is an isometric embedding.   

This implies that the $\mathbb{Q}(q)$-linear map
\[
\gamma_W=\bigoplus_{\mu(S)\colon\Lambda(n,n)_3}\gamma_S
\] 
defines an isometric embedding 
\[
\gamma_W\colon W_{(3^k)}\to K^{\oplus}_0(\mathcal{W}_{(3^k)})_{\bQ(q)}.
\] 
\end{lem}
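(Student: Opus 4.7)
The plan is to establish the isometry on basis webs first, extend by sesquilinearity, and then deduce injectivity from non-degeneracy of the Kuperberg form. The essential computation for basis webs $u,v\in B^S$ rests on the identification $\mathrm{HOM}(P_u,P_v)\cong {}_uK_v=\mathcal{F}^0(u^*v)\{\ell(\hat{S})\}$ coming from the definition of the web algebra, together with the fact that the graded rank of foam homology equals the Kuperberg bracket. Combined, this gives
\[
\dim_q\mathrm{HOM}(P_u,P_v)=q^{\ell(\hat{S})}\dim_q\mathcal{F}^0(u^*v)=q^{\ell(\hat{S})}\langle u^*v\rangle_{\mathrm{Kup}}=\langle u,v\rangle_{\mathrm{Kup}},
\]
where the last equality is Definition~\ref{defn:normkuperform}.

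First I would compute $\langle\gamma_S(u),\gamma_S(v)\rangle_{\mathrm{Eul}}$ directly for $u,v\in B^S$. By $q$-sesquilinearity of the Euler form,
\[
\langle q^{-\ell(\hat{S})}[P_u],q^{-\ell(\hat{S})}[P_v]\rangle_{\mathrm{Eul}}=\overline{q^{-\ell(\hat{S})}}\,q^{-\ell(\hat{S})}\langle [P_u],[P_v]\rangle_{\mathrm{Eul}}=\langle [P_u],[P_v]\rangle_{\mathrm{Eul}},
\]
since $\overline{q^{-\ell(\hat{S})}}=q^{\ell(\hat{S})}$ cancels the other shift. Together with the identification above, this gives $\langle\gamma_S(u),\gamma_S(v)\rangle_{\mathrm{Eul}}=\langle u,v\rangle_{\mathrm{Kup}}$. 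Since $\gamma_S$ is $\mathbb{Q}(q)$-linear and both forms are $q$-sesquilinear with the same bar convention, the isometry identity extends from $B^S$ to all of $W^S$.

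For injectivity it suffices to know that the normalized Kuperberg form is non-degenerate on $W^S$. By Lemma~\ref{lem:phiisometry}, $\phi\colon V_{(3^k)}\to W_{(3^k)}$ is an isometry for the Shapovalov and Kuperberg forms, and the Shapovalov form on the irreducible $V_{(3^k)}$ is non-degenerate. Because Lusztig's antiautomorphism $\tau$ fixes each idempotent $1_\lambda$, distinct weight spaces are Shapovalov-orthogonal, so the form restricts non-degenerately to each weight space, and in particular to the space corresponding to $W^S$. An isometric map into a space carrying a non-degenerate form is injective, so $\gamma_S$ is an isometric embedding.

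For the direct sum statement, both sides decompose orthogonally with respect to the relevant forms. On the source, distinct weight spaces $W^S$ are Kuperberg-orthogonal by the previous argument, and on the target, $\mathrm{HOM}(P_u,P_v)=0$ whenever $u$ and $v$ live over distinct algebras $K^S$, so the summands $K_0^{\oplus}(\mathcal{W}^S)_{\bQ(q)}$ are Euler-orthogonal. Hence $\gamma_W$ is the orthogonal direct sum of the isometric embeddings $\gamma_S$ and is itself an isometric embedding. The only mild subtlety throughout is the bookkeeping of grading shifts against the bar convention, but all the powers of $q$ cancel cleanly.
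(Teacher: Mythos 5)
Your core computation — identifying $\dim_q\mathrm{HOM}(P_u,P_v)$ with $q^{\ell(\hat{S})}\langle u^*v\rangle_{\mathrm{Kup}}=\langle u,v\rangle_{\mathrm{Kup}}$ via ${}_uK_v\cong\mathcal{F}^0(u^*v)\{\ell(\hat{S})\}$ and the fact that $\F^0(w)$ has graded rank $\langle w\rangle_{\mathrm{Kup}}$ — is exactly the paper's argument, as is absorbing the grading shift by $q$-sesquilinearity. Where you diverge is the non-degeneracy of the normalized Kuperberg form on $W^S$: the paper asserts this directly as a consequence of the Kuperberg skein relations (really, of the unitriangularity of Theorem~\ref{thm:upptriang} against the orthonormal elementary tensors), whereas you route through Lemma~\ref{lem:phiisometry}, the non-degeneracy of the $q$-Shapovalov form on the irreducible $V_{(3^k)}$, and Shapovalov-orthogonality of distinct $\mathfrak{gl}_n$-weight spaces. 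Both are correct; your detour is more explicit and fills in what the paper leaves terse, at the mild cost of invoking the $q$-skew Howe duality isometry, which the paper does not need at this point. One phrasing slip worth fixing: you write that an isometric map \emph{into} a space carrying a non-degenerate form is injective, but what is actually needed (and what your preceding sentences establish for $W^S$) is non-degeneracy of the form on the \emph{source}; a vector killed by $\gamma_S$ pairs trivially with everything in $W^S$, hence lies in the radical of the Kuperberg form.
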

\begin{proof}
Note that the normalized Kuperberg form, because of the 
relations~\ref{eq:circle}, ~\ref{eq:digon} and~\ref{eq:square}, and the 
Euler form are non-degenerate.   
For any pair $u,v\in B^S$, we have  
\[
\dim_q \mathrm{HOM}(P_u\{-\ell(\hat{S})\},P_v\{-\ell(\hat{S})\})=\dim_q \mathrm{HOM}(P_u,P_v)=\dim_q {}_uK_v=
q^{\ell(\hat{S})}\langle u^*v\rangle_{\mathrm{Kup}}.
\] 
The factor $q^{\ell(\hat{S})}$ is a consequence of the grading shift in the 
definition of ${}_uK_v$.

Thus, $\gamma_S$ is an isometry. Since the normalized Kuperberg form is 
non-degenerate, this implies that $\gamma_S$ is an embedding. 
\end{proof}

\begin{rem} 
\label{rem:counter2}
In Section 5.5 in~\cite{mn}, Morrison and Nieh showed that 
$P_u$ is not necessarily indecomposable (see also~\cite{rob}). This 
is closely related to 
the contents of Remark~\ref{rem:counter}, as Morrison and Nieh showed.  
Therefore, the surjectivity of $\gamma_W$ is not immediately clear and we need 
the results of the previous subsections to establish it below. 

The $\mathfrak{sl}_2$ case is much simpler. The projective modules of the 
arc algebras analogous to the $P_u$ are all indecomposable. 
See Proposition 2 in~\cite{kh} for 
the details. 
\end{rem}

\begin{thm}
\label{thm:equivalence2}
The map  
\[
\gamma_W\colon W_{(3^k)}\to K^{\oplus}_0({\mathcal W}_{(3^k)})_{\bQ(q)}
\] 
is an isomorphism of $S_q(n,n)$-modules. 

This also implies that, for each sign string $S$ with $\mu_S\in
\Lambda(n,n)_3$, the map  
\[
\gamma_S\colon W^S\to K^{\oplus}_0(\mathcal{W}^S)_{\bQ(q)}
\]
is an isomorphism. 
\end{thm}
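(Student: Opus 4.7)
The plan is to combine Lemma~\ref{lem:grothendieck}, which already gives injectivity of $\gamma_W$, with a dimension count that forces surjectivity, after verifying $S_q(n,n)$-equivariance. The statement for each individual $\gamma_S$ will then drop out for free from the weight-space decomposition.

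First I would verify that $\gamma_W$ intertwines the $S_q(n,n)$-action. The action on the target is defined in Proposition~\ref{prop:cataction} by tensoring with the sweet bimodules $\Gamma(\phi(E_{\ii}1_\lambda))$. For any basis web $u \in B^S$, the key calculation is that
\[
\Gamma\bigl(\phi(E_{\pm i}1_{\mu_S})\bigr) \otimes_{K^S} P_u \;\cong\; \Gamma\bigl(\phi(E_{\pm i}1_{\mu_S})\,u\bigr),
\]
which follows from the isomorphism~\eqref{eq:sweettensor} and the identification of $P_u$ with $\Gamma(u)$ up to an appropriate grading shift. The web $\phi(E_{\pm i}1_{\mu_S})u$ decomposes in $W^{S'}$ as a linear combination $\sum c_j w_j$ of basis webs with $c_j \in \mathbb{N}[q,q^{-1}]$ via Kuperberg's relations~\eqref{eq:circle}–\eqref{eq:square}, and from the discussion following Proposition~\ref{prop-sweet} (in particular the decomposition~\eqref{eq:projbimodule}) this translates into a direct sum decomposition of $\Gamma(\phi(E_{\pm i}1_{\mu_S})u)$ into shifted copies of the $P_{w_j}$. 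Taking Grothendieck classes and matching grading shifts against the normalizing factor $q^{-\ell(\hat S)}$ in the definition of $\gamma_S$ then yields equivariance under the generators $E_{\pm i}1_\lambda$, hence under all of $S_q(n,n)$.

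Next I would assemble the dimension chain that forces $\gamma_W$ to be surjective. By Lemma~\ref{lem:isoirrep} we have $\dim_{\bQ(q)} W_{(3^k)} = \dim_{\bQ(q)} V_{(3^k)}$; by Theorem~\ref{thm:bk} this equals $\dim_{\bQ(q)} K_0^{\oplus}(\mathcal{V}_{(3^k)})_{\bQ(q)}$; and by Corollary~\ref{cor:equivalence} this in turn equals $\dim_{\bQ(q)} K_0^{\oplus}(\mathcal{W}_{(3^k)})_{\bQ(q)}$. Since Lemma~\ref{lem:grothendieck} shows $\gamma_W$ is an isometric (hence injective) embedding of finite-dimensional spaces of equal dimension, $\gamma_W$ must be a $\bQ(q)$-linear isomorphism, which combined with the equivariance from the previous step makes it an isomorphism of $S_q(n,n)$-modules.

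Finally, the second assertion is immediate: by construction $\gamma_W = \bigoplus_{\mu_S \in \Lambda(n,n)_3} \gamma_S$ is a direct sum over the $\mu_S$-weight spaces $W^S$ of $W_{(3^k)}$ and the corresponding blocks $K_0^{\oplus}(\mathcal{W}^S)_{\bQ(q)}$ on the target side, so an isomorphism on the total space forces each summand $\gamma_S$ to be an isomorphism individually. The main obstacle is purely conceptual rather than computational: one must be careful to check that the sign conventions and grading shifts in the sweet-bimodule action of Definition~\ref{defn:cataction} match those in the normalized Kuperberg form of Definition~\ref{defn:normkuperform} and in Definition~\ref{defn:phi}, so that the equivariance identity holds on the nose and not merely up to a scalar; all the normalizations in Lemma~\ref{lem:phiisometry} are designed precisely to make this work.
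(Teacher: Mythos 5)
Your proof is correct and follows essentially the same route as the paper, using Proposition~\ref{prop:cataction} for equivariance and then Lemma~\ref{lem:isoirrep}, Theorem~\ref{thm:bk}, and Corollary~\ref{cor:equivalence} to force surjectivity. Where you run an explicit dimension count against the injectivity from Lemma~\ref{lem:grothendieck}, the paper instead appeals to a commuting square whose other three sides ($\phi$, $\gamma_V$, $K_0^{\oplus}(\Phi)_{\bQ(q)}$) are already known isomorphisms; these are equivalent formulations of the same step.
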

\begin{proof}
The proof of the theorem is only a matter of assembling already known pieces.

By Proposition~\ref{prop:cataction} and the fact that 
$$\Gamma(u)=P_u\{-\ell(\hat{S})\},$$
for any $u\in B^S$, we see that   
$\gamma_W$ intertwines the $S_q(n,n)\cong 
K^{\oplus}_0(\ScatD(n,n))_{\bQ(q)}$-actions. 

We already know that $\gamma_W$ is an embedding, 
by Lemma~\ref{lem:grothendieck}. 

Note that, by Theorem~\ref{thm:bk}, Lemma~\ref{lem:isoirrep} and 
Corollary~\ref{cor:equivalence}, 
we have the following commuting square   
\[
\begin{CD}
V_{(3^k)}&@>{\gamma_V}>>&K^{\oplus}_0({\mathcal V}_{(3^k)})_{\bQ(q)}
\\
@V{\phi}VV&&@V{K^{\oplus}_0(\Phi)}VV\\
W_{(3^k)}&@>{\gamma_W}>>&K^{\oplus}_0({\mathcal W}_{(3^k)})_{\bQ(q)}.
\end{CD}
\]
We already know that $\gamma_V$, $\phi$ and $K^{\oplus}_0(\Phi)_{\bQ(q)}$ are isomorphisms. 
Therefore, $\gamma_W$ has to be an isomorphism. This shows that 
$K^S$ indeed categorifies the $\mu_S$-weight space 
of $V_{(3^k)}$. 
\end{proof}
\vskip0.5cm
A good question is how to find the graded, indecomposable, 
projective modules of $K^S$. Before answering that question, 
we need a result on the 3-colorings of webs. 

Let $w\in B^S$. Recall that there is a bijection between the flows on $w$ 
and the $3$-colorings of $w$, as already mentioned in Remark~\ref{rem:3color}. 
Call the 3-coloring corresponding to the canonical flow of $w$, the 
{\em canonical 3-coloring}, denoted $T_w$. 
\begin{lem}
\label{lem:tech}
Let $u, v\in B^S$. If there 
is a 3-coloring of $v$ which matches $T_u$ and a 3-coloring of $u$ 
which matches $T_v$ on the common boundary $S$, then $u=v$. 
\end{lem}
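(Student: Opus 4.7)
The plan is to reduce the claim to Khovanov--Kuperberg's upper triangularity result (Theorem~\ref{thm:upptriang}) via the flow/3-coloring bijection from Remark~\ref{rem:3color}.

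First, I would translate the hypothesis into the language of flows. Let $f_u$ be the canonical flow on $u$ and $f_v$ the canonical flow on $v$, and write $J_u$, $J_v$ for their respective boundary state strings. Under the bijection of Remark~\ref{rem:3color}, the canonical 3-coloring $T_u$ corresponds to $f_u$ and $T_v$ corresponds to $f_v$, so the matching-boundary hypothesis becomes: there exists a flow on $v$ extending $J_u$, and there exists a flow on $u$ extending $J_v$. Moreover, since $u$ and $v$ are both basis webs with the same boundary $S$, and since basis webs are uniquely parametrized by pairs $(S,J)$ via the growth algorithm (with $J$ the boundary state of the canonical flow), we have $u = w^S_{J_u}$ and $v = w^S_{J_v}$.

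Next, I would apply Theorem~\ref{thm:upptriang}. Expanding $v$ as an invariant tensor gives
\[
v = w^S_{J_v} = e^S_{J_v} + \sum_{J' < J_v} c(S,J_v,J')\, e^S_{J'},
\]
with $c(S,J_v,J') = \sum_{f} q^{\mathrm{wt}(v_f)}$ where $f$ ranges over flows on $v$ extending $J'$. By hypothesis there is a flow on $v$ extending $J_u$, so the coefficient of $e^S_{J_u}$ in this expansion is a nonzero element of $\mathbb{N}[q,q^{-1}]$. Reading off the expansion, this forces $J_u \leq J_v$ in the lexicographic order. The symmetric argument applied to $u = w^S_{J_u}$ using the flow on $u$ extending $J_v$ gives $J_v \leq J_u$.

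Combining the two inequalities yields $J_u = J_v$. Since the growth algorithm produces a unique basis web from a pair $(S,J)$, we conclude $u = w^S_{J_u} = w^S_{J_v} = v$. The only subtlety is making sure the basis webs really are indexed injectively by the canonical boundary data $J$; this is exactly what Kuperberg's parametrization of $B^S$ by closed dominant weight lattice paths gives us, so no real obstacle arises. The whole proof is then essentially a two-line application of the upper triangularity theorem.
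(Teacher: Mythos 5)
Your proof is correct and follows essentially the same route as the paper: both arguments apply Theorem~\ref{thm:upptriang} to deduce $J_u\leq J_v$ and $J_v\leq J_u$ from the existence of the matching flows (using that the coefficients are sums of monomials over flows, so nonvanishing), and then conclude $u=v$ from the growth-algorithm parametrization of basis webs by $(S,J)$. Your version just makes the positivity of the coefficients and the injectivity of the parametrization slightly more explicit than the paper does.
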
 
\begin{proof}
This result is a direct consequence of Theorem~\ref{thm:upptriang}. Recall 
that there is a partial order on flows, and therefore on 3-colorings by 
Remark~\ref{rem:3color}. This ordering is induced by the lexicographical 
order on the state-strings on $S$, which are induced by the flows. Note that 
two matching colorings of $u$ and $v$ have the same order, by definition. 
On the other hand, Theorem~\ref{thm:upptriang} implies that 
any 3-coloring of $u$, respectively $v$, has order less than or equal to 
that of $T_u$ and $T_v$ respectively. 
Therefore, if there exists a 3-coloring of $v$ matching $T_u$, the order of 
$T_u$ must be less or equal than that of $T_v$. 

Thus, if there 
exists a 3-coloring of $v$ matching $T_u$ 
and a 3-coloring of $u$ mathing $T_v$, then $T_u$ and $T_v$ must have the 
same order. This implies that $u=v$, because canonical 3-colorings are uniquely 
determined by their order and the corresponding canonical flows determine 
the corresponding basis webs uniquely by the growth algorithm.   
\end{proof}

\begin{prop}
\label{prop:unitriang}
For each web $u\in B^S$, there exists a graded, indecomposable, projective 
$K^S$-module $Q_u$, which is unique up to degree-preserving isomorphism, 
such that 
\[
P_u\cong Q_u\oplus\bigoplus_{J_v < J_u} Q_v^{\oplus d(S,J_u,J_v)}.
\]
Here $J_u$ is the state string associated to the canonical flow on $u$, 
the coefficients $d(S,J_u,J_v)$ belong to $\mathbb{N}[q,q^{-1}]$ 
and the state strings are ordered lexicographically. 
\end{prop}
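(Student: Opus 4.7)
Plan: I would prove this by induction on the lex order of the canonical state strings $J_u$, which by Lemma~\ref{lem:tech} gives a total order on $B^S$. Each $Q_u$ will be constructed as a specific indecomposable summand of $P_u$, using Gornik's semisimple deformation $G^S$ (Proposition~\ref{prop:Gsemisimple}) as the main technical tool, and exploiting the canonical $3$-coloring $T_u$ attached to $u$ via Definition~\ref{defn-cano} and Remark~\ref{rem:3color}.

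First I would note that Krull--Schmidt applies to graded modules over the finite-dimensional graded $\bC$-algebra $K^S$, so each $P_u$ decomposes uniquely into graded indecomposable projective summands up to grading shift and permutation. By Theorem~\ref{thm:equivalence2}, $K_0^\oplus(\mathcal W^S)_{\bQ(q)}$ has $\bQ(q)$-rank $\#B^S$, which matches the number of iso classes of graded indecomposable projective $K^S$-modules modulo shift. On the $G^S$ side, Lemma~\ref{lem:dimZG} combined with Proposition~\ref{prop:tableauxflows} shows that the iso classes of indecomposable Gornik-projectives are in bijection with state strings $J$ satisfying condition~\eqref{eqn:conds}, which contains the canonical state strings $\{J_u : u \in B^S\}$ but is generically strictly larger. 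The task is to match each $K^S$-indecomposable with the unique Gornik one labelled by a canonical state string $J_u$.

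For the inductive step, I fix $u$ and suppose $Q_v$ has already been defined, together with the claimed decomposition for every $v$ with $J_v < J_u$. The key input is an orthogonality property of Gornik's canonical symmetric idempotents: for distinct $u,v\in B^S$ and any foam $f\colon u\to v$, the product $e_{u,T_u}\cdot f\cdot e_{v,T_v}$ in $G^S$ vanishes, because a non-zero product would force $T_u|_S = T_v|_S$, and Lemma~\ref{lem:tech} would then give $u=v$. This isolates, in the Gornik setting, a distinguished summand of the projective module associated to $u$ that is not shared by any other basis web. In addition, any state string of a flow on $u$ is bounded above by $J_u$ in lex order by Theorem~\ref{thm:upptriang}, which explains why the remaining indecomposable summands of $P_u$ must correspond to strictly smaller canonical state strings.

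The main obstacle will be transferring this Gornik-level information to $K^S$. Since the idempotents $e_{u,T_u}$ are not filtration-preserving, they do not yield idempotents in $K^S$ directly. To handle this I would use the filtered-to-graded correspondence collected in Appendix~\ref{sec:filt}: lift each graded indecomposable $K^S$-summand of $P_u$ to a filtered $G^S$-module whose associated graded recovers the summand, and then compare with the known semisimple decomposition of $G^S$ isolated by the canonical idempotents above. Combining this with numerical bookkeeping via the Euler form on $K_0^\oplus(\mathcal W^S)_{\bQ(q)}$, which matches the normalized Kuperberg form by Lemma~\ref{lem:grothendieck}, should pin down a unique new indecomposable $Q_u$ appearing with multiplicity one in $P_u$, while forcing every other summand to be isomorphic (up to shift) to some previously constructed $Q_v$ with $J_v < J_u$. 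The multiplicities $d(S,J_u,J_v)$ lie in $\bN[q,q^{-1}]$ because they are the graded multiplicities of indecomposable summands of the graded projective $P_u$.
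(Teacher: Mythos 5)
Your overall strategy is the paper's: Gornik's deformation $G^S$, the canonical colorings $e_{u,T_u}$, idempotent lifting via Theorem~\ref{thm:sjodin} and Corollary~\ref{cor:sjodin}, Lemma~\ref{lem:tech} for distinguishing webs, Theorem~\ref{thm:upptriang} for the lexicographic bound, and the rank count from Theorem~\ref{thm:equivalence2}. However, there is a genuine gap at the heart of the argument: you never establish (or state) the multiplicity-one property that actually singles out $Q_u$. Concretely, the paper decomposes $1_u$ into primitive orthogonal idempotents in ${}_uK_u$, lifts them to $G^S$, and then uses the central idempotent $z_{J_u}\in Z(G^S)$ from Lemma~\ref{lem:dimZG} together with the facts that $z_{J_u}1_u=e_{u,T_u}\neq 0$ (Lemma~\ref{lem:embedding}) and $z_{J_u}\,{}_uG_u\,z_{J_u}=e_{u,T_u}G^Se_{u,T_u}\cong\mathbb{C}$ (Theorem~\ref{thm:Gornik}); the latter holds because the canonical flow is the \emph{unique} flow on $u$ extending $J_u$ (this is encoded in Theorem~\ref{thm:upptriang}: the coefficient of $e^S_{J_u}$ in $u$ is exactly $1$). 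This one-dimensionality is what forces exactly one lifted primitive idempotent of $1_u$ to survive $z_{J_u}$, which both defines $Q_u$ and gives its multiplicity one in $P_u$. Your cross-web orthogonality $e_{u,T_u}fe_{v,T_v}=0$ for $u\neq v$ only handles the pairwise non-isomorphism of the $Q$'s; it says nothing about how many primitive idempotents inside the single web $u$ meet the $J_u$-block, so it cannot rule out two ``new'' indecomposable summands at the top of $P_u$. The proposed substitute, ``numerical bookkeeping via the Euler form,'' does not supply this either: the Euler form only computes $\dim_q{}_uK_v=q^{\ell(\hat S)}\langle u^*v\rangle_{\mathrm{Kup}}$, and since $K^S$ is not semisimple these graded Hom-dimensions do not determine multiplicities of indecomposable summands without already knowing the classes $[Q_v]$ — which is circular at this point.

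A second, smaller issue is the transfer step inside your induction. To conclude that every non-distinguished summand of $P_u$ is a shift of a previously constructed $Q_v$ with $J_v<J_u$, you need to know that isomorphic graded projective summands have $G^S$-lifts supported on the same Gornik blocks (equivalent idempotents in $K^S=E(G^S)$ lift to equivalent idempotents in $G^S$), so that a summand of $P_u$ isomorphic to some $Q_w\{t\}$ would force $z_{J_w}1_u\neq 0$ and hence $J_w\leq J_u$, with $J_w=J_u$ only for the distinguished one. You assume this transfer implicitly; it is exactly the point where the paper writes $z_u e_v\neq 0$ and $z_v e_u\neq 0$ from $Q_u\cong Q_v$. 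With the uniqueness-of-flow fact and this transfer made explicit, your induction becomes redundant: the paper's direct argument (construct all $Q_u$, show pairwise non-isomorphism via Lemma~\ref{lem:tech}, use the rank count to see they exhaust the indecomposables, then read off the unitriangular decomposition from $z_{J_v}1_u=0$ for $J_v>J_u$) goes through without ordering the construction.
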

\begin{proof}
Let $u\in B^S$. Then there is a complete decomposition of $1_u$ into orthogonal 
primitive idempotents 
\[
1_u=e_1+\cdots+e_r.
\]
By Theorem~\ref{thm:sjodin} and Corollary~\ref{cor:sjodin}, we can lift this 
decomposition to $G^S$. 
We do not introduce any 
new notation for this lift, trusting that the reader will not get confused 
by this slight abuse of notation. 

Let $z_u\in Z(G^S)$ be the central idempotent corresponding to $J_u$, as 
defined in the proof of Lemma~\ref{lem:dimZG}. We claim that there is a unique 
$1\leq i\leq r$, such that 
\begin{equation}
\label{eq:orto1}
z_ue_j=\delta_{ij}z_u1_u,
\end{equation}
for any $1\leq j\leq r$. 

Let us prove this claim. Note that 
\begin{equation}
\label{eq:orto2}
z_u1_u=e_{u,T_u},
\end{equation}
where $T_u$ is the canonical coloring of $u$, i.e. 
$J_u$ only allows one compatible coloring of $u$, which is $T_u$. 
Since $e_{u,T_u}\ne 0$, courtesy of Lemma~\ref{lem:embedding}, this implies that 
\begin{equation}
\label{eq:dimone}
z_u{}_uG_u={}_uG_u z_u=z_u{}_uG_uz_u=e_{u,T_u}G^Se_{u,T_u}\cong \mathbb{C},
\end{equation}
by Theorem~\ref{thm:Gornik}. 

We also see that there has to exist at least one 
$1\leq i_0\leq r$ such that $z_ue_{i_0}\ne 0$. Then, by~\eqref{eq:dimone}, 
there exists a non-zero $\lambda_{i_0}\in \mathbb{C}$, such that 
\[
z_ue_{i_0}=\lambda_{i_0}z_u1_u=\lambda_{i_0}e_{u,T_u}.
\]  
For any 
$1\leq i,j\leq r$, we have  
\[
z_ue_iz_ue_j=z_u^2e_ie_j=z_u\delta_{ij}e_i.
\]
This implies that $i_0$ is unique and $\lambda_{i_0}=1$. 
In order to see that this is true, suppose there exist 
$1\leq i_0\ne j_0\leq r$ such that $z_ue_{i_0}\ne 0$ and $z_ue_{j_0}\ne 0$. 
By~\eqref{eq:dimone}, there exist non-zero 
$\lambda_{i_0},\lambda_{j_0}\in\mathbb{C}$ such that 
\[
z_ue_{i_0}=\lambda_{i_0}z_u1_u\quad\text{and}\quad z_ue_{j_0}=\lambda_{j_0}
z_u1_u.
\]
However, this is impossible, because we get  
\[
z_ue_{i_0}z_ue_{j_0}=\lambda_{i_0}\lambda_{j_0}z_u1_u\ne 0,
\] 
which contradicts the orthogonality of $z_ue_{i_0}$ and $z_ue_{j_0}$. 

Thus, for each $u\in B^S$, there is a unique primitive idempotent 
$e_u\in \mathrm{End}_{\mathbb{C}}(P_u)$ that is not killed by $z_u$, 
when lifted to 
$G^S$. We define $Q_u$ to be the corresponding graded indecomposable 
projective $K^S$-module by  
\[
Q_u=K^Se_u,
\]
which is clearly a direct summand of $P_u=K^S1_u$. 
\vskip0.5cm
Let us now show that, for any $u,v\in B^S$, we have  
\[
Q_u\cong Q_v\Leftrightarrow u=v.
\]
If $u=v$, we obviously have $Q_u\cong Q_v$. 
Let us prove the other implication. Suppose $Q_u\cong Q_v$. From 
the above, recall that $e_u$ and $e_v$ can be lifted to $G^S$. By a slight 
abuse of notation, call these lifted idempotents $e_u$ and $e_v$ again. 
We have 
\[
z_ue_u=e_{u,T_u}\ne 0\quad\text{and}\quad z_ve_v=e_{v,T_v}\ne 0.
\]
Since $Q_u\cong Q_v$, we then also have 
\[
z_ue_v\ne 0\quad\text{and}\quad z_ve_u\ne 0.
\]
This can only hold if $T_u$ gives a 3-coloring of $v$ and $T_v$ a 3-coloring 
of $u$. By Lemma~\ref{lem:tech}, this implies that $u=v$.
\vskip0.5cm

Since 
\[
\dim K^{\oplus}_0(\mathcal{W}^S)_{\bQ(q)} 
=\dim W^S=\#B^S,
\]
by Theorem~\ref{thm:equivalence2}, 
the above shows that 
\[
\left\{Q_u\mid u\in B^S\right\}
\]
is a basis of $K^{\oplus}_0(\mathcal{W}^S)_{\bQ(q)}$. For any $u,v\in B^S$, we 
have  
\[
z_u1_u=z_ue_u\quad\text{and}\quad z_v1_u=0,\;\text{if}\; J_v>J_u.
\]
The second claim follows from the fact that there are no admissible 
3-colorings of $u$ greater than $J_u$. The proposition now follows. 
\end{proof}

\begin{rem} 
\label{rem:mn}
Proposition~\ref{prop:unitriang} proves the conjecture 
about the decomposition of $1_u$, which Morrison and Nieh 
formulate in the text between Conjectures 5.14 and 5.15 in~\cite{mn}.  
\end{rem}

Before giving the last result of this section, we briefly recall 
some facts about the \textit{dual canonical basis} of $W^S$. 
For more details see~\cite{fkhk} and~\cite{kk}. There exists a $q$-antilinear 
involution $\tilde{\psi}$ on $V^S$ (in~\cite{fkhk} and~\cite{kk} 
this involution is denoted $\psi'$ and $\Phi$, respectively). 
For any sign string $S$ and any state 
string $J$, there exists a unique element $e^S_{\heartsuit J}\in V^S$ which 
is $\tilde{\psi}$-invariant and satisfies  
\begin{equation}
\label{eq:dualcan}
e^S_{\heartsuit J}=e^S_J+\sum_{J^{\prime}<J}c(S,J,J^{\prime})e^{S}_{J^{\prime}},
\end{equation}
with $c(S,J,J^{\prime})\in q^{-1}\mathbb{Z}[q^{-1}]$. 
(Recall again that we use $q$ instead of $v$, contrary to~\cite{fkk} and~\cite{kk} where $v$ indicates $-q^{-1}$.) 
The $e^S_J$ are the elementary tensors, which were defined 
in~\ref{thm:upptriang}. The basis $\left\{e^S_{\heartsuit J}\right\}$ 
is called the \textit{dual canonical basis} of $V^S$. Restriction to the 
dominant closed paths $(S,J)$ 
gives the dual canonical basis of $W^S$ (see Theorem 3 
in~\cite{kk} and the comments below it).

We have not given a definition of $\tilde{\psi}$, but we 
note that $\tilde{\psi}$ is completely determined by Proposition 2 
in~\cite{kk}, which we recall now. 
\begin{prop}\label{prop:basisbarinvariant}(\textbf{Khovanov-Kuperberg})
Each basis web $w\in B^S$ is invariant under $\tilde{\psi}$. 
\end{prop}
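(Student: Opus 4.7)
My plan is to prove bar-invariance by induction on the growth algorithm (Definition \ref{growth}), building each basis web from elementary pieces (arcs and Y-vertices) and showing that bar-invariance is preserved at every step.

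First I would recall the characterization of $\tilde{\psi}$ on $V^S$: it is the $q$-antilinear involution uniquely determined by (i) intertwining the bar involution $\overline{\,\cdot\,}$ on $U_q(\mathfrak{sl}_3)$ with the action on $V^S$ via the quasi-$R$-matrix, and (ii) fixing the tensor product of highest weight vectors $e^{s_1}_1 \otimes \cdots \otimes e^{s_n}_1$. With this characterization, checking $\tilde{\psi}$-invariance of a vector in $V^S$ reduces to a condition on its expansion in the elementary tensor basis $\{e^S_J\}$. Note in particular that $\tilde{\psi}$ is covariant with respect to composition of $U_q(\mathfrak{sl}_3)$-intertwiners, so an invariant tensor obtained by composing bar-equivariant intertwiners and applied to a bar-invariant input is automatically bar-invariant.

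The key technical step is to verify that each elementary web, viewed as an intertwiner, is bar-equivariant. Concretely, this means the cup $\mathbb{C}_q \to V^{+} \otimes V^{-}$ (and its variants with other orientations), the cap $V^{-} \otimes V^{+} \to \mathbb{C}_q$, and the trivalent Y-vertex intertwiner $V^{+} \otimes V^{+} \to V^{-}$ (together with its reverse) all commute with $\tilde{\psi}$. For the cup, this is a direct computation: the image of $1 \in \mathbb{C}_q$ is (up to normalization) $\sum_i q^{a_i}\, e^{+}_i \otimes e^{-}_i$ for specific weights $a_i$ dictated by the isomorphism $V^{-} \cong V^{+} \wedge V^{+}$, and these $a_i$ must match the $q$-powers produced by the quasi-$R$-matrix action of $\tilde{\psi}$. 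Once the cup is handled, the cap follows by duality, and the Y-vertex can be treated either by the analogous direct computation or by expressing Y as a composition of a cup with an invariant Clebsch-Gordan projection. The identity web on any $S$ is trivially bar-equivariant, since it fixes the highest weight tensor.

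With these ingredients the induction concludes as follows: by Definition \ref{growth} a basis web $w^S_J$ is obtained by applying a finite sequence of elementary webs (arc, Y, H rules) to the initial configuration of parallel strands, and this realizes $w^S_J \in W^S$ as the value at $1 \in \mathbb{C}_q$ of a composition of bar-equivariant intertwiners. Since $1$ is bar-invariant and $\tilde{\psi}$ commutes with composition of bar-equivariant intertwiners, $w^S_J$ is bar-invariant. Note that the H-web is a composite of a cup and a cap, so no separate verification is needed.

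The main obstacle will be pinning down normalizations. Kuperberg's web relations \eqref{eq:circle}--\eqref{eq:square} fix specific $q$-powers on circles, digons, and squares, and the paper's conventions remark notes that Khovanov-Kuperberg work with $v = -q^{-1}$ rather than our $q$; combined with the choice of quasi-$R$-matrix sign, this means the elementary cup and Y intertwiners are only bar-equivariant for a particular normalization of their Clebsch-Gordan coefficients. The most delicate part of the proof will therefore be an explicit bookkeeping exercise to confirm that Kuperberg's normalization for the elementary webs is precisely the one that makes the cup and Y bar-equivariant on the nose, with no leftover scalar, after which the inductive argument sketched above runs without further difficulty.
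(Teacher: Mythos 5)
The paper does not contain a proof of this proposition: it is quoted verbatim as Proposition~2 of Khovanov--Kuperberg~\cite{kk}, and the text even remarks that $\tilde{\psi}$ is not defined in the paper but is ``completely determined'' by that cited result. So your proposal is not really an alternative to the paper's argument --- it is an attempt to reconstruct the external proof that the paper outsources to~\cite{kk}.

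Judged on its own terms, the strategy is the standard and correct one for results of this kind, and it is (as far as I can tell) essentially what~\cite{kk} do, following the $\mathfrak{sl}_2$ template of Frenkel--Khovanov: identify $\tilde{\psi}$ with Lusztig's bar involution on $V^S$ built from the quasi-$R$-matrix, observe that $\tilde{\psi}$ on the trivial module fixes $1$, verify bar-equivariance of the elementary intertwiners, and push through the growth algorithm. You correctly isolate the genuine difficulty --- the elementary intertwiners commute with the bar involutions only up to a scalar, and one has to check that Kuperberg's normalizations make those scalars equal to $1$, with the $v=-q^{-1}$ convention a likely source of sign pitfalls --- and you correctly note that the trivial representation contributes no phase because $\tilde{\psi}(1)=1$. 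One omitted but necessary ingredient for the composition step to make sense is coassociativity of Lusztig's bar involution on iterated tensor products (so that the ``intermediate'' $\tilde{\psi}$'s appearing between successive elementary webs are unambiguous); this is standard but should be stated. Two small inaccuracies: the H-web factors through two trivalent Y-vertices (joined along the internal horizontal edge), not through a cup and a cap, so it reduces to the Y-case rather than the cup/cap-case; and it is not clear what you mean by expressing Y as ``a cup composed with an invariant Clebsch--Gordan projection'' --- the Y-intertwiner $V^+\otimes V^+\to V^-$ is itself the Clebsch--Gordan projection onto $\Lambda^2 V^+\cong V^-$ and is most cleanly handled by the same direct computation as the cup. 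Neither inaccuracy affects the validity of the overall argument.
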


Let us show that there exists a duality 
$\circledast$ on $\mathcal{W}_{(3^k)}$ 
such that $\gamma_W\colon W_{(3^k)}\to K_0^{\oplus}(\mathcal{W}_{(3^k)})_{\bQ(q)}$
intertwines $\tilde{\psi}$ and $K_0^{\oplus}(\circledast)_{\bQ(q)}$. 

As Brundan and Kleshchev showed in Section 4.5 in~\cite{bk}, 
there is a duality $\circledast$ on $\mathcal{V}_{(3^k)}$. It is induced by 
Khovanov and Lauda's~\cite{kl3} algebra anti-automorphism 
\[
*\colon R_{(3^k)}\to R_{(3^k)},
\]
given by reflecting the diagrams in the $x$-axis and inverting 
their orientations. Let $M$ be a finite dimensional, graded $R_{(3^k)}$-module, then 
\[
M^{\circledast}=M^{\vee}
\] 
as a graded vector space and $R_{(3^k)}$ acts on $M^{\circledast}$ by 
\[
xf(y)=f(x^{*}y).
\]
In Theorem 4.18 in~\cite{bk} Brundan and Kleshchev showed that $\circledast$ 
commutes with the categorical actions of $\mathcal{E}_{+i}$ and $\mathcal{E}_{-i}$, 
for any $i=1,\ldots,n-1$, and the map 
$\gamma_V\colon V_{(3^k)}\to K_0^{\oplus}(\mathcal{V}_{(3^k)})_{\bQ(q)}$ 
intertwines the usual bar-involution on $V_{(3^k)}$, which we 
also denote by $\tilde{\psi}$, and 
$K_0(\circledast)_{\mathbb{Q}(q)}$. 

Using our equivalence 
\[
\Phi\colon \mathcal{V}_{(3^k)}\to \mathcal{W}_{(3^k)}
\] 
from Theorem~\ref{thm:equivalence}, we can define a duality 
$\circledast$ on $\mathcal{W}_{(3^k)}$ 
exactly as above. We use the anti-automorphism  
$*\colon K_{(3^k)}\to K_{(3^k)}$ defined by 
reflecting the foams in the vertical 
$yz$-plane, i.e. the plane parallel to the front and the back of the foams in 
Definition~\ref{defn:cataction}, and inverting the orientation of 
their edges. Note that $\circledast$ maps indeed 
projective modules to projective modules, 
because $K^{S}$ is a symmetric Frobenius algebra for all $S$ such that $\mu_S\in \Lambda(n,n)_3$. Note also that, by construction, the equivalence  
$$\Phi\colon \mathcal{V}_{(3^k)}\to \mathcal{W}_{(3^k)}$$
intertwines the dualities $\circledast$ on both categories.   

\begin{lem}
\label{lem:Pvbarinv}
For any $S$ such that $\mu_S\in\Lambda(n,n)_3$ and for any $v\in B^S$, we have 
\[
P_v^{\circledast}\cong P_v\{-2\ell(\hat{S})\}.
\]
\end{lem}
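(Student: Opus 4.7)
My plan is to deduce the lemma from the graded symmetric Frobenius structure on $K^S$ established in Theorem~\ref{thm:frob} together with a direct verification that $1_v^{*} = 1_v$.

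First, I would use the Frobenius bimodule isomorphism $K^S\{-2\ell(\hat{S})\} \cong (K^S)^{\vee}$, $a \mapsto (x \mapsto \mathrm{tr}(ax))$, and restrict it to the left ideal $P_v = K^S 1_v$. By non-degeneracy of the trace pairing, the functionals on $P_v$ correspond under this isomorphism precisely to the elements of $1_v K^S$, yielding an isomorphism of right $K^S$-modules
\[
P_v^{\vee} \cong 1_v K^S\{-2\ell(\hat{S})\}.
\]

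Second, I would translate this into the language of $\circledast$. By definition, $P_v^{\circledast}$ is the vector space $P_v^{\vee}$ equipped with the left $K^S$-action $(xf)(y) = f(x^{*}y)$, which is precisely the right action of $x^{*}$ viewed from the left. The map $\phi\colon 1_v K^S \to K^S 1_v^{*}$ defined by $\phi(a) = a^{*}$ then intertwines the $*$-twisted left action on $1_v K^S$ with genuine left multiplication on $K^S 1_v^{*}$, since $\phi(a \cdot x^{*}) = (a x^{*})^{*} = x a^{*} = x \phi(a)$. Combined with the previous step, this gives
\[
P_v^{\circledast} \cong K^S 1_v^{*}\{-2\ell(\hat{S})\}.
\]

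Third, I would verify that $1_v^{*} = 1_v$ in $K^S$. Here $1_v$ is represented by the identity foam $v \times I$ on the basis web $v$, and $*$ reflects foams in the $yz$-plane parallel to the front and back of the foams in Definition~\ref{defn:cataction} and reverses all edge orientations. The identity foam is symmetric under this reflection, and the orientation reversal is absorbed by an ambient isotopy; hence $1_v^{*} = 1_v$. Substituting into the preceding display yields $P_v^{\circledast} \cong P_v\{-2\ell(\hat{S})\}$.

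The main technical hurdle is the last step, namely the geometric verification that the reflection-plus-orientation-reversal fixes the identity foam on every basis web $v$ rather than sending it to the identity foam on some horizontally reflected web that could be a different element of $B^S$. Once this compatibility is established, the remainder is an entirely formal consequence of the Frobenius structure on $K^S$ and the definition of $\circledast$, valid for any symmetric Frobenius algebra with an anti-automorphism fixing a given idempotent.
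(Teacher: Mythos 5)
Your argument is correct and rests on the same two ingredients as the paper's proof --- the symmetric Frobenius structure from Theorem~\ref{thm:frob} and the anti-automorphism $*$ --- but it is packaged differently. The paper works block by block: it checks that $f\mapsto\mathrm{tr}(f^*-)$ gives a degree-preserving isomorphism ${}_uK_v\to({}_uK_v)^{\vee}\{2\ell(\hat{S})\}$ and then verifies equivariance directly from $(gf)^*=f^*g^*$, so it never needs to identify $1_v^*$; you instead route through the general identity $(K^Se)^{\circledast}\cong K^Se^*$ for an idempotent $e$, which shifts all the weight onto the single equality $1_v^*=1_v$. That step, which you call the main technical hurdle, is in fact formal rather than geometric: the only geometric input needed (by either proof) is that the reflection plane is parallel to the planes containing the source and target webs, so that $*$ carries $\foamt^0(u,v)\cong{}_uK_v$ to $\foamt^0(v,u)\cong{}_vK_u$ without altering $u$ or $v$ --- the scenario you worry about, landing on a horizontally reflected web, cannot occur, since that would not even yield an anti-automorphism of $K^S$. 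Granting this, $*$ restricts to an algebra anti-automorphism of the unital algebra ${}_vK_v$ and therefore fixes its unit $1_v$ (equivalently, $1=\sum_v 1_v$ is $*$-invariant and $*$ preserves the diagonal blocks), so no isotopy argument about orientation reversal is required. With that replacement your proof is complete; the trade-off is that the paper's version is slightly shorter and works for any block-swapping anti-automorphism, while yours makes the underlying Frobenius-algebra mechanism $(Ae)^{\circledast}\cong Ae^*$ explicit.
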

\begin{proof}
Let $u,v\in B^S$. The proof of Theorem~\ref{thm:frob} shows that the map determined by 
\[
f\mapsto \mathrm{tr}(f^*-)
\]
defines a degree-preserving isomorphism 
\[
{}_uK_v\to {}_uK_v^{\vee}\{2\ell(\hat{S})\}.
\]
Taking the direct sum over all $v\in B^S$, we get 
a degree-preserving isomorphism of graded vector spaces
\begin{equation}
\label{eq:isoprojstar}
P_v\cong P_v^{\vee}\{2\ell(\hat{S})\}.
\end{equation}
Since $*$ is an anti-automorphism on $K^S$, we see that 
\[
gf\mapsto \mathrm{tr}((gf)^*-)=\mathrm{tr}(f^*g^*-)=g \mathrm{tr}(f^*-).
\]
This shows that~\eqref{eq:isoprojstar} is an isomorphism of graded $K^S$-modules. 
\end{proof}

By Proposition~\ref{prop:basisbarinvariant}, we see that 
the isomorphism 
\[
\gamma_W\colon W_{(3^k)}\to K^{\oplus}_0(\mathcal{W}_{(3^k)})_{\bQ(q)}
\] 
intertwines the bar involutions $\tilde{\psi}$ and 
$K^{\oplus}_0(\circledast)_{\bQ(q)}$. 

In Theorem 5.14 in~\cite{bk}, Brundan and Kleshchev also proved that 
$\gamma_V$ maps the canonical basis elements of $V_{(3^k)}$ to 
the Grothendieck classes of certain indecomposables in 
$K^{\oplus}_0(\mathcal{V}_{(3^k)})_{\bQ(q)}$. They used multipartitions $\lambda$ to 
parametrize these indecomposables, which they denoted $Y({\lambda})\{-m\}$ 
for degree shifts $m$ depending on $\lambda$.
  
These multipartitions corresponds bijectively to the semi-standard tableaux, 
which in turn correspond bijectively to the elements of $B^S$. We therefore 
can denote these indecomposables by $Y_u\{-\ell(\hat{S})\}$, for $u\in B^S$. 
By (4.26) in~\cite{bk} (note that $\#\cong \{2\ell(\hat{S})\} \circledast$), 
we also have 
\begin{equation}
\label{eq:Ybarinv}
Y_u\{-\ell(\hat{S})\}^{\circledast}\cong Y_u\{-\ell(\hat{S})\}.
\end{equation}

Finally, in Theorem 4.18 in~\cite{bk} it was proved that $\gamma_V$ 
intertwines the $q$-Shapovalov form and the Euler form. The $q$-Shapovalov 
form $\langle -,-\rangle_{\mathrm{Shap}}$ on $V^S$ is related to 
the symmetric $\mathbb{Q}(q)$-bilinear form $(-,-)_{\mathrm{Lusz}}$ 
defined by 
\begin{equation}
\label{eq:bilinversussesquil}
(-,-)_{\mathrm{Lusz}}=\overline{\langle -,\psi(-)\rangle}_{\mathrm{Shap}}.
\end{equation}
This bilinear form is used by Lusztig in~\cite{lu}. By Proposition 19.3.3 
in that book, this implies that the Euler form on 
$\mathcal{V}_{(3^k)}$ satisfies 
\begin{equation}
\label{eq:almostortY}
\langle [Y_u\{-\ell(\hat{S})\}], [Y_v\{-\ell(\hat{S})\}]\rangle_{\mathrm{Eul}} \in \delta_{u,v}+q\mathbb{Z}[q]
\end{equation}
for any $u,v\in B^S$.
 
\begin{thm}
\label{thm:dualcan}
The basis 
\[
\left\{q^{-\ell(\hat{S})}[Q_u]\mid u\in B^S\right\}
\]
corresponds to the dual canonical basis 
of $\mathrm{Inv}_{U_q(\mathfrak{sl}_3)}(V^S)$, under the isomorphisms 
\[ 
\mathrm{Inv}_{U_q(\mathfrak{sl}_3)}(V^S)\cong W^S\cong K^{\oplus}_0(\mathcal{W}^S)_{\bQ(q)}.
\]
\end{thm}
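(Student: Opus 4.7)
The plan is to combine three ingredients: Brundan--Kleshchev's identification of the (dual) canonical basis with classes of indecomposable projectives in $\mathcal{V}_{(3^k)}$ (Theorem 5.14 in~\cite{bk}), the categorical equivalence $\Phi\colon\mathcal V_{(3^k)}\to\mathcal W_{(3^k)}$ of Theorem~\ref{thm:equivalence}, and the unitriangular decomposition of $P_u$ from Proposition~\ref{prop:unitriang}. Set $Q'_u := \Phi(Y_u\{-\ell(\hat S)\})$; this is an indecomposable projective $K^S$-module. By the commuting square at the end of the proof of Theorem~\ref{thm:equivalence2} we have $\gamma_W\circ\phi = K^{\oplus}_0(\Phi)_{\bQ(q)}\circ\gamma_V$, so Brundan--Kleshchev gives at once $\gamma_W(\phi(e^S_{\heartsuit J_u})) = [Q'_u]$. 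The theorem therefore reduces to showing $Q'_u\cong Q_u\{-\ell(\hat S)\}$ for every $u\in B^S$.

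\textbf{A unitriangular change of basis in $W^S$.} By Theorem~\ref{thm:upptriang} the basis web $u=w^S_{J_u}$ expands as $e^S_{J_u}+\sum_{J'<J_u}(\cdots)\,e^S_{J'}$, and by~\eqref{eq:dualcan} the dual canonical basis element $e^S_{\heartsuit J_u}$ has the same leading elementary tensor $e^S_{J_u}$. Standard downward extraction of coefficients (using that the $e^S_{\heartsuit J_v}$ are themselves unitriangular in the $e^S_J$) yields a unitriangular relation in $W^S$,
\[
u \;=\; e^S_{\heartsuit J_u} \,+\, \sum_{J_v<J_u}\tilde c(S,J_u,J_v)\, e^S_{\heartsuit J_v},\qquad \tilde c(S,J_u,J_v)\in\bZ[q,q^{-1}].
\]
Applying $\gamma_W$, using $\gamma_W(u)=q^{-\ell(\hat S)}[P_u]$ together with $\gamma_W(\phi(e^S_{\heartsuit J_v}))=[Q'_v]$, we obtain
\[
q^{-\ell(\hat S)}[P_u] \;=\; [Q'_u] \,+\, \sum_{J_v<J_u}\tilde c(S,J_u,J_v)\,[Q'_v],
\]
to be compared with the expansion from Proposition~\ref{prop:unitriang},
\[
q^{-\ell(\hat S)}[P_u] \;=\; q^{-\ell(\hat S)}[Q_u] \,+\, \sum_{J_v<J_u} d(S,J_u,J_v)\, q^{-\ell(\hat S)}[Q_v].
\]

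\textbf{Inductive matching and the main obstacle.} Since $u\mapsto J_u$ is injective on $B^S$ (the growth algorithm recovers the basis web uniquely from a dominant state string), lex order on the canonical state strings is a total order on $B^S$, so we may induct on $J_u$. For the minimal $J_{u_0}$ both sums are empty, so $[Q'_{u_0}]=q^{-\ell(\hat S)}[Q_{u_0}]$; as $\{[Q_v]\}_{v\in B^S}$ is a $\bQ(q)$-basis of $K^{\oplus}_0(\mathcal W^S)_{\bQ(q)}$ by Proposition~\ref{prop:unitriang} and Theorem~\ref{thm:equivalence2}, this forces $Q'_{u_0}\cong Q_{u_0}\{-\ell(\hat S)\}$. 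Assuming $Q'_v\cong Q_v\{-\ell(\hat S)\}$ for all $J_v<J_u$, subtracting the two expansions above yields
\[
[Q'_u] \;=\; q^{-\ell(\hat S)}[Q_u] \,+\, \sum_{J_v<J_u}\bigl(d(S,J_u,J_v)-\tilde c(S,J_u,J_v)\bigr)\,q^{-\ell(\hat S)}[Q_v].
\]
Indecomposability of $Q'_u$ means $[Q'_u]=q^{t}[Q_w]$ for some $w\in B^S$, $t\in\bZ$; comparing with the right-hand side in the basis $\{[Q_v]\}$ forces $w=u$, $t=-\ell(\hat S)$, and $d(S,J_u,J_v)=\tilde c(S,J_u,J_v)$ term by term. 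Closing the induction gives $Q'_u\cong Q_u\{-\ell(\hat S)\}$, which is exactly the desired statement. The main obstacle, flagged in Remark~\ref{rem:counter2}, is that $P_u$ is typically \emph{not} indecomposable and the basis webs are \emph{not} the dual canonical basis, so one cannot hope to read off dual canonical elements directly from $B^S$; what makes the argument go through is that the two unitriangular structures---the representation-theoretic one coming from~\eqref{eq:dualcan} and Theorem~\ref{thm:upptriang}, and the categorical one coming from Proposition~\ref{prop:unitriang}---match term by term, simultaneously pinning down both the labeling bijection $Q'_u\leftrightarrow Q_u$ and the grading shift $\{-\ell(\hat S)\}$.
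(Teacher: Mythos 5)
Your argument breaks down at the very first step, and the failure is essentially circular. You set $Q'_u=\Phi(Y_u\{-\ell(\hat S)\})$ and claim that Brundan--Kleshchev's Theorem 5.14 together with the commuting square gives ``at once'' $\gamma_W(\phi(e^S_{\heartsuit J_u}))=[Q'_u]$. But Brundan--Kleshchev identify $\gamma_V(b_u)=[Y_u\{-\ell(\hat S)\}]$ for $b_u$ a \emph{canonical} basis element of the abstract irreducible module $V_{(3^k)}$; the commuting square then only yields $\gamma_W(\phi(b_u))=[Q'_u]$. To replace $\phi(b_u)$ by the dual canonical basis element $e^S_{\heartsuit J_u}$ of $\mathrm{Inv}(V^S)$ you are implicitly asserting that the $q$-skew Howe duality map $\phi$ carries the canonical basis of $V_{(3^k)}$ to the dual canonical basis of $W_{(3^k)}$ --- and that is exactly the decategorified content of Theorem~\ref{thm:dualcan} (see the remark following it), not an available input. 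The two bases live in different worlds: $b_u$ is characterized by the bar-involution of the irreducible module, while $e^S_{\heartsuit J_u}$ is characterized inside the tensor space $V^S$ by invariance under $\tilde\psi$ together with the lattice condition~\eqref{eq:dualcan} that the off-diagonal coefficients lie in $q^{-1}\bZ[q^{-1}]$; nothing you have quoted relates the two. (As a symptom, the expression $\phi(e^S_{\heartsuit J_u})$ is not even well typed, since $e^S_{\heartsuit J_u}$ lives in the target of $\phi$.) All the later unitriangular bookkeeping and the induction on $J_u$ take this identification as input, so they cannot repair it.

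What is genuinely missing is precisely the content of the paper's proof: one must verify the two defining properties of dual canonical elements for the classes $q^{-\ell(\hat S)}[Q_u]$, whose expansion $e^S_{J_u}+\sum_{J'<J_u}e(S,J_u,J')e^S_{J'}$ follows from Theorem~\ref{thm:upptriang} and Proposition~\ref{prop:unitriang}. First, bar-invariance, i.e.\ $Q_u^{\circledast}\cong Q_u\{-2\ell(\hat S)\}$: this uses that $\Phi$ intertwines the dualities $\circledast$, that $\gamma_W$ intertwines $\tilde\psi$ with $K_0^{\oplus}(\circledast)$ (Proposition~\ref{prop:basisbarinvariant}), and the Frobenius-type isomorphism $P_v^{\circledast}\cong P_v\{-2\ell(\hat S)\}$ of Lemma~\ref{lem:Pvbarinv} fed through the unitriangular decomposition; your proposal only gestures at this. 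Second, and entirely absent from your argument, the lattice condition $e(S,J_u,J')\in q^{-1}\bZ[q^{-1}]$: in the paper this comes from showing that the Kuperberg bilinear form coincides with Lusztig's form $(-,-)_{\mathrm{Lusz}}$ (using that canonical flows have weight zero, Lemma~\ref{lem:canflowzero}) and then importing the Euler-form estimate~\eqref{eq:almostortY} for the $Y_u$ through the equivalence $\Phi$. Without an argument of this kind, knowing that the $[Q_u]$ form a bar-invariant, unitriangular basis with $\bZ[q,q^{-1}]$ coefficients does not distinguish them from many other bases --- indeed Remark~\ref{rem:counter} shows the web basis itself has these weaker properties yet is \emph{not} dual canonical --- so the heart of the theorem is the positivity/negativity control on the coefficients, which your proposal never addresses.
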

\begin{proof}
Note that by Theorem~\ref{thm:upptriang} and Proposition~\ref{prop:unitriang}, 
we have 
\begin{equation}
\label{eq:upptriang}
q^{-\ell(\hat{S})}[Q_u]=e_{J_u}^S+\sum_{J^{\prime} < J_u} e(S,J_u,J^{\prime})e^S_{J^{\prime}}
\end{equation}
for any $u\in B^S$. The dual canonical basis elements are uniquely 
determined by the conditions that they are bar-invariant and 
satisfy~\eqref{eq:dualcan}, so all we have to show is 
\begin{eqnarray}
\label{eq:firstcondition} Q_u^{\circledast}\{-\ell(\hat{S})\}&\cong&
Q_u\{-\ell(\hat{S})\}\\
\label{eq:secondcondition} e(S,J_u,J^{\prime})&\in&q^{-1}\mathbb{Z}[q^{-1}].
\end{eqnarray} 
The equivalence 
$\Phi\colon \mathcal{V}_{(3^k)}\to \mathcal{W}_{(3^k)}$ maps indecomposables 
to indecomposables. By the observations above, this shows that for 
any $u\in B^S$ there exists a $t_u\in \mathbb{Z}$ such that 
\[
Q_u^{\circledast}\cong Q_u\{t_u\}.
\]
By Proposition~\ref{prop:unitriang} and Lemma~\ref{lem:Pvbarinv}, we get 
$t_u=-2\ell(\hat{S})$ for all $u\in B^S$. 
Hence, this proves~\eqref{eq:firstcondition}. 
In particular, this implies that 
$\Phi$ maps $Y_u$ to $Q_u$ for any $u\in B^S$. 
\vskip0.5cm
In order to prove~\eqref{eq:secondcondition}, we first recall Lusztig's 
symmetric $\mathbb{Q}(q)$-bilinear form $(-,-)_{\mathrm{Lusz}}$ form on 
$V^S$ (see Section 19.1.1 and Section 27.3 in~\cite{lu}). 
A short calculation shows that 
it is completely determined by 
\begin{equation}
\label{eq:Lusorthog}
(e^S_{J^{\prime}},e^S_{J^{\prime\prime}})_{\mathrm{Lusz}}=\delta_{J^{\prime},J^{\prime\prime}},
\end{equation}
for any elementary tensors $e^S_{J^{\prime}}$ and $e^S_{J^{\prime\prime}}$. Lusztig's symmetric 
bilinear form 
can be restricted to $W^{\mathcal{Z}}_S$ and 
condition~\eqref{eq:secondcondition} is then equivalent to the condition 
\begin{equation}
\label{eq:Lusorthog2}
(q^{-\ell(\hat{S})}[Q_u],q^{-\ell(\hat{S})}[Q_v])_{\mathrm{Lusz}}\in 
\delta_{u,v}+q^{-1}\mathbb{Z}[q^{-1}]
\end{equation}
for any $u,v\in B^S$. 

Similarly, we can define a symmetric $\mathbb{Q}(q)$-bilinear web form on 
$W^S$ by 
\[
(u,v)_{\mathrm{Kup}}=q^{-\ell(\hat{S})}\langle u^*v\rangle_{\mathrm{Kup}}.
\]

We claim that both bilinear forms are equal. Since 
$(-,-)_{\mathrm{Lusz}}$ and $(-,-)_{\mathrm{Kup}}$ 
are $\mathbb{Q}(q)$-bilinear and symmetric, 
it suffices to show that we have 
\[
(u,u)_{\mathrm{Lusz}}=(u,u)_{\mathrm{Kup}},
\]
for any $u\in B^S$. Let $u\in B^S$ be arbitrary and write  
\[
u=e^S_{J_u}+\sum_{J^{\prime}<J_u} c(S,J_u,J^{\prime})e^S_{J^{\prime}},
\]
as in Theorem~\ref{thm:upptriang}. Then, by~\eqref{eq:Lusorthog}, we get 
\begin{equation}
\label{eq:firstcheck}
(u,u)_{\mathrm{Lusz}}=1 + \sum_{J^{\prime}<J}c(S,J,J^{\prime})^2.
\end{equation}

Now let us 
compute $(u,u)_{\mathrm{Kup}}$. By definition, 
we have 
\[
(u,u)_{\mathrm{Kup}}=q^{-\ell(S)}\langle 
u^*u\rangle_{\mathrm{Kup}}.
\]
Consider the way in which the coefficients $c(S,J_u,J^{\prime})$ change 
under the symmetry $x\mapsto x^*$, for $x$ any $Y$, cup or cap with flow. 
Comparing the corresponding weights 
in~\eqref{weights} and~\eqref{weights2}, we get 
\[
\mathrm{wt}(x^*)=q^{(\ell(t(x))-\ell(b(x)))}\mathrm{wt}(x).
\]
where $t(x)$ and $b(x)$ are the top and bottom boundary of $x$. 
Recall also that the canonical flow on $u$ has weight 0 (see 
Lemma~\ref{lem:canflowzero}). 
It follows that 
\begin{eqnarray*}
\label{eq:secondcheck}
(u, u)_{\mathrm{Kup}}&=&
q^{-\ell(S)}\langle u^*u\rangle_{\mathrm{Kup}}\\
&=&q^{-\ell(S)}\left(q^{\ell(S)}+q^{\ell(S)}\sum_{J^{\prime}<J_u}c(S,J_u,J^{\prime})^2\right)\\
&=&1+\sum_{J^{\prime}<J_u}c(S,J_u,J^{\prime})^2.
\end{eqnarray*}
This finishes the proof that $(-,-)_{\mathrm{Lusz}}=(-,-)_{\mathrm{Kup}}$. 
\vskip0.5cm
Comparing the definition of the bilinear and the sesquilinear web form on 
$W^S$, 
we see that 
\[
(-,-)_{\mathrm{Kup}}=\overline{\langle -,\tilde{\psi}(-)\rangle}_{\mathrm{Kup}},
\]
just as in~\eqref{eq:bilinversussesquil} (recall that the elements 
in $B^S$ are $\tilde{\psi}$-invariant). 
Since $(-,-)_{\mathrm{Kup}}=(-,-)_{\mathrm{Lusz}}$, we see that condition 
\eqref{eq:Lusorthog2} is equivalent to the condition
\begin{equation}
\label{eq:Lusorthog3}
\langle q^{-\ell(\hat{S})}[Q_u],q^{-\ell(\hat{S})}[Q_v]\rangle_{\mathrm{Kup}}\in 
\delta_{u,v}+q\mathbb{Z}[q]
\end{equation}
for any $u,v\in B^S$. As already shown above, $\Phi$ maps 
$Y_u$ to $Q_u$ for any $u\in B^S$. Since $\Phi$ is an equivalence, 
$K_0^{\oplus}(\Phi)_{\bQ(q)}$ intertwiners the Euler forms. 
Therefore~\eqref{eq:almostortY} implies~\eqref{eq:Lusorthog3}, 
which we had already 
shown to be equivalent to~\eqref{eq:secondcondition}. 
\end{proof}

\begin{rem}
Just for completeness, let us summarize the decategorification of the 
above results. What they show is that the quantum skew Howe duality 
isomorphism $$\phi\colon V_{(3^k)}\to W_{(3^k)}$$
intertwines the respective $q$-sesquilinear forms and bar-involutions, 
and that it maps the canonical 
$\U$-basis to the dual canonical 
$\dot{\mathbf U}_q(\mathfrak{sl}_3)$-basis.
\end{rem}
\begin{rem}
It is also worth noting that Lemma~\ref{lem:grothendieck}, 
Proposition~\ref{prop:unitriang} and Theorem~\ref{thm:dualcan} 
imply that the change-of-basis matrix from 
Kuperberg's basis $B^S$ to the dual canonical basis of $W^S$ is unitriangular. 
\end{rem}

\section*{Appendix: Filtered and graded algebras and modules}
\label{sec:filt}
\setcounter{section}{5}
\setcounter{subsection}{1}
In this appendix, we have collected some basic facts about filtered algebras, 
the associated graded algebras and the idempotents in both. Our main sources 
are~\cite{sj} and~\cite{sr}. 
In this appendix, everything is defined over an arbitrary commutative 
associative unital ring $R$.  

Let $A$ be a finite dimensional, 
associative, unital $R$-algebra together with an increasing filtration of $R$-submodules
\[
\{0\}\subset A_{-p}\subset A_{-p+1}\subset\cdots\subset A_0\subset \cdots 
\subset A_{m-1}\subset A_m=A.
\]
Actually, for any $t\in \mathbb{Z}$ we have a subspace $A_t$, where 
we extend the filtration above by 
\[A_t=
\begin{cases}
\{0\},& \text{if}\quad t<-p,\\
A,&\text{if}\quad p\geq m.
\end{cases}
\]
Note that in the language of~\cite{sj}, such a filtration is \textit{discrete, 
separated, exhaustive and complete}. If $1\in A_0$ and the multiplication 
satisfies $A_iA_j\subseteq A_{i+j}$, we say that $A$ is an associative, unital, 
\textit{filtered algebra}.
\vspace*{0.25cm}

The \textit{associated graded algebra} is defined by 
\[
E(A)=\bigoplus_{i\in\mathbb{Z}} A_i/A_{i-1},
\] 
and is also associative and unital. Although $A$ and $E(A)$ are isomorphic 
$R$-modules, they are not isomorphic as algebras. 
\vspace*{0.25cm}

A finite dimensional, \textit{filtered $A$-module} is a finite dimensional,
unitary $A$-module $M$ with an increasing filtration of $R$-submodules 
\[
\{0\}\subset M_{-q}\subset M_{-q+1}\subset \cdots \subset M_t=M,
\]
such that $A_iM_j\subseteq M_{i+j}$, for all $i,j\in\mathbb{Z}$, after 
extending the finite filtration to a $\mathbb{Z}$-filtration as above. 
\vspace*{0.25cm}

We define the $t$-fold \textit{suspension} $M\{t\}$ of $M$, which 
has the same underlying $A$-module structure, but a new filtration defined by 
\[
M\{t\}_r=M_{r+t}.
\]
Given a filtered $A$-module $M$, the \textit{associated graded module} 
is defined by  
\[
E(M)=\bigoplus_{i\in\mathbb{Z}} M_i/M_{i-1}.
\]
An $A$-module map 
$f\colon M\to N$ is said to \textit{preserve the filtrations} if 
$f(M_i)\subseteq N_i$, for all $i\in\mathbb{Z}$. Any such map 
$f\colon M\to N$ induces a grading preserving $E(A)$-module map 
$E(f)\colon E(M)\to E(N)$ in the obvious way. 
\vspace*{0.25cm}

This way, we get a functor 
\[
E\colon A\text{-}\mathrm{\textbf{Mod}}_{\mathrm{fl}} \to E(A)\text{-}\mathrm{\textbf{Mod}}_{\mathrm{gr}},
\]
where $A\text{-}\mathrm{\textbf{Mod}}_{\mathrm{fl}}$ is the category of finite dimensional, filtered 
$A$-modules and filtration preserving $A$-module maps and $E(A)\text{-}\mathrm{\textbf{Mod}}_{\mathrm{gr}}$ 
is the category of finite dimensional, graded $E(A)$-modules and grading preserving $E(A)$-module maps. 
\vspace*{0.25cm}

Recall that $A\text{-}\mathrm{\textbf{Mod}}_{\mathrm{fl}}$ is not an abelian category, e.g. 
the identity map $M\to M\{1\}$ is a filtration preserving bijective $A$-module 
map, but does not have an inverse in $A\text{-}\mathrm{\textbf{Mod}}_{\mathrm{fl}}$.
\vspace*{0.25cm}

In order to 
avoid such complications, one can consider a subcategory with fewer morphisms. 
An $A$-module map $f\colon M\to N$ is called \textit{strict} if 
\[
f(M_i)=f(M)\cap N_i
\]
holds, for all $i\in\mathbb{Z}$. 
Let $A\text{-}\mathrm{\textbf{Mod}}_{\mathrm{st}}$ be the subcategory of filtered $A$-modules 
and strict $A$-module homomorphisms. 

\begin{lem}
The restriction of $E$ to $A\text{-}\mathrm{\textbf{Mod}}_{\mathrm{st}}$ is exact. 
\end{lem}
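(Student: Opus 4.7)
The plan is to take an arbitrary short exact sequence $0 \to M' \xrightarrow{f} M \xrightarrow{g} M'' \to 0$ in $A\text{-}\mathrm{\textbf{Mod}}_{\mathrm{st}}$ and verify that the induced sequence $0 \to E(M') \xrightarrow{E(f)} E(M) \xrightarrow{E(g)} E(M'') \to 0$ is exact in $E(A)\text{-}\mathrm{\textbf{Mod}}_{\mathrm{gr}}$. Since $E$ is additive and $E(g)\circ E(f)=0$ is automatic from $g\circ f=0$, it is enough to check, degree by degree, that $E(f)$ is injective, $E(g)$ is surjective, and $\ker E(g)\subseteq \mathrm{im}\,E(f)$. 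In every one of these checks, strictness is precisely the hypothesis that lets me transfer filtration-level information along $f$ and $g$.

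First I would dispose of surjectivity of $E(g)$: given $z\in M''_i$, strictness of $g$ together with its surjectivity gives $g(M_i)=g(M)\cap M''_i=M''_i$, so any class $[z]\in M''_i/M''_{i-1}$ lifts to the class of some $x\in M_i$. Next, for injectivity of $E(f)$, I would take $x\in M'_i$ with $f(x)\in M_{i-1}$; strictness of $f$ at degree $i-1$ gives $f(x)\in f(M')\cap M_{i-1}=f(M'_{i-1})$, so $f(x)=f(y)$ for some $y\in M'_{i-1}$, and injectivity of $f$ forces $x\in M'_{i-1}$, i.e.\ $[x]=0$ in $E(M')_i$.

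The remaining containment $\ker E(g)\subseteq \mathrm{im}\,E(f)$ is the one step that uses strictness twice. Suppose $[x]\in M_i/M_{i-1}$ satisfies $g(x)\in M''_{i-1}$. Strictness of $g$ at degree $i-1$ yields $y\in M_{i-1}$ with $g(y)=g(x)$, so $x-y\in\ker g=\mathrm{im}\,f$, giving a unique $x'\in M'$ with $f(x')=x-y$ by injectivity of $f$. A priori I only know $f(x')\in M_i$, but strictness of $f$ then upgrades this to $f(x')\in f(M'_i)$, and injectivity of $f$ forces $x'\in M'_i$. Hence $E(f)([x'])=[f(x')]=[x-y]=[x]$ in $M_i/M_{i-1}$.

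The only real obstacle is conceptual rather than technical: without strictness, one cannot bound the filtration degree of a preimage, and indeed this is exactly why $E$ fails to be exact on the larger category $A\text{-}\mathrm{\textbf{Mod}}_{\mathrm{fl}}$ (the identity $M\to M\{1\}$ already shows this). Once strictness of both $f$ and $g$ is invoked where indicated above, all three exactness checks become essentially one-line diagram chases.
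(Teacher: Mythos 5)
Your proof is correct and is exactly the standard diagram chase that the paper implicitly cites from Sj\"{o}din and Sridharan rather than reproducing: strictness of $g$ gives surjectivity of $E(g)$, strictness of $f$ gives injectivity of $E(f)$, and the middle exactness uses strictness of both in the way you indicate. The paper states this lemma without proof, so there is no alternative argument to compare against; your write-up fills the gap with the argument that the references have in mind.
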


We also need to recall a simple result about bases. 
A basis $\{x_1,\ldots,x_n\}$ of a filtered algebra $A$ is called \textit{homogeneous} if, for each $1\leq j\leq n$, there exists an 
$i\in\mathbb{Z}$ such that $x_j\in A_i\backslash A_{i-1}$. In that case, 
$\{\overline{x}_1,\ldots,\overline{x}_n\}$ defines a homogeneous basis 
of $E(A)$, where $\overline{x_j}\in A_i/A_{i-1}$. In order to avoid cluttering 
our notation, we always write $\overline{x}_j$ and then specify in which 
subquotient we take the equivalence class by saying that it belongs to 
$A_i/A_{i-1}$.

Given a homogeneous basis $\{y_1,\ldots,y_n\}$ of the associated graded $E(A)$, we say that 
a homogeneous basis $\{x_1,\ldots,x_n\}$ of $A$ \textit{lifts} 
$\{y_1,\ldots,y_n\}$ if $\overline{x}_j=y_j\in A_i/A_{i-1}$ holds, for each $1\leq j\leq n$ 
and the corresponding $i\in\mathbb{Z}$. 
The result in the following lemma is well-known. However, we could not find 
a reference in the literature, so we provide a short proof here. 
\begin{lem}
\label{lem:homogbasis}
Let $A$ be a finite dimensional, filtered algebra and 
$\{y_1,\ldots,y_n\}$ be a homogeneous basis of $E(A)$. Then 
there is a homogeneous basis $\{x_1,\ldots,x_n\}$ of $A$ 
which lifts $\{y_1,\ldots,y_n\}$. 
\end{lem}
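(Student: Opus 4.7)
The plan is to lift each basis element of $E(A)$ to an element of $A$ of the correct filtration degree, and then show that the resulting set is a basis of $A$ by induction on the filtration.

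First I would fix, for each $j\in\{1,\dots,n\}$, the unique $i(j)\in\bZ$ with $y_j\in A_{i(j)}/A_{i(j)-1}\setminus\{0\}$, and choose any lift $x_j\in A_{i(j)}$ under the canonical surjection $A_{i(j)}\twoheadrightarrow A_{i(j)}/A_{i(j)-1}$. By construction, $\overline{x}_j=y_j$ in $A_{i(j)}/A_{i(j)-1}$, so it only remains to show that $\{x_1,\dots,x_n\}$ is an $R$-basis of $A$. Since $\dim A=\dim E(A)=n$, it suffices to show linear independence (or, alternatively, spanning).

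The key step is the following claim, proved by induction on $i\in\bZ$: the set $\{x_j\mid i(j)\leq i\}$ is an $R$-basis of $A_i$. For $i$ sufficiently small we have $A_i=\{0\}$ and the set is empty, so the base case is trivial. For the inductive step, consider the short exact sequence
\[
0\to A_{i-1}\to A_i\to A_i/A_{i-1}\to 0.
\]
By the inductive hypothesis, $\{x_j\mid i(j)\leq i-1\}$ is a basis of $A_{i-1}$. By the defining property of the basis $\{y_1,\dots,y_n\}$ of $E(A)$, the subset $\{y_j\mid i(j)=i\}$ is a basis of $A_i/A_{i-1}$, and since $\overline{x}_j=y_j$, the lifts $\{x_j\mid i(j)=i\}$ map to this basis. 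A standard splitting argument for short exact sequences of $R$-modules (valid here because $A_i/A_{i-1}$ is free by the assumed existence of a homogeneous basis of $E(A)$) then shows that the union $\{x_j\mid i(j)\leq i\}$ is a basis of $A_i$. Taking $i$ large enough that $A_i=A$ gives the result.

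The only mild subtlety—really the only thing to check—is that the splitting step works over the ring $R$. Because the excerpt implicitly assumes enough freeness for $E(A)$ to admit a homogeneous basis, the quotients $A_i/A_{i-1}$ are free, so the sequence splits and the inductive step goes through without issue; over a field this is of course automatic. I do not expect any real obstacle here: the whole argument is essentially the observation that a finite filtration whose associated graded is free can be trivialized by choosing compatible lifts level by level.
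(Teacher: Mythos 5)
Your proof is correct and follows essentially the same route as the paper: an induction on the filtration degree in which one lifts, level by level, the homogeneous basis elements of $E(A)$ to elements of $A_i$ and shows the accumulated set is a basis of $A_i$. The only (minor) difference is that you obtain both spanning and independence at once via the split short exact sequence $0\to A_{i-1}\to A_i\to A_i/A_{i-1}\to 0$ with free quotient, whereas the paper verifies only linear independence by projecting a relation to the top graded piece and leaves spanning to an implicit dimension count; your variant is, if anything, slightly more careful over the arbitrary ground ring $R$ used in the appendix.
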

\begin{proof}
We prove the lemma by induction with respect to the filtration degree $q$. 
Suppose $A_q=0$, for all $q<-p$, and $A_q=A$, for all $q\geq m$. 
Then $E(A_{-p})=A_{-p}$. Since $\{y_1,\ldots,y_n\}$ is a homogeneous basis of $E(A)$, 
a subset of this basis forms a basis of $A_{-p}$. 

For each $-p+1\leq q\leq m$, choose elements in $A_q$ which lift the homogeneous subbasis 
of $E(A_q)$. We claim that the union of the sets of these elements, for all $-p\leq q\leq m$, 
form a homogeneous basis of $A$ which lifts $\{y_1,\ldots,y_n\}$. Call it $\{x_1,\ldots,x_n\}$. 
By definition, the $x_j$ lift the $y_j$, for all $1\leq j\leq n$. It remains to show that 
the $x_j$ are all linearly independent. This is true for $q=-p$, as shown above. 

Let $q>-p$ and suppose that the claim holds for $\{x_1,\ldots,x_{m_{q-1}}\}$, the subset of $\{x_1,\ldots,x_n\}$ 
which belongs to $A_{q-1}$. Let 
\[
\{x_1,\ldots, x_{m_q}\}=\{x_1,\ldots,x_{m_{q-1}}\}\cup \{x_{m_{q-1}+1},\ldots,x_{m_q}\}
\]
be the subset belonging to $A_q$. Suppose that  
\begin{equation}
\label{eqn:linind}
\sum_{j=1}^{m_q}\lambda_jx_j=0,
\end{equation}
with $\lambda_j\in R$. 
Then we have
\[
\sum_{j=1}^{m_q}\lambda_j\overline{x}_j=\sum_{j=m_{q-1}+1}^{m_q}\lambda_j\overline{x}_j=\sum_{j=m_{q-1}+1}^{m_q}\lambda_jy_j=0 \in A_q/A_{q-1}.
\]
By the linear independence of the $y_j$, this implies that $\lambda_j=0$, for all $m_{q-1}+1\leq j\leq m_q$. Thus, the linear 
combination in~\eqref{eqn:linind} becomes 
\[
\sum_{j=1}^{m_{q-1}}\lambda_jx_j=0.
\]
By induction, this implies that $\lambda_j=0$, for all $1\leq j\leq m_{q-1}$. 

This shows that $\lambda_j=0$, for all $1\leq j\leq n$, so the $x_j$ 
are linearly independent. 
\end{proof}
For a proof of the following proposition, see for example 
Proposition 1 in the appendix of~\cite{sr}.
\begin{prop}
\label{prop:Srid}
Let $M$ and $N$ be filtered $A$-modules and $f\colon M\to N$ a filtration 
preserving $A$-linear map. If $E(f)\colon E(M)\to E(N)$ is an isomorphism, 
then $f$ is an isomorphism (and therefore strict too). 
\end{prop}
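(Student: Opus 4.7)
The plan is to prove injectivity and surjectivity separately, exploiting the fact that the filtrations on $M$ and $N$ are discrete, separated, exhaustive, and finite (as stipulated at the start of the appendix), which allows a clean induction on the filtration degree.

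For injectivity, I would argue by contradiction. Suppose $x \in M$ is a nonzero element with $f(x) = 0$. Since the filtration on $M$ is separated and discrete (with $M_r = 0$ for $r$ sufficiently small), there exists a minimal integer $i$ such that $x \in M_i$; minimality of $i$ gives $x \notin M_{i-1}$, so the class $\overline{x} \in M_i/M_{i-1}$ is nonzero. But $E(f)(\overline{x}) = \overline{f(x)} = 0$ in $N_i/N_{i-1}$, contradicting the injectivity of $E(f)$. Hence $f$ is injective.

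For surjectivity and strictness simultaneously, I would show by induction on $j$ that $f(M_j) = N_j$ for every $j$, starting at the smallest index $-p$ where both filtrations become trivial (note that the base step forces $M_{-p} = 0$ iff $N_{-p} = 0$, since $E(f)$ is bijective). For the inductive step, let $y \in N_j$ and consider $\overline{y} \in N_j/N_{j-1}$. Surjectivity of $E(f)$ produces some $\overline{x} \in M_j/M_{j-1}$ with $E(f)(\overline{x}) = \overline{y}$; choosing a lift $x \in M_j$, the element $y - f(x)$ lies in $N_{j-1}$, and by the inductive hypothesis there exists $x' \in M_{j-1} \subseteq M_j$ with $f(x') = y - f(x)$, giving $y = f(x + x') \in f(M_j)$. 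Since the filtration on $N$ is exhaustive (equal to all of $N$ at some finite level), this proves that $f$ is surjective. Moreover, the construction shows $f(M_j) = N_j = f(M) \cap N_j$, which is precisely the strictness condition.

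There is no real obstacle here, since the proof is essentially a filtration-by-filtration diagram chase and the assumptions on the filtration (discrete, separated, exhaustive) are exactly what makes the induction terminate and give well-defined smallest and largest indices. The only point worth double-checking is that the base step of the induction is compatible with the hypothesis that $E(f)$ is an isomorphism, which is automatic because both $M$ and $N$ become zero at the same filtration level once one of them does.
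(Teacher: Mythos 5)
Your proof is correct: the boundedness of the finite filtrations (zero in low degree, all of the module in high degree) makes both the minimal-degree argument for injectivity and the induction $f(M_j)=N_j$ — which yields surjectivity and strictness simultaneously — go through without gaps, including the base case where bijectivity of $E(f)$ forces the two filtrations to vanish in the same range. The paper gives no argument of its own but simply cites Proposition 1 of the appendix of~\cite{sr}; your filtration-degree induction is exactly the standard proof that the reference supplies, so there is nothing further to reconcile.
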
 
The most important fact about filtered, projective modules and their 
associated graded, projective modules, that 
we need in this paper, is Theorem 6 in~\cite{sj}. Note that these projective modules are the projective objects in the category $A\text{-}\mathrm{\textbf{Mod}}_{\mathrm{st}}$.
\begin{thm}
\label{thm:sjodin}(\textbf{Sj\"{o}din})
Let $P$ be a finite dimensional, graded, projective $E(A)$-module. Then 
there exists a finite dimensional, filtered, projective $A$-module $P'$, 
such that $E(P')=P$. Moreover, if $M$ is a finite dimensional, filtered, 
$A$-module, then any degree preserving $E(A)$-module map 
$P\to E(M)\{t\}$, for some grading shift $t\in\mathbb{Z}$, lifts to a 
filtration preserving $A$-module map $P'\to M\{t\}$. 
\end{thm}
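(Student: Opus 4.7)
The plan is to reduce the existence of $P'$ to a lifting-of-idempotents problem, and then obtain the lifting of $E(A)$-module maps by a standard free-module construction adapted to filtrations.

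Since $P$ is a finite dimensional, graded projective $E(A)$-module, it is a direct summand of a free graded module of finite rank
\[
F=\bigoplus_{i=1}^{n}E(A)\{t_i\}
\]
for suitable $t_i\in\mathbb{Z}$. Thus $P=\bar e F$ for some grading preserving idempotent $\bar e\in B^{\mathrm{gr}}:=\mathrm{End}_{E(A),\mathrm{gr}}(F)$. Set $F'=\bigoplus_{i=1}^{n}A\{t_i\}$, a filtered free $A$-module with $E(F')\cong F$, and write $B^{\mathrm{fl}}$ for the algebra of filtration preserving endomorphisms of $F'$. A matrix description shows that taking associated gradeds defines a surjective algebra homomorphism $\pi\colon B^{\mathrm{fl}}\to B^{\mathrm{gr}}$, whose kernel consists of matrices $(g_{ij})$ with $g_{ij}\in A_{t_i-t_j-1}$. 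Multiplying two such matrices drops the degree by an extra unit, so the $N$-th power has entries in $A_{t_i-t_j-N}$, which vanishes for $N$ large since $A$ is finite dimensional with bounded filtration. Hence $\ker\pi$ is a nilpotent two-sided ideal, and standard idempotent lifting modulo a nilpotent ideal (Newton iteration, e.g.\ replacing an initial preimage $e_0$ of $\bar e$ by $3e_0^2-2e_0^3$ until convergence) produces a lift $e\in B^{\mathrm{fl}}$ of $\bar e$. Setting $P'=eF'$, we obtain a direct summand of the filtered free module $F'$, hence a filtered projective $A$-module, with $E(P')=\bar e E(F')\cong\bar e F=P$.

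For the lifting of module maps, let $\bar f\colon P\to E(M)\{t\}$ be degree preserving. Composing with the projection $F\twoheadrightarrow P$ induced by $\bar e$ yields a degree preserving map $\tilde{\bar f}\colon F\to E(M)\{t\}$, which is determined by the images of the canonical homogeneous generators $\bar x_i$ of $F$. Each such image lies in a single subquotient $M_{k_i}/M_{k_i-1}$ of $E(M)\{t\}$ (with $k_i$ determined by $t_i$ and $t$); lift it to an element $y_i\in M_{k_i}$ and define $\tilde f\colon F'\to M\{t\}$ on generators by $x_i\mapsto y_i$. Then $\tilde f$ is filtration preserving with $E(\tilde f)=\tilde{\bar f}$. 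Restricting $\tilde f$ to the summand $P'\subset F'$ yields the desired filtration preserving map $f\colon P'\to M\{t\}$; the compatibility $E(f)=\bar f$ follows because the filtered splitting $F'=P'\oplus(1-e)F'$ is carried by $E$ to the graded splitting $F=P\oplus(1-\bar e)F$ underlying the construction of $\tilde{\bar f}$.

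The main obstacle is the nilpotency of $\ker\pi$ and the accompanying idempotent lifting; once those are in place, everything else is a routine manipulation with direct summands and associated gradeds, with Proposition~\ref{prop:Srid} available if one needs to upgrade any filtration preserving isomorphism along the way to a strict one.
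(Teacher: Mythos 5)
Your proof is correct, and it is worth noting that the paper itself contains no argument for this statement: it is quoted verbatim as Theorem 6 of Sj\"{o}din's article~\cite{sj}, so your write-up is a genuinely self-contained alternative rather than a reproduction of the paper's route. Your two key points both check out in the paper's setting: because the filtration on $A$ is bounded, the kernel of the reduction map $\pi\colon B^{\mathrm{fl}}\to B^{\mathrm{gr}}$ for a filtered free module $F'$ really is nilpotent (an $N$-fold composite of maps with vanishing associated graded sends $F'=F'_R$ into $F'_{R-N}=0$), so idempotents lift by the usual Newton iteration, and the surjectivity of $\pi$ is exactly the statement that the comparison map $\phi$ of Corollary~\ref{cor:sjodin} is onto for free modules, which your matrix description establishes; the map-lifting half is then the routine free-module argument you give, and the identification $E(eF')=\bar e\,E(F')$ is legitimate because $e$ and $1-e$ are filtration preserving, so $eF'$ carries the induced filtration $eF'\cap F'_r=eF'_r$ and the splitting is strict. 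What your approach buys is precisely this simplification: Sj\"{o}din works with complete (not necessarily bounded) filtrations, where the lifting of idempotents must be done by a completeness/limit argument, whereas boundedness plus finite dimensionality lets you replace that by nilpotency, and your construction of $P'$ as $eF'$ also makes it transparent that $P'$ is a filtered direct summand of a filtered free module, hence projective in the strict category (the lifting property against strict epimorphisms holds for $F'$ by your generator argument and passes to summands). The only cosmetic point I would flag is that you should say this last sentence explicitly, since the theorem asserts projectivity of $P'$ and not only the ``moreover'' lifting property; with that remark added, the argument is complete.
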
 

We also recall the following corollary of Sj\"{o}din (Corollary in~\cite{sj} after Lemma 20).

\begin{cor}
\label{cor:sjodin}
Let $M$ be a finite dimensional, filtered, $A$-module, then any finite or countable set of orthogonal idempotents in
\[
\mathrm{im}(\phi)\subset\mathrm{Hom}_{E(A)}(E(M),E(M))
\]
can be lifted to $\mathrm{Hom}_{A}(M,M)$, where $\phi$ is the natural transformation
\[
\phi\colon E(\mathrm{Hom}_{A}(M,M))\to \mathrm{Hom}_{E(A)}(E(M),E(M)).
\]
\end{cor}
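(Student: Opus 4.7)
The approach is a classical Hensel-style lifting of orthogonal idempotents modulo a nilpotent ideal, adapted to the filtered setting and combined with the module-lifting content of Theorem~\ref{thm:sjodin}.

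First I would equip $B := \mathrm{Hom}_A(M,M)$ with its natural filtration $B_i = \{f \in B \mid f(M_j) \subset M_{i+j}\text{ for all }j\}$. Because $M$ is finite-dimensional and its filtration is bounded, $B$ inherits a bounded-below filtration. In particular $B_0$ is a finite-dimensional unital subring, and $B_{-1}$ is a two-sided ideal of $B_0$ that is nilpotent: the inclusion $B_iB_j \subset B_{i+j}$ forces $B_{-1}^N \subset B_{-N} = 0$ for $N$ sufficiently large. The degree-zero part of $E(B)$ is precisely $B_0/B_{-1}$, and $\phi$ restricts to a ring homomorphism $\phi^0 : B_0/B_{-1} \to \mathrm{Hom}_{E(A)}(E(M),E(M))^0$.

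Two easy reductions are in order. The countable case reduces to the finite case because $B_0$ is finite-dimensional and so admits only finitely many nonzero orthogonal idempotents. In the applications of the corollary in this paper the idempotents in question lie in degree zero (they arise from decomposing $1_M \in B_0$), so I would carry out the lifting argument in that case only. Given an orthogonal family $e_1, \ldots, e_r$ in $\mathrm{im}(\phi^0)$, choose preimages $y_i \in B_0$ under the composite surjection $B_0 \twoheadrightarrow \mathrm{im}(\phi^0)$ and iteratively correct them, working filtration-degree by filtration-degree: at the $k$-th stage the errors $y_iy_j - \delta_{ij}y_i$ can be arranged to lie in $B_{-k}$, using the standard lifting-of-orthogonal-idempotents theorem (see e.g.\ Lam, \emph{A first course in noncommutative rings}, Prop.~21.25) applied to the nilpotent quotients $B_0/B_{-k}$. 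Termination is guaranteed by $B_{-N} = 0$ for $N$ large, and the resulting $f_i \in B_0$ are orthogonal idempotents with $\phi(\bar{f}_i) = e_i$.

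The main obstacle is the very first step of this chain: lifting the given $e_i$ from $\mathrm{im}(\phi^0)$ back through $\phi^0$ itself to $B_0/B_{-1}$. The kernel of $\phi^0$ need not be nilpotent, so the classical idempotent-lifting theorem does not apply directly. Sj\"{o}din's argument sidesteps this difficulty by reinterpreting the orthogonal idempotents module-theoretically: they correspond to a direct-sum decomposition of $E(M)$ whose summands lie in $\mathrm{im}(\phi)$, and this compatibility allows the decomposition to be lifted to a direct-sum decomposition of $M$ by invoking Theorem~\ref{thm:sjodin}. The orthogonal idempotents in $\mathrm{End}_A(M)$ associated to the lifted decomposition then provide the desired lifts, and the filtered correction of the previous paragraph completes the proof.
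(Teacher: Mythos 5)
A preliminary remark on the comparison: the paper does not prove Corollary~\ref{cor:sjodin} at all; it is quoted from Sj\"{o}din~\cite{sj} (the Corollary after Lemma 20 there), and Sj\"{o}din's argument is precisely the successive-approximation lifting you sketch in your second paragraph: put the induced filtration on $B=\mathrm{Hom}_A(M,M)$, note that $B_{-1}$ is an ideal of $B_0$ which is nilpotent here (and, in Sj\"{o}din's general setting, that the filtration is complete, which also handles countable families), and lift orthogonal idempotents along $B_0\to B_0/B_{-1}$ in the classical Hensel fashion. Up to that point your proposal is the intended proof, and Theorem~\ref{thm:sjodin} plays no role in it.

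The genuine problem is your final paragraph. The ``main obstacle'' you identify does not exist, because $\phi$ is injective: if $f\in B_i$ induces the zero map $M_j/M_{j-1}\to M_{i+j}/M_{i+j-1}$ for every $j$, then $f(M_j)\subseteq M_{i+j-1}$ for all $j$, i.e.\ $f\in B_{i-1}$, so its class in $E(B)_i$ vanishes. Hence $\phi^0\colon B_0/B_{-1}\to\mathrm{Hom}_{E(A)}(E(M),E(M))^0$ is an isomorphism onto $\mathrm{im}(\phi^0)$, the given idempotents \emph{are} orthogonal idempotents of $B_0/B_{-1}$ (note that a nonzero homogeneous idempotent is automatically of degree zero, so your degree-zero reduction is harmless), and your correction modulo the nilpotent ideal $B_{-1}$ already completes the proof. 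Moreover, the patch you propose for this phantom obstacle is not valid as stated: Theorem~\ref{thm:sjodin} lifts graded \emph{projective} $E(A)$-modules and maps defined on them, whereas the summands $e_iE(M)$ cut out of $E(M)$ by the given idempotents are arbitrary graded $E(A)$-modules (the corollary makes no projectivity assumption on $M$), so it cannot be invoked to lift that direct-sum decomposition of $E(M)$ to $M$. Delete the last paragraph, insert the injectivity observation, and your argument is complete and coincides in substance with Sj\"{o}din's.
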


\end{document}